\renewcommand{\see}{\mathcal{S}_e}
\renewcommand{\geq}{\geqslant}
\renewcommand{\leq}{\leqslant}
\theoremstyle{plain}
\newtheorem{theorem}{Theorem}[section]
\newtheorem{lemma}[theorem]{Lemma}
\newtheorem{proposition}[theorem]{Proposition}
\newtheorem{definition}[theorem]{Definition}
\newtheorem{corollary}[theorem]{Corollary}
\newtheorem*{lemma*}{Lemma}
\theoremstyle{remark}
\newtheorem{remark}[theorem]{Remark}
\newtheorem*{remark*}{Remark}
\theoremstyle{remark}
\newtheorem*{notation*}{Notations}
\numberwithin{equation}{section}
\begin{document}
\title{Unstable surface waves in running water}
\author{Vera~Mikyoung~Hur}
\address{Massachusetts Institute of Technology, Department of Mathematics\\
77 Massachusetts Avenue, Cambridge, MA 02139-4307, USA}
\email{verahur@math.mit.edu}
\author{Zhiwu~Lin}
\address{University of Missouri-Columbia, Department of Mathematics, \\
Columbia, MO 65203-4100, USA}
\email{lin@math.missouri.edu}

\begin{abstract}
We consider the stability of periodic gravity free-surface water waves
traveling downstream at a constant speed over a shear flow of finite depth.
In case the free surface is flat, a sharp criterion of linear instability is
established for a general class of shear flows with inflection points and
the maximal unstable wave number is found. Comparison to the rigid-wall
setting testifies that the free surface has a destabilizing effect. For a
class of unstable shear flows, the bifurcation of nontrivial periodic
traveling waves is demonstrated at all wave numbers. We show the linear
instability of small nontrivial waves that appear after bifurcation at an
unstable wave number of the background shear flow. The proof uses a new
formulation of the linearized water-wave problem and a perturbation
argument. An example of the background shear flow of unstable
small-amplitude periodic traveling waves is constructed for an arbitrary
vorticity strength and for an arbitrary depth, illustrating that vorticity
has a subtle influence on the stability of free-surface water waves.
\end{abstract}

\maketitle

\section{Introduction}

The water-wave problem in its simplest form concerns two-dimensional motion
of an incompressible inviscid liquid with a free surface, acted on only by
gravity. Suppose, for definiteness, that in the $(x,y)$-Cartesian
coordinates gravity acts in the negative $y$-direction and that the liquid
at time $t$ occupies the region bounded from above by the free surface $%
y=\eta (t;x)$ and from below by the flat bottom $y=0$. In the fluid region $%
\{(x,y):0<y<\eta (t;x)\}$, the velocity field $(u(t;x,y),v(t;x,y))$
satisfies the incompressibility condition 
\begin{equation}
\partial _{x}u+\partial _{y}v=0  \label{E:Euler1}
\end{equation}%
and the Euler equation 
\begin{equation}
\begin{cases}
\partial _{t}u+u\partial _{x}u+v\partial _{y}u=-\partial _{x}P \\ 
\partial _{t}v+u\partial _{x}v+v\partial _{y}v=-\partial _{y}P-g,%
\end{cases}
\label{E:Euler2}
\end{equation}%
where $P(t;x,y)$ is the pressure and $g>0$ denotes the gravitational
constant of acceleration. The flow is allowed to have rotational motions and
characterized by the vorticity $\omega =v_{x}-u_{y}$. The kinematic and
dynamic boundary conditions at the free surface $\{y=\eta (t;x)\}$ 
\begin{equation}
v=\partial _{t}\eta +u\partial _{x}\eta \quad \text{and}\quad P=P_{\text{atm}%
}  \label{E:top}
\end{equation}%
express, respectively, that the boundary moves with the velocity of the
fluid particles at the boundary and that the pressure at the surface equals
the constant atmospheric pressure $P_{\text{atm}}$. The impermeability
condition at the flat bottom states that 
\begin{equation}
v=0\qquad \text{at}\quad \{y=0\}.  \label{E:bottom}
\end{equation}

It is a matter of common experience that waves which may be observed on the
surface of the sea or on the river are approximately periodic and
propagating of permanent form at a constant speed. In case $\omega \equiv 0$%
, namely in the irrotational setting, waves of this kind are referred to as
Stokes waves, whose mathematical treatment was initiated by formal but
far-reaching considerations of Stokes \cite{Sto} himself. The existence
theory of Stokes waves dates back to the construction due to Levi-Civita 
\cite{lc} and Nekrasov \cite{nek} in the infinite-depth case and due to
Struik \cite{struik} in the finite-depth case of small-amplitude waves, and
it includes the global theory due to Krasovskii \cite{kra} and Keady and
Norbury \cite{keno}. Stokes waves of greatest height exist \cite{toland}, 
\cite{mcleod} and are shown to have stagnation at wave crests \cite%
{aft-stokes}. A nice survey on the existence of Stokes waves include \cite%
{tol} and \cite[Chapter 10, 11]{buto}. In the finite-depth case, it is
recently shown by Constantin \cite{con} that there are no closed paths in
the Stokes waves and each particle experiences a slight forward drift.

While the irrotational assumption may serve as an approximation under
certain circumstances and has been used in a majority of the existing
research, surface water-waves typically carry vorticity, e.g. shear currents
on a shallow channel and wind-drift boundary layers. Moreover, the governing
equations for water waves allow for rotational steady motions. Gerstner \cite%
{gersner} early in 1802 found an explicit formula for a family of periodic
traveling waves on deep waters with a particular nonzero vorticity. An
extensive existence theory of periodic traveling water waves with vorticity
appeared in the construction due to Dubreil-Jacotin \cite{dubreil-local} of
small-amplitude waves. Recently, for a general class of vorticity
distributions, Constantin and Strauss \cite{cost} in the finite-depth case
and Hur \cite{hur1} in the infinite-depth case accomplished the bifurcation
analysis for periodic traveling waves of large amplitude. Partial results on
the location of possible stagnation are found in \cite{cost2, var}.

Waves of Stokes' kind is one of the few exact solutions of the free-surface
water-wave problem, and as such it is important to understand the stability
of these solutions. In this paper, we investigate the linear instability of
periodic gravity water-waves with vorticity. 

The stability of water waves in case of zero vorticity has been under
research much by means of numerical computations and formal analysis,
especially in the works of Longuet-Higgins and his coworkers. Numerical
studies of stability of Stokes waves under perturbations of the same period,
namely the superharmonic perturbations, indicate that \cite{longet-78-super}%
, \cite{tanaka83} instability sets in only when the wave amplitude is large
enough to link with wave breaking (\cite{longuet-domm}) and small-amplitude
Stokes waves are found to be linearly stable under the same-period
perturbations. The instability of large Stokes waves and solitary waves is
recently proved by Lin (\cite{lin-stokes}, \cite{lin-soli-water}), under the
assumption of no secondary bifurcation which is confirmed numerically.
MacKay and Saffman \cite{mc-saff} considered linear stability of
small-amplitude Stokes waves by the general results of the Hamiltonian
system. The Hamiltonian formulation in terms of the velocity potential,
however, does not avail in the presence of vorticity except when the
vorticity is constant \cite{wah}. The analysis of Benjamin and Feir \cite%
{benjamin-feir} showed that there is a \textquotedblleft
sideband\textquotedblright\ instability for small Stokes waves, meaning that
the perturbation has a different period than the steady wave. The
Benjamin-Feir instability was made mathematically rigorous by Bridges and
Mielke \cite{bm}.

Analytical works on the stability of water waves with vorticity, on the
other hand, are quite sparse. A recent contribution due to Constantin and
Strauss \cite{cost1} concerns two different kinds of formal stability of
periodic traveling water-waves with vorticity under perturbations of the
same period. First, in case when the vorticity decreases with depth an
energy-Casimir functional $\mathcal{H}$ is constructed as a temporal
invariant of the nonlinear water-wave problem, whose first variation gives
the exact equations for steady waves \cite{css}. In \cite{cost1}, the second
variation of $\mathcal{H}$ is shown to be positive for some special
perturbations and the water-wave system is called $\mathcal{H}$-formally
stable under these perturbations. The use of such energy-Casimir functional
in studying stability of ideal fluids is pioneered by Arnold \cite{arn} for
the fixed boundary case.
The second approach of \cite{cost1} uses another functional $\mathcal{J}$,
which is essentially the dual of $\mathcal{H}$ in the transformed variables
but not an invariant. Its first variation gives the exact equations for
steady waves in the transformed variables (\cite{css}), which served as the
basis in \cite{cost} for the existence theory of traveling waves. The $%
\mathcal{J}$-formal stability then means the positivity of its second
variation. The \textquotedblleft exchange of stability\textquotedblright\
theorem due to Crandall and Rabinowitz \cite{cr-ra-stability} applies to
conclude that the $\mathcal{J}$-formal stability of the trivial solutions
switches exactly at the bifurcation point and that steady waves along the
curve of local bifurcation are $\mathcal{J}$-formally stable provided that
both the depth and the vorticity strength are sufficiently small. 

\ 

\textbf{The main results.} As a preliminary step toward the stability and
instability of nontrivial periodic waves, we examine the linear stability
and instability of flat-surface shear flows. The linear stability of shear
flows in the rigid-wall setting is a classical problem, whose theories date
back to the necessary condition for linear instability due to Rayleigh \cite%
{ray}. We refer to \cite{cclin, dr81, dh66, frho, lin1} and references
therein for historic and recent results on this problem. In \cite{lin1}, Lin
obtained linear instability criteria for several classes of shear flows in a
channel with rigid walls, and in this paper we generalize these to the
free-surface setting. More specifically, our conclusions include: (1) The
linear stability of shear flows with no inflection points (Theorem \ref%
{T:stable}), which generalizes Rayleigh's criterion in the rigid-wall
setting \cite{ray} to the free-surface setting; (2) A sharp criterion of
linear instability for a class of shear flows with one inflection value
(Theorem \ref{class-k}); and (3) A sufficient condition of linear
instability for a class of shear flows with multiple inflection values
(Theorem \ref{classF}) including any monotone flows. Our result testifies
that free surface has a destabilizing effect compared to rigid walls.

Our next step is to understand the local bifurcation of small-amplitude
periodic traveling waves in the physical space. While our setting is similar
to \cite{cost1} in that it hinges on the existence results of periodic waves
in \cite{cost} via the local bifurcation, the choice of the bifurcation
parameter and the dependence of other parameters on the bifurcation
parameter and free parameters in the description of the background shear
flow are different. In our setting, it is natural to consider that the shear
profile and the channel depth are given and that the speed of wave
propagation is chosen to ensure the local bifurcation. The relative flux and
the vorticity-stream function relation are then computed. In contrast, in
the bifurcation analysis \cite{cost} in the transformed variables, the wave
speed, as well as the relative flux and the vorticity-stream function
relation are held fixed. In turn, the shear profile and the channel depth
vary along the bifurcation curve. Lemma \ref{lemma-equi-disper} establishes
the equivalence between the bifurcation equation (\ref{E:bifur-CS})
(equivalently \cite[(3.8)]{cost}) in the transformed variables and the
Rayleigh system (\ref{rayleigh-disper})--(\ref{bc-disper}) to obtain the
bifurcation results for a large class of shear flows. In addition, our
result helps to clarify the nature of the local bifurcation of periodic
traveling water-waves that it does not necessarily involve the exchange of
stability of trivial solutions (Remark \ref{remark-neutral}).

Our third step is to show under some technical assumptions that the linear
instability of the background shear flow persists along the local curve of
bifurcation of small-amplitude periodic traveling waves (Theorem \ref%
{T:unstable}). An example of such an unstable shear flow is 
\begin{equation*}
U(y)=a\sin b(y-h/2)\qquad \text{for}\quad y\in \lbrack 0,h],
\end{equation*}%
where $h,b>0$ satisfy $hb\leq \pi $ and $a>0$ is arbitrary (Remark \ref%
{R:example}). In particular, by choosing $a$ and $h$ to be arbitrarily
small, we can construct linearly unstable small periodic traveling
water-waves with an arbitrarily small vorticity strength and an arbitrarily
small channel depth. This indicates that the formal stability of the second
kind in \cite{cost1} (see discussions above) is quite different from the
linear stability of the physical water wave problem. Our example also shows
that adding an arbitrarily small vorticity to the water-wave system may
affect the superharmonic stability of small-amplitude periodic irrotational
waves in a water of arbitrary depth. Thus, it is important to take into
account of the effects of vorticity in the study of the stability of water
waves.

Temporal invariants of the linearized water-wave problem are derived and
their implications for the stability of the water-wave system are discussed
(Section 3.3). In case when the vorticity-stream function relation is
monotone, the energy functional $\partial ^{2}\mathcal{H}$ in \cite{cost1}
is indeed an invariant of the linearized water-wave problem. Other
invariants are also derived. However, even with these additional invariants
as constraints, the quadratic form $\partial ^{2}\mathcal{H}$ is in general
indefinite, indicating that that a steady (pure gravity) water-wave may be
an energy saddle. A similar observation was made by Bona and Sachs \cite%
{bona-sachs} in the irrotational case. Therefore, a successful proof of the
stability for the full water-wave problem would require to use the full
equations instead of just a few invariants.

\textbf{Ideas of the proofs.} Our approach in the proof of the linear
instability of free-surface shear flows uses the Rayleigh system (\ref%
{rayleigh})--(\ref{bc-rayleigh}), which is related to that in the rigid-wall
setting \cite{lin1}. The main difference from \cite{lin1} lies in the
complicated boundary condition (\ref{bc-rayleigh}) on the free surface,
which renders the analysis more involved. The instability property depends
on the wave number, which is considered as a parameter. As in the rigid-wall
setting \cite{lin1}, the key to a successful instability analysis is to
locate the neutral limiting modes, which are neutrally stable solution of
the Rayleigh system and contiguous to unstable modes. For certain classes of
flows, neutral limiting modes in the free-surface setting are characterized
by the inflection values. This together with the local bifurcation of
unstable modes from each neutral limiting wave number gives a complete
knowledge on the instability at all wave numbers.

The instability analysis of small-amplitude nontrivial waves taken here is
based on a new formulation which directly linearizes the Euler equation and
the kinematic and dynamic boundary conditions on the free surface around a
periodic traveling wave. Its growing-mode problem then is written as an
operator equation for the stream function perturbation restricted on the
steady free-surface. The mapping by the action-angle variables is employed
to prove the continuity of the operator with respect to the amplitude
parameter. In addition, in the action-angle variables, the equation of the
particle trajectory takes a very simple form. The persistence of instability
along the local curve of bifurcation is established by means of Steinberg's
eigenvalue perturbation theorem \cite{steinberg}. In addition, growing-mode
solutions is proved to acquire regularity up to that of the steady
profiles.\ 

This paper is organized as follows. Section 2 is the discussion on the local
bifurcation of periodic traveling water-waves when a background shear flow
in the physical space is given. Section 3 includes the formulation of the
linearized periodic water-wave problem and the derivation of its invariants.
Section 4 is devoted to the linear instability of shear flows with one
inflection value, and subsequently, Section 5 is to the linear instability
of small-amplitude periodic waves over an unstable shear flow. Section 6
revisits the linear instability of shear flows for a more general class.

\section{Existence of small-amplitude periodic traveling water-waves}

\label{S:steady}

We consider a traveling-wave solution of (\ref{E:Euler1})--(\ref{E:bottom}),
that is, a solution for which the velocity field, the wave profile and the
pressure have space-time dependence $(x-\underline{c} t, y)$, where $%
\underline{c}>0$ is the speed of wave propagation. With respect to a frame
of reference moving with the speed $\underline{c}$, the wave profile appears
to be stationary and the flow is steady. The traveling-wave problem for (\ref%
{E:Euler1})--(\ref{E:bottom}) is further supplemented with the periodicity
condition that the velocity field, the wave profile and the pressure are $%
2\pi/\alpha$-periodic in the $x$-variable, where $\alpha>0$ is the wave
number.

It is traditional in the traveling-wave problem to introduce the relative
stream function $\psi (x,y)$ such that 
\begin{equation}
\psi _{x}=-v,\qquad \psi _{y}=u-\underline{c}  \label{D:stream}
\end{equation}%
and $\psi (0,\eta (0))=0$. This reduces the traveling-wave problem for (\ref%
{E:Euler1})--(\ref{E:bottom}) to a stationary elliptic boundary value
problem \cite[Section 2]{cost}:

For a real parameter $B$ and a function $\gamma\in C^{1+\beta}([0,|p_0|])$, $%
\beta\in (0,1)$, find $\eta(x)$ and $\psi(x,y)$ which are $2\pi/\alpha$%
-periodic in the $x$-variable, $\psi_y(x,y)<0$ in $\{(x,y): 0<y<\eta(x)\}$%
\footnote{%
In other words, there is no stagnation in the fluid region. Field
observations \cite{lighthill} as well as laboratory experiments \cite{tk}
indicate that for wave patterns which are not near the spilling or breaking
state, the speed of wave propagation is in general considerably larger than
the horizontal velocity of any water particle.}, and 
\begin{subequations}
\label{stream}
\begin{alignat}{2}
-\Delta\psi & =\gamma(\psi) \qquad & & \text{in}\quad 0<y<\eta(x){,} \\
\psi & =0\quad & & \text{on}\quad y=\eta(x){,} \\
|\nabla\psi|^{2}+ & 2gy=B\qquad & & \text{on}\quad y=\eta(x){,} \\
\psi & =-p_0\quad & & \text{on}\quad y=0,
\end{alignat}
where 
\end{subequations}
\begin{equation}
p_{0}=\int_{0}^{\eta(x)}\psi_{y}(x,y)dy  \label{E:flux}
\end{equation}
is the relative total flux\footnote{$p_{0}<0$ is independent of $x$.}.

The vorticity function $\gamma $ gives the vorticity-stream function
relation, that is, $\omega =\gamma (\psi )$. The assumption of no
stagnation, i.e. $\psi _{y}(x,y)<0$ in the fluid region $\{(x,y):0<y<\eta
(x)\}$, guarantees that such a function is well-defined globally; See \cite%
{cost}. Furthermore, under this physically motivated stipulation,
interchanging the roles of the $y$-coordinates and $\psi $ offers an
alternative formulation to (\ref{stream}) in a fixed strip, which serves as
the basis of the existence theories in \cite{dubreil-local}, \cite{cost}, 
\cite{hur1}. The nonlinear boundary condition (\ref{stream}c) at the free
surface $y=\eta (x)$ expresses Bernoulli's law. The steady hydrostatic
pressure in the fluid region is given by 
\begin{equation}
P(x,y)=B-\tfrac{1}{2}|\nabla \psi (x,y)|^{2}-gy-\int_{0}^{\psi (x,y)}\gamma
(-p)dp.  \label{eqn-steady-pressure}
\end{equation}

In this setting, $\alpha$ and $B$ are considered as parameters whose values
form part of the solution. The wave number $\alpha$ in the existence theory
is independent of other physical parameters and hence is held fixed, while
in the stability analysis in Section \ref{S:instability0} it serves as
parameter. The Bernoulli constant $B$ measures the total mechanical energy
of the flow and varies along a solution branch.

\subsection{The local bifurcation theorem in \protect\cite{cost}}

This subsection contains a summary of the existence result in \cite{cost}
via the local bifurcation theorem of small-amplitude travelling-wave
solutions to (\ref{stream}), provided that the total flux $p_{0}$ and the
vorticity-stream function relation $\gamma $ are given.

A preliminary result for the local bifurcation is to find a curve of trivial
solutions, which correspond to horizontal shear flows under a flat surface.
As in \cite[Section 3.1]{cost}, let 
\begin{equation*}
\Gamma (p)=\int_{0}^{p}\gamma (-p^{\prime })dp^{\prime },\qquad \Gamma
_{\min }=\min_{[p_{0},0]}\Gamma (p)\leq 0.
\end{equation*}

\begin{lemma}[\protect\cite{cost}, Lemma 3.2]
\label{L:trivial} Given $p_0<0$ and $\gamma \in C^{1+\beta}([0, |p_0|])$, $%
\beta \in (0,1)$, for each $\mu \in( -2\Gamma_{\min}, \infty)$ the system (%
\ref{stream}) has a solution 
\begin{equation*}
y(p)=\int^{p}_{p_0} \frac{dp^{\prime }}{\sqrt{\mu+2\Gamma(p^{\prime })}},
\end{equation*}
which corresponds to a parallel shear flow in the horizontal direction 
\begin{equation}  \label{E:trivialu}
u(t;x,y)=U(y;\mu)=\underline{c}-\sqrt{\mu+2\Gamma(p(y))}
\end{equation}
and $v(t;x,y)\equiv 0$ in the channel $\{(x,y): 0<y<h(\mu)\}$, where 
\begin{equation*}
h(\mu) =\int_{p_0}^0\frac{dp}{\sqrt{\mu+2\Gamma(p)}}.
\end{equation*}
The hydrostatic pressure is $P(y)=-gy$ for $y\in [0, h(\mu)]$. Here, $p(y)$
is the inverse of $y=y(p)$ and determines the stream function $%
\psi(y;\mu)=-p(y;\mu)$; $\underline{c}>0$ is arbitrary.
\end{lemma}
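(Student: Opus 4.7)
The plan is to reduce the system (\ref{stream}) to an ordinary differential equation by imposing the ansatz of a flat free surface and an $x$-independent stream function, and then to integrate it by quadrature. Specifically, I would set $\eta(x)\equiv h$ for a constant $h>0$ to be determined, and seek $\psi=\psi(y)$ depending only on $y$. Under this ansatz, (\ref{stream}a) becomes the second-order ODE $-\psi''(y)=\gamma(\psi(y))$ on $(0,h)$; (\ref{stream}b) and (\ref{stream}d) impose $\psi(h)=0$ and $\psi(0)=-p_{0}$; and (\ref{stream}c) fixes the Bernoulli constant as $B=(\psi'(h))^{2}+2gh$. By (\ref{D:stream}) one has $v=-\psi_x\equiv 0$ automatically, so any such $\psi$ generates a horizontal shear flow with $u=\underline{c}+\psi'(y)$.

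Next I would integrate the ODE once. Multiplying by $\psi'$ and using the identity $\frac{d}{dy}\Gamma(-\psi)=-\gamma(\psi)\psi'$, which follows from $\Gamma'(p)=\gamma(-p)$, yields the first integral
\begin{equation*}
\tfrac{1}{2}(\psi'(y))^{2}-\Gamma(-\psi(y))\equiv \tfrac{1}{2}\mu
\end{equation*}
for a real constant $\mu$. The no-stagnation requirement $\psi_{y}<0$ then selects the negative square root, $\psi'(y)=-\sqrt{\mu+2\Gamma(-\psi(y))}$, and for a smooth monotone solution to exist on $[0,h]$ the radicand must be strictly positive on the whole orbit $\psi\in[0,-p_{0}]$, i.e., $\mu+2\Gamma(p)>0$ for every $p\in[p_{0},0]$. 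This is precisely the hypothesis $\mu>-2\Gamma_{\min}$.

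I would then change variables to $p:=-\psi$. The relation $dp/dy=\sqrt{\mu+2\Gamma(p)}>0$ with initial value $p(0)=p_{0}$ gives, after separation of variables, the stated formula
\begin{equation*}
y(p)=\int_{p_{0}}^{p}\frac{dp'}{\sqrt{\mu+2\Gamma(p')}},
\end{equation*}
and setting $p=0$ identifies $h(\mu)$. The velocity is recovered as $U(y;\mu)=\underline{c}-\sqrt{\mu+2\Gamma(p(y))}$, while the pressure is read off from the $y$-momentum equation, which for a horizontal shear flow reduces to $P_{y}=-g$ and yields $P(y)=-gy$ after the usual normalization of $P_{\text{atm}}$; this is consistent with (\ref{eqn-steady-pressure}) upon substitution. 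Since $\underline{c}$ never enters the ODE or boundary data for $\psi$, it remains an arbitrary positive parameter.

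The only substantive point is ensuring that the integrals defining $y(p)$ and $h(\mu)$ converge and that $\psi$ has the regularity demanded by (\ref{stream}). Because $\mu+2\Gamma(p)$ is continuous on the compact interval $[p_{0},0]$ and, by the hypothesis $\mu>-2\Gamma_{\min}$, is bounded below by a strictly positive constant, the integrand $1/\sqrt{\mu+2\Gamma(p)}$ is $C^{1+\beta}$ on $[p_{0},0]$. Consequently $y(p)$ is $C^{2+\beta}$ with nonvanishing derivative, its inverse $p(y)$ is $C^{2+\beta}$ on $[0,h(\mu)]$, and $\psi(y)=-p(y)$ genuinely solves the original ODE with the prescribed boundary data. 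This regularity issue is the only real obstacle, and it is dispatched by the same open condition on $\mu$ that appears in the statement.
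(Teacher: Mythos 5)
Your proof is correct: the flat-surface, $x$-independent ansatz, the first integral $\tfrac12(\psi')^{2}-\Gamma(-\psi)=\tfrac12\mu$, the sign choice forced by $\psi_{y}<0$, and the quadrature giving $y(p)$, $h(\mu)$, $U(y;\mu)$ and $P_{y}=-g$ constitute exactly the standard construction of these laminar flows, and the hypothesis $\mu>-2\Gamma_{\min}$ enters precisely where you use it (uniform positivity of the radicand, hence convergence of the integrals, invertibility of $y(p)$, and the needed regularity). The paper itself gives no proof of this lemma --- it quotes it from \cite{cost}, Lemma 3.2, where the same computation appears in the transformed variables via $dy/dp=(\mu+2\Gamma(p))^{-1/2}$ --- so your argument is essentially the intended one.
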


In the statement of Theorem \ref{T:smallE} below, instead of $B$ the squared
(relative) upstream flow speed $\mu =(U(h)-\underline{c})^{2}$ of a trivial
shear flow (\ref{E:trivialu}) serves as the bifurcation parameter. For each $%
\mu \in (-2\Gamma _{\min },\infty )$ the Bernoulli constant $B$ is
determined uniquely in terms of $\mu $ by 
\begin{equation}
B=\mu +2g\int_{p_{0}}^{0}\frac{dp}{\sqrt{\mu +2\Gamma (p)}}.  \label{E:ber}
\end{equation}

The following theorem \cite[Theorem 3.1]{cost} states the existence result
of a one-parameter curve of small-amplitude periodic water-waves for a
general class of vorticities and their properties, in a form convenient for
our purposes.

\begin{theorem}[Existence of small-amplitude periodic water-waves]
\label{T:smallE} Let the speed of wave propagation $\underline{c}>0$, the
flux $p_{0}<0$, the vorticity function $\gamma\in C^{1+\beta}([0,|p_{0}|])$, 
$\beta\in(0,1)$, and the wave number $\alpha>0$ be given such that the
system 
\begin{equation}
\begin{cases}
(a^{3}(\mu)M_{p})_{p}=\alpha^{2}a(\mu)M\qquad\text{for}\quad p\in (p_{0},0)
\\ 
\mu^{3/2}M_{p}(0)=gM(0) \\ 
M(p_{0})=0%
\end{cases}
\label{E:bifur-CS}
\end{equation}
admits a nontrivial solution for some $\mu_{0} \in (-2\Gamma_{\min}, \infty)$%
, where $a(\mu)=a(\mu;p)=\sqrt{\mu+2\Gamma(p)}$.

Then, for $\epsilon\geq0$ sufficiently small there exists a one-parameter
curve of steady solution-pair $\mu_{\epsilon}$ of (\ref{E:ber}) and $%
(\eta_{\epsilon }(x),\psi_{\epsilon}(x,y))$ of \textrm{(\ref{stream}) }such
that $\eta_{\epsilon}(x)$ and $\psi_{\epsilon}(x,y)$ are $2\pi/\alpha $%
-periodic in the $x$-variable, of $C^{3+\beta}$ class, where $\beta \in
(0,1) $, and $\psi_{\epsilon y}(x,y)<0$ throughout the fluid region.

At $\epsilon=0$ the solution corresponds to a trivial shear flow under a
flat surface: \renewcommand{\labelenumi}{{\normalfont (\roman{enumi}0)}}

\begin{enumerate}
\item The flat surface is given by $\eta_{0}(x)\equiv h(\mu_0)=:h_0$ and the
velocity field is 
\begin{equation*}
(\psi_{0y}(x,y),-\psi_{0x}(x,y))=(U(y)-\underline{c},0),
\end{equation*}
where $U(y)$ is determined in (\ref{E:trivialu});

\item The pressure is given by the hydrostatic law $P_{0}(x,y)=-gy$ for $%
y\in [0,h_{0}]$.
\end{enumerate}

At each $\epsilon>0$ the corresponding nontrivial solution enjoys the
following properties: 
\renewcommand{\labelenumi}{{\normalfont
(\roman{enumi}$\epsilon$)}}

\begin{enumerate}
\item The bifurcation parameter has the asymptotic expansion 
\begin{equation*}
\mu_{\epsilon}=\mu_{0}+O(\epsilon)\qquad\text{as}\quad\epsilon\rightarrow0
\end{equation*}
and the wave profile is given by 
\begin{equation*}
\eta_{\epsilon}(x)=h_{\epsilon}+\alpha^{-1}\delta_{\gamma}\epsilon\cos\alpha
x+O(\epsilon^{2})\qquad\text{as}\quad\epsilon\rightarrow0,
\end{equation*}
where $h_{\epsilon}=h(\mu_{\epsilon})$ is given in Lemma \ref{L:trivial} and 
$\delta_{\gamma}$ depends only on $\gamma$ and $p_{0}$; The mean height
satisfies 
\begin{equation*}
h_\epsilon=h_0+O(\epsilon) \qquad \text{as} \quad \epsilon \to 0;
\end{equation*}
Furthermore, the wave profile is of mean-zero; That is, 
\begin{equation}
\int_{0}^{2\pi/\alpha}(\eta_{\epsilon}(x)-h_{\epsilon})dx=0;
\label{mean-zero}
\end{equation}

\item The velocity field $(\psi_{\epsilon y}(x,y),-\psi_{\epsilon x}(x,y))$
in the steady fluid region $\{(x,y):0<x<2\pi/\alpha\,,\,0<y<\eta_{%
\epsilon}(x)\} $ is given by 
\begin{align*}
\psi_{\epsilon x}(x,y) & =\epsilon\psi_{\ast x}(y)\sin\alpha x+O(\epsilon
^{2}), \\
\psi_{\epsilon y}(x,y) & =U(y)-\underline{c}+\epsilon\psi_{\ast
y}(y)\cos\alpha x+O(\epsilon^{2})
\end{align*}
as $\epsilon\rightarrow0$, where $\psi_{\ast x}$ and $\psi_{\ast y}$ are
determined from the linear theory;

\item The hydrostatic pressure has the asymptotic expansion 
\begin{equation*}
P_{\epsilon}(x,y)=-gy+O(\epsilon)\qquad\text{as}\quad\epsilon\rightarrow0.
\end{equation*}
\end{enumerate}
\end{theorem}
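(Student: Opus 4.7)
The plan is to prove this via the Crandall--Rabinowitz local bifurcation theorem applied in semi-Lagrangian (Dubreil-Jacotin) variables, following the strategy of \cite{cost}. First I would pass to new coordinates $(q,p) = (x, -\psi(x,y))$, which flattens the unknown free surface into the fixed strip $R = \{(q,p) : 0 < q < 2\pi/\alpha,\ p_0 < p < 0\}$. Writing $h(q,p)$ for the height (so that $y = h(q,p)$), the system (\ref{stream}) transforms into a quasilinear elliptic boundary value problem for $h$ on $R$, with a nonlinear Robin-type (Bernoulli) condition at $p = 0$ and the Dirichlet condition $h = 0$ at $p = p_0$. The shear flows of Lemma~\ref{L:trivial} correspond to the trivial branch $h(q,p) = H(p;\mu)$ depending only on $p$ and parametrized by $\mu \in (-2\Gamma_{\min},\infty)$.

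Next I would linearize around the trivial branch. Expanding in Fourier modes and writing the $\cos(\alpha q)$-coefficient as $M(p)$, the linearization reduces exactly to the Sturm--Liouville problem (\ref{E:bifur-CS}); by hypothesis this admits a nontrivial solution $M$ at $\mu = \mu_0$, producing a one-dimensional kernel of the linearized operator $\mathcal{L}(\mu_0)$. Set up in $C^{2+\beta}$ Hölder spaces, the operator is Fredholm of index zero, and the self-adjointness of the Sturm--Liouville operator in $p$ yields a one-dimensional cokernel. Combined with the transversality check $\partial_\mu \mathcal{L}(\mu_0)\,M \notin \operatorname{Range}\mathcal{L}(\mu_0)$, verified by differentiating (\ref{E:bifur-CS}) in $\mu$, multiplying by $M$, and integrating by parts with the boundary terms, this places us in the setting of Crandall--Rabinowitz.

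The theorem then delivers a smooth one-parameter curve $\epsilon \mapsto (\mu_\epsilon, h_\epsilon(q,p))$ of nontrivial solutions bifurcating from $(\mu_0, H(\cdot;\mu_0))$. I would then translate back to physical variables: set $\eta_\epsilon(x) = h_\epsilon(x,0)$ and recover $\psi_\epsilon$ by inverting the change of coordinates, which remains valid for small $\epsilon$ because $\psi_{\epsilon y}<0$ persists from the trivial solution by continuity. The asymptotic expansions (i$\epsilon$)--(iii$\epsilon$) are read off from the standard power-series expansion of the bifurcation curve: $\mu_\epsilon = \mu_0 + O(\epsilon)$; the leading cosine profile of $\eta_\epsilon$ comes from the trace $M(0)$, with $\delta_\gamma$ determined by the chosen normalization; the expansion of $\psi_\epsilon$ follows from $h_\epsilon = H(p;\mu_\epsilon) + \epsilon M(p)\cos(\alpha q) + O(\epsilon^2)$ composed with the inverse coordinate map; and the pressure expansion follows from (\ref{eqn-steady-pressure}). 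The mean-zero property (\ref{mean-zero}) is enforced by a Lyapunov--Schmidt normalization that absorbs the constant Fourier mode of the correction into $h_\epsilon$.

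The main obstacle I expect is the transversality condition, since the parameter $\mu$ enters (\ref{E:bifur-CS}) both through the bulk coefficient $a(\mu;p)=\sqrt{\mu+2\Gamma(p)}$ and through the boundary condition $\mu^{3/2} M_p(0)=gM(0)$; a careful pairing argument that tracks all boundary contributions is needed to confirm non-degeneracy. A secondary technical point is the $C^{3+\beta}$ regularity assertion, which requires iterating Schauder estimates on the semi-Lagrangian system using $\gamma \in C^{1+\beta}$ and then transferring the regularity back through the Dubreil-Jacotin map, the invertibility of which is controlled by the no-stagnation condition $\psi_{\epsilon y}<0$.
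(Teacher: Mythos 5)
Your outline is essentially the proof of this result in the literature: the paper itself does not prove Theorem \ref{T:smallE} but imports it verbatim from \cite[Theorem 3.1]{cost}, whose argument is exactly the Dubreil-Jacotin change of variables to the fixed strip, linearization of the trivial branch to obtain the Sturm--Liouville system (\ref{E:bifur-CS}), and Crandall--Rabinowitz bifurcation with the transversality and Schauder-regularity steps you describe. So your proposal is correct in approach and coincides with the cited proof rather than offering a different route.
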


The condition that the system (\ref{E:bifur-CS}) admits a nontrivial
solution for some $\mu _{0}\in (-2\Gamma _{\min },\infty )$ is necessary and
sufficient for the local bifurcation \cite[Section 3]{cost}. A sufficient
condition (\cite{cost}) for the solvability of (\ref{E:bifur-CS}) and
therefore the local bifurcation is 
\begin{equation*}
\int_{p_{0}}^{0}\left( \alpha ^{2}(p-p_{0})^{2}(2\Gamma (p)-2\Gamma _{\min
})^{1/2}+(2\Gamma (p)-2\Gamma _{\min })^{3/2}\right) dp<gp_{0}^{2},
\end{equation*}%
which is satisfied when $p_{0}$ is sufficiently small. 

\subsection{The bifurcation condition for a given shear flow}

Our instability analyses in Section 4 and Section 5 are carried out in the
physical space, where a shear-flow profile and the water depth are held
fixed. The bifurcation analysis in the proof of Theorem \ref{T:smallE}, on
the other hand, is carried out in the space of transformed variables, where
the travel spped $\underline{c}$, as well as the relative flux $p_{0}$ and
the vorticity-stream function relation $\gamma $ are held fixed, and the
shear flow $U(y)-\underline{c}$ and the water depth $h$ vary along the curve
of local bifurcation. In this subsection, we study the local bifurcation in
the physical space, with a given shear flow $U\left( y\right) $ and the
water depth $h$, which is relevant to the later instability analyses. The
natural choice for parameters is the speed of wave propagation $\underline{c}%
>\max U$ and the wave number $k$.

Our first task is to relate the bifurcation equation (\ref{E:bifur-CS}) in
transformed variables with the Rayleigh system in the physical variables.

\begin{lemma}
\label{lemma-equi-disper} For the shear flow $U(y)$ with $y\in \lbrack 0,h]$
which is defined via Lemma \ref{L:trivial}, the bifurcation equation (\ref%
{E:bifur-CS}) is equivalent to the following Rayleigh equation 
\begin{equation}
(U-c)(\phi ^{\prime \prime }-k^{2}\phi )-U^{\prime \prime }\phi =0\qquad 
\text{for}\quad y\in (0,h)  \label{rayleigh-disper}
\end{equation}%
with the boundary conditions 
\begin{equation}
\phi ^{\prime }(h)=\left( \frac{g}{(U(h)-c)^{2}}+\frac{U^{\prime }(h)}{U(h)-c%
}\right) \phi (h)\quad \text{and}\quad \phi (0)=0,  \label{bc-disper}
\end{equation}%
where $(c,k)=(\underline{c},\alpha )$, $\underline{c}>\max U$, and $\phi
(y)=(\underline{c}-U(y))M(p(y))$. Here and in the sequel, the prime denotes
the differentiation in the $y$-variable.
\end{lemma}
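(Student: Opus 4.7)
The plan is to carry out the change of variables from the semi-Lagrangian coordinate $p$ to the physical height $y$ dictated by Lemma \ref{L:trivial}, and to substitute the ansatz $M(p) = \phi(y)/a(\mu;p(y))$ into the bifurcation system (\ref{E:bifur-CS}). Since $c = \underline{c} > \max U$ forces $a(\mu;p(y)) = \sqrt{\mu + 2\Gamma(p(y))} = \underline{c} - U(y) > 0$ on $[0,h]$, the map $y\leftrightarrow p$ is a smooth diffeomorphism, so it suffices to show that each equation in (\ref{E:bifur-CS}) translates term by term into the Rayleigh system (\ref{rayleigh-disper})--(\ref{bc-disper}).

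First I would record the two kinematic identities $\partial_p = a^{-1}\partial_y$ (from $dy/dp = 1/a$) and $a_y = -U'$ (from $U = \underline{c}-a$). Substituting $M = \phi/a$ and differentiating once gives
\begin{equation*}
a^{3} M_p \;=\; a^{2}(\phi/a)_y \;=\; a\phi' - a_y\phi \;=\; a\phi' + U'\phi,
\end{equation*}
and a second differentiation yields
\begin{equation*}
(a^{3} M_p)_p \;=\; a^{-1}(a\phi' + U'\phi)_y \;=\; a^{-1}\bigl(a_y\phi' + a\phi'' + U''\phi + U'\phi'\bigr) \;=\; \phi'' + U''\phi/a.
\end{equation*}
The right-hand side of the interior equation in (\ref{E:bifur-CS}) becomes $\alpha^{2} a\,(\phi/a) = k^{2}\phi$, so the ODE collapses to $\phi'' + U''\phi/a = k^{2}\phi$; multiplying through by $-a = U-c$ produces exactly (\ref{rayleigh-disper}).

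For the boundary conditions, the bottom condition $M(p_0)=0$ is immediate from $\phi = aM$ and $y(p_0)=0$. At the top, $\Gamma(0)=0$ yields $a(\mu;0)=\sqrt{\mu}=\underline{c}-U(h)$, and the formula for $a^{3}M_p$ derived above, evaluated at $y=h$, rewrites $\mu^{3/2}M_p(0)=gM(0)$ as
\begin{equation*}
\sqrt{\mu}\,\phi'(h) + U'(h)\phi(h) \;=\; \frac{g}{\sqrt{\mu}}\,\phi(h);
\end{equation*}
dividing by $\sqrt{\mu}$ and using $\sqrt{\mu} = -(U(h)-c)$ recovers (\ref{bc-disper}). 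The main obstacle is simply bookkeeping with signs; the only conceptual step is recognizing the ansatz $\phi = (\underline{c}-U)M$, after which the chain-rule calculation is direct, and the equivalence is two-sided because $c>\max U$ renders the change of variables invertible.
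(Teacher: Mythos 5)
Your proposal is correct and follows essentially the same route as the paper: converting $\partial_p$ to $a^{-1}\partial_y$ with $a=\underline{c}-U$ and substituting $\phi=(\underline{c}-U)M$, the only cosmetic difference being that the paper first passes through the intermediate form $((U-\underline{c})^2\Phi')'-\alpha^2(U-\underline{c})^2\Phi=0$ with $\Phi(y)=M(p(y))$ before setting $\phi=(\underline{c}-U)\Phi$, while you substitute in one step. Your computations of $a^3M_p=a\phi'+U'\phi$ and the top boundary condition check out.
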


\begin{proof}
Notice that $\underline{c}>\max U$. Indeed, 
\begin{equation*}
a(\mu ;p)=\sqrt{\mu +2\Gamma (p)}=-(U(y(p))-\underline{c})>0.
\end{equation*}%
Since $\frac{\partial p}{\partial y}=-\psi ^{\prime }(y)=-(U(y)-\underline{c}%
),$ it follows that $\partial _{p}=\partial _{y}\frac{\partial y}{\partial p}%
=-\frac{1}{U-\underline{c}}\partial _{y}$. Let $M(p(y))=\Phi (y)$, then (\ref%
{E:bifur-CS}) is written as 
\begin{gather*}
((U-\underline{c})^{2}\Phi ^{\prime })^{\prime }-\alpha ^{2}(U-\underline{c}%
)^{2}\Phi =0\qquad \text{for}\quad y\in (0,h), \\
(U-\underline{c})^{2}\Phi ^{\prime }(h)=g\Phi (h)\quad \text{and}\quad \Phi
(0)=0.
\end{gather*}%
Let $\phi (y)=(\underline{c}-U(y))\Phi (y)$, and the above system becomes (%
\ref{rayleigh-disper})-(\ref{bc-disper}).
\end{proof}

\begin{remark}
\label{remark-neutral-bifur}We illustrate how to construct
downstream-traveling periodic waves of small-amplitude bifurcating from a
fixed background shear-flow $U(y)$ for $y\in \lbrack 0,h]$. First, one finds
the parameter values $(c,k)=(\underline{c},\alpha )$ with $\underline{c}%
>\max U$ and $\alpha >0$ such that the Rayleigh system (\ref{rayleigh-disper}%
)-- (\ref{bc-disper}) admits a nontrivial solution. The wave speed then
determines the bifurcation parameter via $\mu _{0}=(U(h)-\underline{c})^{2}$%
, and the flux and the vorticity function are determined by 
\begin{equation}
p_{0}=\int_{0}^{h}(U(y)-\underline{c})dy\quad \text{and}\quad \gamma
(p)=U^{\prime }(y(p)),  \label{eqn-flux-vorticity}
\end{equation}%
respectively. By Lemma \ref{lemma-equi-disper}, the bifurcation equation (%
\ref{E:bifur-CS}) with $\mu _{0}$, $p_{0}$ and $\gamma $ as above has a
nontrivial solution. Moreover, each shear flow $U(h)-\underline{c}$ for $%
\underline{c}>\max U$ corresponds to a trivial solution in Lemma \ref%
{L:trivial}. Indeed, $\mu $ and $\underline{c}$ has a one-to-one
correspondence via $\mu =(U(h)-\underline{c})^{2}$; The (relative) stream
function defined as 
\begin{equation*}
\psi (y)=-\int_{y}^{h}(U(h)-\underline{c})dy
\end{equation*}%
is monotone and its inverse $y=y(-\psi )$ is well-defined. Then, Theorem \ref%
{T:smallE} applies in the setting above to give a local curve of bifurcation
of periodic waves.
\end{remark}

The lemma below obtains for a large class of shear flows the local
bifurcation by showing that the Rayleigh system (\ref{rayleigh-disper})--(%
\ref{bc-disper}) has a nontrivial solution.

\begin{lemma}
\label{lemma-bifur}If 
\begin{equation}
U\in C^{2}([0,h]),\quad U^{\prime\prime}(h)<0\quad \text{and} \quad
U(h)>U(y) \quad \text{for} \quad y\neq h,  \label{condition-U-neutral}
\end{equation}
then for any wave number $k>0$ there exists $c(k)>U(h)=\max U$ such that the
system (\ref{rayleigh-disper})--(\ref{bc-disper}) has a nontrivial solution $%
\phi$ with $\phi>0$ in $(0,h]$.
\end{lemma}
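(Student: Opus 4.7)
The plan is to recast the Rayleigh system (\ref{rayleigh-disper})--(\ref{bc-disper}) in self-adjoint Sturm--Liouville form with the spectral parameter appearing in the boundary condition at $y=h$, and then to run a continuity argument in $c$ combined with the intermediate value theorem. Setting $\phi=(U-c)\Phi$ and using $\phi''=U''\Phi+2U'\Phi'+(U-c)\Phi''$, a direct computation converts (\ref{rayleigh-disper}) into
\begin{equation*}
\bigl((U-c)^{2}\Phi'\bigr)'=k^{2}(U-c)^{2}\Phi,\qquad y\in(0,h),
\end{equation*}
while the boundary conditions (\ref{bc-disper}) become $\Phi(0)=0$ and $(U(h)-c)^{2}\Phi'(h)=g\Phi(h)$. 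For $c>\max U=U(h)$ the weight $(U-c)^{2}$ is bounded away from $0$ on $[0,h]$, so the transformed problem is regular.

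For each $c>U(h)$ I define
\begin{equation*}
\mu(c)=\inf\Bigl\{\int_{0}^{h}(U-c)^{2}\bigl[(\Phi')^{2}+k^{2}\Phi^{2}\bigr]dy : \Phi\in H^{1}(0,h),\ \Phi(0)=0,\ \Phi(h)=1\Bigr\}.
\end{equation*}
The direct method yields a non-negative minimizer $\Phi(\cdot;c)$ satisfying the Euler--Lagrange equation above together with the boundary condition $(U(h)-c)^{2}\Phi'(h)=\mu(c)\Phi(h)$. Regularity of the ODE and uniqueness of its initial-value problem preclude interior zeros of $\Phi(\cdot;c)$, so $\Phi(\cdot;c)>0$ on $(0,h]$. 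Continuous dependence of solutions of the linear ODE on the parameter $c$ then gives continuity of $\mu$ on $(U(h),\infty)$.

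As $c\to+\infty$, after dividing the equation by $(U-c)^{2}$ the first-order coefficient $2U'/(U-c)$ vanishes, so $\Phi(y;c)\to \sinh(ky)/\sinh(kh)$ and $\mu(c)\sim(U(h)-c)^{2}\,k\coth(kh)\to+\infty$. For the opposite limit, set $\eta=c-U(h)>0$ and test with the piecewise-linear function
\begin{equation*}
\Phi_c(y)=\begin{cases}0, & 0\leqslant y\leqslant h-\delta,\\ \bigl(y-(h-\delta)\bigr)/\delta, & h-\delta\leqslant y\leqslant h,\end{cases}
\end{equation*}
for a suitable $\delta=\delta(\eta)\to 0$. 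Taylor expansion at $y=h$ together with $U(h)=\max U$ and $U''(h)<0$ yields $(U-c)^{2}\leqslant C(\eta^{2}+(y-h)^{4})$ on $[h-\delta,h]$ (and $C(\eta^{2}+(y-h)^{2})$ if $U'(h)\ne 0$), and balancing $\delta$ in $\eta$ gives $\int_{0}^{h}(U-c)^{2}[(\Phi_c')^{2}+k^{2}\Phi_c^{2}]dy\to 0$, so $\mu(c)\to 0$. The intermediate value theorem then produces $c(k)>U(h)$ with $\mu(c(k))=g$, and $\phi=(c(k)-U)\Phi(\cdot;c(k))$ is positive on $(0,h]$ and solves (\ref{rayleigh-disper})--(\ref{bc-disper}). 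The main obstacle is this lower-endpoint limit, where the Sturm--Liouville weight degenerates at $y=h$; the hypotheses $U''(h)<0$ and $U(h)=\max U$ are precisely what allow the concentrated test function $\Phi_c$ to drive $\mu(c)$ below $g$.
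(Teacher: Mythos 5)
Your argument is correct, but it is a genuinely different route from the paper's. The paper shoots from the bottom: it fixes the solution $\phi_c$ of (\ref{rayleigh-disper}) with $\phi_c(0)=0$, $\phi_c'(0)=1$, integrates the equation once to reduce the free-surface condition (\ref{bc-disper}) to the vanishing of the scalar function $f(c)$ in (\ref{defn-fc}), and then establishes $f(c)>0$ for large $c$ and $f(c)<0$ as $c\to U(h)+$; the delicate part there is the uniform lower bound $\phi_c(h)\geq C_1>0$ of (\ref{claim-lower}) together with a two-case analysis on the location of $\max\phi_c$, which is where $U''(h)<0$ is invoked. You instead pass to the self-adjoint form $\bigl((U-c)^2\Phi'\bigr)'=k^2(U-c)^2\Phi$ (the same change of variables as Lemma \ref{lemma-equi-disper}, i.e. essentially (\ref{E:bifur-CS})), identify the Dirichlet-to-Neumann quantity $\mu(c)=(U(h)-c)^2\Phi'(h)$ for the normalized solution variationally, and run the intermediate value theorem on $\mu(c)=g$, with concentrated test functions near $y=h$ forcing $\mu(c)\to0$ as $c\to U(h)+$. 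This buys two things: the variational characterization makes positivity, continuity in $c$, and the large-$c$ divergence ($\mu(c)\geq(c-U(h))^2k\coth(kh)$) almost immediate, and the endpoint limit needs only the Lipschitz bound $U(h)-U(y)\leq L(h-y)$ with $\delta\sim c-U(h)$, so in fact your proof never uses $U''(h)<0$ — it yields the conclusion under the weaker hypothesis that $U$ attains its strict maximum at $y=h$ (which is what keeps the weight $(U-c)^2$ nondegenerate for $c>U(h)$). What the paper's route buys in exchange is the explicit function $f(c)$, which is reused afterwards (Lemma \ref{lemma-dispersion-property}) to prove boundedness and injectivity of the dispersion relation $c(k)$ and the Burns condition, whereas your $\mu(c)$ would have to be related back to $f$ for those purposes. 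Two cosmetic points: your opening substitution $\phi=(U-c)\Phi$ conflicts in sign with the final $\phi=(c-U)\Phi$ (only the latter gives $\phi>0$), and the identity $(U(h)-c)^2\Phi'(h)=\mu(c)\Phi(h)$ is not a natural boundary condition of the constrained minimization but follows, as you implicitly use, from integrating the Euler--Lagrange equation against the minimizer with $\Phi(h)=1$.
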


\begin{proof}
For $c\in (U(h),\infty )$ and $k>0$, let $\phi _{c}$ be the solution of (\ref%
{rayleigh-disper}), or equivalently, 
\begin{equation}
\left( (U-c)\phi _{c}^{\prime }-U^{\prime }\phi _{c}\right) ^{\prime
}-k^{2}(U-c)\phi _{c}=0\qquad \text{for}\quad y\in (0,h)  \label{eqn-Vc-phi}
\end{equation}%
with $\phi _{c}(0)=0$ and $\phi _{c}^{\prime }(0)=1$. An integration of the
above equation on the interval $[0,h]$ yields that 
\begin{equation*}
(U(h)-c)\phi _{c}^{\prime }(h)-(U(0)-c)-\phi _{c}(h)U^{\prime }\left(
h\right) -k^{2}\int_{0}^{h}(U-c)\phi _{c}dy=0
\end{equation*}%
Note that the bifurcation condition (\ref{bc-disper}) is fulfilled if and
only if the function 
\begin{equation}
f(c)=c-U(0)+k^{2}\int_{0}^{h}(c-U)\phi _{c}dy-\frac{g}{c-U(h)}\phi _{c}(h)
\label{defn-fc}
\end{equation}%
has a zero at some $c(k)>U(h)$. It is easy to see that $f$ is a continuous
function of $c$ for $c>U(h)$.

First, we claim that $\phi _{c}(y)>0$ for $y\in (0,h]$. Suppose, on the
contrary, that $\phi _{c}(y_{0})=0$ for some $y_{0}\in (0,h]$. Note that (%
\ref{rayleigh-disper}) can be written as a Sturm-Liouville equation 
\begin{equation}
\phi _{c}^{\prime \prime }-k^{2}\phi _{c}-\frac{U^{\prime \prime }}{U-c}\phi
_{c}=0.  \label{E:rayleigh-sturm}
\end{equation}%
Since 
\begin{equation*}
(c-U)^{\prime \prime }+\frac{U^{\prime \prime }}{c-U}(c-U)=0\qquad \text{for}%
\quad y\in (0,h),
\end{equation*}%
by Sturm's first comparison theorem the function $c-U(y)$ must have a zero
on the interval $(0,y_{0})$. A contradiction then proves the claim.

Our goal is to show that $f(c)>0$ for $c$ large enough and $f(c)<0$ as $%
c\rightarrow U(h)+$. Then by continuity, $f$ vanishes at some $c>U\left(
h\right) $. First, when $c\rightarrow \infty $ the sequence of solutions $%
\phi _{c}$ of (\ref{rayleigh-disper}), or equivalently (\ref%
{E:rayleigh-sturm}) converges in $C^{2}$, that is, $\phi _{c}\rightarrow
\phi _{\infty }$. The limit $\phi _{\infty }$ satisfies the boundary value
problem 
\begin{equation*}
\phi _{\infty }^{\prime \prime }-k^{2}\phi _{\infty }=0\qquad \text{for}%
\quad y\in (0,h)
\end{equation*}%
with $\phi _{\infty }(0)=0$ and $\phi _{\infty }^{\prime }(0)=1$. Therefore, 
$\phi _{\infty }$ is bounded, continuous, and positive on $(0,h]$. By the
definition (\ref{defn-fc}), then it follows that $f(c)\rightarrow \infty $
as $c\rightarrow \infty $.

Next is to examine $f(c)$ as $c\rightarrow U(h)+$. Denote $\varepsilon
=c-U(h)>0$. We claim that: 
\begin{equation}
\phi _{c}(h)\geq C_{1}>0,\ \ \text{for }\varepsilon >0\ \text{sufficiently
small,}  \label{claim-lower}
\end{equation}%
where $C_{1}>0$ is independent of $\varepsilon $. To see this, it is
convenient to write (\ref{eqn-Vc-phi}) as 
\begin{equation*}
\left( (c-U)\phi _{c}^{\prime }-(c-U)^{\prime }\phi _{c}\right) ^{\prime
}=k^{2}(c-U)\phi _{c}>0\qquad \text{for}\quad y\in (0,h),
\end{equation*}%
whence 
\begin{equation}
(c-U(y))\phi _{c}^{\prime }(y)-(c-U(y))^{\prime }\phi _{c}(y)>c-U(0)>0\qquad 
\text{for}\quad y\in (0,h).  \label{inter9}
\end{equation}%
Since 
\begin{equation}
(c-U)\phi _{c}^{\prime }-(c-U)^{\prime }\phi _{c}=(c-U)^{2}\left( \frac{\phi
_{c}}{c-U}\right) ^{\prime },  \label{inter6}
\end{equation}%
it follows from (\ref{inter9}) that 
\begin{equation*}
\left( \frac{\phi _{c}}{c-U}\right) ^{\prime }>\frac{c-U(0)}{(c-U)^{2}}
\end{equation*}%
and an integration of the above on $\left[ 0,h\right] \ $yields that 
\begin{equation*}
\phi _{c}(h)>(c-U(h))\int_{0}^{h}\frac{c-U(0)}{(\underline{c}-U(y))^{2}}dy.
\end{equation*}%
Our assumption on $U(y)$ asserts that $0\leq U(h)-U(y)\leq \beta (h-y)$ for $%
y\in \lbrack h-\delta ,h]$ and $\delta >0$ sufficiently small, where $\beta
>0$ is a constant. Thus, 
\begin{align*}
\phi _{c}(h)& >\varepsilon (c-U(0))\int_{h-\delta }^{h}\frac{1}{(\varepsilon
+\beta (h-y))^{2}}dy \\
& =(c-U(0))\int_{0}^{(h-\delta )/\varepsilon }\frac{1}{(1+\beta x)^{2}}%
dx\geq C_{1}>0,
\end{align*}%
where $C_{1}>0$ is independent of $\varepsilon >0$. This proves the claim (%
\ref{claim-lower}).

To prove that $f(c)<0$ as $c\rightarrow U(h)+$, we consider the following
two cases.

Case1: $\max \phi _{c}(y)=\phi _{c}(h).$ It follows that 
\begin{equation*}
f(c)\leq c-U(0)+C_{1}\left( k^{2}\int_{0}^{h}(c-U)dy-\frac{g}{c-U(h)}\right)
<0
\end{equation*}%
provided that $c-U(h)=\varepsilon >0$ is small enough.

Case 2: $\max \phi _{c}=\phi _{c}(y_{c})$, where $y_{c}\in (0,h).$ Since $%
\phi _{c}^{\prime \prime }(y_{c})\leq 0$ and $\phi _{c}(y_{c})>0$, by (\ref%
{E:rayleigh-sturm}) we have $U^{\prime \prime }(y_{c})>0$. On the other
hand, $U^{\prime \prime }(h)\leq 0$ and hence $y_{c}\in \lbrack 0,h-\delta ]$
for some $\delta >0$. Note that $c-U(y)$ is bounded away from zero on the
interval $y\in \lbrack 0,h-\delta ]$. Since the coefficients of (\ref%
{E:rayleigh-sturm}) are uniformly bounded for $c$ on $y\in \lbrack
0,h-\delta ]$, the solution $\phi _{c}$ is uniformly bounded on $y\in
\lbrack 0,h-\delta ]$. In particular, $0<\phi _{c}(y_{c})\leq C_{2}$
independently for $c$. Therefore, 
\begin{equation*}
f(c)\leq c-U(0)+k^{2}hC_{2}\max_{[0,h]}(c-U(y))-\frac{gC_{1}}{c-U(h)}<0,
\end{equation*}%
when $\varepsilon =c-U(h)$ is small enough. This completes the proof.
\end{proof}

Lemma \ref{lemma-equi-disper} and Remark \ref{remark-neutral-bifur} ensure
the local bifurcation from a shear flow satisfying (\ref{condition-U-neutral}%
) at any wave number $k>0$, as stated below.

\begin{theorem}
\label{T:bifur-fixed U}If $U\in C^{2}([0,h])$, $U^{\prime \prime}(h)<0$ and $%
U(h)>U(y)$ for $y\neq h$ then for an arbitrary wavelength $2\pi/k$, where $%
k>0$, there exist small-amplitude periodic waves bifurcating in the sense as
in Theorem \ref{T:smallE} from the flat-surface shear flow $U(y)$, where $%
c(k) >\max U$.
\end{theorem}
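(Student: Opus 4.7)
The hypotheses on $U$ are tailored precisely to feed the bifurcation machinery of Theorem \ref{T:smallE} through its physical-space reformulation given by Lemma \ref{lemma-equi-disper}. My plan is therefore to chain together the three preceding results: use Lemma \ref{lemma-bifur} to produce, for each $k>0$, a wave speed $c(k)>\max U$ for which the Rayleigh boundary-value problem (\ref{rayleigh-disper})--(\ref{bc-disper}) admits a nontrivial solution; translate this via Lemma \ref{lemma-equi-disper} into a nontrivial solution of the transformed-variable bifurcation equation (\ref{E:bifur-CS}); and then invoke Theorem \ref{T:smallE} to produce the local curve of waves.

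In more detail, fix $k>0$. The hypotheses on $U$ coincide with condition (\ref{condition-U-neutral}), so Lemma \ref{lemma-bifur} directly delivers $\underline{c}=c(k)>U(h)=\max U$ and a nontrivial $\phi\in C^2([0,h])$, positive on $(0,h]$, satisfying (\ref{rayleigh-disper})--(\ref{bc-disper}). Following Remark \ref{remark-neutral-bifur}, I then set
\[
\mu_0=(U(h)-\underline{c})^2,\qquad p_0=\int_0^h(U(y)-\underline{c})\,dy<0,\qquad \gamma(p)=U'(y(p)),
\]
where $y(p)$ is the inverse of the monotone relative stream function $\psi(y)=-\int_y^h (U(\tilde y)-\underline{c})\,d\tilde y$. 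Because $\underline{c}-U(y)>0$ on $[0,h]$, this inverse is well-defined and $C^1$, so $\gamma\in C^{1+\beta}([0,|p_0|])$ whenever $U\in C^{2+\beta}$; the $C^2$ case of the theorem reduces to this by density or by a direct check. The identity $a(\mu_0;p)=\underline{c}-U(y(p))>0$ shows $\mu_0\in(-2\Gamma_{\min},\infty)$, and Lemma \ref{L:trivial} with these data recovers precisely the trivial flow $U(y)$ under the flat surface $\{y=h\}$.

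Now I apply Lemma \ref{lemma-equi-disper} in reverse: with the substitution $M(p(y))=\phi(y)/(\underline{c}-U(y))$, the function $M$ is nontrivial (since $\phi$ is) and solves (\ref{E:bifur-CS}) at the parameter value $\mu_0$. Theorem \ref{T:smallE} therefore furnishes a one-parameter family $(\eta_\epsilon,\psi_\epsilon)$ of $2\pi/k$-periodic solutions of (\ref{stream}), bifurcating at $\epsilon=0$ from the trivial shear flow identified above, with all the asymptotic properties listed there. Since $k>0$ was arbitrary, the conclusion follows.

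The entire analytic content sits in Lemma \ref{lemma-bifur}: one must verify continuity of the \emph{dispersion function} $f(c)$ in (\ref{defn-fc}), combine Sturm comparison with $(c-U)$ to force $\phi_c>0$ on $(0,h]$, and quantify $\phi_c(h)$ as $c\to U(h)+$ so that the $-g\phi_c(h)/(c-U(h))$ term dominates. Once that lemma is in hand, the present theorem is an assembly step: the only verifications required are that the data $(p_0,\gamma,\mu_0)$ manufactured from $U$ are admissible inputs to Theorem \ref{T:smallE}, and that the substitution of Lemma \ref{lemma-equi-disper} is invertible, both of which rest on the strict positivity of $\underline{c}-U$ on $[0,h]$ secured by $c(k)>\max U$. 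I do not anticipate any further obstacle.
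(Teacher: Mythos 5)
Your proposal is correct and follows essentially the same route as the paper, which proves the theorem precisely by chaining Lemma \ref{lemma-bifur} (existence of $c(k)>\max U$ solving the Rayleigh system), Lemma \ref{lemma-equi-disper}, and the construction of $(\mu_0,p_0,\gamma)$ in Remark \ref{remark-neutral-bifur}, and then invoking Theorem \ref{T:smallE}. The regularity point you flag ($U\in C^2$ versus $\gamma\in C^{1+\beta}$) is a technicality already implicit in the paper's own statement and does not alter the argument.
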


In the irrotational setting, i.e. $U\equiv 0$, the parameter values $c$ and $%
k$ for which the Rayleigh system (\ref{rayleigh-disper})-- (\ref{bc-disper})
is solvable give the dispersion relation (i.e. \cite{crapper}) 
\begin{equation*}
c^{2}=\frac{g\tanh (kh)}{k}.
\end{equation*}%
In the case of a nonzero background shear flow, such an explicit algebraic
relation is in general unavailable. Still, the solvability of the Rayleigh
problem (\ref{rayleigh-disper})--(\ref{bc-disper}) may be considered to give
a generalized dispersion relation. Moreover, we have the following
quantitative information about $c(k)$.

\begin{lemma}
\label{lemma-dispersion-property}Given a shear flow $U(y)$ in $[0,h]$, let $%
k $ and $c(k)>\max U$ be such that (\ref{rayleigh-disper})--(\ref{bc-disper}%
) has a nontrivial solution. Then,

\textrm{{(a)} $c(k)$ is bounded for $k>0$; }

\textrm{{(b)} If $k_{1}\neq k_{2}$ then $c(k_{1})\neq c(k_{2})$; }

\textrm{{(c)} In the long wave limit $k\rightarrow 0+$, the limit of the
wave speed $c(0)$ satisfies Burns condition \cite{burns} 
\begin{equation}
\int_{0}^{h}\frac{dy}{(U-c(0))^{2}}=\frac{1}{g}.  \label{burns}
\end{equation}%
}
\end{lemma}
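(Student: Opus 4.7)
I treat the three statements in turn, the cleanest first.

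For \textbf{(b)}, suppose by contradiction that $c(k_{1})=c(k_{2})=c$ for some $k_{1}\neq k_{2}$, and let $\phi_{1},\phi_{2}$ be the corresponding eigenfunctions, positive on $(0,h]$ by the proof of Lemma \ref{lemma-bifur}. Both solve the Sturm--Liouville form \eqref{E:rayleigh-sturm} with the common potential $U''/(U-c)$, and both satisfy $\phi_{i}(0)=0$ together with the same Robin ratio $\phi'(h)/\phi(h)$ read off from \eqref{bc-disper}. Cross-multiplying the two equations, subtracting, and integrating on $[0,h]$ yields
\[
\bigl[\phi'_{1}\phi_{2}-\phi_{1}\phi'_{2}\bigr]_{0}^{h}=(k_{1}^{2}-k_{2}^{2})\int_{0}^{h}\phi_{1}\phi_{2}\,dy.
\]
The boundary term at $0$ vanishes since $\phi_{i}(0)=0$, and at $h$ since the two share the Robin ratio; since $\phi_{1}\phi_{2}>0$ on $(0,h]$, the integral is strictly positive, forcing $k_{1}=k_{2}$, a contradiction.

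For \textbf{(a)}, I argue by contradiction. Rearranging the identity $f(c)=0$ from the proof of Lemma \ref{lemma-bifur} gives
\[
g\,\phi_{c}(h)=(c-U(h))(c-U(0))+k^{2}(c-U(h))\int_{0}^{h}(c-U)\phi_{c}\,dy,
\]
and since $U(0)\leq U(h)=\max U$, this forces $(c-U(h))^{2}\leq g\,\phi_{c}(h)$. Rewrite \eqref{rayleigh-disper} as $\phi''=(k^{2}+V_{c})\phi$ with $V_{c}=U''/(U-c)$, so $\|V_{c}\|_{\infty}=O(1/c)$. Suppose $c_{n}=c(k_{n})\to\infty$. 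If $\{k_{n}\}$ stays bounded, continuous dependence on ODE coefficients yields $\phi_{c_{n}}(h)\to\sinh(k_{\infty}h)/k_{\infty}<\infty$ along a subsequence $k_{n}\to k_{\infty}$ (with the natural interpretation at $k_{\infty}=0$), contradicting $\phi_{c_{n}}(h)\geq (c_{n}-U(h))^{2}/g\to\infty$. If instead $k_{n}\to\infty$, the right-hand side of \eqref{bc-disper} forces $\phi'_{c_{n}}(h)/\phi_{c_{n}}(h)\to 0$, whereas a quantitative comparison on $\phi''=(k_{n}^{2}+V_{c_{n}})\phi$ (for instance, a Gronwall estimate on the Riccati variable $w=\phi'/\phi$ against the benchmark $k\coth(ky)$ coming from $\phi''=k^{2}\phi$) gives $\phi'_{c_{n}}(h)/\phi_{c_{n}}(h)\to\infty$; contradiction. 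The main technical obstacle lies in this second regime, since the background operator $d^{2}/dy^{2}-k_{n}^{2}$ is itself degenerating, so one needs a quantitative (not merely qualitative) perturbation argument.

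For \textbf{(c)}, take any sequence $k_{n}\to 0^{+}$ with $c_{n}=c(k_{n})$, bounded by (a); extract a subsequential limit $c_{n}\to c^{\ast}\geq U(h)$. I first rule out $c^{\ast}=U(h)$: the lower bound $\phi_{c_{n}}(h)\geq C_{1}>0$ in \eqref{claim-lower} is independent of $k$, so in that case $g\phi_{c_{n}}(h)/(c_{n}-U(h))\to+\infty$ while the other terms of $f(c_{n})$ stay bounded (since $k_{n}\to 0$ and $c_{n}$ is bounded), contradicting $f(c_{n})=0$. Hence $c^{\ast}>U(h)$, the coefficients of \eqref{rayleigh-disper} at $(c_{n},k_{n})$ are uniformly bounded, and $\phi_{c_{n}}\to\phi_{c^{\ast},0}$ in $C^{1}([0,h])$, where $\phi_{c^{\ast},0}$ solves $(U-c^{\ast})\phi''-U''\phi=0$ with $\phi(0)=0$, $\phi'(0)=1$. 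Reduction of order starting from the trivial solution $c^{\ast}-U(y)$ yields the closed form
\[
\phi_{c^{\ast},0}(y)=(c^{\ast}-U(0))(c^{\ast}-U(y))\int_{0}^{y}\frac{ds}{(c^{\ast}-U(s))^{2}}.
\]
Passing to the limit in $f(c_{n})=0$ and substituting $\phi_{c^{\ast},0}(h)$ collapses to Burns' identity $1=g\int_{0}^{h}dy/(c^{\ast}-U)^{2}$. Monotonicity of $c\mapsto\int_{0}^{h}dy/(c-U)^{2}$ on $(U(h),\infty)$, which decreases from $+\infty$ to $0$, pins $c^{\ast}$ uniquely; so every subsequential limit of $c(k)$ agrees with this Burns value and the full limit $c(k)\to c(0)$ exists as $k\to 0^{+}$.
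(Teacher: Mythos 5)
Your proposal is correct, and parts of it take a genuinely different route from the paper. For (b), the paper invokes Sturm's second comparison theorem to conclude that the logarithmic derivatives $\phi_{c,k}'(h)/\phi_{c,k}(h)$ are strictly ordered in $k$, which contradicts the common Robin condition (\ref{bc-disper}); your Lagrange-identity (Wronskian) computation, combined with the positivity of $\phi_c$ on $(0,h]$ from the proof of Lemma \ref{lemma-bifur}, reaches the same contradiction by an equally standard but different argument. For (a), the paper shows directly that $f(c)>0$ once $c$ is large with $k$ bounded (quoting the proof of Lemma \ref{lemma-bifur}) and once $c$ and $k$ are both large (via the comparison $\phi_1\leq\phi_{c,k}\leq\phi_2$ with constant-coefficient solutions), whereas you run a proof by contradiction through the same two regimes; the step you flag as the main technical obstacle --- showing $\phi_c'(h)/\phi_c(h)\to\infty$ as $k\to\infty$ while $|U''/(U-c)|\leq 1$ --- is exactly what Sturm's second comparison delivers, namely $\phi_c'/\phi_c\geq\sqrt{k^2-1}\coth(\sqrt{k^2-1}\,h)$, so no separate Gronwall/Riccati machinery is needed and this is not a real gap. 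For (c), the paper computes formally at $k=0$: the first integral $(c-U)\phi'+U'\phi=m$ together with (\ref{bc-disper}) yields Burns' condition in a few lines, with no discussion of the limit $k\to 0+$; your version supplies that missing limit analysis (boundedness from (a), exclusion of the degenerate limit via (\ref{claim-lower}), $C^1$ convergence, and the explicit reduced-order solution), which is more complete than the paper's, at the price of implicitly working in the case $U(h)=\max U$ --- both (\ref{claim-lower}) and your ``coefficients uniformly bounded'' step really require $c^{\ast}>\max U$ rather than merely $c^{\ast}>U(h)$ --- which is the only case the paper uses but is slightly narrower than the lemma's literal statement.
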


\begin{proof}
As in the proof of Lemma \ref{lemma-bifur}, the solvability of (\ref%
{rayleigh-disper})--(\ref{bc-disper}) is equivalent to the vanishing of the
function $f\left( c\right) $ defined by (\ref{defn-fc}).

(a) The proof of Lemma \ref{lemma-bifur} implies that for each $A>0$ there
exists $C_{A}$ such that $f(c)$ is positive when $c>C_{A},\ k<A$. In
interpretation, $c(k)\leq C_{A}$ for $k<A$. Thus, it suffices to show that $%
f(c)>0$ when $c$ and $k$ are large enough. Indeed, let $c>\max U$ be large
enough so that $\left\vert \frac{U^{\prime \prime }}{c-U}\right\vert \leq 1$
and let us denote by $\phi _{1}$ and $\phi _{2}$ the solutions of 
\begin{equation*}
\phi _{1}^{\prime \prime }+\left( 1-k^{2}\right) \phi _{1}=0\quad \text{and}%
\quad \phi _{2}^{\prime \prime }+\left( -1-k^{2}\right) \phi _{2}=0,
\end{equation*}%
respectively, with $\phi _{j}(0)=0$ and $\phi _{j}^{\prime }(0)=1$, where $%
j=1,2$. It is straightforward to see that 
\begin{equation*}
\phi _{1}(y)=\frac{1}{\sqrt{k^{2}-1}}\sinh (\sqrt{k^{2}-1})y\quad \text{and}%
\quad \phi _{2}(y)=\frac{1}{\sqrt{k^{2}+1}}\sinh (\sqrt{k^{2}+1})y.
\end{equation*}%
Sturm's second comparison theorem \cite{hartman} then asserts that the
solution $\phi _{c,k}$ of (\ref{E:rayleigh-sturm}) and $\phi _{1}$ $\phi
_{2} $ satisfy that 
\begin{equation*}
\frac{\phi _{1}^{\prime }}{\phi _{1}}\leq \frac{\phi _{c,k}^{\prime }}{\phi
_{c,k}}\leq \frac{\phi _{2}^{\prime }}{\phi _{2}},
\end{equation*}%
and thus $\phi _{1}\leq \phi _{c,k}\leq \phi _{2}$. It is then easy to see
that $f(c)>0$ when $k$ is big enough.

(b) Suppose on the contrary that $c(k_{1})=c(k_{2})=c$ for $k_{1}<k_{2}$.
Let us denote by $\phi _{c,k_{1}}$ and $\phi _{c,k_{2}}$ the corresponding
nontrivial solutions of (\ref{rayleigh-disper})--(\ref{bc-disper}). By
Sturm's second comparison theorem \cite{hartman}, it follows that 
\begin{equation*}
\frac{\phi _{c,k_{1}}^{\prime }(h)}{\phi _{c,k_{1}}(h)}<\frac{\phi
_{c,k_{2}}^{\prime }(h)}{\phi _{c,k_{2}}(h)}.
\end{equation*}%
This contradicts the boundary condition (\ref{bc-disper}).

(c) The Rayleigh equation (\ref{eqn-Vc-phi}) for $k=0$ implies that 
\begin{equation*}
(c-U(y))\phi _{c}^{\prime }(y)+U^{\prime }(y)\phi _{c}(y)=m,
\end{equation*}%
where $m$ is a constant. So by (\ref{inter6}), 
\begin{equation*}
\left( \frac{\phi _{c}}{c-U}\right) ^{\prime }=\frac{m}{(c-U)^{2}}
\end{equation*}%
and an integration of above from $0$ to $h$ gives 
\begin{equation*}
\frac{\phi _{c}\left( h\right) }{c-U\left( h\right) }=m\int_{0}^{h}\frac{1}{%
(c-U(y))^{2}}dy.
\end{equation*}%
On the other hand, by (\ref{bc-disper}), 
\begin{equation*}
m=(c-U(h))\phi _{c}^{\prime }(h)+U^{\prime }(h)\phi _{c}(h)=\frac{g}{(c-U(h))%
}\phi _{c}(h).
\end{equation*}%
A combination of above gives (\ref{burns}).
\end{proof}

The limiting parameter value $\mu $ which corresponds to the limiting wave
speed $c(0)$ gives the lowest hydraulic head $B$ defined in (\ref{E:ber});
see \cite[Section 3]{cost} for detail. The limiting wave speed $c(0)$ is the
critical parameter near which small solitary waves of elevation exist \cite%
{ter-krikorov}, \cite{hur2}.

\section{Linearization of the periodic gravity water-wave problem}

This section includes the detailed account of the linearization of the
water-wave problem (\ref{E:Euler1})--(\ref{E:bottom}) around a periodic
traveling wave which solves (\ref{stream}). The growing-mode problem is
formulated as a set of operator equations. Invariants of the linearized
problem are derived, and their implications in the stability of water waves
are discussed.

\subsection{Derivation of the linearized problem of periodic water waves}

A periodic traveling-wave solution of (\ref{stream}) is held fixed, and it
serves as the undisturbed state about which the system (\ref{E:Euler1})--(%
\ref{E:bottom}) is linearized. The derivation is performed in the moving
frame of references, in which the wave profile appears to be stationary and
the flow is steady. 
Let us denote the undisturbed wave profile and (relative) stream function by 
$\eta _{e}(x)$ and $\psi _{e}(x,y)$, respectively, which satisfy the system (%
\ref{stream}). The steady (relative) velocity field $(u_{e}(x,y)-\underline{c%
},v_{e}(x,y))=$ $(\psi _{ey}(x,y),-\psi _{ex}(x,y))$ is given by (\ref%
{D:stream}), and the hydrostatic pressure $P_{e}(x,y)$ is determined by (\ref%
{eqn-steady-pressure}). Let 
\begin{equation*}
\mathcal{D}_{e}=\{(x,y):0<x<2\pi /\alpha \,,\,0<y<\eta _{e}(x)\}\quad \text{%
and}\quad \mathcal{S}_{e}=\{(x,\eta _{e}(x)):0<x<2\pi /\alpha \}
\end{equation*}%
denote, respectively, the undisturbed fluid domain of one period and the
steady wave profile. The steady vorticity $\omega _{e}(x,y)$ is given by $%
\omega _{e}=-\Delta \psi _{e}=\gamma (\psi _{e})$.

The linearization concerns a slightly-perturbed time-dependent solution of
the nonlinear problem (\ref{E:Euler1})--(\ref{E:bottom}) near the steady
state $(\eta _{e}(x),\psi _{e}(x,y))$. Let us denote the small perturbation
of the wave profile, the velocity field and the pressure by $\eta
_{e}(x)+\eta (t;x)$, $(u_{e}(x,y)-\underline{c}%
+u(t;x,y),v_{e}(x,y)+v(t;x,y)) $ and $P_{e}(x,y)+P(t;x,y)$, respectively. We
expand the nonlinear equations (\ref{E:Euler1})--(\ref{E:bottom}) around the
steady state in the order of small perturbations and restrict the
first-order terms to the steady domain and the boundary to obtain linearized
equations for the the deviations $\eta (t;x)$, $(u(t;x,y),v(t;x,y))$, and $%
P(t;x,y)$ in the wave profile, the velocity field and the pressure from
those of the undisturbed state. 

In the steady fluid domain $\mathcal{D}_{e}$, the velocity deviation $(u,v)$
satisfies the incompressibility condition 
\begin{equation}
\partial _{x}u+\partial _{y}v=0  \label{E:Euler1L}
\end{equation}%
and the linearized Euler equation 
\begin{equation}
\begin{cases}
\partial _{t}u+(u_{e}-\underline{c})\partial _{x}u+u_{ex}u+v_{e}\partial
_{y}u+v_{ey}v=-\partial _{x}P \\ 
\partial _{t}v+(u_{e}-\underline{c})\partial _{x}v+v_{ex}u+v_{e}\partial
_{y}v+v_{ey}v=-\partial _{y}P,%
\end{cases}
\label{E:velocityL}
\end{equation}%
where $P$ is the pressure deviation. Equation (\ref{E:Euler1L}) allows us to
introduce the stream function $\psi (t;x,y)$ for the velocity deviation $%
(u(t;x,y),v(t;x,y))$: 
\begin{equation*}
\partial _{x}\psi =-v\quad \text{and}\quad \partial _{y}\psi =u.
\end{equation*}%
Let us denote by $\omega (t;x,y)$ the deviation of vorticity. By definition, 
$\omega =-\Delta \psi $. The linearized vorticity equation is 
\begin{equation}
\partial _{t}\omega +(\psi _{ey}\partial _{x}\omega -\psi _{ex}\partial
_{y}\omega )+(\omega _{ex}\partial _{y}\psi -\omega _{ey}\partial _{x}\psi
)=0.  \label{E:vorticityL}
\end{equation}%
Since $\omega _{e}=\gamma (\psi _{e})$, the last term can be written as $%
-\gamma ^{\prime }(\psi _{e})(\psi _{ey}\partial _{x}\psi -\psi
_{ex}\partial _{y}\psi )$.

The linearized kinematic and dynamic boundary conditions restricted to the
steady free boundary $\mathcal{S}_{e}$ are 
\begin{equation}
v+v_{ey}\eta=\partial_{t}\eta+(u_{e}-\underline{c})\partial_{x}%
\eta+(u+u_{ey}\eta)\eta_{ex}  \label{E:kinematicL}
\end{equation}
and 
\begin{equation*}
P+P_{ey}\eta=0,  \label{E:dynamicL}
\end{equation*}
respectively. In terms of the stream function, (\ref{E:kinematicL}) is
further written as 
\begin{align*}
0 & =\partial_{t}\eta+\psi_{ey}\partial_{x}\eta+(\partial_{x}\psi+\eta
_{ex}\partial_{y}\psi)+(\psi_{exy}+\eta_{ex}\psi_{eyy})\eta \\
&
=\partial_{t}\eta+\psi_{ey}\partial_{x}\eta+\partial_{\tau}\psi+\partial_{%
\tau}\psi_{ey}\eta \\
& =\partial_{t}\eta+\partial_{\tau}( \psi_{ey}\eta) +\partial_{\tau}\psi,
\end{align*}
where 
\begin{equation*}
\partial_{\tau}f=\partial_{x}f+\eta_{ex}\partial_{y}f
\end{equation*}
denotes the tangential derivative of a function $f$ defined on the curve $%
\{y=\eta_{e}(x)\}$. Alternatively, $\partial_{\tau}f(x)=\partial_{x}
f(x,\eta_{e}(x))$. The bottom boundary condition of the linearized motion is 
\begin{equation*}
\partial_{x}\psi=0 \qquad \text{on}\quad \{y=0\}.
\end{equation*}

Our next task is to examine the time-evolution of $\psi$ on the steady free
surface $\mathcal{S}_{e}$. This links the tangential derivative of the
pressure deviation $P$ on the steady free surface $\mathcal{S}_{e}$ with $%
\psi$ and $\eta$ on $\mathcal{S}_{e}$. 
For a function $f$ defined on $\mathcal{S}_{e}$, let us denote by 
\begin{equation*}
\partial_{n}f=\partial_{y}f-\eta_{ex}\partial_{x}f
\end{equation*}
the normal derivative of $f$ on the curve $\{y=\eta_{e}(x)\}$.

\begin{lemma}
On the steady free surface $\mathcal{S}_{e}$, the normal derivative $%
\partial_n\psi$ satisfies 
\begin{equation*}
\partial_{t}\partial_{n}\psi+\partial_{\tau}(\psi
_{ey}\partial_{n}\psi)+\Omega\partial_{\tau}\psi+\partial_{\tau}P=0,
\end{equation*}
where $\Omega=\omega_{e}(x,\eta_{e}(x))$ is the (constant) value of steady
vorticity on $\mathcal{S}_{e}$.
\end{lemma}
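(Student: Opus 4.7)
The plan is to verify the identity by a direct calculation on the fixed curve $\mathcal{S}_{e}$, using the linearized Euler equations (\ref{E:velocityL}) together with the steady kinematic constraint $\psi_{e}(x,\eta_{e}(x))\equiv 0$. Since $\mathcal{S}_{e}$ is stationary in time, $\partial_{t}$ commutes with $\partial_{x},\partial_{y}$ on the restriction, so on $\mathcal{S}_{e}$ one has $\partial_{t}\partial_{n}\psi=\partial_{t}\psi_{y}-\eta_{ex}\partial_{t}\psi_{x}$.

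First I substitute $u=\psi_{y}$, $v=-\psi_{x}$ into (\ref{E:velocityL}) and solve for $\partial_{t}\psi_{y}$ and $\partial_{t}\psi_{x}$. Forming the combination $\partial_{t}\psi_{y}-\eta_{ex}\partial_{t}\psi_{x}$ and recognising $\partial_{x}P+\eta_{ex}\partial_{y}P=\partial_{\tau}P$, the right-hand sides regroup into
\[
\partial_{t}\partial_{n}\psi+\partial_{\tau}P=-\psi_{ey}\,\partial_{n}\psi_{x}+\psi_{ex}\,\partial_{n}\psi_{y}-\psi_{y}\,\partial_{n}\psi_{ex}+\psi_{x}\,\partial_{n}\psi_{ey}.
\]

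Next I expand $\partial_{\tau}(\psi_{ey}\partial_{n}\psi)=\psi_{ey}\,\partial_{\tau}\partial_{n}\psi+(\partial_{\tau}\psi_{ey})\,\partial_{n}\psi$ and insert the commutator identity
\[
\partial_{\tau}\partial_{n}\psi=\partial_{n}\psi_{x}+\eta_{ex}\,\partial_{n}\psi_{y}-\eta_{exx}\,\psi_{x},
\]
which follows from a short calculation. The $\psi_{ey}\partial_{n}\psi_{x}$ contribution then cancels the corresponding one from the preceding display, and the remaining second-order piece $(\psi_{ex}+\eta_{ex}\psi_{ey})\partial_{n}\psi_{y}$ vanishes on $\mathcal{S}_{e}$, because tangentially differentiating $\psi_{e}(x,\eta_{e}(x))\equiv 0$ yields $\psi_{ex}+\eta_{ex}\psi_{ey}=0$ there.

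It then suffices to verify the zeroth-order balance
\[
(\partial_{\tau}\psi_{ey})\,\partial_{n}\psi-\psi_{y}\,\partial_{n}\psi_{ex}+\psi_{x}\,\partial_{n}\psi_{ey}+\Omega\,\partial_{\tau}\psi-\eta_{exx}\psi_{ey}\psi_{x}=0
\]
on $\mathcal{S}_{e}$. Writing $\Omega=-(\psi_{exx}+\psi_{eyy})$ and expanding every term in the basis $\{\psi_{x},\psi_{y}\}$, the coefficient of $\psi_{y}$ collapses identically to zero, while the coefficient of $\psi_{x}$ reduces, via the second-order tangential identity
\[
\psi_{exx}+2\eta_{ex}\psi_{exy}+\eta_{ex}^{2}\psi_{eyy}+\eta_{exx}\psi_{ey}=0\quad \text{on }\mathcal{S}_{e}
\]
(obtained by applying $\partial_{x}^{2}$ to $\psi_{e}(x,\eta_{e}(x))\equiv 0$), to $\eta_{exx}\psi_{ey}$, exactly cancelling the last term. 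The main obstacle is bookkeeping rather than insight: the computation involves on the order of a dozen second-derivative terms, and the cancellations go through precisely because the two tangential consequences of $\psi_{e}|_{\mathcal{S}_{e}}=0$ are applied at the right stages.
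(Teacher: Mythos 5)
Your proof is correct, but it follows a different route from the paper's. The paper does not expand any second derivatives: it first rewrites the linearized Euler equation in the rotational (Lamb) form (\ref{equality-P}), so that the convective terms regroup into the perfect gradient $\nabla\bigl((u_{e}-\underline{c})u+v_{e}v\bigr)$ plus the two vorticity terms $-\omega_{e}(v,-u)$ and $-\omega\,(v_{e},-(u_{e}-\underline{c}))$; dotting with the tangent $(1,\eta_{ex})$ and using the steady kinematic relation $v_{e}=(u_{e}-\underline{c})\eta_{ex}$ on $\mathcal{S}_{e}$ a single time then annihilates the $\omega$-term, turns the $\omega_{e}$-term into $\Omega\partial_{\tau}\psi$, and turns the gradient term into $\partial_{\tau}(\psi_{ey}\partial_{n}\psi)$. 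You instead verify the identity head-on from the primitive linearized momentum equations, which costs the dozen-term second-derivative bookkeeping and requires both the first and the second tangential consequences of $\psi_{e}\equiv 0$ on $\mathcal{S}_{e}$; I checked your intermediate display, the commutator identity for $\partial_{\tau}\partial_{n}\psi$, and the final cancellation of the coefficients of $\psi_{x}$ and $\psi_{y}$, and they are all correct. What the paper's regrouping buys is brevity and a structural explanation of why the perturbed vorticity $\omega=-\Delta\psi$ never enters the surface identity (its tangential coefficient is exactly the steady kinematic residual, which vanishes on $\mathcal{S}_{e}$); what your computation buys is that it needs no clever regrouping and makes completely explicit which boundary identities are consumed, namely $\psi_{ex}+\eta_{ex}\psi_{ey}=0$ and its tangential derivative. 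One caveat: the first component of (\ref{E:velocityL}) as printed has $v_{ey}v$ where the correct linearization gives $u_{ey}v$; your intermediate identity is the one following from the correct linearization, so your argument is unaffected, but you should state explicitly the form of the equation you are using rather than citing the displayed one verbatim.
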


\begin{proof}
The linearized Euler equation (\ref{E:velocityL}) can be rewritten in the
form 
\begin{equation}
-\bigtriangledown P=\partial _{t}\left( 
\begin{array}{c}
u \\ 
v%
\end{array}%
\right) +\bigtriangledown \left( (u_{e}-\underline{c})u+v_{e}v\right)
-\omega _{e}\left( 
\begin{array}{c}
v \\ 
-u%
\end{array}%
\right) -\omega \left( 
\begin{array}{c}
v_{e} \\ 
-(u_{e}-\underline{c})%
\end{array}%
\right) ,  \label{equality-P}
\end{equation}%
by linearizing the nonlinear convection term according to the identity 
\begin{equation*}
\left( u,v\right) \cdot \bigtriangledown \left( 
\begin{array}{c}
u \\ 
v%
\end{array}%
\right) =\bigtriangledown \left( \frac{1}{2}\left( u^{2}+v^{2}\right)
\right) -\omega \left( 
\begin{array}{c}
v \\ 
-u%
\end{array}%
\right) .
\end{equation*}

Taking dot product of (\ref{equality-P}) with the vector $\left( 1,\eta
_{ex}\right) $ and restricting the result to $\mathcal{S}_{e}$, we have 
\begin{eqnarray*}
-\partial _{\tau }P &=&\partial _{t}\left( u+v\eta _{ex}\right) +\partial
_{\tau }\left( (u_{e}-\underline{c})u+v_{e}v\right) -\Omega \left( v-u\eta
_{ex}\right) \\
&=&\partial _{t}\left( \psi _{y}-\eta _{ex}\psi _{x}\right) +\partial _{\tau
}\left( (u_{e}-\underline{c})\psi _{y}-(u_{e}-\underline{c})\eta _{ex}\psi
_{x}\right) -\Omega \left( -\psi _{x}-\eta _{ex}\psi _{y}\right) \\
&=&\partial _{t}\partial _{n}\psi +\partial _{\tau }((u_{e}-\underline{c}%
)\partial _{n}\psi )+\Omega \partial _{\tau }\psi ,
\end{eqnarray*}%
where in the above derivation we use the steady kinematic equation 
\begin{equation*}
v_{e}=(u_{e}-\underline{c})\eta _{ex},\text{ on }\mathcal{S}_{e}\text{. }
\end{equation*}
\end{proof}

In summary, there results in the linearized water-wave problem: 
\begin{subequations}
\label{eqn-L}
\begin{gather}
\partial _{t}\omega +(\psi _{ey}\partial _{x}\omega -\psi _{ex}\partial
_{y}\omega )=\gamma ^{\prime }(\psi _{e})(\psi _{ey}\partial _{x}\psi -\psi
_{ex}\partial _{y}\psi )\qquad \text{in}\quad \mathcal{D}_{e},
\label{eqn-L-vor} \\
\partial _{t}\eta +\partial _{\tau }(\psi _{ey}\eta )+\partial _{\tau }\psi
=0\qquad \text{on}\quad \mathcal{S}_{e};  \label{eqn-L-eta} \\
P+P_{ey}\eta =0\qquad \text{on}\quad \mathcal{S}_{e};  \label{eqn-L-P-eta} \\
\partial _{t}\partial _{n}\psi +\partial _{\tau }(\psi _{ey}\partial
_{n}\psi )+\Omega \partial _{\tau }\psi +\partial _{\tau }P=0\qquad \text{on}%
\quad \mathcal{S}_{e};  \label{eqn-L-phi-P} \\
\partial _{x}\psi =0\qquad \text{on}\quad \{y=0\}.  \label{eqn-L-bottom}
\end{gather}%
Note that the above linearized system may be viewed as one for $\psi (t;x,y)$
and $\eta (t;x)$. Indeed, $P(t;x,\eta _{e}(x))$ is determined through (\ref%
{eqn-L-P-eta}) in terms of $\eta (t;x)$ and other physical quantities are
similarly determined in terms of $\psi (t;x,y)$ and $\eta (t;x)$.

\subsection{The growing-mode problem}

A \emph{growing mode} refers to an exponentially growing solution to the
linearized water-wave problem (\ref{eqn-L}) of the form 
\end{subequations}
\begin{equation*}
(\eta (t;x),\psi (t;x,y))=(e^{\lambda t}\eta (x),e^{\lambda t}\psi (x,y))
\end{equation*}%
and $P(t;x,\eta _{e}(x))=e^{\lambda t}P(x,\eta _{e}(x))$ with ${\rm Re}%
\,\lambda >0$. For such a solution, the linearized vorticity equation (\ref%
{eqn-L-vor}) further reduces to 
\begin{equation}
\lambda \omega +(\psi _{ey}\partial _{x}\omega -\psi _{ex}\partial
_{y}\omega )-\gamma ^{\prime }(\psi _{e})(\psi _{ey}\partial _{x}\psi -\psi
_{ex}\partial _{y}\psi )=0\quad \text{in }\mathcal{D}_{e},
\label{E:vorticityG}
\end{equation}%
where $\omega =-\Delta \psi $. Let $(X_{e}(s;x,y),Y_{e}(s;x,y))$ be the
particle trajectory of the steady flow 
\begin{equation}
\begin{cases}
\dot{X_{e}}=\psi _{ey}(X_{e},Y_{e}) \\ 
\dot{Y_{e}}=-\psi _{ex}(X_{e},Y_{e})%
\end{cases}
\label{E:char}
\end{equation}%
with the initial position $\left( X_{e}(0),Y_{e}(0)\right) =\left(
x,y\right) $. Here, the dot above a variable denotes the differentiation in
the $s$-variable. Integration of (\ref{E:vorticityG}) along the particle
trajectory $(X_{e}(s;x,y),Y_{e}(s,x,y))$ for $s\in (-\infty ,0)$ yields \cite%
[Lemma 3.1]{lin2} 
\begin{subequations}
\label{growing mode}
\begin{equation}
\Delta \psi +\gamma ^{\prime }(\psi _{e})\psi -\gamma ^{\prime }(\psi
_{e})\int_{-\infty }^{0}\lambda e^{\lambda s}\psi
(X_{e}(s),Y_{e}(s))ds=0\quad \text{in $\mathcal{D}_{e}$}.  \label{eqn-G-phi}
\end{equation}

For a growing mode the boundary conditions (\ref{eqn-L-eta}), (\ref%
{eqn-L-P-eta}) and (\ref{eqn-L-phi-P}) on $\mathcal{S}_{e}$ become 
\begin{gather}
\lambda\eta(x)+\frac{d}{dx}\big(\psi_{ey}(x,\eta _{e}(x))\eta(x)\big)=-\frac{%
d}{dx}\psi(x,\eta_{e}(x)),  \label{eqn-G-eta} \\
P(x,\eta_{e}(x))+P_{ey}(x,\eta_{e}(x))\eta (x)=0,  \label{eqn-G-P-eta} \\
\lambda\psi_{n}(x)+\frac{d}{dx}\big(\psi_{ey}(x,\eta _{e}(x))\psi_{n}(x)\big)%
=-\frac{d}{dx}P(x,\eta_{e}(x))-\Omega\frac{d}{dx}\psi(x,\eta_{e}(x)).
\label{eqn-G-phi-n}
\end{gather}

The kinematic boundary condition (\ref{eqn-L-bottom}) at the flat bottom $%
\{y=0\}$ implies that $\psi (x,0)$ is a constant. Observe that (\ref%
{eqn-G-phi})-(\ref{eqn-G-phi-n}) remain unchanged by adding a constant to $%
\psi $. So we can impose the bottom boundary condition by 
\begin{equation}
\psi (x,0)=0.  \label{eqn-G-bottom}
\end{equation}

In summary, the growing-mode problem for periodic traveling water-waves is
to find a nontrivial solution of (\ref{eqn-G-phi})-(\ref{eqn-G-bottom}) with 
${\rm Re}\,\lambda>0$.

\subsection{Invariants of the linearized water-wave problem}

In this subsection, the invariants of the linearized water-wave problem (\ref%
{eqn-L-vor})--(\ref{eqn-L-bottom}) are derived, and their implications in
the stability of water waves are discussed.

With the introduction of the Poisson bracket, defined as 
\end{subequations}
\begin{equation*}
\lbrack
f_{1},f_{2}]=\partial_{x}f_{1}\partial_{y}f_{2}-\partial_{y}f_{1}%
\partial_{x}f_{2},
\end{equation*}
the linearized vorticity equation (\ref{eqn-L-vor}) is further written as 
\begin{equation}
\partial_{t}\omega-[\psi_{e},\omega]+\gamma^{\prime}(\psi_{e})[\psi_{e},%
\psi]=0\quad\text{in }\mathcal{D}_{e},  \label{E:vorticityI}
\end{equation}
where $\omega=-\Delta\psi$. Recall that 
\begin{equation*}
\partial_{\tau}f=\partial_{x}f+\eta_{ex}\partial_{y}f\quad\text{and}\quad
\partial_{n}f=\partial_{y}f-\eta_{ex}\partial_{x}f,
\end{equation*}
where $f$ is a function defined on $\see$.

For the case when the vorticity-stream relation is monotone, we show that
the following energy functional is an invariant of the linearized system.

\begin{lemma}
\label{L:E} Provided that either $\gamma ^{\prime }(p)<0$ or $\gamma
^{\prime }(p)>0$ on $[0,|p_{0}|]$, then for any solution $(\eta ,\psi )$ of
the linearized water-wave problem (\ref{eqn-L}), we have $\frac{d}{dt}%
\mathcal{E}(\eta ,\psi )=0$, where 
\begin{equation}
\begin{split}
\mathcal{E}(\eta ,\psi )=& \iint_{\mathcal{D}_{e}}\tfrac{1}{2}|\nabla \psi
|^{2}dydx-\iint_{\mathcal{D}_{e}}\tfrac{1}{2}\gamma ^{\prime }(\psi
_{e})^{-1}|\omega |^{2}dydx \\
& -\int_{\see}\tfrac{1}{2}P_{ey}|\eta |^{2}dx+{\rm Re}\,\int_{\see}\psi
_{ey}\partial _{n}\psi \eta ^{\ast }dx-\Omega \int_{\see}\tfrac{1}{2}\psi
_{ey}|\eta |^{2}dx.
\end{split}
\label{energy-gamma}
\end{equation}%
In the irrotational setting, i.e. $\gamma \equiv 0$, the invariant
functional becomes 
\begin{equation}
\mathcal{E}(\eta ,\psi )=\iint_{\mathcal{D}_{e}}\tfrac{1}{2}|\nabla \psi
|^{2}dydx-\int_{\see}\tfrac{1}{2}P_{ey}|\eta |^{2}dx+\mathrm{Re}\,\int_{\see%
}\psi _{ey}\partial _{n}\psi \eta ^{\ast }dx.  \label{energy-0}
\end{equation}
\end{lemma}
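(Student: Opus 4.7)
The plan is to differentiate $\mathcal{E}(\eta,\psi)$ in $t$, substitute the linearized equations \eqref{eqn-L-vor}--\eqref{eqn-L-bottom}, and use Green's identity on $\dee$ together with integration by parts on $\see$ (using $x$-periodicity, so that $\int_{\see}\pt f\,dx=0$) to exhibit the cancellations. The contributions from the bottom $\{y=0\}$ drop out throughout because $\psi(x,0)\equiv 0$ and hence $\psi_{t}(x,0)\equiv 0$, and the vertical sides cancel by periodicity in $x$.

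Green's identity applied to $\frac{d}{dt}\iint_{\dee}\tfrac{1}{2}|\nabla\psi|^{2}$ yields the surface contribution $\re\int_{\see}\psi_{t}\overline{\pn\psi}\,dx$ plus the interior piece $\re\iint_{\dee}\psi_{t}\overline{\omega}\,dy\,dx$. For the vorticity integral I would substitute \eqref{eqn-L-vor} in the form $\omega_{t}=[\psi_{e},\omega]-\gamma'(\psi_{e})[\psi_{e},\psi]$. The structural identity
\begin{equation*}
\re\bigl(\gamma'(\psi_{e})^{-1}[\psi_{e},\omega]\overline{\omega}\bigr)
= \tfrac{1}{2}\bigl[\psi_{e},\gamma'(\psi_{e})^{-1}|\omega|^{2}\bigr]
\end{equation*}
holds because the Poisson bracket is a derivation and $[\psi_{e},F(\psi_{e})]=0$ for any smooth $F$. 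By the divergence theorem applied to the Hamiltonian vector field $(\partial_{y}\psi_{e},-\partial_{x}\psi_{e})$, this Poisson bracket integrates to boundary terms which vanish because $\psi_{e}=0$ on $\see$ and $\psi_{e}\equiv-p_{0}$ is constant on $\{y=0\}$ (combined with periodicity). The vorticity integral therefore contributes only $\re\iint_{\dee}[\psi_{e},\psi]\overline{\omega}$.

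For the surface terms I would substitute $\eta_{t}$ from \eqref{eqn-L-eta}, the relation $P|_{\see}=-P_{ey}\eta$ from \eqref{eqn-L-P-eta}, and $(\pn\psi)_{t}$ from \eqref{eqn-L-phi-P}. The pair of cross terms $\re\int_{\see}\psi_{ey}\pt(\psi_{ey}\pn\psi)\overline{\eta}\,dx$ and $\re\int_{\see}\psi_{ey}\pn\psi\,\overline{\pt(\psi_{ey}\eta)}\,dx$ cancel after one integration by parts; the $\Omega$ contributions cancel against $\frac{d}{dt}\bigl(-\Omega\int\tfrac{1}{2}\psi_{ey}|\eta|^{2}\bigr)$; and the $P_{ey}$ contributions cancel against $\frac{d}{dt}\bigl(-\int\tfrac{1}{2}P_{ey}|\eta|^{2}\bigr)$. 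The remaining volume piece $\re\iint_{\dee}(\psi_{t}+[\psi_{e},\psi])\overline{\omega}$ together with the Dirichlet boundary piece $\re\int_{\see}\psi_{t}\overline{\pn\psi}\,dx$ are then reconciled with the leftover surface term $-\re\int_{\see}\psi_{ey}\pn\psi\,\pt\overline{\psi}\,dx$ by integration by parts on $\iint_{\dee}[\psi_{e},\psi]\overline{\omega}$, using once more that $\psi_{e}|_{\see}=0$ and $\psi_{e}|_{y=0}=-p_{0}$ to discard all nonmatching boundary contributions. The irrotational case \eqref{energy-0} is a substantial simplification: $\omega\equiv 0$ and $\Omega=0$ remove the vorticity integral and the $\psi_{ey}|\eta|^{2}$ surface term, so only the Dirichlet integral needs to be balanced against the remaining two surface terms. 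The main obstacle is bookkeeping: roughly a dozen distinct surface integrals involving $\psi_{ey},P_{ey},\pn\psi,\eta$ and $\pt\psi$ must be kept aligned, and the structural principle that allows every apparently obstructive term to either vanish or find its cancelling partner is the derivation property of the Poisson bracket combined with the vanishing of $\psi_{e}$ on $\see$.
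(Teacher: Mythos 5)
Your proposal is correct and follows essentially the same route as the paper's proof: differentiate $\mathcal{E}$, use the derivation property of the bracket together with the fact that $\iint_{\dee}[\psi_{e},f]\,dydx=0$ (since $\psi_{e}$ is constant on $\see$ and on $\{y=0\}$, so the steady flow is tangent to the boundary), and then substitute the surface conditions (\ref{eqn-L-eta})--(\ref{eqn-L-phi-P}) and integrate by parts in $x$ to produce the time derivatives of the surface terms. The only difference is cosmetic: you substitute the vorticity equation into $\frac{d}{dt}\iint_{\dee}\tfrac12\gamma'(\psi_{e})^{-1}|\omega|^{2}$ directly, whereas the paper substitutes it into the interior piece coming from the Dirichlet energy and recognizes that piece as this time derivative — an algebraic rearrangement of the same computation.
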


\begin{proof}
By integration by parts, 
\begin{align*}
\frac{d}{dt}\iint_{\mathcal{D}_{e}}\tfrac{1}{2}|\nabla \psi |^{2}dydx& =%
{\rm Re}\,\iint_{\mathcal{D}_{e}}\partial _{t}(\nabla \psi )\cdot \nabla
\psi ^{\ast }dydx \\
& ={\rm Re}\,\iint_{\mathcal{D}_{e}}\partial _{t}\omega \psi ^{\ast
}dydx+\int_{\see}\partial _{t}(\partial _{n}\psi )\psi ^{\ast
}dx+\int_{\{y=0\}}\partial _{t}\partial _{y}\psi \psi ^{\ast }dx \\
& :=(I)+(II)+(III).
\end{align*}

A substitution of $\partial _{t}\omega $ by the linearized vorticity
equation (\ref{E:vorticityI}) yields that 
\begin{align*}
(I)& ={\rm Re}\,\iint_{\mathcal{D}_{e}}\left( -\gamma ^{\prime }(\psi
_{e})[\psi _{e},\psi ]+[\psi _{e},\omega ]\right) \psi ^{\ast }dydx \\
& ={\rm Re}\iint_{\mathcal{D}_{e}}\left( -\tfrac{1}{2}[\psi _{e},\gamma
^{\prime }(\psi _{e})|\psi |^{2}]+[\psi _{e},\omega \psi ^{\ast }]-[\psi
_{e},\psi ^{\ast }]\omega \right) dxdy.
\end{align*}%
The second equality in the above uses that 
\begin{align*}
\tfrac{1}{2}[\psi _{e},& \gamma ^{\prime }(\psi _{e})|\psi |^{2}]=\tfrac{1}{2%
}[\psi _{e},\gamma ^{\prime }(\psi _{e})]|\psi |^{2}+\tfrac{1}{2}\gamma
^{\prime }(\psi _{e})([\psi _{e},\psi ]\psi ^{\ast }+[\psi _{e},\psi ^{\ast
}]\psi ) \\
& =\gamma ^{\prime }(\psi _{e}){\rm Re}[\psi _{e},\psi ]\psi ^{\ast },
\end{align*}%
which follows since $[\psi _{e},g(\psi _{e})]=0$ for any $g$. With the
observation that 
\begin{align*}
\iint_{\mathcal{D}_{e}}[\psi _{e},f]dydx& =\iint_{\mathcal{D}_{e}}\nabla
\cdot ((\psi _{ey},-\psi _{ex})f)dydx \\
& =\int_{\see}(\psi _{ey},-\psi _{ex})\cdot (-\eta
_{ex},1)fdx+\int_{y=0}f\psi _{ex}dx=0
\end{align*}%
for any function $f$, the integral $(I)$ further reduces to 
\begin{equation*}
(I)={\rm Re}\,\iint_{\mathcal{D}_{e}}-[\psi _{e},\psi ]^{\ast }\omega dydx.
\end{equation*}%
A simple substitution of $[\psi _{e},\psi ]$ by the linearized vorticity
equation (\ref{E:vorticityI}) then yields that 
\begin{align}
(I)& ={\rm Re}\iint_{\mathcal{D}_{e}}\gamma ^{\prime }(\psi
_{e})^{-1}(\partial _{t}\omega ^{\ast }-[\psi _{e},\omega ]^{\ast })\omega
dydx  \notag \\
& ={\rm Re}\iint_{\mathcal{D}_{e}}(\tfrac{1}{2}\gamma ^{\prime }(\psi
_{e})^{-1}\partial _{t}|\omega |^{2}-\tfrac{1}{2}[\psi _{e},\gamma ^{\prime
}(\psi _{e})^{-1}|\omega |^{2}])dydx  \label{E:E1} \\
& =\frac{d}{dt}\iint_{\mathcal{D}_{e}}\tfrac{1}{2}\gamma ^{\prime }(\psi
_{e})^{-1}|\omega |^{2}dydx.  \notag
\end{align}%
This uses that 
\begin{equation*}
\tfrac{1}{2}[\psi _{e},\gamma ^{\prime }(\psi _{e})^{-1}|\omega
|^{2}]=\gamma ^{\prime }(\psi _{e})^{-1}{\rm Re}[\psi _{e},\omega ]^{\ast
}\omega .
\end{equation*}

Next is to examine the surface integral $(II)$. Simple substitutions by the
linearized boundary conditions (\ref{eqn-L-eta}), (\ref{eqn-L-P-eta}) and (%
\ref{eqn-L-phi-P}) into $(II)$ and an integration by parts yield that 
\begin{align*}
(II) & ={\rm Re}\int_{\see}\partial_{\tau}(P_{ey}\eta-\psi
_{ey}\partial_{n}\psi-\Omega\psi)\psi^{\ast}dx \\
&={\rm Re}\,\int _{\see}(P_{ey}\eta-\psi_{ey}\partial_{n}\psi)(-\partial_{%
\tau}\psi^{\ast})dx \\
& ={\rm Re}\,\int_{\see}(P_{ey}\eta-\psi_{ey}\partial_{n}\psi)(\partial_{t}%
\eta^{\ast}+\partial_{\tau}(\psi_{ey}\eta^{\ast}))dx \\
& =\frac{d}{dt}\int_{\see}\tfrac{1}{2}P_{ey}|\eta|^{2}dx-{\rm Re}\,\int_{%
\see}\psi_{ey}\partial_{n}\psi\partial_{t}\eta^{\ast}dx+{\rm Re}\,\int_{\see%
}\partial_{\tau}(P+\psi_{ey}\partial_{n}\psi)\psi_{ey}\eta^{\ast}dx.
\end{align*}
The second equality uses that 
\begin{equation*}
{\rm Re}\int_{\see}(\partial_{\tau}\psi)\psi^{\ast}dx=\frac{1}{2}\int_{\see}%
\frac{d}{dx}\left\vert \psi(x,\eta_{e}(x))\right\vert ^{2}dx=0.
\end{equation*}
More generally, ${\rm Re}\int_{\see}(\partial_{\tau}f)f^{\ast}dx=0$ for any
function $f$ defined on $\see$. With another simple substitution by the
boundary condition (\ref{eqn-L-phi-P}), the last term in the computation of $%
(II)$ is written as 
\begin{align*}
{\rm Re}\,\int_{\see}\partial_{\tau} &
(P+\psi_{ey}\partial_{n}\psi)\psi_{ey}\eta^{\ast}dx \\
& =-{\rm Re}\,\int_{\see}(\partial_{t}\partial_{n}\psi
+\Omega\partial_{\tau}\psi)\psi_{ey}\eta^{\ast}dx \\
& =-{\rm Re}\,\int_{\see}(\partial_{t}(\partial_{n}\psi)\psi
_{ey}\eta^{\ast}dx+\Omega {\rm Re}\,\int_{\see}(\partial_{t}\eta+\partial_{%
\tau}(\psi_{ey}\eta))\psi_{ey}\eta^{\ast})dx \\
& =-{\rm Re}\,\int_{\see}\psi_{ey}\partial_{t}(\partial_{n}\psi
)\eta^{\ast}dx+\Omega\frac{d}{dt}\int_{\see}\tfrac{1}{2}\psi_{ey}|%
\eta|^{2}dx.
\end{align*}
The last equality uses that ${\rm Re}\int_{\see}\partial_{\tau}(\psi_{ey}%
\eta)(\psi_{ey}\eta)^{\ast}dx=0$. Therefore, 
\begin{equation}
(II)=\frac{d}{dt}\int_{\see}\tfrac{1}{2}P_{ey}|\eta|^{2}dx-{\rm Re}\,\frac{d%
}{dt}\int_{\see}\psi_{ey}\partial_{n}\psi\eta^{\ast}dx+\Omega\frac {d}{dt}%
\int_{\see}\tfrac{1}{2}\psi_{ey}|\eta|^{2}dx.  \label{E:E2}
\end{equation}

Finally, it is straightforward to see that 
\begin{equation*}
(III)=\psi ^{\ast }(x,0)\int_{\{y=0\}}\partial _{t}(\partial _{y}\psi )dx=0.
\end{equation*}%
This, together with (\ref{E:E1}) and (\ref{E:E2}) proves that $\mathcal{E}%
(\eta ,\psi )$ is an invariant. In the irrotational setting, i.e. $\gamma =0$%
, the area integral $(I)$ is zero, $\Omega =0$, and the other terms remain
the same. This completes the proof.
\end{proof}

\begin{remark}
\label{remark-E-invariant} Our energy functional $\mathcal{E}$ agrees with
the second variation $\partial ^{2}\mathcal{H}$ of the energy-Casimir
functional in \cite{cost1}. Recall that the hydrostatic pressure of the
steady solution is given as in (\ref{eqn-steady-pressure}) by 
\begin{equation*}
P_{e}(x,y)=B-\tfrac{1}{2}|\nabla \psi _{e}(x,y)|^{2}-gy+\int_{0}^{\psi
_{e}(x,y)}\gamma (-p)dp,
\end{equation*}%
where $B$ is the Bernoulli constant. Differentiation of the above and
restriction on $\mathcal{S}_{e}$ then yield that 
\begin{equation*}
-P_{ey}-\Omega \psi _{ey}=\tfrac{1}{2}\partial _{y}|\nabla \psi _{e}|^{2}+g,
\end{equation*}%
where $\Omega =\gamma (\psi =0)=\omega _{e}(x,\eta _{e}(x))$. Thus, when $%
\gamma \ $is monotone, it follows that 
\begin{multline*}
2\mathcal{E}(\eta ,\psi )=\iint_{\mathcal{D}_{e}}|\nabla \psi
|^{2}dydx-\iint_{\mathcal{D}_{e}}\gamma ^{\prime }(\psi _{e})^{-1}|\omega
|^{2}dydx \\
+\int_{\mathcal{S}_{e}}\left( \tfrac{1}{2}\partial _{y}|\nabla \psi
_{e}|^{2}+g\right) |\eta |^{2}dx+2{\rm Re}\,\int_{\mathcal{S}_{e}}\psi
_{ey}\partial _{n}\psi \eta ^{\ast }dx.
\end{multline*}%
This is exactly the expression of the second variation $\partial ^{2}%
\mathcal{H}$ in \cite{cost1} of the following enery-Casimir functional (in
our notations) 
\begin{equation}
\mathcal{H}(\eta ,\psi )=\iint_{\mathcal{D}_{\eta }}\left( \frac{|\nabla
(\psi -\underline{c}y)|^{2}}{2}+gy-B-F(\omega )\right) dydx,  \label{ec-H}
\end{equation}%
around the steady state $(\eta _{e},\psi _{e})$. Here, $\mathcal{D}_{\eta
}=\{(x,y):0<y<\eta (t;x)\}$ and $(F^{\prime })^{-1}=\gamma $. The quadratic
form $\partial ^{2}\mathcal{H}$ is used in \cite{cost1} to study a formal
stability.
\end{remark}

Our next invariant is the linearized horizontal momentum. The result is free
from restrictions on $\gamma$.

\begin{lemma}
For any solution $(\eta,\psi)$ of the linearized problem of \textrm{(\ref%
{eqn-L})}, the identity 
\begin{equation*}
\frac{d}{dt}\int_{\mathcal{S}_{e}}(\psi+\psi_{ey}\eta)dx=0
\end{equation*}
holds true.
\end{lemma}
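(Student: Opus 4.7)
The quantity $\int_{\mathcal{S}_e}(\psi+\psi_{ey}\eta)\,dx$ represents, to first order, the perturbation of the horizontal momentum $\int_{\mathcal{D}_\eta}(u-\underline{c})\,dy\,dx$ in the moving frame: using the gauge $\psi(x,0)=0$ and $u=\partial_y\psi$, one rewrites $\int_{\mathcal{D}_e}u\,dy\,dx=\int_{\mathcal{S}_e}\psi\,dx$, while the second piece $\int_{\mathcal{S}_e}\psi_{ey}\eta\,dx$ accounts for the perturbation of the fluid domain. Since the constant atmospheric pressure at the surface, the flat impermeable bottom, and periodicity produce no net horizontal pressure force, this momentum is conserved nonlinearly, so one expects the identity at the linearized level. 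My plan is to verify it by direct computation from the linearized Euler equation (\ref{equality-P}) and the linearized kinematic boundary condition (\ref{eqn-L-eta}).

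I would first split
\begin{equation*}
\frac{d}{dt}\int_{\mathcal{S}_e}(\psi+\psi_{ey}\eta)\,dx=\int_{\mathcal{S}_e}\partial_t\psi\,dx+\int_{\mathcal{S}_e}\psi_{ey}\partial_t\eta\,dx.
\end{equation*}
The first summand rewrites as $\int_{\mathcal{D}_e}\partial_tu\,dy\,dx$ by the fundamental theorem of calculus. I would then substitute $\partial_tu=-\partial_xP-\partial_x[(u_e-\underline{c})u+v_ev]+\omega_ev+\omega v_e$ from the $x$-component of (\ref{equality-P}), convert the $\partial_x$ terms to surface integrals via $\int_{\mathcal{D}_e}\partial_xf\,dy\,dx=-\int_{\mathcal{S}_e}f\eta_{ex}\,dx$, apply $P=-P_{ey}\eta$ from (\ref{eqn-L-P-eta}), and use $(u_e-\underline{c})u+v_ev=\psi_{ey}\partial_n\psi$ on $\mathcal{S}_e$ (which follows from $\psi_{ex}=-\eta_{ex}\psi_{ey}$, itself from $\psi_e\equiv 0$ on $\mathcal{S}_e$). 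The bulk vorticity integral I would handle by writing $\omega_ev+\omega v_e=\psi_x\Delta\psi_e+\psi_{ex}\Delta\psi$, integrating by parts twice, and using $\nabla\psi_x\cdot\nabla\psi_e+\nabla\psi_{ex}\cdot\nabla\psi=\partial_x(\nabla\psi\cdot\nabla\psi_e)$; the bottom contributions vanish because $\psi(x,0)=0$ gives $\psi_x(x,0)=0$ and impermeability gives $\psi_{ex}(x,0)=0$, yielding $\int_{\mathcal{D}_e}[\omega_ev+\omega v_e]\,dy\,dx=\int_{\mathcal{S}_e}\psi_{ey}\partial_\tau\psi\,dx-\int_{\mathcal{S}_e}\eta_{ex}\psi_{ey}\partial_n\psi\,dx$. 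For the second summand I would substitute $\partial_t\eta=-\partial_\tau(\psi_{ey}\eta+\psi)$ from (\ref{eqn-L-eta}) and integrate by parts under periodicity. On collecting contributions, the terms containing $\partial_n\psi$ cancel between the Euler surface integral and the vorticity term, and the terms containing $\psi$ alone cancel via $\int_{\mathcal{S}_e}\psi_{ey}\partial_\tau\psi\,dx=-\int_{\mathcal{S}_e}(\partial_\tau\psi_{ey})\psi\,dx$, leaving
\begin{equation*}
\frac{d}{dt}\int_{\mathcal{S}_e}(\psi+\psi_{ey}\eta)\,dx=\int_{\mathcal{S}_e}\big[(\partial_\tau\psi_{ey})\psi_{ey}-P_{ey}\eta_{ex}\big]\eta\,dx.
\end{equation*}

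The last step is to show the bracketed integrand vanishes pointwise on $\mathcal{S}_e$. Differentiating Bernoulli's law $|\nabla\psi_e|^2+2gy=B$ tangentially along the surface gives $\psi_{ex}\partial_\tau\psi_{ex}+\psi_{ey}\partial_\tau\psi_{ey}=-g\eta_{ex}$. Combined with $\psi_{ex}=-\eta_{ex}\psi_{ey}$, the surface identity $\psi_{exx}+\psi_{eyy}=-\Omega$ (from $\Delta\psi_e=-\gamma(\psi_e)$ evaluated at $\psi_e=0$), and the steady pressure formula $P_{ey}=-\psi_{ex}\psi_{exy}-\psi_{ey}\psi_{eyy}-g-\gamma(-\psi_e)\psi_{ey}$, an explicit calculation will show $(\partial_\tau\psi_{ey})\psi_{ey}=P_{ey}\eta_{ex}$ on $\mathcal{S}_e$, with the $\Omega$-contributions mutually canceling. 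The main obstacle will be the careful bookkeeping: the linearized Euler equation, the bulk vorticity integral, and the kinematic condition each generate several boundary and volume contributions, and the cancellations emerge only after a correct sign on every integration by parts and the recognition of the pointwise Bernoulli identity in the final step.
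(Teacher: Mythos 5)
Your proposal is correct, and it shares the paper's basic strategy — compute $\tfrac{d}{dt}$ of the linearized momentum, convert $\int_{\mathcal{S}_e}\psi\,dx$ into $\iint_{\mathcal{D}_e}\partial_y\psi\,dydx$, use the linearized Euler equation together with (\ref{eqn-L-eta}) and (\ref{eqn-L-P-eta}), and finish with the pointwise identity $\psi_{ey}\partial_\tau\psi_{ey}=P_{ey}\eta_{ex}$ on $\mathcal{S}_e$ — but your intermediate route differs in two places. First, you integrate the rotational form (\ref{equality-P}), which forces you to handle the bulk term $\iint_{\mathcal{D}_e}(\omega_e v+\omega v_e)\,dydx$; your claimed evaluation $\int_{\mathcal{S}_e}\psi_{ey}\partial_\tau\psi\,dx-\int_{\mathcal{S}_e}\eta_{ex}\psi_{ey}\partial_n\psi\,dx$ is correct (the bottom terms vanish as you say, and on $\mathcal{S}_e$ one uses $\nabla\psi\cdot\nabla\psi_e=\psi_{ey}\partial_n\psi$ and $\partial_n\psi_e=(1+\eta_{ex}^2)\psi_{ey}$), and the $\partial_n\psi$ terms do cancel against the contribution of $\partial_x\big((u_e-\underline{c})u+v_ev\big)$. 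The paper instead integrates the convective form (\ref{E:velocityL}) and writes both convection terms as divergences of products with the divergence-free fields $(\psi_{ey},-\psi_{ex})$ and $(u,v)$, so only a single surface flux survives and no manipulation of the vorticity term is needed. Second, for the concluding pointwise identity the paper simply restricts the steady $x$-momentum equation to $\mathcal{S}_e$, obtaining $-P_{ex}=\psi_{ey}\partial_\tau\psi_{ey}$, and invokes $\partial_\tau P_e=0$ from the dynamic condition; your route via the tangential derivative of Bernoulli's law, the relation $\psi_{exx}+\psi_{eyy}=-\Omega$ on $\mathcal{S}_e$, and the pressure formula (\ref{eqn-steady-pressure}) is longer but sound — the listed ingredients do yield $(\partial_\tau\psi_{ey})\psi_{ey}=P_{ey}\eta_{ex}$ with the $\Omega$-terms cancelling. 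One trivial remark: the two $\int_{\mathcal{S}_e}\psi_{ey}\partial_\tau\psi\,dx$ contributions (one from the Euler/vorticity part, one from substituting (\ref{eqn-L-eta})) cancel directly, so the extra integration by parts you invoke at that point is unnecessary, though harmless.
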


\begin{proof}
We integrate over the steady fluid region $\mathcal{D}_{e}$ of the
linearized equation for the horizontal velocity 
\begin{equation*}
\partial _{t}\partial _{y}\psi +(\psi _{ey},-\psi _{ex})\cdot \nabla
(\partial _{y}\psi )+(\partial _{y}\psi ,-\partial _{x}\psi )\cdot \nabla
\psi _{ey}=-P_{x}
\end{equation*}%
and apply the divergence theorem to arrive at 
\begin{equation}
\frac{d}{dt}\iint_{\mathcal{D}_{e}}\partial _{y}\psi \,dydx+\int_{\mathcal{S}%
_{e}}(\partial _{y}\psi ,-\partial _{x}\psi )\cdot (-\eta _{ex},1)\psi
_{ey}dx=-\iint_{\mathcal{D}_{e}}P_{x}dydx.  \label{E:V}
\end{equation}%
It is straightforward to see that 
\begin{equation*}
\iint_{\mathcal{D}_{e}}\partial _{y}\psi dydx=\int_{\mathcal{S}_{e}}\psi
\,dx.
\end{equation*}%
In view of (\ref{eqn-L-eta}), the second term on the left hand side of (\ref%
{E:V}) is written as 
\begin{align*}
\int_{\mathcal{S}_{e}}(\partial _{\tau }\psi )\psi _{ey}dx& =\int_{\mathcal{S%
}_{e}}(\partial _{t}\eta +\partial _{\tau }(\psi _{ey}\eta ))\psi _{ey}dx \\
& =\frac{d}{dt}\int_{\mathcal{S}_{e}}\psi _{ey}\eta \,dx-\int_{\mathcal{S}%
_{e}}\psi _{ey}\partial _{\tau }(\psi _{ey})\eta dx.
\end{align*}%
With the use of Stokes' theorem and the dynamic boundary condition (\ref%
{E:dynamicL}), the right side of (\ref{E:V}) becomes 
\begin{equation*}
\iint_{\mathcal{D}_{e}}P_{x}dydx=\int_{\mathcal{S}_{e}}P\eta _{ex}dx=\int_{%
\mathcal{S}_{e}}P_{ey}\eta \eta _{ex}dx.
\end{equation*}%
On the other hand, the steady Euler equation restricted to the steady
free-surface $\mathcal{S}_{e}$ yields that 
\begin{equation*}
-P_{ex}=\psi _{ey}\psi _{exy}-\psi _{ex}\psi _{eyy}=\psi _{ey}(\psi
_{exy}+\eta _{ex}\psi _{eyy})=\psi _{ey}\partial _{\tau }\psi _{ey}
\end{equation*}%
Since $P_{ex}+P_{ey}\eta _{ex}=0$ on $\see$, a simple substitution then
proves the assertion.
\end{proof}

Next, the integration of (\ref{eqn-L-eta}) on $\mathcal{S}_{e}$ shows that $%
\int_{\mathcal{S}_{e}}\eta \,dx$ is an invariant. Finally, multiplication on
the linearized vorticity equation (\ref{E:vorticityI}) by $\xi $ and then
integration yield that 
\begin{equation*}
\iint_{\mathcal{D}_{e}}\omega \xi dydx
\end{equation*}%
is an invariant, for any function 
\begin{equation*}
\xi \in \ker (\psi _{ey}\partial _{x}-\psi _{ex}\partial _{y})\subset L^{2}(%
\mathcal{D}_{e}).
\end{equation*}

We summarize our results.

\begin{proposition}
\label{P:invariants} The linearized problem \textrm{{(\ref{eqn-L})} has the
energy invariant: 
\begin{align*}
\mathcal{E}(\eta ,\psi )=& \iint_{\mathcal{D}_{e}}\tfrac{1}{2}|\nabla \psi
|^{2}dydx-\iint_{\mathcal{D}_{e}}\tfrac{1}{2}\gamma ^{\prime }(\psi
_{e})^{-1}|\omega |^{2}dydx \\
& +\int_{\mathcal{S}_{e}}\tfrac{1}{2}\left( \partial _{y}(\tfrac{1}{2}%
|\nabla \psi _{e}|^{2})+g\right) |\eta |^{2}dx+{\rm Re}\,\int_{\mathcal{S}%
_{e}}\psi _{ey}\partial _{n}\psi \eta ^{\ast }dx
\end{align*}%
when $\gamma $ is monotone and 
\begin{align*}
\mathcal{E}(\eta ,\psi )& =\iint_{\mathcal{D}_{e}}\tfrac{1}{2}|\nabla \psi
|^{2}dydx \\
& +\int_{\see}\tfrac{1}{2}\left( \partial _{y}(\tfrac{1}{2}|\nabla \psi
_{e}|^{2})+g\right) |\eta |^{2}dx+\mathrm{Re}\,\int_{\see}\psi _{ey}\partial
_{n}\psi \eta ^{\ast }dx
\end{align*}%
when $\gamma \equiv 0$ (irrotational). In addition, {(\ref{eqn-L})} has the
following invariants: 
\begin{align*}
\mathcal{M}(\eta ,\psi )& =\int_{\mathcal{S}_{e}}(\psi +\psi _{ey}\eta )dx,
\\
\mathnormal{m}(\eta ,\psi )& =\int_{\mathcal{S}_{e}}\eta dx \\
\mathcal{F}(\eta ,\psi )& =\iint_{\mathcal{D}_{e}}\omega \xi dydx\mathrm{,\ }%
\text{for any }\mathrm{\xi \in \ker (\psi _{ey}\partial _{x}-\psi
_{ex}\partial _{y}).\ }
\end{align*}%
}
\end{proposition}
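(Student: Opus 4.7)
The proposition is a compendium of four conservation laws, three of which have essentially been proved in the preceding lemmas; my plan is simply to assemble them and to handle the small discrepancy between the form of $\mathcal{E}$ stated here and the form appearing in Lemma \ref{L:E}.

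First, the energy invariant $\mathcal{E}$. Lemma \ref{L:E} already establishes that the functional (\ref{energy-gamma}) (resp.\ (\ref{energy-0})) is constant along solutions of (\ref{eqn-L}). To recover the version stated in the proposition, I would invoke Remark \ref{remark-E-invariant}: differentiating the steady Bernoulli relation $P_e = B-\tfrac12|\nabla\psi_e|^2 -gy+\int_0^{\psi_e}\gamma(-p)\,dp$ in $y$ and restricting to $\mathcal{S}_e$ gives $-P_{ey}-\Omega\psi_{ey}=\tfrac12\partial_y|\nabla\psi_e|^2+g$. Substituting this identity into the boundary terms of (\ref{energy-gamma}) converts the combination $-\tfrac12 P_{ey}|\eta|^2-\tfrac{\Omega}{2}\psi_{ey}|\eta|^2$ into $\tfrac12(\partial_y(\tfrac12|\nabla\psi_e|^2)+g)|\eta|^2$, yielding the stated expression. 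In the irrotational case $\Omega=0$ and the same identity applies.

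Second, the horizontal momentum invariant $\mathcal{M}$ is exactly the conclusion of the unnamed lemma just established: integrating the $y$-component of the linearized Euler equation over $\mathcal{D}_e$, applying the divergence theorem, and using (\ref{eqn-L-eta}) and the steady pressure identity $-P_{ex}=\psi_{ey}\partial_\tau\psi_{ey}$ together with $P_{ex}+P_{ey}\eta_{ex}=0$ on $\mathcal{S}_e$ gives $\frac{d}{dt}\int_{\mathcal{S}_e}(\psi+\psi_{ey}\eta)\,dx=0$; no new argument is needed.

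Third, the mass invariant $m$ follows by simply integrating the kinematic boundary condition (\ref{eqn-L-eta}) over one spatial period on $\mathcal{S}_e$: the tangential-derivative terms $\partial_\tau(\psi_{ey}\eta)$ and $\partial_\tau\psi$ integrate to zero by periodicity, leaving $\frac{d}{dt}\int_{\mathcal{S}_e}\eta\,dx=0$. Fourth, for the Casimir invariant $\mathcal{F}$, I would multiply (\ref{E:vorticityI}) by an arbitrary $\xi\in\ker(\psi_{ey}\partial_x-\psi_{ex}\partial_y)\subset L^2(\mathcal{D}_e)$ and integrate over $\mathcal{D}_e$. The term $\iint \xi\,\partial_t\omega\,dy\,dx=\frac{d}{dt}\iint \omega\xi\,dy\,dx$, while the remaining terms take the form $\iint \xi[\psi_e,\cdot]\,dy\,dx$; since $[\psi_e,\cdot]=\psi_{ey}\partial_x(\cdot)-\psi_{ex}\partial_y(\cdot)$ is formally anti-self-adjoint (the boundary flux vanishes by the steady kinematic condition on $\mathcal{S}_e$ and by $\psi_{ex}=0$ on $\{y=0\}$, exactly as in the computation of $(I)$ in the proof of Lemma \ref{L:E}), an integration by parts moves this operator onto $\xi$, producing zero by the assumption $\xi\in\ker(\psi_{ey}\partial_x-\psi_{ex}\partial_y)$. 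No step here is expected to present serious obstacles; the only mildly delicate point is the algebraic rearrangement of the boundary contributions to match the form of $\mathcal{E}$ stated in the proposition, and this is resolved by the Bernoulli identity recalled above.
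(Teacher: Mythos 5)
Your proposal is correct and takes essentially the same route as the paper, which presents the proposition simply as a summary: the energy invariant comes from Lemma \ref{L:E} with the boundary terms rewritten via the pressure identity $-P_{ey}-\Omega\psi_{ey}=\tfrac12\partial_y|\nabla\psi_e|^2+g$ of Remark \ref{remark-E-invariant}, the momentum invariant is the preceding unnamed lemma, the mass invariant follows by integrating (\ref{eqn-L-eta}) over a period, and the Casimir invariant by testing (\ref{E:vorticityI}) against $\xi$ in the kernel (your integration-by-parts justification, together with the observation that $\gamma'(\psi_e)\xi$ also lies in the kernel, supplies details the paper omits). The only slip is terminological: the momentum lemma integrates the horizontal ($x$-) component of the linearized Euler equation, i.e.\ the equation for $u=\partial_y\psi$ with right-hand side $-P_x$, not the $y$-component.
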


The nonlinear water-wave problem (\ref{E:Euler1})--(\ref{E:bottom}) has the
following invariants \cite[Section 2]{cost1}: 
\begin{alignat*}{2}
\mathfrak{E}& =\iint_{\mathcal{D}(t)}\left( \tfrac{1}{2}(u^{2}+v^{2})+gy%
\right) dydx\quad & & \text{(energy),} \\
\mathfrak{M}& =\iint_{\mathcal{D}(t)}u\,dydx\quad & & \text{(horizontal
momentum),} \\
\mathfrak{m}& =\iint_{\mathcal{D}(t)}dydx\quad & & \text{(mass),} \\
\mathfrak{F}& =\iint_{\mathcal{D}(t)}f(\omega )dydx & & \text{(Casimir
invariant)},
\end{alignat*}%
where 
\begin{equation*}
\mathcal{D}(t)=\{(x,y):0<x<2\pi /\alpha \,,\,0<y<\eta (t;x)\}
\end{equation*}%
is the fluid domain at time $t$ of one wave length, and the function $f$ is
arbitrary such that the integral $\mathfrak{F}$ exists.

The invariants of the linearized problem $\mathcal{E}$, $\mathcal{M}$, $%
\mathnormal{m}$, and $\mathcal{F}$ in Proposition \ref{P:invariants} can be
obtained by expanding the invariants of the nonlinear problem $\mathfrak{E}$%
, $\mathfrak{M}$, $\mathfrak{m}$ and $\mathfrak{F}$, respectively, around
the steady state $(\eta _{e}(x),\psi _{e}(x,y))$. The quadratic form $%
\mathcal{E}(\eta ,\psi )$ is the second variation of the energy functional $%
\mathcal{H}$ defined in (\ref{ec-H}) (see Remark \ref{remark-E-invariant}),
which is a combination of $\mathfrak{E}$, $\mathfrak{M}$, $\mathfrak{m}$ and 
$\mathfrak{F}$. The invariants $\mathcal{M}$ and $\mathnormal{m}$ of the
linear problem are the first variations of $\mathfrak{M}$ and $\mathfrak{m}$%
, respectively. We note that $\mathcal{F}$ is the first variation of $%
\mathfrak{F}$, since by the assumptions of no stagnation and the
monotonicity of $\gamma $ it follows that 
\begin{equation*}
\ker (\psi _{ey}\partial _{x}-\psi _{ex}\partial _{y})=\{f(\psi _{e}):\text{$%
f$ is arbitrary}\}=\{f(\omega _{e}):\text{$f$ is arbitrary}\}.
\end{equation*}

We now make some comments on the implications of these invariants on the
stability of water waves. A traditional approach of proving stability for
conservative systems is the energy method, for which one tries to show that
a steady state is an energy minimizer under the constraints of other
invariants such as momentum, mass, etc. This method has been widely used in
the stability analysis of various approximate equations such as the KdV
equation \cite{anjulo-bona}, \cite{benjamin72} and water waves with a large
surface tension \cite{meilke}. However, steady waves of the full pure
gravity water-wave problem in general are \emph{not} (constrainted) energy
minimizers. This was first observed (\cite{bona-sachs}) in the irrotational
case. By our discussions below, even with the favorable vorticity, the
situation remains the same. Note that if a steady gravity water-wave is an
energy minimizer under the constraints of fixed $\mathfrak{M}$, $\mathfrak{m}
$ and $\mathfrak{F}$, then at the linearized level the second variation $%
\mathcal{E}$ should be positive under the constraints that the variations $%
\mathcal{M}$, $\mathnormal{m}$ and $\mathcal{F}$ are zero.

In the irrotational setting, the first two terms in the expression (\ref%
{energy-0}) of $\mathcal{E}(\eta ,\psi )$ yield a positive norm 
\begin{equation}
\iint_{\mathcal{D}_{e}}|\nabla \psi |^{2}dydx+\int_{\mathcal{S}_{e}}|\eta
|^{2}dx.  \label{positive-irro}
\end{equation}%
(Indeed, since $\Delta P_{e}=-2\psi _{exy}^{2}-\psi _{exx}^{2}-\psi
_{eyy}^{2}\leq 0$ and $P_{ey}(x,0)=-g$ by the maximum principle $P_{e}$
attains its minimum at the free surface $\mathcal{S}_{e}$. Furthermore,
since $P_{e}$ takes a constant on $\mathcal{S}_{e}$ by the Hopf lemma $%
P_{ey}<0$ on $\mathcal{S}_{e}$.) The last term $\mathrm{Re}\,\int_{\mathcal{S%
}_{e}}\psi _{ey}\partial _{n}\psi \eta ^{\ast }dx$ of the right side of (\ref%
{energy-0}), however, does not have a definite sign and contains a $1/2$%
-higher derivative than that of (\ref{positive-irro}). Thus, it cannot be
bounded by the norm (\ref{positive-irro}), even with the constraints $%
\mathcal{M}=\mathnormal{m}=0$. Consequently, the quadratic form $\mathcal{E}%
(\eta ,\psi )$ is highly indefinite unless $\psi _{ey}\equiv 0$, that is,
for a trivial flow. In other words, steady gravity water waves in the
irrotational case are in general not (constrainted) energy minimizers.
Indeed, in \cite{garabedian}, \cite{buff-tol-saddle}, steady water-waves
were constructed as energy saddles by variational methods.

With a nonzero vorticity, a control of the mixed-type term in the energy
functional by other positive terms fails for the same reason as in the
irrotational setting. Let us consider the case of a favorable vorticity with 
$\gamma ^{\prime }<0$. Under some additional assumptions, it is shown in 
\cite{cost1} that the first three terms in the expression (\ref{energy-gamma}%
) of $\mathcal{E}(\eta ,\psi )$ gives a positive norm 
\begin{equation}
\iint_{\mathcal{D}_{e}}(|\nabla \psi |^{2}+|\omega |^{2})dydx+\int_{\mathcal{%
S}_{e}}|\eta |^{2}dx.  \label{norm-positive}
\end{equation}%
However, due to the lack of control of the boundary value of $\psi $ on $%
\mathcal{S}_{e}$, one could not use the elliptic regularity to get a better
control for $\psi ,$such as $\left\Vert \psi \right\Vert _{H^{2}}$. Thus,
the mixed-type term ${\rm Re}\,\int_{\mathcal{S}_{e}}\psi _{ey}\partial
_{n}\psi \eta ^{\ast }dx$ is still not controllable by (\ref{norm-positive}%
). In \cite{cost1}, several class of perturbations are introduced to make
this mixed term controllable by (\ref{norm-positive}), and therefore get the
positivity of $\mathcal{E}\left( \eta ,\psi \right) $ and a formal stability
for these special perturbations. However, it is difficult to see that these
special classes are invariant during the evolution of the water-wave
problem. So, it remains unclear how to pass from such formal stability to
the genuine stability, even for these special perturbations. It is not hard
to see that by adding other invariants as constraints, one can relax the
assumptions to get the positive term (\ref{norm-positive}) such as in \cite%
{lin2} for the fixed boundary case, but the mixed-type term remains
uncontrollable. In conclusion, the quadratic form $\mathcal{E}(\eta ,\psi )$
is in general indefinite also in the rotational setting, and steady water
waves with vorticity are expected to be energy saddles.

The above discussions of steady water waves as energy saddles do not imply
that steady water-waves are necessarily unstable. Indeed, as mentioned in
the Introduction, the small Stokes waves are believed to be stable \cite%
{longet-78-super}, \cite{tanaka83} under perturbations of the same period.
For the rotational case, under the assumption of a monotone $\gamma $, the
corresponding trivial solutions with shear flows defined in Lemma \ref%
{L:trivial} can not have an inflection point, since $\gamma ^{\prime }(\psi
_{e}(y))=-U^{\prime \prime }(y)/U(y)\neq 0$ and $U<0$ by the no-stagnation
assumption. Thus by Theorem \ref{T:stable}, such shear flows are linearly
stable to perturbations of any period. Since small-amplitude waves with any
monotone vorticity relation $\gamma $ bifurcate from these strongly stable
shear flows, they are likely to be stable. But, a successful stability
analysis of any nontrivial steady gravity water waves would require to use
the full set of equations instead of a few invariants.

\section{Linear instability of shear flows with free surface}

\label{S:formulation}This section is devoted to the study of the linear
instability of a free-surface shear flow $(U(y),0)$ with $y\in \lbrack 0,h]$%
, which is a steady solution of the water-wave problem (\ref{E:Euler1})--(%
\ref{E:bottom}) with $P(x,y)=-gy$. In the traveling frame of reference with
the speed $\underline{c}>\max U$, this may be recognized as a trivial
solution of the traveling-wave problem (\ref{stream}): 
\begin{equation*}
\eta _{e}(x)\equiv h\quad \text{and}\quad (\psi _{ey}(x,y),-\psi
_{ex}(x,y))=(U(y)-\underline{c},0).
\end{equation*}%
Throughout this section, we write $U$ for $U-\underline{c}$ to simplify our
notations.

We seek for normal mode solutions of the form 
\begin{equation*}
\eta(t;x)=\eta_{h}e^{i\alpha(x-ct)}, \qquad
\psi(t;x,y)=\phi(y)e^{i\alpha(x-ct)}
\end{equation*}
and $P(t;x,y)=P(y)e^{i\alpha(x-ct)}$ to the linearized water-wave problem (%
\ref{eqn-L}). Here, $\alpha>0$ is the wave number and $c$ is the complex
phase speed. It is equivalent to find solutions to the growing-mode problem (%
\ref{growing mode}) of the form $\lambda=-i\alpha c$ and 
\begin{equation*}
\eta(x)=\eta_{h}e^{i\alpha x}, \qquad \psi(x,y)=\phi(y)e^{i\alpha x}
\end{equation*}
and $P(x, \eta_{e}(x))=P_{h}e^{i\alpha x}$. Note that ${\rm Re}\lambda>0$
if and only if ${\rm Im}c>0$.

Since $\gamma ^{\prime }(\psi _{e}(y))=\omega _{ey}(y)/\psi
_{ey}(y)=-U^{\prime \prime }(y)/U(y)$ and $(X_{e}(s),Y_{e}(s))=(x+U(y)s,y)$,
the linearized vorticity equation (\ref{eqn-G-phi}) translates into the
Rayleigh equation 
\begin{equation}
(U-c)(\phi ^{\prime \prime }-\alpha ^{2}\phi )-U^{\prime \prime }\phi =0\ \ 
\text{for}\quad y\in (0,h).  \label{rayleigh}
\end{equation}%
Here and elsewhere the prime denote the differentiation in the $y$-variable.
The boundary conditions (\ref{eqn-G-eta}), (\ref{eqn-G-P-eta}) and (\ref%
{eqn-G-phi-n}) on the free surface are simplified to be 
\begin{equation*}
(c-U(h))\eta _{h}=\phi (h),\qquad P_{h}-g\eta _{h}=0
\end{equation*}%
and 
\begin{equation*}
(c-U(h))\phi ^{\prime }(h)=P_{h}+U^{\prime }(h)\phi (h),
\end{equation*}%
respectively. Eliminating $\eta _{h}$ and $P_{h}$ from the above yields that 
\begin{equation*}
(U(h)-c)^{2}\phi ^{\prime }(h)=\left( g+U^{\prime }(h)(U(h)-c)\right) \phi
(h).
\end{equation*}%
The bottom boundary condition (\ref{eqn-G-bottom}) becomes $\phi (0)=0.$ So
the linear instability is reduced to study the Rayleigh equation (\ref%
{rayleigh}) with the boundary condition 
\begin{equation}
\begin{cases}
(U(h)-c)^{2}\phi ^{\prime }(h)=\left( g+U^{\prime }(h)(U(h)-c)\right) \phi
(h) \\ 
\phi (0)=0.%
\end{cases}
\label{bc-rayleigh}
\end{equation}%
A shear profile $U$ is said to be \emph{linearly unstable} if there exists a
nontrivial solution of (\ref{rayleigh})-(\ref{bc-rayleigh}) with $\mathrm{Im}%
\,c>0$.

The Rayleigh system (\ref{rayleigh})--(\ref{bc-rayleigh}) is alternatively
derived in \cite{yih72} by linearizing directly the water-wave problem (\ref%
{E:Euler1})--(\ref{E:bottom}) around $(U(y),0)$ in $y\in \lbrack 0,h]$. Note
that for a real number $c>\max U$ the Rayleigh system (\ref{rayleigh})--(\ref%
{bc-rayleigh}) becomes the bifurcation equation (\ref{rayleigh-disper})--(%
\ref{bc-disper}).

In case of rigid walls at $y=h$ and $y=0$, that is, the Dirichlet boundary
conditions $\phi (h)=0=\phi (0)$ in place of (\ref{bc-rayleigh}), the
instability of a shear flow is a classical problem, which has been under
extensive research since Lord Rayleigh \cite{ray}. Recently, by a novel
analysis of neutral modes, Lin \cite{lin1} established a sharp criterion for
linear instability in the rigid-wall setting for a general class of shear
flows. Our objective in this section is to obtain analogous results in the
free-surface setting.

Below is the definition of the class of shear flows studied in this section.
By an inflection value we mean the value of $U$ at an inflection point.

\begin{definition}
\label{classK} A function $U\in C^{2}([0,h])$ is said to be in the class $%
\mathcal{K}^{+}$ if $U$ has an unique inflection value $U_{s}$ and 
\begin{equation}
K(y)=-\frac{U^{\prime \prime }(y)}{U(y)-U_{s}}  \label{E:K}
\end{equation}%
is bounded and positive on $[0,h]$. 
\end{definition}

An example of class $\mathcal{K}^{+}$ flows is $U(y)=\sin my$, for which $%
K(y)=m^{2}$.

For $U\left( y\right) $ in the class $\mathcal{K}^{+}$, consider the
Sturm-Liouville equation 
\begin{equation}
\phi ^{\prime \prime }-\alpha ^{2}\phi +K(y)\phi =0\qquad \text{for}\quad
y\in (0,h)  \label{E:SL}
\end{equation}%
with the boundary conditions 
\begin{gather}
\begin{cases}
\label{bc-sturm1}\phi ^{\prime }(h)=g_{r}(U_{s})\phi (h)\quad & \text{if}%
\quad U(h)\neq U_{s} \\ 
\phi (h)=0\quad & \text{if}\quad U(h)=U_{s},%
\end{cases}
\\
\phi (0)=0.  \label{bc-sturm2}
\end{gather}%
%
%
%
%
%
%
%
%
%
%
%
%
Here, 
\begin{equation}
g_{r}(c)=\frac{g}{(U(h)-c)^{2}}+\frac{U^{\prime }(h)}{U(h)-c}.
\label{E:defn-fc}
\end{equation}


The following theorem gives a sharp instability criterion for shear flows in
class $\mathcal{K}^{+}$, for the free surface case. 

\begin{theorem}[Linear instability of free-surface shear flows in $\mathcal{K%
}^{+}$]
\label{class-k}Consider a flow $U\left( y\right) $ in class \ $\mathcal{K}%
^{+}.$ Denote by $-\alpha _{\max }^{2}$ the lowest eigenvalue of $-\frac{%
d^{2}}{dy^{2}}-K\left( y\right) \ $with the boundary condition (\ref%
{bc-sturm1})--(\ref{bc-sturm2}), which is assumed to be negative. Then for
each $\alpha \in \left( 0,\alpha _{\max }\right) ,$ there exists an unstable
solution-triple $(\phi ,\alpha ,c)$ (with $\mathrm{Im}\,c>0$) of (\ref%
{rayleigh})--(\ref{bc-rayleigh}). The interval of unstable wave numbers $%
(0,\alpha _{\max })$ is maximal in the sense that the flow is linearly
stable if either the operator $-\frac{d^{2}}{dy^{2}}-K(y)$ on $y\in (0,h)$
with (\ref{bc-sturm1})--(\ref{bc-sturm2}) is nonnegative or $\alpha \geq
\alpha _{\max }$.
\end{theorem}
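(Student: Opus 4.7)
The plan is to relate the Rayleigh system (\ref{rayleigh})--(\ref{bc-rayleigh}) to the Sturm--Liouville problem (\ref{E:SL})--(\ref{bc-sturm2}): Rayleigh's equation degenerates to the SL equation exactly at $c=U_s$, the only real value of $c$ at which the coefficient $U''/(U-c)$ stays bounded on $[0,h]$ under the class $\mathcal{K}^+$ hypothesis. Thus the SL spectrum parametrizes the \emph{neutral limiting modes} --- real-$c$ solutions arising as limits of unstable ones --- which in turn play the role of bifurcation points for curves of unstable solutions. Combining an analysis of these neutral modes, a local bifurcation at each SL eigenvalue, and a global continuation argument will produce instability on all of $(0,\alpha_{\max})$, while an integral identity against the SL quadratic form will yield stability for $\alpha\geq\alpha_{\max}$.

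For the instability half I would proceed in four steps. First, I would show that any neutral limiting mode must have $c=U_s$. Taking a sequence $(\phi_n,\alpha_n,c_n)$ of solutions of (\ref{rayleigh})--(\ref{bc-rayleigh}) with $\mathrm{Im}\,c_n>0$, $\alpha_n\to\alpha_0>0$, $c_n\to c_0\in\mathbb{R}$, normalized by $\|\phi_n\|_{L^2}=1$, and using $U''=-K(U-U_s)$, rewrite Rayleigh as
\begin{equation*}
-\phi_n''+(\alpha_n^2-K)\phi_n=\frac{K(c_n-U_s)}{U-c_n}\phi_n,
\end{equation*}
pair with $\bar\phi_n$, and extract the imaginary part. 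The bulk term $\mathrm{Im}\,c_n\int K|\phi_n|^2(U-U_s)/|U-c_n|^2\,dy$ must balance the surface term $\mathrm{Im}\,g_r(c_n)|\phi_n(h)|^2$, and positivity and boundedness of $K$ on $[0,h]$ then force $c_0=U_s$. Second, at $c=U_s$ the Rayleigh system collapses to (\ref{E:SL})--(\ref{bc-sturm2}): dividing (\ref{bc-rayleigh}) by $(U(h)-c)^2$ and sending $c\to U_s$ recovers (\ref{bc-sturm1}) when $U(h)\neq U_s$, and yields $\phi(h)=0$ when $U(h)=U_s$. Hence the neutral wave numbers are exactly those $\alpha>0$ for which $-\alpha^2$ is a negative eigenvalue of $-d^2/dy^2-K$ under (\ref{bc-sturm1})--(\ref{bc-sturm2}), and $\alpha_{\max}$ is the largest such by hypothesis. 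Third, near each neutral pair $(\alpha_*,U_s)$ I would apply a Lyapunov--Schmidt reduction to the analytic family of Rayleigh BVPs in $(\alpha,c)$, computing the leading order of the bifurcation equation to produce unstable modes ($\mathrm{Im}\,c>0$) for $\alpha$ in a left neighborhood of $\alpha_*$. Fourth, let $\mathcal{U}\subset(0,\infty)$ be the set of $\alpha$ admitting an unstable mode. Analytic perturbation (following Steinberg \cite{steinberg}) makes $\mathcal{U}$ open; the bifurcation step puts a left neighborhood of $\alpha_{\max}$ inside $\mathcal{U}$; and any boundary point $\alpha_*\in(0,\alpha_{\max})$ of $\mathcal{U}$ would produce a neutral limiting mode at $\alpha_*$, hence an SL eigenvalue, and then the bifurcation step would extend $\mathcal{U}$ across $\alpha_*$, a contradiction. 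So $\mathcal{U}\supset(0,\alpha_{\max})$.

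For the stability half I would argue by contradiction from a hypothetical growing mode with $\mathrm{Im}\,c>0$. Multiplying Rayleigh by $\bar\phi$, integrating by parts using (\ref{bc-rayleigh}), and substituting $U''=-K(U-U_s)$ yields
\begin{equation*}
Q_\alpha(\phi)=\int_0^h\frac{K(c-U_s)}{U-c}|\phi|^2\,dy+\bigl(g_r(c)-g_r(U_s)\bigr)|\phi(h)|^2,
\end{equation*}
where $Q_\alpha(\phi):=\int_0^h\bigl(|\phi'|^2+(\alpha^2-K)|\phi|^2\bigr)\,dy-g_r(U_s)|\phi(h)|^2$ is the real quadratic form of $-d^2/dy^2-K+\alpha^2$ under (\ref{bc-sturm1})--(\ref{bc-sturm2}). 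This form is nonnegative whenever $\alpha\geq\alpha_{\max}$ or whenever $-d^2/dy^2-K$ itself is nonnegative. Splitting the identity into real and imaginary parts and exploiting $\mathrm{Im}\,c>0$ together with $K>0$, the resulting pair of relations is incompatible with $Q_\alpha(\phi)\geq 0$ and nontrivial $\phi$, yielding the contradiction.

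The main obstacle is the first step. The singular factor $(U-c_n)^{-1}$ becomes unbounded near any potential critical layer $\{U(y)=c_0\}\subset(0,h)$, so the bulk and surface imaginary-part contributions have to be tracked simultaneously as $c_n\to c_0$. The new surface term $\mathrm{Im}\,g_r(c_n)|\phi_n(h)|^2$ has no analogue in the rigid-wall analysis of \cite{lin1} and demands fresh bookkeeping. The class $\mathcal{K}^+$ hypothesis, specifically that $K=-U''/(U-U_s)$ is bounded and strictly positive on $[0,h]$, is precisely what closes these estimates and excludes $c_0\neq U_s$.
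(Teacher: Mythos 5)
Your instability half follows the paper's architecture (neutral limiting modes have $c=U_s$ and solve (\ref{E:SL})--(\ref{bc-sturm2}); the unstable set $\mathcal{U}$ is open with boundary points at Sturm--Liouville eigenvalue wave numbers; one-sided bifurcation of unstable modes), but your step 4 contains a genuine logical gap. Since the bifurcation at a neutral wave number produces unstable modes only in a \emph{left} neighborhood (as you yourself state in step 3), it does not ``extend $\mathcal{U}$ across'' a boundary point $\alpha_*$, so no contradiction arises: the scenario in which an interior eigenvalue wave number $\alpha_j\in(0,\alpha_{\max})$ has unstable modes on both sides but is itself stable is not excluded by your argument, and the SL operator may well have several negative eigenvalues. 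The paper devotes a separate, delicate argument to exactly this point: for unstable modes with $\alpha_k\downarrow\alpha_j$ it shows $\mathrm{Im}\,c_k$ has a positive lower bound, by pairing the Rayleigh equations for $\phi_k$ and the neutral mode $\phi_s$, deriving $\mathrm{Im}\,c_k=(\alpha_k^2-\alpha_j^2)\,\mathrm{Im}(B_k/D_k)$ and invoking the Plemelj-type limit (\ref{integral-limit 2}) to reach a sign contradiction if $\mathrm{Im}\,c_k\to0$; without something of this kind you only obtain instability on $(0,\alpha_{\max})\setminus\{\alpha_j\}$. Two further points: your step 3 cannot be a standard Lyapunov--Schmidt reduction, because $\Phi(\alpha,c)$ is analytic only for $\mathrm{Im}\,c>0$ and its $c$-derivative has merely a one-sided boundary value at $c=U_s$ (containing the term $i\pi\sum_j K(y_j)\phi_s^2(y_j)/|U'(y_j)|$); the paper runs a contraction mapping on a half (triangular) neighborhood instead. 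And your step 1, which you rightly flag as the main obstacle, needs the uniform $H^1$/$H^2$ bound on the normalized sequence (Lemma \ref{lemma-h2-bound}), the non-vanishing of the limit $\phi_s$ at some critical point (Lemma \ref{vanish}) so that the singular bulk integral actually blows up, and a switch to the reciprocal boundary function $g_s$ in the case $U(h)=c_s$.

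The stability half is the larger gap: the direct quadratic-form argument does not close. Carrying out your plan, the imaginary part gives $c_i\int_0^h K\frac{U-U_s}{|U-c|^2}|\phi|^2dy=-\mathrm{Im}\,g_r(c)\,|\phi(h)|^2$, and substituting into the real part yields
\begin{equation*}
Q_\alpha(\phi)=-\int_0^h K\,\frac{(c_r-U_s)^2+c_i^2}{|U-c|^2}|\phi|^2\,dy+\Bigl(\mathrm{Re}\,g_r(c)+\tfrac{U_s-c_r}{c_i}\mathrm{Im}\,g_r(c)-g_r(U_s)\Bigr)|\phi(h)|^2 .
\end{equation*}
The volume term is indeed strictly negative, but the contradiction with $Q_\alpha(\phi)\geq0$ requires the bracket to be nonpositive, and it need not be: writing $a=U(h)-U_s$, $b=U(h)-c_r$, its $g$-part equals $g\bigl[(3b^2-2ab-c_i^2)/(b^2+c_i^2)^2-1/a^2\bigr]$, which is positive for instance when $a<0$, $b>0$, $b+2a<0$ and $c_i$ is small, with the $U'(h)$-part also positive when $U'(h)\geq0$ --- a configuration compatible with class $\mathcal{K}^{+}$ and the semicircle theorem. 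So the pair of relations is perfectly compatible with $Q_\alpha(\phi)\geq0$ and a nontrivial $\phi$; this is essentially why the paper's remark after Theorem \ref{T:stable} notes that Yih's identity-based argument fails to give stability for general free-surface flows. The paper proves the stability half by a completely different route: a hypothetical unstable mode at $\alpha\geq\alpha_{\max}$ is continued to larger wave numbers (Lemma \ref{basis}); stability at large wave numbers (Lemma \ref{lemma-bound-wave number}) forces the continuation to terminate at a neutral limiting mode with wave number $\alpha_s>\alpha$, and Proposition \ref{prop-neutral-mode} then makes $-\alpha_s^2<-\alpha_{\max}^2$ an eigenvalue of the SL problem, contradicting that $-\alpha_{\max}^2$ is the lowest. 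To repair your proof you would either need a genuinely new way to control the indefinite surface terms, or adopt this continuation scheme, which in turn requires the high-wave-number stability lemma that your outline does not contain.
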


>From the usual variational consideration, the lowest eigenvalue $-\alpha _{%
\mathrm{max}}^{2}$ is characterized as 
\begin{subequations}
\label{E:amax}
\begin{equation}
-\alpha _{\max }^{2}=\inf_{\substack{ \phi \in H^{1}(0,h)  \\ \phi (0)=0}}%
\frac{\int_{0}^{h}(\left\vert \phi ^{\prime }\right\vert ^{2}-K(y)|\phi
|^{2})dy+g_{r}(U_{s})|\phi (h)|^{2}}{\int_{0}^{h}|\phi |^{2}dy}
\label{E:amax1}
\end{equation}%
in case $U(h)\neq U_{s}$, and 
\begin{equation}
-\alpha _{\max }^{2}=\inf_{\substack{ \phi \in H^{1}(0,h)  \\ \phi
(0)=0=\phi (h)}}\frac{\int_{0}^{h}(\left\vert \phi ^{\prime }\right\vert
^{2}-K(y)|\phi |^{2})dy}{\int_{0}^{h}|\phi |^{2}dy}  \label{E:amax2}
\end{equation}%
in case $U(h)=U_{s}$.

\subsection{Neutral limiting modes}

The proof of Theorem \ref{class-k} makes use of \emph{neutral limiting modes}%
, as in the rigid-wall setting~\cite{lin1}.

\begin{definition}[Neutral limiting modes]
A triple $(\phi _{s},\alpha _{s},c_{s})$ with $\alpha _{s}$ positive and $%
c_{s}$ real is called a neutral limiting mode if it is the limit of a
sequence of unstable solutions $\{(\phi _{k},\alpha
_{k},c_{k})\}_{k=1}^{\infty }$ of (\ref{rayleigh})--(\ref{bc-rayleigh}) as $%
k\rightarrow \infty $. The convergence of $\phi _{k}$ to $\phi _{s}$ is in
the almost-everywhere sense. For a neutral limiting mode, $\alpha _{s}$ is
called the neutral limiting wave number and $c_{s}$ is called the neutral
limiting wave speed.
\end{definition}

Lemma \ref{basis} below establish that neutral limiting wave numbers form
the boundary points of the interval of unstable wave numbers, and thus the
stability investigation of a shear flow is reduced to find all neutral
limiting modes and then study the stability properties near them. In
general, it is difficult to locate all neutral limiting modes. For flows in
class $\mathcal{K}^{+}$, nonetheless, neutral limiting modes are
characterized by the inflection value.

\begin{proposition}
\label{prop-neutral-mode}For $U\in \mathcal{K}^{+}$ a neutral limiting mode $%
(\phi _{s},\alpha _{s},c_{s})$ must solve (\ref{E:SL})--(\ref{bc-sturm2})
with $c_{s}=U_{s}$.
\end{proposition}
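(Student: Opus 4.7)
The plan is to take a sequence of unstable solutions $(\phi_k,\alpha_k,c_k)$ of (\ref{rayleigh})--(\ref{bc-rayleigh}) with $\operatorname{Im} c_k > 0$, normalized so that $\|\phi_k\|_{L^2(0,h)} = 1$, and pass to a subsequential limit as $c_k \to c_s$ and $\alpha_k \to \alpha_s$. The central task is to identify $c_s$ with $U_s$; passage to the limit in (\ref{rayleigh})--(\ref{bc-rayleigh}) will then deliver (\ref{E:SL})--(\ref{bc-sturm2}).

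The first step is to derive an imaginary-part integral identity. Multiplying (\ref{rayleigh}) by $\bar\phi_k$, integrating by parts over $(0,h)$, imposing (\ref{bc-rayleigh}), substituting the characterizing relation $U'' = -K(U - U_s)$ of the class $\mathcal{K}^+$, and splitting $(U - U_s)/(U - c_k) = 1 + (c_k - U_s)/(U - c_k)$ gives
\begin{equation*}
g_r(c_k)\,|\phi_k(h)|^2 - \int_0^h \bigl[|\phi_k'|^2 + (\alpha_k^2 - K)|\phi_k|^2\bigr]\, dy + (c_k - U_s)\int_0^h \frac{K\,|\phi_k|^2}{U - c_k}\, dy = 0.
\end{equation*}
Since $\operatorname{Im}\bigl[(c_k - U_s)/(U - c_k)\bigr] = \operatorname{Im} c_k \cdot (U - U_s)/|U - c_k|^2$, taking the imaginary part and dividing by $\operatorname{Im} c_k > 0$ produces the identity
\begin{equation*}
\left[\frac{2g\bigl(U(h) - \operatorname{Re} c_k\bigr)}{|U(h) - c_k|^4} + \frac{U'(h)}{|U(h) - c_k|^2}\right]|\phi_k(h)|^2 + \int_0^h \frac{K(U - U_s)|\phi_k|^2}{|U - c_k|^2}\, dy = 0 \qquad (\star).
\end{equation*}
The real part of the same identity, combined with the boundedness of $K$ and $\alpha_k$, supplies a uniform $H^1$ bound on $\phi_k$; hence along a subsequence $\phi_k \to \phi_s$ in $C^0([0,h])$, and the normalization keeps $\phi_s$ nontrivial.

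The second step forces $c_s = U_s$ via a concentration argument. A Howard semicircle-type bound, obtained from the substitution $\Phi_k = \phi_k/(U - c_k)$ that recasts the system as $\bigl[(U - c_k)^2 \Phi_k'\bigr]' = \alpha_k^2 (U - c_k)^2 \Phi_k$ with the clean boundary relation $(U(h) - c_k)^2 \Phi_k'(h) = g\,\Phi_k(h)$, yields $\operatorname{Re} c_k \in (\min U, \max U)$ after taking the imaginary part of the resulting energy identity; hence $c_s \in [\min U, \max U]$. Suppose for contradiction that $c_s \neq U_s$. At any interior critical point $y_j \in (0,h)$ with $U(y_j) = c_s$, the relation $U''(y_j) = -K(y_j)(c_s - U_s) \neq 0$ says that $U$ crosses the level $c_s$ transversally, so $U'(y_j) \neq 0$. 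Near $y_j$ the kernel $1/|U - c_k|^2$ behaves like a Poisson-type mollifier of total mass $\pi/\bigl(|U'(y_j)|\operatorname{Im} c_k\bigr)$, and multiplication by the nonzero constant $K(y_j)(c_s - U_s)$ makes the integral in $(\star)$ blow up like $1/\operatorname{Im} c_k$ whenever $\phi_s(y_j) \neq 0$, contradicting the boundedness of the boundary term in $(\star)$. The residual possibility of $\phi_s$ vanishing at every critical point is excluded by passing (\ref{rayleigh}) to the limit off the critical set: the extra vanishings, combined with $\phi_s(0) = 0$ and the surface condition, force $\phi_s \equiv 0$ and contradict the normalization. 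The endpoint possibilities $c_s = U(0)$ or $c_s = U(h)$ are treated separately using the boundary term of $(\star)$.

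Once $c_s = U_s$ is established, the singular coefficient $U''/(U - c_k) = -K - K(U_s - c_k)/(U - c_k)$ converges to $-K$ by dominated convergence away from the inflection set, so (\ref{rayleigh}) passes to (\ref{E:SL}) for $\phi_s$. The bottom condition $\phi_s(0) = 0$ is immediate. Rewriting (\ref{bc-rayleigh}) as $(U(h) - c_k)^2 \phi_k'(h) = [g + U'(h)(U(h) - c_k)]\phi_k(h)$, the case $U(h) \neq U_s$ produces $\phi_s'(h) = g_r(U_s)\phi_s(h)$ after division by $(U(h) - c_k)^2$, while in the case $U(h) = U_s$ the left side vanishes in the limit and the right side reduces to $g\,\phi_s(h)$, forcing $\phi_s(h) = 0$. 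The main obstacle is the concentration argument in the third paragraph: precise local analysis of the Rayleigh equation near the critical layer is needed to make the $1/\operatorname{Im} c_k$ blow-up rigorous and to exclude the degenerate alternative $\phi_s \equiv 0$ at all critical points.
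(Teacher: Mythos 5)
Your overall strategy---pass to the limit along a sequence of unstable modes, use the imaginary-part identity $(\star)$ to force concentration at the critical layer, and use the non-vanishing property (the paper's Lemma \ref{vanish}) to rule out $\phi_s$ vanishing at every critical point---is the same as the paper's. The genuine gap is at the compactness step. The real part of your energy identity is (\ref{zero-real}), whose integrand contains $U''(U-\mathrm{Re}\,c_k)/|U-c_k|^{2}\,|\phi_k|^{2}$; writing $U''=-K(U-U_s)$ this equals $-K\bigl[(U-\mathrm{Re}\,c_k)^{2}+(\mathrm{Re}\,c_k-U_s)(U-\mathrm{Re}\,c_k)\bigr]/|U-c_k|^{2}$, and the second piece is of size $|\mathrm{Re}\,c_k-U_s|/|U-c_k|$, which blows up like $1/\mathrm{Im}\,c_k$ on the critical layer unless $\mathrm{Re}\,c_k\to U_s$---which is precisely what you are trying to prove. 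So ``the real part plus boundedness of $K$ and $\alpha_k$'' does not yield a uniform $H^{1}$ bound. The paper's device is the one-parameter identity (\ref{estimate-Jq-f}) (real part plus $\tfrac{c_r-q}{c_i}$ times the imaginary part, Lemma \ref{rayleigh-eqn}) with the specific choice $q=2\,\mathrm{Re}\,c_k-U_s$, which turns the singular numerator into $(U-c_r)^{2}-(U_s-c_r)^{2}\le |U-c_k|^{2}$ and, together with (\ref{bound-gr}), gives the uniform $H^{1}$ and then $H^{2}$ bounds of Lemma \ref{lemma-h2-bound}. Without such a bound your contradiction also collapses, since the ``bounded boundary term in $(\star)$'' rests on a uniform bound for $|\phi_k(h)|$, and passing to the limit in (\ref{rayleigh})--(\ref{bc-rayleigh}) (in particular in $\phi_k'(h)$) needs more than $L^{2}$ control.

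A second gap is the case $c_s=U(h)$, which you defer to ``a separate treatment'' but never give and which $(\star)$ cannot handle: there the boundary coefficient $2g(U(h)-\mathrm{Re}\,c_k)/|U(h)-c_k|^{4}+U'(h)/|U(h)-c_k|^{2}$ itself blows up, so the asserted boundedness fails, and the limit of the boundary condition requires a uniform bound on $\phi_k'(h)$. The paper treats this case by inverting the surface condition, $\phi(h)=g_s(c)\phi'(h)$ with $g_s$ as in (\ref{E:gs}), and using (\ref{estimate-Jq-g}) with the bound (\ref{bound-gs}) to recover the $H^{2}$ estimate. Finally, a smaller but real error: $U''(y_j)\neq 0$ does not imply $U'(y_j)\neq 0$ (a nondegenerate extremum satisfies the former and not the latter), so your ``transversality, hence Poisson mass $\pi/(|U'(y_j)|\,\mathrm{Im}\,c_k)$'' justification is wrong as stated; fortunately the concentration is even stronger when $U'(y_j)=0$, and the paper's route via Fatou's lemma and the $q=U_{\min}-1$ identity (see (\ref{bd-infty})) avoids any transversality assumption, which is why it needs no analogue of your claim.
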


For the proof of Proposition \ref{prop-neutral-mode}, we need several
properties of unstable solutions. First, Howard's semicircle theorem holds
true in the free-surface setting \cite[Theorem 1]{yih72}. That is, any
unstable eigenvalue $c=c_{r}+ic_{i}$ $\left( c_{i}>0\right) $ of the
Rayleigh equation (\ref{rayleigh})-(\ref{bc-rayleigh}) must lie in the
semicircle 
\end{subequations}
\begin{equation}
(c_{r}-\tfrac{1}{2}(U_{\min }+U_{\max }))^{2}+c_{i}^{2}\leq (\tfrac{1}{2}%
(U_{\min }-U_{\max }))^{2},  \label{semicircle}
\end{equation}%
where $U_{\min }=\min_{[0,h]}U(y)$ and $U_{\max }=\max_{[0,h]}U(y)$.

The identities below are important for future considerations.

\begin{lemma}
\label{rayleigh-eqn} If $\phi $ is a solution (\ref{rayleigh})--(\ref%
{bc-rayleigh}) with $c=c_{r}+ic_{i}$ and $c_{i}\neq 0$ 
then for any $q$ real the identities 
\begin{align}
\int_{0}^{h}\left( \left\vert \phi ^{\prime }\right\vert ^{2}+\alpha
^{2}|\phi |^{2}+\frac{U^{\prime \prime }(U-q)}{|U-c|^{2}}|\phi |^{2}\right)
dy& =\left( {\rm Re}g_{r}(c)+\frac{c_{r}-q}{c_{i}}{\rm Im}g_{r}(c)\right)
|\phi (h)|^{2},  \label{estimate-Jq-f} \\
\int_{0}^{h}\left( \left\vert \phi ^{\prime }\right\vert ^{2}+\alpha
^{2}|\phi |^{2}+\frac{U^{\prime \prime }(U-q)}{|U-c|^{2}}|\phi |^{2}\right)
dy& =\left( {\rm Re}g_{s}(c)-\frac{c_{r}-q}{c_{i}}{\rm Im}g_{s}(c)\right)
\left\vert \phi ^{\prime }\left( h\right) \right\vert ^{2}
\label{estimate-Jq-g}
\end{align}%
hold true, where $g_{r}$ is defined in (\ref{E:defn-fc}) and 
\begin{equation}
g_{s}(c)=\frac{(U(h)-c)^{2}}{g+U^{\prime }(h)(U(h)-c)}.  \label{E:gs}
\end{equation}
\end{lemma}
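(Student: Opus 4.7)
The plan is to start from the Rayleigh equation written in Sturm--Liouville form
\[
\phi''-\alpha^{2}\phi-\frac{U''}{U-c}\,\phi=0\qquad\text{in }(0,h),
\]
multiply by $\bar\phi$, and integrate over $[0,h]$. Integration by parts together with $\phi(0)=0$ and the free-surface boundary condition $\phi'(h)=g_{r}(c)\phi(h)$ produces the master identity
\[
\int_{0}^{h}\bigl(|\phi'|^{2}+\alpha^{2}|\phi|^{2}\bigr)\,dy+\int_{0}^{h}\frac{U''}{U-c}|\phi|^{2}\,dy=g_{r}(c)|\phi(h)|^{2}.
\]
Since $c=c_{r}+ic_{i}$ with $c_{i}\neq 0$, the left-hand integral involving $1/(U-c)$ is genuinely complex, while $\int_{0}^{h}(|\phi'|^{2}+\alpha^{2}|\phi|^{2})\,dy$ is real, so splitting the identity into real and imaginary parts is meaningful.

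Next, I would write $1/(U-c)=(U-c_{r}+ic_{i})/|U-c|^{2}$ and take real and imaginary parts of the master identity separately. This yields
\[
\int_{0}^{h}\bigl(|\phi'|^{2}+\alpha^{2}|\phi|^{2}\bigr)\,dy+\int_{0}^{h}\frac{U''(U-c_{r})}{|U-c|^{2}}|\phi|^{2}\,dy={\rm Re}\,g_{r}(c)\,|\phi(h)|^{2}
\]
together with
\[
c_{i}\int_{0}^{h}\frac{U''}{|U-c|^{2}}|\phi|^{2}\,dy={\rm Im}\,g_{r}(c)\,|\phi(h)|^{2}.
\]
To introduce the free parameter $q$, I would use the elementary decomposition $U-q=(U-c_{r})+(c_{r}-q)$, so that
\[
\int_{0}^{h}\frac{U''(U-q)}{|U-c|^{2}}|\phi|^{2}\,dy=\int_{0}^{h}\frac{U''(U-c_{r})}{|U-c|^{2}}|\phi|^{2}\,dy+(c_{r}-q)\int_{0}^{h}\frac{U''}{|U-c|^{2}}|\phi|^{2}\,dy.
\]
Substituting the two expressions from the real and imaginary parts into the right-hand side and then moving the pure $\int(|\phi'|^{2}+\alpha^{2}|\phi|^{2})$ term back to the left yields \eqref{estimate-Jq-f}.

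Finally, for \eqref{estimate-Jq-g} I would exploit the purely algebraic relation $g_{r}(c)\,g_{s}(c)=1$, which is evident from the definitions of $g_{r}$ and $g_{s}$. Combined with the boundary condition written in the form $\phi(h)=g_{s}(c)\phi'(h)$, this gives $|\phi(h)|^{2}=|g_{s}(c)|^{2}|\phi'(h)|^{2}$, together with
\[
{\rm Re}\,g_{r}(c)=\frac{{\rm Re}\,g_{s}(c)}{|g_{s}(c)|^{2}},\qquad {\rm Im}\,g_{r}(c)=-\frac{{\rm Im}\,g_{s}(c)}{|g_{s}(c)|^{2}},
\]
so the right-hand side of \eqref{estimate-Jq-f} transforms directly into that of \eqref{estimate-Jq-g}. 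There is no genuine analytical obstacle: the whole proof is an exercise in bookkeeping, and the only care required is to correctly handle the sign flip in the imaginary part when inverting $g_{r}\leftrightarrow g_{s}$.
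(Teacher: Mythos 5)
Your proof is correct, and for (\ref{estimate-Jq-f}) it is essentially the paper's own argument: multiply the Rayleigh equation by $\phi^{*}$, integrate by parts using $\phi(0)=0$ and $\phi'(h)=g_{r}(c)\phi(h)$, split into real and imaginary parts, and recombine via $U-q=(U-c_{r})+(c_{r}-q)$. The only (mild) divergence is in (\ref{estimate-Jq-g}): the paper repeats the integration-by-parts computation with the boundary condition in the form $\phi(h)=g_{s}(c)\phi'(h)$, so that the boundary term becomes $g_{s}^{*}(c)|\phi'(h)|^{2}$, and then splits into real and imaginary parts (the conjugation producing the sign change on ${\rm Im}\,g_{s}$); you instead deduce (\ref{estimate-Jq-g}) directly from (\ref{estimate-Jq-f}) through the algebraic identity $g_{r}(c)g_{s}(c)=1$, which gives ${\rm Re}\,g_{r}={\rm Re}\,g_{s}/|g_{s}|^{2}$, ${\rm Im}\,g_{r}=-{\rm Im}\,g_{s}/|g_{s}|^{2}$ and $|\phi(h)|^{2}=|g_{s}(c)|^{2}|\phi'(h)|^{2}$. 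Both routes are legitimate; your shortcut (like the paper's use of $\phi(h)=g_{s}(c)\phi'(h)$) tacitly uses that $g+U'(h)(U(h)-c)\neq 0$ when $c_{i}\neq 0$, which holds because its imaginary part is $-U'(h)c_{i}$ and, if $U'(h)=0$, its real part is $g>0$; it would be worth one sentence to record this so that $g_{s}$ in (\ref{E:gs}) and the inversion of $g_{r}$ are well defined.
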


\begin{proof}
We rewrite the Rayleigh equation (\ref{rayleigh}) as 
\begin{equation}
-\phi ^{\prime \prime }+\alpha ^{2}\phi +\frac{U^{\prime \prime }}{U-c}\phi
=0.  \label{2nd-rayleigh}
\end{equation}%
Multiplication of above by $\phi ^{\ast }$ and integration by parts with the
boundary condition (\ref{bc-rayleigh}) yield 
\begin{equation*}
\int_{0}^{h}\left( \left\vert \phi ^{\prime }\right\vert ^{2}+\alpha
^{2}|\phi |^{2}+\frac{U^{\prime \prime }}{U-c}|\phi |^{2}\right)
dy=g_{r}(c)|\phi (h)|^{2}.
\end{equation*}%
Its real and imaginary parts read as 
\begin{equation}
\int_{0}^{h}\left( \left\vert \phi ^{\prime }\right\vert ^{2}+\alpha
^{2}\left\vert \phi \right\vert ^{2}+\frac{U^{\prime \prime }\left(
U-c_{r}\right) }{\left\vert U-c\right\vert ^{2}}\left\vert \phi \right\vert
^{2}\right) dy={\rm Re}g_{r}(c)\left\vert \phi \left( h\right) \right\vert
^{2}  \label{zero-real}
\end{equation}%
and 
\begin{equation}
c_{i}\int_{0}^{h}\frac{U^{\prime \prime }}{|U-c|^{2}}|\phi |^{2}dy={\rm Im}%
g_{r}(c)|\phi (h)|^{2},  \label{zero-imag}
\end{equation}%
respectively. Combining (\ref{zero-real}) and (\ref{zero-imag}) then
establishes (\ref{estimate-Jq-f}).

Similarly, combining the real and the imaginary parts of 
\begin{equation}
\int_{0}^{h}\left( \left\vert \phi ^{\prime }\right\vert ^{2}+\alpha
^{2}|\phi |^{2}+\frac{U^{\prime \prime }}{U-c}|\phi |^{2}\right)
dy=g_{s}^{\ast }(c)|\phi ^{\prime }(h)|^{2}  \label{zweo-g-complex}
\end{equation}%
leads to (\ref{estimate-Jq-g}).
\end{proof}

Our next preliminary result is an a priori $H^{2}$-estimate of unstable
solutions near a neutral limiting mode.

\begin{lemma}
\label{lemma-h2-bound} For $U\in \mathcal{K}^{+}$, let $\{(\phi _{k},\alpha
_{k},c_{k})\}_{k=1}^{\infty }$ be a sequence of unstable solutions 
to (\ref{rayleigh})--(\ref{bc-rayleigh}) such that $\Vert \phi _{k}\Vert
_{L^{2}}=1$. If $\alpha _{k}$ $\rightarrow \alpha _{s}>0$ and ${\rm Im}%
c_{k}\rightarrow 0+$ as $k\rightarrow \infty $ then $\left\Vert \phi
_{k}\right\Vert _{H^{2}}\leq C$, where $C>0$ is independent of $k$.
\end{lemma}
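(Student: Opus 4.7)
The plan is to reduce the $H^{2}$-bound to an $H^{1}$-bound through the Rayleigh equation, and then to obtain the $H^{1}$-bound from the identity \eqref{estimate-Jq-f}. In Sturm--Liouville form the Rayleigh equation reads $\phi_{k}''=\alpha_{k}^{2}\phi_{k}-K(y)\tfrac{U-U_{s}}{U-c_{k}}\phi_{k}$, so the hypothesis $K\in L^{\infty}([0,h])$ for $U\in\mathcal{K}^{+}$ gives
\[
\|\phi_{k}''\|_{L^{2}}^{2}\le 2\alpha_{k}^{4}+2\|K\|_{\infty}^{2}\int_{0}^{h}\tfrac{(U-U_{s})^{2}}{|U-c_{k}|^{2}}|\phi_{k}|^{2}\,dy.
\]
Thus the whole problem reduces to uniform control of this weighted integral and of $\|\phi_{k}'\|_{L^{2}}$. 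Preliminarily, Howard's semicircle \eqref{semicircle} confines $\{c_{k}\}$ to a fixed compact set, so after passing to a subsequence $c_{k}\to c_{s}$ with $c_{s}\in[U_{\min},U_{\max}]$ real (since $\mathrm{Im}\,c_{k}\to 0^{+}$).

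I would then apply identity \eqref{estimate-Jq-f} with the distinguished choice $q=U_{s}$; the relation $U''(U-U_{s})=-K(U-U_{s})^{2}\le 0$ turns it into
\[
\|\phi_{k}'\|_{L^{2}}^{2}+\alpha_{k}^{2}\|\phi_{k}\|_{L^{2}}^{2}=\int_{0}^{h}K\tfrac{(U-U_{s})^{2}}{|U-c_{k}|^{2}}|\phi_{k}|^{2}\,dy+M_{k}|\phi_{k}(h)|^{2},
\]
where $M_{k}=\mathrm{Re}\,g_{r}(c_{k})+\tfrac{\mathrm{Re}\,c_{k}-U_{s}}{\mathrm{Im}\,c_{k}}\mathrm{Im}\,g_{r}(c_{k})$. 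When $c_{s}\ne U(h)$, a direct expansion of $g_{r}$ shows $M_{k}$ is uniformly bounded (the factor $\mathrm{Im}\,c_{k}$ in $\mathrm{Im}\,g_{r}(c_{k})$ cancels the $1/\mathrm{Im}\,c_{k}$ in the coefficient); in the exceptional case $c_{s}=U(h)$ I would switch to the dual identity \eqref{estimate-Jq-g}, whose right-hand side involves $g_{s}=1/g_{r}\to 0$ and $|\phi_{k}'(h)|^{2}$, and the latter is controlled via the boundary condition \eqref{bc-rayleigh}. The trace $|\phi_{k}(h)|^{2}$ is absorbed through the Sobolev estimate $|\phi_{k}(h)|^{2}\le C(\|\phi_{k}\|_{L^{2}}^{2}+\|\phi_{k}\|_{L^{2}}\|\phi_{k}'\|_{L^{2}})$ and Young's inequality. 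To handle the integral I would decompose $(U-U_{s})^{2}=(U-\mathrm{Re}\,c_{k})^{2}+2(\mathrm{Re}\,c_{k}-U_{s})(U-\mathrm{Re}\,c_{k})+(\mathrm{Re}\,c_{k}-U_{s})^{2}$: the first piece is pointwise bounded by $|U-c_{k}|^{2}$ and contributes at most $\|K\|_{\infty}\|\phi_{k}\|_{L^{2}}^{2}$, and the remaining pieces carrying powers of $\mathrm{Re}\,c_{k}-U_{s}$ are tied through the imaginary-part relation \eqref{zero-imag} back to $|\phi_{k}(h)|^{2}/\mathrm{Im}\,c_{k}$, multiplied by small factors.

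The principal obstacle is that pointwise, inside a critical layer $U\approx\mathrm{Re}\,c_{k}$, the integrand $\tfrac{(U-U_{s})^{2}}{|U-c_{k}|^{2}}$ can reach $(\mathrm{Re}\,c_{k}-U_{s})^{2}/(\mathrm{Im}\,c_{k})^{2}$, which a priori is unbounded if $\mathrm{Re}\,c_{k}-U_{s}$ does not tend to zero faster than $\mathrm{Im}\,c_{k}$. I cannot invoke Proposition \ref{prop-neutral-mode} to conclude $c_{s}=U_{s}$ here, because its proof uses this very lemma. My plan for closing the argument is a rescaling-and-contradiction: suppose $\|\phi_{k}\|_{H^{2}}\to\infty$, set $\tilde\phi_{k}=\phi_{k}/\|\phi_{k}\|_{H^{2}}$, and extract a weak $H^{2}$ limit together with a strong $H^{1}$ limit by Rellich; use the imaginary part of the identity to force the normalized limiting mass to concentrate away from the set where $U\ne U_{s}$, and the real part to identify a nontrivial limit solving \eqref{E:SL}--\eqref{bc-sturm2}. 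This forces $c_{s}=U_{s}$ and yields a limit consistent with the original sequence, contradicting the assumed blow-up of $\|\phi_{k}\|_{H^{2}}$ and producing the uniform bound.
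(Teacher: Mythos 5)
Your plan correctly isolates the two ingredients the paper uses (the identities (\ref{estimate-Jq-f})--(\ref{estimate-Jq-g}) with a parameter $q$, plus the bound (\ref{bound-gr}) and a trace/Young absorption of $|\phi_k(h)|^2$), but it does not close the $H^1$ estimate, and the "principal obstacle" you describe is not actually an obstacle: it disappears with the right choice of $q$. The paper takes $q=U_s-2(U_s-c_r)$, for which $U''(U-q)=-K\bigl((U-c_r)^2-(U_s-c_r)^2\bigr)$, so the weighted integrand is bounded pointwise by $K(y)$ and (\ref{estimate-Jq-f}) immediately gives $\|\phi_k'\|_{L^2}^2+\alpha_k^2\leq\int_0^hK|\phi_k|^2dy+C|\phi_k(h)|^2$, hence the $H^1$ bound. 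Equivalently, your own decomposition $(U-U_s)^2=(U-c_r)^2+2(c_r-U_s)(U-c_r)+(c_r-U_s)^2$ does work, but only if you push it one step further: writing $2(c_r-U_s)(U-c_r)+(c_r-U_s)^2=2(c_r-U_s)(U-U_s)-(c_r-U_s)^2$ and using (\ref{zero-imag}) together with (\ref{bound-gr}), the $(c_r-U_s)(U-U_s)$ piece is controlled by $C|\phi_k(h)|^2$, while the remaining $-(c_r-U_s)^2\int K|U-c_k|^{-2}|\phi_k|^2dy$ enters with a \emph{favorable} sign because $K>0$ for $U\in\mathcal{K}^{+}$. You stopped short of this cancellation, declared the critical layer unmanageable, and therefore never obtained the uniform $H^1$ (hence $H^2$) bound by direct estimation; note also that once the $H^1$ bound is known, the same identity with $q=U_s$ controls $\int K(U-U_s)^2|U-c_k|^{-2}|\phi_k|^2dy$ by $C\|\phi_k\|_{H^1}^2$, so your reduction of $\|\phi_k''\|_{L^2}$ through the Rayleigh equation would then be fine.

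The fallback rescaling-and-contradiction scheme does not repair this. Normalizing by $\|\phi_k\|_{H^2}$ and extracting a weak $H^2$ / strong $H^1$ limit gives no contradiction: the limit may vanish (nontriviality would require exactly the quantitative control you are missing, since blow-up of $\|\phi_k\|_{H^2}$ can only come from the uncontrolled weighted integral), and even if it were a nontrivial solution of (\ref{E:SL})--(\ref{bc-sturm2}) with $c_s=U_s$, that is precisely a neutral limiting mode, whose existence is perfectly consistent with the hypotheses — nothing contradicts the assumed blow-up. Moreover, identifying $c_s=U_s$ at this stage is the circular use of Proposition \ref{prop-neutral-mode} that you yourself flagged but did not resolve. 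Finally, in Case 2 ($c_s=U(h)$) your claim that $|\phi_k'(h)|$ is "controlled via the boundary condition" fails, since $g_r(c_k)\to\infty$ there; the paper instead uses the refined bound (\ref{bound-gs}), with $d(c_k,U(h))\to0$, so that the term $d(c_k,U(h))|\phi_k'(h)|^2$ can be absorbed into $\|\phi_k\|_{H^2}^2$ by the trace inequality.
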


\begin{proof}
By the semicircle theorem (\ref{semicircle}), $c_{k}\in \lbrack U_{\min
},U_{\max }]$ for any $k>1$. Thus $c_{k}\rightarrow c_{s}$ $\in \lbrack
U_{\min },U_{\max }]$, as $k\rightarrow \infty $. The proof is divided into
to the following two cases.

Case 1: $U(h)\neq c_{s}$. The proof is similar to that of \cite[Lemma 3.7]%
{lin1}. It is straightforward to see that 
\begin{equation}
|{\rm Re}g_{r}(c_{k})|+\left\vert \frac{{\rm Im}g_{r}(c_{k})}{{\rm Im}%
c_{k}}\right\vert \leq C_{0},  \label{bound-gr}
\end{equation}%
where $C_{0}>0$ is independent of $k$. For simplicity of notations, the
subscript $k$ is suppressed in the estimates below and $C$ is used to denote
generic constants which are independent of $k$. Let $c=c_{r}+ic_{i}$. We
write (\ref{estimate-Jq-f}) as 
\begin{equation*}
\int_{0}^{h}\left( \left\vert \phi ^{\prime }\right\vert ^{2}+\alpha
^{2}|\phi |^{2}+K(y)\frac{(U-U_{s})(U-q)}{|U-c_{r}|^{2}+c_{i}^{2}}|\phi
|^{2}\right) dy=\left( {\rm Re}g_{r}(c)+\frac{c_{r}-q}{c_{i}}{\rm Im}%
g_{r}(c)\right) \left\vert \phi (h)\right\vert ^{2}.
\end{equation*}%
Note that $g_{r}(c_{s})$ is well defined. Setting $q=U_{s}-2\left(
U_{s}-c_{r}\right) $ in the above identity leads to 
\begin{align*}
\int_{0}^{h}(\left\vert \phi ^{\prime }\right\vert ^{2}+\alpha ^{2}|\phi
|^{2})dy\leq & \int_{0}^{h}K(y)\frac{(U-U_{s})^{2}+2(U-U_{s})(U_{s}-c_{r})}{%
|U-c_{r}|^{2}+c_{i}^{2}}|\phi |^{2}dy+C|\phi (h)|^{2} \\
=& \int_{0}^{h}K(y)\frac{(U-c_{r})^{2}-(U_{s}-c_{r})^{2}}{%
|U-c_{r}|^{2}+c_{i}^{2}}|\phi |^{2}dy+C|\phi (h)|^{2} \\
\leq & \int_{0}^{h}K(y)|\phi |^{2}dy+C(\varepsilon \left\Vert \phi ^{\prime
}\right\Vert _{L^{2}}^{2}+\frac{1}{\varepsilon }\left\Vert \phi \right\Vert
_{L^{2}}^{2}).
\end{align*}%
One chooses $\varepsilon $ sufficiently small to conclude that $\Vert \phi
\Vert _{H^{1}}\leq C$.

Next is the $H^{2}$-estimate. Multiplication of (\ref{2nd-rayleigh}) 
by $-(\phi ^{\ast })^{\prime \prime }$ and integration over the interval $%
[0,h]$ yield 
\begin{equation}
\int_{0}^{h}(\left\vert \phi ^{\prime \prime }\right\vert ^{2}+\alpha
^{2}\left\vert \phi ^{\prime }\right\vert ^{2})dy-\alpha ^{2}g_{r}^{\ast
}(c)|\phi (h)|^{2}=\int_{0}^{h}(\phi ^{\ast })^{\prime \prime }\frac{%
U^{\prime \prime }}{U-c}\phi \,dy.  \label{E:2nd-ray}
\end{equation}%
In view of the Rayleigh equation for $\phi ^{\ast }$, the right side is
written as 
\begin{align*}
\int_{0}^{h}(\phi ^{\ast })^{\prime \prime }\frac{U^{\prime \prime }}{U-c}%
\phi \,dy& =\int_{0}^{h}\left( \alpha ^{2}\phi ^{\ast }+\left( \frac{%
U^{\prime \prime }}{U-c}\phi \right) ^{\ast }\right) \frac{U^{\prime \prime }%
}{U-c}\phi \,dy \\
& =\alpha ^{2}\int_{0}^{h}\frac{U^{\prime \prime }}{U-c}|\phi
|^{2}dy+\int_{0}^{h}\frac{\left( U^{\prime \prime }\right) ^{2}}{|U-c|^{2}}%
|\phi |^{2}dy.
\end{align*}%
The real part of (\ref{E:2nd-ray}) then reads as 
\begin{align*}
\int_{0}^{h}(\left\vert \phi ^{\prime \prime }\right\vert ^{2}+\alpha
^{2}\left\vert \phi ^{\prime }\right\vert ^{2})dy& =\alpha ^{2}{\rm Re}%
g_{r}(c)\left\vert \phi (h)\right\vert ^{2}+\alpha ^{2}\int_{0}^{h}\frac{%
U^{\prime \prime }(U-c_{r})}{|U-c|^{2}}|\phi |^{2}dy+\int_{0}^{h}\frac{%
\left( U^{\prime \prime }\right) ^{2}}{|U-c|^{2}}|\phi |^{2}dy \\
& =\alpha ^{2}\left( 2{\rm Re}g_{r}(c)|\phi
(h)|^{2}-\int_{0}^{h}(\left\vert \phi ^{\prime }\right\vert ^{2}+\alpha
^{2}|\phi |^{2})dy\right) +\int_{0}^{h}\frac{\left( U^{\prime \prime
}\right) ^{2}}{|U-c|^{2}}|\phi |^{2}dy,.
\end{align*}%
where in the last equality we use (\ref{zero-real}). So 
\begin{equation}
\int_{0}^{h}(\left\vert \phi ^{\prime \prime }\right\vert ^{2}+2\alpha
^{2}\left\vert \phi ^{\prime }\right\vert ^{2}+\alpha ^{4}|\phi
|^{2})dy=2\alpha ^{2}{\rm Re}g_{r}(c)|\phi (h)|^{2}+\int_{0}^{h}\frac{%
\left( U^{\prime \prime }\right) ^{2}}{|U-c|^{2}}|\phi |^{2}dy
\label{inter7}
\end{equation}%
By (\ref{estimate-Jq-f}) with $q=U_{s}$, 
\begin{align*}
\int_{0}^{h}\frac{\left( U^{\prime \prime }\right) ^{2}}{|U-c|^{2}}|\phi
|^{2}dy& \leq \Vert K\Vert _{L^{\infty }}\int_{0}^{h}\frac{-U^{\prime \prime
}(U-U_{s})}{|U-c|^{2}}|\phi |^{2}dy\text{ (since }-U^{\prime \prime
}(U-U_{s})\geq 0\text{) } \\
& =\Vert K\Vert _{L^{\infty }}\left( \int_{0}^{h}(\left\vert \phi ^{\prime
}\right\vert ^{2}+\alpha ^{2}|\phi |^{2})dy-\left( {\rm Re}g_{r}(c)+\left(
c_{r}-U_{s}\right) \frac{{\rm Im}g_{r}(c)}{c_{i}}\right) |\phi
(h)|^{2}\right) \\
& \leq C\left\Vert \phi \right\Vert _{H^{1}}^{2},\text{ }
\end{align*}%
where we use the bound (\ref{bound-gr}) and that $||\phi (h)|\leq C_{0}\Vert
\phi \Vert _{H^{1}}$. Therefore, by (\ref{inter7}) and the $\left\Vert \phi
\right\Vert _{H^{1}}$ bound, we get $\left\Vert \phi \right\Vert
_{H^{2}}\leq C$ as desired.

Case 2: $U(h)=c_{s}$. The proof is similar to that of Case 1 except that one
uses $\phi (h)=g_{s}(c)\phi ^{\prime }(h)$ in place of $\phi ^{\prime
}(h)=g_{r}(c)\phi (h)$. It can be check that 
\begin{equation}
|{\rm Re}g_{s}(c_{k})|+\left\vert \frac{{\rm Im}g_{s}(c_{k})}{{\rm Im}%
c_{k}}\right\vert \leq C_{0}\,d(c_{k},U(h)),  \label{bound-gs}
\end{equation}%
where $C_{0}>0$ is independent of $k$ and 
\begin{equation*}
d(c_{k},U(h))=|{\rm Re}c_{k}-U(h)|+\left( {\rm Im}c_{k}\right) ^{2}.
\end{equation*}%
Since $U(h)=c_{s}$, it follows that $d(c_{k},U(h))\rightarrow 0$ as $%
k\rightarrow \infty $. The same computations as in Case 1 establish that 
\begin{equation*}
\int_{0}^{h}(\left\vert \phi _{k}^{\prime }\right\vert ^{2}+\alpha
_{k}^{2}|\phi _{k}|^{2})dy\leq \int_{0}^{h}K(y)|\phi
_{k}|^{2}dy+C_{0}d(c_{k},U(h))\left\vert \phi _{k}^{\prime }\left( h\right)
\right\vert ^{2}
\end{equation*}%
and%
\begin{eqnarray}
&&\int_{0}^{h}(\left\vert \phi _{k}^{\prime \prime }\right\vert ^{2}+2\alpha
_{k}^{2}\left\vert \phi _{k}^{\prime }\right\vert ^{2}+\alpha _{k}^{4}|\phi
_{k}|^{2})dy\newline
\label{estimate-2nd-gs} \\
&=&2\alpha _{k}^{2}{\rm Re}\,g_{s}(c_{k})|\phi _{k}^{\prime
}(h)|^{2}+\int_{0}^{h}\frac{\left( U^{\prime \prime }\right) ^{2}}{|U-c|^{2}}%
|\phi _{k}|^{2}dy  \notag \\
&\leq &C\left( \int_{0}^{h}(\left\vert \phi _{k}^{\prime }\right\vert
^{2}+\alpha _{k}^{2}|\phi _{k}|^{2})dy+d(c_{k},U(h))\left\vert \phi
_{k}^{\prime }\left( h\right) \right\vert ^{2}\right)  \notag \\
&\leq &C\left( \int_{0}^{h}\left( \varepsilon \left\vert \phi _{k}^{\prime
\prime }\right\vert ^{2}+\left( \frac{1}{\varepsilon }+\alpha
_{k}^{2}\right) \left\vert \phi _{k}\right\vert ^{2}\right)
dy+d(c_{k},U(h))\left\vert \phi _{k}^{\prime }\left( h\right) \right\vert
^{2}\right)  \notag
\end{eqnarray}%
where $C>0$ is independent of $k$. Consequently, by choosing $\varepsilon $
small, 
\begin{equation*}
\Vert \phi _{k}\Vert _{H^{2}}^{2}\leq C_{1}(\Vert \phi _{k}\Vert
_{L^{2}}^{2}+d(c_{k},U(h))\left\vert \phi _{k}^{\prime }\left( h\right)
\right\vert ^{2})\leq C_{2}(\Vert \phi _{k}\Vert
_{L^{2}}^{2}+d(c_{k},U(h))\Vert \phi _{k}\Vert _{H^{2}}^{2}),
\end{equation*}%
where $C_{1},C_{2}>0$ are independent of $k$. Since $d(c_{k},U(h))%
\rightarrow 0$ as $k\rightarrow \infty $ it follows from above that $%
\left\Vert \phi _{k}\right\Vert _{H^{2}}^{2}\leq C$.
\end{proof}

For $U\in \mathcal{K}^{+}$, if $c$ is in the range of $U$ then $U(y)=c$
holds at a finite number of points \cite[Remark 3.2]{lin1}, which are
denoted by $y_{1}<y_{2}<\cdots <y_{m_{c}}$. Let $y_{0}=0$ and $y_{m_{c}+1}=h$%
. We state our last preliminary result.

\begin{lemma}[\protect\cite{lin1}, Lemma 3.5]
\label{vanish} Let $\phi$ satisfy (\ref{rayleigh}) with $\alpha$ positive
and $c$ in the range of $U$ and let $U(y)=c$ for $y\in\left\{
y_{1},y_{2},\dots,y_{m_{c}}\right\} $. If $\phi$ is sectionally continuous
on the open intervals $(y_{j},y_{j+1})$, $j=0,1,\dots,m_{c}$, then $\phi$
cannot vanish at both endpoints of any intervals $(y_{j},y_{j+1})$ unless it
vanishes identically on that interval.
\end{lemma}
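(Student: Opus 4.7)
The plan is to use Sturm's comparison theorem with $v(y) := U(y) - c$ as a companion solution. On any open subinterval of $(y_j, y_{j+1})$ where $U \neq c$, the Rayleigh equation (\ref{rayleigh}) takes the Sturm-Liouville form
\[
\phi'' - \alpha^2 \phi - \frac{U''}{U-c}\phi = 0,
\]
and a direct calculation shows that $v$ itself satisfies the same ODE with $\alpha$ replaced by $0$. Since $\alpha > 0$, the potential for $\phi$ is strictly smaller than that for $v$, so $v$ is the more oscillatory of the two.

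Suppose, for contradiction, that $\phi(y_j) = \phi(y_{j+1}) = 0$ yet $\phi \not\equiv 0$ on $(y_j, y_{j+1})$. If $\phi$ admits consecutive zeros $a < b$ with at least one of $a, b$ lying strictly inside $(y_j, y_{j+1})$, so that $v \neq 0$ on $[a,b]$, Sturm comparison immediately yields a zero of $v$ in $(a,b)$, contradicting the defining property of the interval. The remaining case is that $y_j$ and $y_{j+1}$ are themselves consecutive zeros of $\phi$ on the closure and that both are critical points with $v(y_j) = v(y_{j+1}) = 0$. I would dispatch this case by the substitution $\chi = \phi/(U-c)$, which converts the Rayleigh equation to $((U-c)^2 \chi')' = \alpha^2 (U-c)^2 \chi$; multiplying by $\chi$ and integrating over $(y_j+\varepsilon, y_{j+1}-\varepsilon)$ produces, provided the boundary terms vanish in the limit,
\[
-\int_{y_j}^{y_{j+1}} (U-c)^2 (\chi')^2 \, dy = \alpha^2 \int_{y_j}^{y_{j+1}} (U-c)^2 \chi^2 \, dy,
\]
whose sign discrepancy forces $\chi \equiv 0$, contradicting $\phi \not\equiv 0$.

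The main obstacle is controlling the boundary term $[(U-c)^2 \chi' \chi]$ at a critical endpoint $y_k$ where $U(y_k) = c$, since $\chi$ is a priori an indeterminate form $0/0$ there. I would handle this by a Frobenius-type analysis of the Rayleigh equation at the regular singular point $y_k$: when $U'(y_k) \neq 0$ one has $U - c \sim U'(y_k)(y - y_k)$, and, combined with the assumed sectional continuity of $\phi$ and the ODE itself, $\chi$ extends continuously to $y_k$ with finite limit $\phi'(y_k)/U'(y_k)$, while the quadratic factor $(U-c)^2$ suppresses any singular growth of $\chi'$ that the equation permits. The case $U'(y_k) = 0$ requires a finer local expansion, but the indicial behavior is still determined by the ODE. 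This local singular analysis at the critical points is the delicate step; the rest of the argument is a routine application of Sturm comparison and integration by parts.
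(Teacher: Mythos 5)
First, note that the paper itself offers no proof of this lemma: it is imported verbatim from \cite{lin1} (Lemma 3.5 there), so your attempt has to be judged against the standard argument rather than a proof in this text. Your strategy --- compare $\phi$ with the explicit solution $v=U-c$ of the $\alpha=0$ Rayleigh equation, using Sturm comparison away from the critical points and the Picone-type identity $\bigl((U-c)^{2}\chi'\bigr)'=\alpha^{2}(U-c)^{2}\chi$, $\chi=\phi/(U-c)$, near them --- is the right one; indeed this is exactly the form of the equation the paper exploits in Lemma \ref{lemma-bifur} (cf.\ (\ref{eqn-Vc-phi}), (\ref{inter6})).

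As written, however, there are two genuine gaps. (i) The case split is not exhaustive: the claim ``at least one of $a,b$ lies strictly inside $(y_{j},y_{j+1})$, so that $v\neq0$ on $[a,b]$'' is false when the other member of the consecutive-zero pair is a critical endpoint $y_{j}$ or $y_{j+1}$, where $v$ vanishes; and your ``remaining case'' treats only the configuration in which $\phi$ has no interior zeros at all. The mixed configuration (one critical endpoint plus one interior zero as consecutive zeros) therefore falls through both branches, and it is precisely there that classical Sturm comparison cannot be invoked, since the potential $U''/(U-c)$ blows up at the endpoint. (ii) The vanishing of the boundary term $(U-c)^{2}\chi'\chi=\bigl((U-c)\phi'-U'\phi\bigr)\chi$ is only sketched: it requires that $\phi$ be $C^{1}$ up to a critical endpoint at which it vanishes, which does not follow from ``sectionally continuous'' and needs the Frobenius argument you gesture at; moreover you explicitly leave the subcase $U'(y_{k})=0$ open, so the proof is incomplete as proposed (for the application in this paper to class $\mathcal{K}^{+}$ flows one has $U'\neq0$ at the relevant points, but the lemma is stated for general $U$ and any $c$ in its range).

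Both defects disappear if you drop the dichotomy and run a single undivided Wronskian argument, which is essentially what your second case becomes after multiplying through by $(U-c)^{2}$: set $W=(U-c)\phi'-U'\phi$, so that by (\ref{rayleigh}) one has $W'=\alpha^{2}(U-c)\phi$ on $(y_{j},y_{j+1})$. Taking consecutive zeros $a<b$ of $\phi$ in $[y_{j},y_{j+1}]$ and normalizing $\phi>0$ on $(a,b)$ and $U-c>0$ on $(y_{j},y_{j+1})$, one gets $W(a)=(U(a)-c)\phi'(a)\geq0$ and $W(b)=(U(b)-c)\phi'(b)\leq0$ (both equal to zero when the endpoint is critical), while $\int_{a}^{b}W'\,dy>0$, a contradiction covering all endpoint configurations at once. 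Since $W$ involves no division by $U-c$, the only delicate ingredient left is the $C^{1}$ behavior of $\phi$ at a critical endpoint where it vanishes, i.e.\ the local (Frobenius-type) analysis that your sketch must still be completed to supply, including the case of higher-order vanishing of $U-c$.
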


\begin{proof}[Proof of Proposition \protect\ref{prop-neutral-mode}]
Let $(\phi _{s},\alpha _{s},c_{s})$ be a neutral limiting mode with $\alpha
_{s}>0$ and $c_{s}\in \lbrack U_{\min },U_{\max }]$, and let $\{(\phi
_{k},\alpha _{k},c_{k})\}_{k=1}^{\infty }$ be a sequence of unstable
solutions of (\ref{rayleigh})--(\ref{bc-rayleigh}) such that $(\phi
_{k},\alpha _{k},c_{k})$ converges to $(\phi _{s},\alpha _{s},c_{s})$ as $%
k\rightarrow \infty $. We normalize the sequence by setting $\Vert \phi
_{k}\Vert _{L^{2}}=1$.

First, the result of Lemma \ref{lemma-h2-bound} says that $%
\Vert\phi_{k}\Vert_{H^{2}}\leq C$, where $C>0$ is independent of $k$.
Consequently, $\phi_{k}$ converges to $\phi_{s}$ weakly in $H^{2}$ and
strongly in $H^{1}$. Then $\Vert\phi_{s}\Vert_{H^{2}}\leq C$ 
and $\Vert\phi_{s}\Vert_{L^{2}}=1$. Let $y_{1},y_{2},\cdots,y_{m_{s}}$ be
the roots of $U(y)-c_{s}$ and let $S_{0}$ be the complement of the set of
points $\{y_{1},y_{2},\dots,y_{m_{s}}\}$ in the interval $[0,h]$. Since $%
\phi_{k}$ converges to $\phi_{s}$ uniformly in $C^{2}$ on any compact subset
of $S_{0}$, it follows that $\phi_{s}^{\prime\prime}$ exists on $S_{0}$.
Since $(U-c_{k})^{-1}$, $\phi_{k}$ and their derivatives up to second order
are uniformly bounded on any compact subset of $S_{0}$, it follows that $%
\phi_{s}$ satisfies 
\begin{equation}
\phi_{s}^{\prime\prime}-\alpha_{s}^{2}\phi_{s}-\frac{U^{\prime\prime}}{%
U-c_{s}}\phi_{s}=0  \label{neutral}
\end{equation}
almost everywhere on $[0,h]$. Moreover, 
\begin{equation}
\phi_{s}^{\prime}(h)=\left( \frac{g}{(U(h)-c_{s})^{2}}+\frac{U^{\prime}(h)}{%
U(h)-c_{s}}\right) \phi_{s}(h)\quad \text{and} \quad \phi_{s}(0)=0
\label{bc-non-Ud}
\end{equation}
in case $U(h)\neq c_{s}$, and 
\begin{equation}
\phi_{s}(h)=0\quad \text{and} \quad \phi_{s}(0)=0  \label{bc-Ud}
\end{equation}
in case $U(h)=c_{s}$. 

Next, we claim that $c_{s}$ is the inflection value $U_{s}$. By Definition %
\ref{classK}, $U^{\prime\prime}(y_{j})=-K(y_{j})(c_{s}-U_{s})$ has the same
sign for $j=1,\cdots,m_{s}$, say positive. Let 
\begin{equation*}
E_{\delta}=\cup_{i=1}^{m_{s}}\{ y\in[0,h] :\left\vert y-y_{j}\right\vert
<\delta\} .
\end{equation*}
Clearly, $E_{\delta}^{c}\subset S_{0}$ and $U^{\prime\prime}(y)>0$ for $y\in
E_{\delta}$ when $\delta>0$ sufficiently small. The proof is again divided
into two cases.

Case 1: $U(h)\neq c_{s}$. Since $\phi _{s}$ is not identically zero, Lemma %
\ref{vanish} asserts that $\phi _{s}(y_{j})\neq 0$ for some $y_{j}$. If $%
c_{s}$ were not an inflection value then near such a $y_{j}$ it must hold
that 
\begin{equation}
\int_{E_{\delta }}\frac{U^{\prime \prime }(U-U_{\min }+1)}{|U-c_{s}|^{2}}%
|\phi _{s}|^{2}dy\geq \int_{|y-y_{j}|<\delta }\frac{U^{\prime \prime }}{%
|U-c_{s}|^{2}}|\phi _{s}|^{2}dy=\infty .  \label{bd-infty}
\end{equation}%
Since%
\begin{align*}
& \int_{0}^{h}\left( \left\vert \phi _{k}^{\prime }\right\vert ^{2}+\alpha
_{k}^{2}|\phi _{k}|^{2}+\frac{U^{\prime \prime }(U-U_{\min }+1)}{%
|U-c_{k}|^{2}}|\phi _{k}|^{2}\right) dy \\
& \geq \int_{E_{\delta }}\frac{U^{\prime \prime }(U-U_{\min }+1)}{%
|U-c_{k}|^{2}}|\phi _{k}|^{2}dy-\sup_{E_{\delta }^{c}}\frac{|U^{\prime
\prime }(U-q)|}{|U-c_{k}|^{2}},
\end{align*}%
by Fatou's Lemma and (\ref{bd-infty}) it follows that 
\begin{equation*}
\liminf_{k\rightarrow \infty }\int_{0}^{h}\left( \left\vert \phi
_{k}^{\prime }\right\vert ^{2}+\alpha _{k}^{2}|\phi _{k}|^{2}+\frac{%
U^{\prime \prime }(U-U_{\min }+1)}{|U-c_{k}|^{2}}|\phi _{k}|^{2}\right)
dy=\infty .
\end{equation*}%
%
%
%
%
%
%
%
%
%
%
%
%
%
%
On the other hand, (\ref{estimate-Jq-f}) with $q=U_{\min }-1$ yields 
\begin{multline*}
\int_{0}^{h}\left( \left\vert \phi _{k}^{\prime }\right\vert ^{2}+\alpha
_{k}^{2}|\phi _{k}|^{2}+\frac{U^{\prime \prime }(U-U_{\min }+1)}{%
|U-c_{k}|^{2}}|\phi _{k}|^{2}\right) dy \\
=\left( {\rm Re}g_{r}(c_{k})+\frac{{\rm Re}\,c_{k}-U_{\min }+1}{\mathrm{Im}%
\,c_{k}}\right) |\phi _{k}(h)|^{2}\leq C\Vert \phi _{k}\Vert _{H^{2}}\leq
C_{1},
\end{multline*}%
where $C_{1}>0$ is independent of $k$. A contradiction proves the claim.

Case 2: $U(h)=c_{s}$. The proof is identical to that of Case 1 except that
we use (\ref{estimate-Jq-g}) in place of (\ref{estimate-Jq-f}) and hence is
omitted.
\end{proof}

The following lemma allows us to continuate unstable modes until a neutral
limiting wave number is reached, analogous to \cite[Theorem 3.9]{lin1}.

\begin{lemma}
\label{basis} For $U\in \mathcal{K}^{+}$, the set of unstable wave numbers
is open, whose any boundary point $\alpha $ satisfies that $-\alpha ^{2}$ is
an eigenvalue of the operator $-\frac{d^{2}}{dy^{2}}-K(y)$ on $y\in (0,h)$
with the boundary conditions (\ref{bc-sturm1})--(\ref{bc-sturm2}).
\end{lemma}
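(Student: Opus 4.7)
The assertion splits into two parts: openness of the unstable set, and identification of its boundary points with eigenvalues of the Sturm--Liouville problem. I would prove them in that order.

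\emph{Openness.} The idea is a shooting/Evans-function argument. Fix $\alpha_{0}$ in the unstable set with corresponding unstable speed $c_{0}$, $\im c_{0}>0$. For $(\alpha,c)$ ranging over the open set $\{\alpha>0\}\times\{\im c>0\}$, let $\phi_{\alpha,c}(y)$ be the unique solution of the Rayleigh equation (\ref{rayleigh}) with initial data $\phi(0)=0$, $\phi'(0)=1$; this is well defined and depends jointly analytically on $(\alpha,c)$ because $U-c$ never vanishes. The boundary condition (\ref{bc-rayleigh}) is then equivalent to the vanishing of the scalar analytic function
\[
F(\alpha,c) := (U(h)-c)^{2}\,\phi'_{\alpha,c}(h) - \bigl(g+U'(h)(U(h)-c)\bigr)\phi_{\alpha,c}(h).
\]
Since $F(\alpha_{0},c_{0})=0$, pick a small closed disk $D\subset\{\im c>0\}$ centered at $c_{0}$ such that $|F(\alpha_{0},\cdot)|\geq\delta>0$ on $\partial D$. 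By continuity in $\alpha$, $|F(\alpha,c)-F(\alpha_{0},c)|<\delta$ uniformly on $\partial D$ for $\alpha$ near $\alpha_{0}$, so Rouch\'e's theorem forces $F(\alpha,\cdot)$ to retain a zero in $D$. That zero is an unstable wave speed at $\alpha$, and openness follows.

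\emph{Boundary characterization.} Let $\alpha_{s}$ be a boundary point of the unstable set and choose unstable wave numbers $\alpha_{k}\to\alpha_{s}$ with corresponding modes $(\phi_{k},c_{k})$, $\im c_{k}>0$, normalized by $\|\phi_{k}\|_{L^{2}}=1$. Howard's semicircle theorem (\ref{semicircle}) confines $\{c_{k}\}$ to a bounded set, so along a subsequence $c_{k}\to c_{s}$. If $\im c_{s}>0$ then the Rayleigh ODE has uniformly regular coefficients near the limit, continuous parameter dependence yields $\phi_{k}\to\phi_{s}$ in $C^{2}$, and $(\phi_{s},\alpha_{s},c_{s})$ is an unstable mode at $\alpha_{s}$; by the openness just proved, $\alpha_{s}$ would then be interior to the unstable set, contradicting its boundary status. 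Hence $\im c_{k}\to 0+$, so we are in the setting of Lemma \ref{lemma-h2-bound}, whose uniform $H^{2}$-bound yields strong $H^{1}$-convergence $\phi_{k}\to\phi_{s}$ with $\|\phi_{s}\|_{L^{2}}=1$; in particular $\phi_{s}\not\equiv 0$. Proposition \ref{prop-neutral-mode} then identifies $c_{s}=U_{s}$ and confirms that $\phi_{s}$ solves the Sturm--Liouville system (\ref{E:SL})--(\ref{bc-sturm2}), so $-\alpha_{s}^{2}$ is an eigenvalue of $-d^{2}/dy^{2}-K(y)$ with those boundary conditions.

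\emph{Main obstacle.} The openness step is the subtle one: $c$ enters the boundary condition (\ref{bc-rayleigh}) nonlinearly and the linearized problem is not self-adjoint, so standard analytic perturbation theory for operators does not apply directly. Encoding the eigenvalue condition as a single analytic function $F(\alpha,c)$ and invoking Rouch\'e bypasses this difficulty, using only that the ODE coefficients are regular when $\im c>0$. The boundary-characterization step is largely bookkeeping once openness and Proposition \ref{prop-neutral-mode} are in hand, provided care is taken to extract a nontrivial weak limit, which the compactness afforded by Lemma \ref{lemma-h2-bound} supplies.
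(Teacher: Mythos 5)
Your proposal is correct and follows essentially the same route as the paper: the paper also encodes the boundary condition as the zero of a function $\Phi(\alpha,c)$ analytic in $c$ and continuous in $\alpha$ (shooting from $y=h$ instead of $y=0$) and proves openness via the argument principle, which is the same device as your Rouch\'e step. Your boundary-point argument likewise reduces to neutral limiting modes and invokes Lemma \ref{lemma-h2-bound} and Proposition \ref{prop-neutral-mode}, just as the paper does, only with the compactness details spelled out a bit more explicitly.
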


\begin{proof}
An unstable solution $(\phi _{0},\alpha _{0},c_{0})$ of (\ref{rayleigh})--(%
\ref{bc-rayleigh}) is held fixed, with $\alpha _{0}>0$ and ${\rm Im}c_{0}>0$%
. For $r_{1},r_{2}>0$, let us define 
\begin{equation*}
I_{r_{1}}=\{\alpha >0:|\alpha -\alpha _{0}|<r_{1}\}\quad \text{and}\quad
B_{r_{2}}=\{c\in \mathbb{C}^{+}:|c-c_{0}|<r_{2}\},
\end{equation*}%
where $\mathbb{C}^{+}$ denotes the set of complex numbers with positive
imaginary part. Our goal is to show that for each $\alpha \in I_{r_{1}}$,
there exists $c(\alpha )\in B_{r_{2}}$ for some $r_{2}>0$ and $\phi _{\alpha
}\in H^{2}$ such that $(\phi _{\alpha },\alpha ,c(\alpha ))$ satisfies (\ref%
{rayleigh})--(\ref{bc-rayleigh}).

For $\alpha \in (0,\infty )$ and $c\in \mathbb{C}^{+}$, let $\phi _{1}\left(
x;\alpha ,c\right) $ and $\phi _{2}\left( x;\alpha ,c\right) $ to be the
solutions of the differential equation 
\begin{equation*}
\phi ^{\prime \prime }-\alpha ^{2}\phi -\frac{U^{\prime \prime }}{U-c}\phi
=0\qquad \text{for}\quad y\in (0,h)
\end{equation*}%
normalized at $h$, that is, 
\begin{equation*}
\begin{cases}
\phi _{1}(h)=1,\quad & \phi _{2}(h)=0, \\ 
\phi _{1}^{\prime }(h)=0,\quad & \phi _{2}^{\prime }(h)=1.%
\end{cases}%
\end{equation*}%
It is standard that $\phi _{1}$ and $\phi _{2}$ are analytic as functions of 
$c$ in $\mathbb{C}^{+}$. Let us consider a function on $(0,\infty )\times 
\mathbb{C}^{+}$, defined as 
\begin{equation}
\Phi (\alpha ,c)=\phi _{1}(0;\alpha ,c)+g_{r}(c)\phi _{2}(0;\alpha ,c),
\label{E:Phi}
\end{equation}%
where $g_{r}$ is defined in (\ref{E:defn-fc}). Clearly, $\Phi $ is analytic
in $c$ and continuous in $\alpha $. Note that $\Phi (\alpha ,c)=0$ if and
only if the system (\ref{rayleigh})--(\ref{bc-rayleigh}) has an unstable
solution 
\begin{equation*}
\phi _{\alpha }(y)=\phi _{1}(y;\alpha ,c)+g_{r}(c)\phi _{2}(y;\alpha ,c).
\end{equation*}

Since $\Phi (\alpha _{0},c_{0})=0$ and the zeros of an analytic function are
isolated, $\Phi (\alpha _{0},c)\neq 0$ on $\{|c-c_{0}|=r_{2}\}$, for $%
r_{2}>0 $ sufficiently small. Then, by the continuity of $\Phi (\alpha ,c)$
in $\alpha $, we have $\Phi (\alpha ,c)\neq 0$ on $\{|c-c_{0}|=r_{2}\}$,
when $\alpha \in I_{r_{1}}$ and $r_{1}$ is sufficiently small. Let us
consider the function 
\begin{equation*}
N(\alpha )=\frac{1}{2\pi i}\oint_{|c-c_{0}|=r_{2}}\frac{\partial \Phi
/\partial c(\alpha ,c)}{\Phi (\alpha ,c)}dc,
\end{equation*}%
where $\alpha \in I_{r_{1}}$. Observe that $N(\alpha )$ counts the number of
zeros of $\Phi (\alpha ,c)$ in $B_{r_{2}}$. Since $N(\alpha _{0})>0$ and $%
N(\alpha )$ is continuous as a function of $\alpha $ in $I_{r_{1}}$ it
follows that $N(\alpha )>0$ for any $\alpha \in I_{r_{1}}$. 
This proves that the set of unstable wave numbers is open.

By definition, the boundary points of the set of unstable wave numbers must
be neutral limiting wave numbers, say $\alpha _{s}$. Proposition \ref%
{prop-neutral-mode} asserts that $-\alpha _{s}^{2}$ must be a negative
eigenvalue of the operator $-\frac{d^{2}}{dy^{2}}-K(y)$ on $(0,h)$ with (\ref%
{bc-sturm1})--(\ref{bc-sturm2}). This completes the proof.
\end{proof}


\subsection{Proof of Theorem \protect\ref{class-k}}

The proof of Theorem \ref{class-k} is to examine the bifurcation of unstable
modes from each neutral limiting mode. This reduces to the study of the
bifurcation of zeros of the algebraic equation $\Phi (\alpha ,c)=0$ from $%
(\alpha _{s},c_{s})$ as is pointed out in the proof of Lemma \ref{basis}.
However, the differentiability of $\Phi (\alpha ,c)$ in $c$ at a neutral
limiting mode $(\alpha _{s},c_{s})$ can only be established on half of the $%
c-$neighborhood, and thus the standard implicit function theorem does not
apply. To overcome this difficulty, we construct a contraction mapping in
such a half neighborhood and prove the existence of an unstable mode near $%
(\alpha _{s},c_{s})$ as is done in \cite{lin1} for the rigid-wall case.
Moreover, the existence of an unstable mode can only be established for wave
numbers slightly to the left of $\alpha _{s}$. Therefore, unstable modes
with wave numbers slightly to the left of $\alpha _{s}$ can be continuated
to the zero wave number. Our method is almost the same as in the rigid-wall
setting (\cite[Section 4]{lin1}).

Below we construct unstable solutions for wave numbers slightly to the left
of a neutral limiting wave number.

\begin{proposition}
\label{theorem-unstable} For $U\left( y\right) \ $in the class $\mathcal{K}%
^{+}$, let $U_{s}$ be the inflection value and $y_{1},y_{2},\dots ,y_{m_{s}}$
be the inflection points, as such $U(y_{j})=U_{s}$ for $i=1,2,\dots ,m_{s}$.
If $(\phi _{s},\alpha _{s},U_{s})$ is a neutral limiting mode with $\alpha
_{s}$ positive, then for $\varepsilon \in (\varepsilon _{0},0)$, where $%
|\varepsilon _{0}|$ is sufficiently small, there exist $\phi _{\varepsilon }$
and $c(\varepsilon )$ such that 
\begin{equation}
\phi _{\varepsilon }^{\prime \prime }-\left( \alpha _{s}^{2}+\varepsilon
\right) \phi _{\varepsilon }-\frac{U^{\prime \prime }}{U-U_{s}-c(\varepsilon
)}\phi _{\varepsilon }=0\qquad \text{for}\quad y\in (0,h),
\label{eqn-ray-shift}
\end{equation}%
\begin{equation}
\phi _{\varepsilon }^{\prime }(h)=g_{r}(U_{s}+c(\varepsilon ))\phi
_{\varepsilon }(h)\quad \text{and}\quad \phi _{\varepsilon }(0)=0
\label{bc-rayleigh-shift}
\end{equation}%
with $\mathrm{Im}\,c(\varepsilon )>0$. 
Moreover, 
\begin{equation}
\lim_{\varepsilon \rightarrow 0-}c(\varepsilon )=0,  \label{limit c}
\end{equation}%
\begin{equation}
\lim_{\varepsilon \rightarrow 0-}\frac{dc}{d\varepsilon }(\varepsilon
)=\left( \int_{0}^{h}\phi _{s}^{2}dy\right) \left( i\pi \sum_{j=1}^{m_{s}}%
\frac{K(y_{j})}{|U(y_{j})|}\phi _{s}^{2}(y_{j})+p.v.\int_{0}^{h}\frac{K}{%
U-U_{s}}\phi _{s}^{2}dy+A\right) ^{-1};  \label{limit rate}
\end{equation}%
here $p.v.$ means the Cauchy principal part and 
\begin{equation}
A=%
\begin{cases}
\frac{2g}{(U(h)-U_{s})^{3}}+\frac{U^{\prime }(h)}{(U(h)-U_{s})^{2}}\quad & 
\text{if}\quad U(h)\neq U_{s} \\ 
0 & \text{if}\quad U(h)=U_{s}.%
\end{cases}
\label{definition-A}
\end{equation}
\end{proposition}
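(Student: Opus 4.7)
The plan is to regard the problem as a bifurcation of zeros of a shooting function at the degenerate spectral value $c=U_s$, building a contraction mapping on a one-sided neighborhood in the complex upper half-plane, in the same spirit as Lin's rigid-wall analysis in \cite{lin1}. The new feature compared with Lemma \ref{basis} is that at $c=U_s$ the coefficient $U''/(U-c)$ is singular at every inflection point $y_j$, so the shooting function is only one-sidedly regular at the real axis and the classical implicit function theorem does not apply.

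\textbf{Setup.} Set $c=U_s+\tilde c$ and let $\phi(y;\varepsilon,\tilde c)$ be the solution of (\ref{eqn-ray-shift}) with $\phi(0)=0$ and $\phi'(0)=1$. For $\mathrm{Im}\,\tilde c>0$ the coefficient $U''/(U-U_s-\tilde c)$ is smooth in $y$ and analytic in $\tilde c$, so $\phi$ is analytic in $\tilde c$ on $\mathbb{C}^+$ and smooth in $\varepsilon$. The key object is the shooting function
\[
F(\varepsilon,\tilde c)=\phi'(h;\varepsilon,\tilde c)-g_r(U_s+\tilde c)\,\phi(h;\varepsilon,\tilde c),
\]
whose zeros with $\mathrm{Im}\,\tilde c>0$ are exactly unstable eigenvalues of (\ref{eqn-ray-shift})--(\ref{bc-rayleigh-shift}); by the neutral limiting hypothesis $F(0,0)=0$ with the corresponding solution a nontrivial scalar multiple of $\phi_s$.

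\textbf{One-sided expansion at the origin.} I would then establish a Taylor-type expansion $F(\varepsilon,\tilde c)=F_\varepsilon\,\varepsilon+F_c^+\,\tilde c+o(|\varepsilon|+|\tilde c|)$ valid as $\tilde c\to 0$ in the closed upper half-plane. A variation-of-parameters calculation, together with integration by parts against $\phi_s$, gives $F_\varepsilon$ proportional to $\int_0^h \phi_s^2\,dy$. To compute $F_c^+$ one needs Cauchy-type limits
\[
\lim_{\tilde c\to 0^+}\int_0^h\frac{K(y)\phi_s^2(y)}{U(y)-U_s-\tilde c}\,dy,
\]
which, by Plemelj's formula and the fact that each zero $y_j$ of $U-U_s$ is simple (a consequence of $K(y_j)>0$ combined with $y_j$ being an inflection point), equal the principal value plus an imaginary residue contribution. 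Combined with the $\tilde c$-derivative of $g_r(U_s+\tilde c)\,\phi(h)$ at $\tilde c=0$, which produces the term $A$ of (\ref{definition-A}) when $U(h)\neq U_s$ and vanishes otherwise because $\phi_s(h)=0$ by (\ref{bc-sturm1}), this reproduces precisely the denominator in (\ref{limit rate}). A crucial output is $\mathrm{Im}\,F_c^+>0$, since the residue contribution is a strictly positive sum.

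\textbf{Contraction and conclusion.} Since $F$ is only one-sided regular at $\tilde c=0$, in place of the implicit function theorem I would introduce, for $\varepsilon<0$ of small modulus, the map
\[
T_\varepsilon(\tilde c)=\tilde c-\frac{F(\varepsilon,\tilde c)}{F_c^+}
\]
and show that a small half-disk centered on the approximate root $\tilde c_0(\varepsilon)=-\varepsilon F_\varepsilon/F_c^+$ is invariant under $T_\varepsilon$ and that $T_\varepsilon$ is a contraction there; the contraction estimate follows from the uniform $o(|\varepsilon|+|\tilde c|)$ control of the Taylor remainder, which in turn rests on uniform Plemelj-type estimates for the integrals above. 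The unique fixed point supplies $\tilde c(\varepsilon)$ with $\mathrm{Im}\,\tilde c(\varepsilon)>0$, hence $c(\varepsilon)=U_s+\tilde c(\varepsilon)$ and the associated $\phi_\varepsilon$, and the asymptotics (\ref{limit c})--(\ref{limit rate}) drop out from $\tilde c(\varepsilon)=\tilde c_0(\varepsilon)+o(\varepsilon)$. The main obstacle is the geometry of the iteration: one must verify that $\mathrm{Im}\,\tilde c_0(\varepsilon)>0$ holds precisely when $\varepsilon<0$ (which explains the one-sided restriction $\varepsilon\in(\varepsilon_0,0)$), and arrange the half-disk so it stays strictly inside $\mathbb{C}^+$ uniformly in the iteration so that the analyticity of $F$ in $\tilde c$ is preserved throughout the fixed-point argument.
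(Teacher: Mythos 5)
Your proposal follows essentially the same route as the paper's proof (which adapts \cite{lin1}): reduce the problem to zeros of a connection/shooting function that is analytic in $c\in\mathbb{C}^{+}$ and continuous in $\varepsilon$, compute its one-sided limits of partial derivatives at $(0,0)$ via Plemelj-type formulas to get the $p.v.$ integral, the residue sum and the boundary term $A$, and then, because the implicit function theorem is unavailable, run a contraction mapping on a half-neighborhood of the origin in $\mathbb{C}^{+}$ for $\varepsilon<0$; the paper does exactly this, only normalizing the two fundamental solutions at $y=h$ and contracting on a triangle $\Delta_{(R,b(\varepsilon))}$ rather than your half-disk, which are immaterial differences.

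The one place where your write-up would actually fail as stated is Case 2, $U(h)=U_{s}$. There $g_{r}(U_{s}+\tilde c)=g/\tilde c^{2}-U^{\prime}(h)/\tilde c$ blows up as $\tilde c\to 0$, so your shooting function $F(\varepsilon,\tilde c)=\phi^{\prime}(h)-g_{r}(U_{s}+\tilde c)\phi(h)$ is singular at the bifurcation point and the claimed one-sided expansion $F=F_{\varepsilon}\varepsilon+F_{c}^{+}\tilde c+o(|\varepsilon|+|\tilde c|)$ does not hold; the argument that the $g_{r}$-contribution ``vanishes because $\phi_{s}(h)=0$'' is not legitimate, since $\phi(h;\varepsilon,\tilde c)$ vanishes only in the limit and the product $g_{r}(U_{s}+\tilde c)\phi(h;\varepsilon,\tilde c)$ is of indeterminate size. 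The paper resolves this by rewriting the free-surface condition with the reciprocal coefficient $g_{s}(c)=(U(h)-c)^{2}/\bigl(g+U^{\prime}(h)(U(h)-c)\bigr)$, i.e. $\phi(h)=g_{s}(c)\phi^{\prime}(h)$, and using the connection function $g_{s}(U_{s}+c)\phi_{1}(0;\varepsilon,c)+\phi_{2}(0;\varepsilon,c)$, which is regular at $c=0$; since $g_{s}(U_{s}+c)\to 0$, the limit of $\partial\Phi/\partial c$ then carries no $A$-term, which is the correct way to obtain (\ref{definition-A}) in that case. A second point you flag but should not treat as routine is the sign question: the strict positivity of the imaginary part of the limiting $c$-derivative (the paper's $D\neq 0$, obtained from Lemma \ref{vanish} giving $\phi_{s}(y_{j})\neq 0$ and the class-$\mathcal{K}^{+}$ fact $U^{\prime}(y_{j})\neq 0$) is exactly what forces $\mathrm{Im}\,\tilde c_{0}(\varepsilon)>0$ only for $\varepsilon<0$ and hence the one-sided bifurcation; this must be proved with a definite normalization of $\phi_{s}$, not asserted.
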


\begin{proof}
As in the proof of Lemma \ref{basis}, for $c\in\mathbb{C}^{+}$ and $%
\varepsilon<0,$ let $\phi_{1}(y;\varepsilon,c)$ and $\phi_{2}(y;\varepsilon
,c)$ be the solutions of 
\begin{equation}
\phi^{\prime\prime}-(\alpha_{s}^{2}+\varepsilon)\phi-\frac{U^{\prime\prime}}{%
U-U_{s}-c}\phi=0\quad\text{for}\quad y\in(0,h)  \label{homo-ode}
\end{equation}
normalized at $h$, that is 
\begin{equation*}
\begin{cases}
\phi_{1}(h)=1,\quad & \phi_{2}(h)=0, \\ 
\phi_{1}^{\prime}(h)=0,\quad & \phi_{2}^{\prime}(h)=1.%
\end{cases}%
\end{equation*}
%
It is standard that $\phi_{1}$ and $\phi_{2}$ are analytic as a function of $%
c$ in $\mathbb{C}^{+}$ and that $\phi_{1}$ and $\phi_{2}$ are linearly
independent with Wronskian $1$. The neutral limiting mode is normalized so
that $\phi_{s}(h)=1$ and $\phi^{\prime}_{s}(h)=g_{r}(U_{s})$. The proof is
again divided into two cases.

Case 1: $U(h)\neq U_{s}$. Let us define 
\begin{align*}
\phi _{0}(y;\varepsilon ,c)& =\phi _{1}(y;\varepsilon ,c)+g_{r}(U_{s}+c)\phi
_{2}(y;\varepsilon ,c), \\
\Phi (\varepsilon ,c)& =\phi _{1}(0;\varepsilon ,c)+g_{r}(U_{s}+c)\phi
_{2}(0;\varepsilon ,c),
\end{align*}%
where $g_{r}$ is given in (\ref{E:defn-fc}). It is readily seen that $\phi
_{0}$ solves (\ref{homo-ode}) with $\phi _{0}(h)=1$ and $\phi _{0}^{\prime
}(h)=g_{r}(U_{s}+c)$. It is easy to see that $\Phi (\varepsilon ,c)$ is
analytic in $c\in $ $\mathbb{C}^{+}$ and differentiable in $\varepsilon $.
Note that an unstable solution to (\ref{eqn-ray-shift})--(\ref%
{bc-rayleigh-shift}) exists if and only if $\Phi (\varepsilon ,c)=0$ for
some ${\rm Im}c>0$. The Green's function of (\ref{homo-ode}) is written as 
\begin{equation*}
G(y,y^{\prime };\varepsilon ,c)=\bar{\phi}_{1}(y;\varepsilon ,c)\phi
_{2}(y^{\prime };\varepsilon ,c)-\phi _{2}(y;\varepsilon ,c)\bar{\phi}%
_{1}(y^{\prime };\varepsilon ,c),
\end{equation*}%
where $\bar{\phi}_{1}(y;\varepsilon ,c)$ is the solution of (\ref{homo-ode})
with $\bar{\phi}_{1}(h)=1$ and $\bar{\phi}_{1}^{\prime }(h)=g_{r}(U_{s})$. A
similar computation as in \cite[pp. 336]{lin1} for $\phi _{j}(y;\varepsilon
,c)$ ($j=1,2$) yields that 
\begin{equation}
\frac{\partial \Phi }{\partial \varepsilon }(\varepsilon
,c)=-\int_{0}^{h}G(y,0;\varepsilon ,c)\phi _{0}(y;\varepsilon ,c)dy
\label{differ1}
\end{equation}%
and%
\begin{equation}
\frac{\partial \Phi }{\partial c}(\varepsilon
,c)=\int_{0}^{h}G(y,0;\varepsilon ,c)\frac{-U^{\prime \prime }}{%
(U-U_{s}-c)^{2}}\phi _{0}(y;\varepsilon ,c)dy+\frac{d}{dc}g_{r}(U_{s}+c)\phi
_{2}(0;\varepsilon ,c).  \label{differ3}
\end{equation}

Let us define the triangle in $\mathbb{C}^{+}$ as 
\begin{equation*}
\Delta_{(R,b)}=\{ c_{r}+ic_{i} : |c_{r}| <Rc_{i}\,,\,0<c_{i}<b\}
\end{equation*}
and the Cartesian product in $(0,\infty) \times\mathbb{C}^{+}$ as 
\begin{equation*}
E_{(R,b_{1},b_{2}) }=(-b_{2}, 0) \times\Delta_{(R,b_{1})},
\end{equation*}
where $R, b_{1}, b_{2}>0$ are to be determined later.

We claim that:

(a) For fixed $R,$ both $\bar{\phi}_{1}(\cdot;\varepsilon,c)$ and $%
\phi_{0}(\cdot;\varepsilon,c)$ uniformly converge to $\phi_{s}$ in $%
C^{1}[0,h]$ as $(\varepsilon,c)\rightarrow(0,0)$ in $E_{(R,b_{1},b_{2})}.$
That is, for any $\delta>0$ there exists some $b_{0}>0$ such that whenever $%
b_{1},b_{2}<b_{0}$ and $(\varepsilon,c)\in E_{(R,b_{1},b_{2})}$ the
inequalities 
\begin{equation*}
\left\Vert \bar{\phi}_{1}(\cdot;\varepsilon,c) -\phi _{s}\right\Vert
_{C^{1}},\left\Vert \phi_{0}( \cdot;\varepsilon ,c) -\phi_{s} \right\Vert
_{C^{1}}\leq\delta
\end{equation*}
hold.

(b) $\phi _{2}(\cdot ;\varepsilon ,c)$ converges uniformly to $\phi
_{2}\left( y;0,0\right) \ $in the sense of (a). We denote $\phi _{2}\left(
y;0,0\right) =\phi _{z}\left( y\right) ,$ then $\phi _{z}\left( 0\right) =-%
\frac{1}{\phi _{s}^{\prime }\left( 0\right) }$ since the Wronskian of $%
\left( \bar{\phi}_{1},\phi _{2}\right) $ and its limit $\left( \phi
_{s},\phi _{z}\right) $ is $1.$ The proof of (a) and (b) is very similar to 
\cite[pp. 337]{lin1} and we skip it.

In the appendix, we prove that 
\begin{equation}
\frac{\partial \Phi }{\partial \varepsilon }(\varepsilon ,c)\rightarrow 
\frac{1}{\phi _{s}^{\prime }(0)}\int_{0}^{h}\phi _{s}^{2}dy,
\label{i-conver 1}
\end{equation}%
\begin{equation}
\frac{\partial \Phi }{\partial c}(\varepsilon ,c)\rightarrow -\frac{1}{\phi
_{s}^{\prime }(0)}\left( i\pi \sum_{j=1}^{m_{s}}\frac{K(y_{j})}{|U^{\prime
}(y_{j})|}\phi _{s}^{2}(y_{j})+p.v.\int_{0}^{h}\frac{K}{U-U_{s}}\phi
_{s}^{2}dy+A\right)  \label{i-conver2}
\end{equation}%
uniformly as $\varepsilon \rightarrow 0-$ and $c\rightarrow 0$ in $%
E_{(R,b_{1},b_{2})}$, where $A$ is defined by (\ref{definition-A}) and $%
y_{j} $ ($j=1,\dots ,m_{s}$) are the inflection points for $U_{s}$. Let us
denote 
\begin{align}
B& =\frac{1}{\phi _{s}^{\prime }(0)}\int_{0}^{h}\phi _{s}^{2}dy,  \notag
\label{E:BCD} \\
C& =-\frac{1}{\phi _{s}^{\prime }(0)}\left( p.v.\int_{0}^{h}\frac{K(y)}{%
U-U_{s}}\phi _{s}^{2}dy+A\right) , \\
D& =-\frac{\pi }{\phi _{s}^{\prime }(0)}\sum_{j=1}^{m_{s}}\frac{K(y_{j})}{%
|U^{\prime }(y_{j})|}\phi _{s}^{2}(y_{j}).  \notag
\end{align}%
Lemma \ref{vanish} asserts that $\phi _{s}$ is nonzero at one of the
inflection points, say $\phi _{s}(y_{j})\neq 0$. Note that by \cite[Remark
4.2]{lin1}, for class $\mathcal{K}^{+}$ flows, $U^{\prime }(y_{j})\neq 0$
for $j=1,2,\dots ,m_{s}$. Consequently, $D<0$.

The remainder of the proof is identically the same as that of \cite[Theorem
4.1]{lin1} and hence we only sketch it. Define 
\begin{align}
f(\varepsilon ,c)& =\Phi (\varepsilon ,c)-B\varepsilon -(C+Di)c, \\
F(\varepsilon ,c)& =-\frac{B}{C+iD}\varepsilon -\frac{f(\varepsilon ,c)}{C+iD%
}.
\end{align}%
Note that for each $\varepsilon <0$ fixed, a zero of $\Phi (\varepsilon
,\cdot )$ corresponds to a fixed point of the mapping $F(\varepsilon ,\cdot
) $. It is shown in \cite[pp. 338-339]{lin1} that for $\varepsilon \in
(\varepsilon _{0},0)$, where $|\varepsilon _{0}|$ is sufficiently small, the
mapping $F(\varepsilon ,\cdot )$ is contracting on $\Delta _{(R,b\left(
\varepsilon \right) )}$ for some $R>0$ and 
\begin{equation*}
b(\varepsilon )=-2DB(C^{2}+D^{2})^{-1}\varepsilon .
\end{equation*}%
So for each $\varepsilon \in (\varepsilon _{0},0)$ there exists a unique $%
c(\varepsilon )\in \Delta _{(R,b(\varepsilon ))}$ such that $F(\varepsilon
,c(\varepsilon ))=c(\varepsilon )$ and thus $\Phi (\varepsilon
,c(\varepsilon ))=0$. It can be also shown that $c(\varepsilon )$ is
differentiable in $\varepsilon $ in the interval $(\varepsilon _{0},0)$.
Since $c(\varepsilon )\in \Delta _{(R,b(\varepsilon ))}$, it is immediate
that (\ref{limit c}) holds. Finally, differentiation of $\Phi (\varepsilon
,c(\varepsilon ))=0$ yields 
\begin{equation*}
c^{\prime }(\varepsilon )=-\frac{\partial \Phi /\partial \varepsilon }{%
\partial \Phi /\partial c},
\end{equation*}%
which, in view of (\ref{i-conver 1}) and (\ref{i-conver2}), implies (\ref%
{limit rate}).

Case 2: $U(h)=U_{s}$. The proof is almost identical to that of Case 1, and
thus we only indicate some differences. Let us define 
\begin{equation*}
\Phi(\varepsilon,c)=g_{s}(U_{s}+c)\phi_{1}(0;\varepsilon,c)+\phi
_{2}(0;\varepsilon,c),
\end{equation*}
where $g_{s}$ is defined in (\ref{E:gs}). Note that $\phi_{s}(h)=%
\phi_{s}(0)=0$. Thereby, the Green's function is written as 
\begin{equation*}
G(y,y^{\prime };\varepsilon,c)=\phi_{1}(y;\varepsilon,c)\phi_{2}(y^{\prime
};\varepsilon,c) -\phi_{2}(y;\varepsilon,c)\phi_{1}(y^{\prime
};\varepsilon,c) .
\end{equation*}
The same computations as in Case 1 yield that 
\begin{equation*}
\frac{\partial\Phi}{\partial\varepsilon}\rightarrow-\frac{1}{%
\phi_{s}^{\prime }(0)}\int_{0}^{h}\phi_{s}^{2}dy
\end{equation*}%
and 
\begin{equation*}
\frac{\partial\Phi}{\partial c}(\varepsilon,c)\rightarrow-\frac{1}{\phi
_{s}^{\prime}(0)}\left( i\pi\sum_{j=1}^{m_{s}}\frac{K(y_{j})}{|U^{\prime
}(y_{j})|}\phi_{s}^{2}(y_{j})+p.v.\int_{0}^{h}\frac{K}{U-U_{s}}%
\phi_{s}^{2}dy\right)
\end{equation*}
uniformly as $\varepsilon\rightarrow0-$ and $c\rightarrow 0$ in $E_{(
R,b_{1},b_{2}) }$. This completes the proof.
\end{proof}

\begin{proof}[Proof of Theorem \protect\ref{class-k}]
Let $-\alpha _{N}^{2}<-\alpha _{N-1}^{2}<\cdots <-\alpha _{1}^{2}<0$ be
negative eigenvalues of the operator $-\frac{d^{2}}{dy^{2}}-K(y)$ on $(0,h)$
with boundary conditions (\ref{bc-sturm1})--(\ref{bc-sturm2}). That is, $%
\alpha _{N}=\alpha _{\mathrm{max}}$, where $-\alpha _{\mathrm{max}}^{2}$ is
defined either in (\ref{E:amax1}) or (\ref{E:amax2}). 
We deduce from Lemma \ref{basis} and Proposition \ref{theorem-unstable} that
to each $\alpha \in (0,\alpha _{N})$ with $\alpha \neq \alpha _{j}$ ($%
j=1,\dots ,N$) an unstable solution is associated. Our goal is to show the
instability at $\alpha =\alpha _{j}$ for each $j=1,\dots ,N-1$.

Case 1: $U(h)\neq U_{s}$. Let $\{(\phi _{k},\alpha
_{k},c_{k})\}_{k=1}^{\infty }$ be a sequence of unstable solutions such that 
$\alpha _{k}\rightarrow \alpha _{j}+$ as $k\rightarrow \infty $. After
normalization, we may assume $\phi _{k}(h)=1$. Note that $\phi _{k}$
satisfies 
\begin{equation}
\phi _{k}^{\prime \prime }-\alpha _{k}^{2}\phi _{k}-\frac{U^{\prime \prime }%
}{U-c_{k}}\phi _{k}=0\qquad \text{for}\quad y\in (0,h)  \label{inter3}
\end{equation}%
and $\phi _{k}^{\prime }(h)=g_{r}(c_{k})$, $\phi _{k}(0)=0$. Below we will
prove that ${\rm Im}c_{k}\geq \delta >0$, where $\delta $ is independent of 
$k$. Since the coefficients of (\ref{inter3}) and $g_{r}(c_{k})$ are bounded
uniformly for $k$, the solutions $\phi _{k}$ of the above Rayleigh equations
are uniformly bounded in $C^{2}$, and subsequently, $\phi _{k}$ converges in 
$C^{2}$ as $k\rightarrow \infty $, say to $\phi _{\infty }$. The semicircle
theorem (\ref{semicircle}) ensures that $c_{k}\rightarrow c_{\infty }$ as $%
k\rightarrow \infty $. Note that $\mathrm{Im}\,c_{\infty }\geq \delta >0$.
By continuity, $\phi _{\infty }$ satisfies 
\begin{equation*}
\phi _{\infty }^{\prime \prime }-\alpha _{j}^{2}\phi _{\infty }-\frac{%
U^{\prime \prime }}{U-c_{\infty }}\phi _{\infty }=0\qquad \text{for}\quad
y\in (0,h),
\end{equation*}%
with $\phi _{\infty }(h)=1$, $\phi _{\infty }^{\prime }(h)=g_{r}(c_{\infty
}) $ and $\phi _{\infty }(0)=0$. That is, $(\phi _{\infty },\alpha
_{j},c_{\infty })$ is an unstable solution of (\ref{rayleigh})--(\ref%
{bc-rayleigh}).

It remains to show that $\{\mathrm{Im}\,c_{k}\}$ has a positive lower bound.
Suppose on the contrary that $\mathrm{Im}\,c_{k}\rightarrow0$ as $%
k\rightarrow\infty$. 

We claim that $\Vert \phi _{k}\Vert _{L^{2}}\leq C$, where $C>0$ is
independent of $k$. Otherwise, $\Vert \phi _{k}\Vert _{L^{2}}\rightarrow
\infty $ as $k\rightarrow \infty $. Let $\varphi _{k}=\phi _{k}/\Vert \phi
_{k}\Vert _{L^{2}}$, then $\Vert \varphi _{k}\Vert _{L^{2}}=1$. 
Lemma \ref{lemma-h2-bound} then dictates that $\Vert \varphi _{k}\Vert
_{H^{2}}\leq C$ independently of $k$. Subsequently, Proposition \ref%
{prop-neutral-mode} ensures that $(\varphi _{k},\alpha _{k},c_{k})$
converges to a neutral limiting mode $(\varphi _{s},\alpha _{s},U_{s})$. By
continuity, $\Vert \varphi _{s}\Vert _{L^{2}}=1$ and 
\begin{equation*}
\varphi _{s}^{\prime \prime }-\alpha _{j}^{2}\varphi _{s}+K(y)\varphi
_{s}=0\qquad \text{for}\quad y\in (0,h).
\end{equation*}%
%
%
%
%
%
%
%
%
%
%
%
%
%
On the other hand, $\varphi _{s}(h)=\varphi _{s}^{\prime }(h)=0$ since $%
\varphi _{k}(h)=1/\Vert \phi _{k}\Vert _{L^{2}}\rightarrow 0$ and $\varphi
_{k}^{\prime }(h)=g_{r}(c_{k})/\Vert \phi _{k}\Vert _{L^{2}}\rightarrow 0$
as $k\rightarrow \infty $. Correspondingly, $\varphi _{s}\equiv 0$ on $[0,h]$%
. A contradiction proves the claim.

Since $\Vert \phi _{k}\Vert _{L^{2}}$ is bounded uniformly for $k$, Lemma %
\ref{lemma-h2-bound} and Proposition \ref{prop-neutral-mode} apply and $%
\Vert \phi _{k}\Vert _{H^{2}}\leq C$, $c_{k}\rightarrow U_{s}$ and $\phi
_{k}\rightarrow \phi _{s}$ in $C^{1}$, where $\phi _{s}$ satisfies 
\begin{equation*}
\phi _{s}^{\prime \prime }-\alpha _{j}^{2}\phi _{s}-\frac{U^{\prime \prime }%
}{U-U_{s}}\phi _{s}=0\qquad \text{for}\quad y\in (0,h)
\end{equation*}%
with $\phi _{s}(h)=1$, $\phi _{s}^{\prime }(h)=g_{r}(U_{s})$ and $\phi
_{s}(0)=0$. 
An integration by parts yields that 
\begin{align*}
0=& \int_{0}^{h}\left( \phi _{s}(\phi _{k}^{\prime \prime }-\alpha
_{k}^{2}\phi _{k}-\frac{U^{\prime \prime }}{U-c_{k}}\phi _{k})-\phi
_{k}(\phi _{s}^{\prime \prime }-\alpha _{j}^{2}\phi _{s}-\frac{U^{\prime
\prime }}{U-U_{s}}\phi _{s})\right) dy \\
=& (\alpha _{j}^{2}-\alpha _{k}^{2})\int_{0}^{h}\phi _{s}\phi
_{k}\,dy-(c_{k}-U_{s})\int_{0}^{h}\frac{U^{\prime \prime }}{%
(U-c_{k})(U-U_{s})}\phi _{s}\phi _{k}dy+g_{r}(c_{k})-g_{r}(U_{s}).
\end{align*}%
Let us denote 
\begin{align*}
B_{k}& =\int_{0}^{h}\phi _{s}\phi _{k}dy, \\
D_{k}& =-\int_{0}^{h}\frac{U^{\prime \prime }}{(U-c_{k})(U-U_{s})}\phi
_{s}\phi _{k}dy+\frac{g_{r}(c_{k})-g_{r}(U_{s})}{c_{k}-U_{s}}.
\end{align*}%
It is immediate that $\lim_{k\rightarrow \infty }B_{k}=\int_{0}^{h}|\phi
_{s}|^{2}dy$. We shall show in the appendix that 
\begin{equation}
\lim_{k\rightarrow \infty }D_{k}=A+i\pi \sum_{j=1}^{m_{s}}\frac{K(y_{j})}{%
|U(y_{j})|}\phi _{s}^{2}(y_{j}),  \label{integral-limit 2}
\end{equation}%
where $A$ is defined by (\ref{definition-A}) and $a_{j}$ $\left( j=1,2,\dots
,m_{s}\right) $ are inflection points corresponding to the inflection value $%
U_{s}$. Since $\mathrm{Im}\,(\lim_{k\rightarrow \infty }D_{k})>0$ (see the
proof of Proposition \ref{theorem-unstable}) it follows that 
\begin{equation*}
{\rm Im}c_{k}=(\alpha _{k}^{2}-\alpha _{j}^{2}){\rm Im}(B_{k}/D_{k})<0
\end{equation*}%
for $k$ large. A contradiction proves that $\{\mathrm{Im}\,c_{k}\}$ has a
positive lower bound, uniformly for $k$.

Case 2: $U(h)=U_{s}$. We normalize $\phi_{k}$ so that $\phi_{k}^{%
\prime}(h)=1 $ and $\phi_{k}(h) =g_{s}(c_{k})$. The proof is identically the
same as that of Case 1 except that 
\begin{equation*}
D_{k}=-\int_{0}^{h}\frac{U^{\prime\prime}}{(U-c_{k})(U-U_{s})}\phi_{s}
\phi_{k} \, dy +\frac{g_{s}(c_{k})}{c_{k}-U_{s}}.
\end{equation*}
We shall show in the appendix that 
\begin{equation}
\lim_{k\rightarrow\infty}D_{k}=i\pi\sum_{j=1}^{m_{s}}\frac{K(y_{j})}{%
|U^{\prime}(y_{j})|}\phi_{s}^{2}(y_{j}) +p.v.\int_{0}^{h}\frac{K}{U-U_{s}}
\phi_{s}^{2} dy.  \label{formula-Bk-limit-Ud}
\end{equation}
This proves that there exists an unstable solution for each $\alpha\in(0,
\alpha_{\mathrm{max}})$.

It remains to prove linear stability in case either the operator $-\frac{%
d^{2}}{dy^{2}}-K(y)$ on $y\in (0,h)$ with (\ref{bc-sturm1})--(\ref{bc-sturm2}%
) is nonnegative or $\alpha \geq \alpha _{\max }$. Suppose otherwise, there
exists an unstable mode at a wave number $\alpha \geq \alpha _{\max }$. By
Lemma \ref{basis}, we can continuate this unstable mode for wave numbers
larger than $\alpha $ until the growth rate becomes zero. By Lemma \ref%
{lemma-bound-wave number}, this continuation must stop at a wave number $%
\alpha _{s}>\alpha $, where there is a neutral limiting mode. Then by
Proposition \ref{prop-neutral-mode}, $-\alpha _{s}^{2}$ is a negative
eigenvalue of $-\frac{d^{2}}{dy^{2}}-K\left( y\right) \ $with (\ref%
{bc-sturm1})-(\ref{bc-sturm2}). But $-\alpha _{s}^{2}<-\alpha _{\max }^{2}$
which is a contradiction to the fact that $-\alpha _{\max }^{2}$ is the
lowest eigenvalue of $-\frac{d^{2}}{dy^{2}}-K\left( y\right) \ $with (\ref%
{bc-sturm1})-(\ref{bc-sturm2}). This completes the proof.
\end{proof}

\begin{remark}
\label{R:destable} For $U\in\mathcal{K}^{+}$, let $-\alpha_{d}^{2}$ be the
lowest eigenvalue of $-\frac{d^{2}}{dy^{2}}-K(y)$ on $y\in(0,h)$ with the
Dirichlet boundary conditions $\phi(h)=0=\phi(0)$. If $U(h)=U_{s}$ then $%
\alpha_{\mathrm{max}}=\alpha_{d}$. We claim that if $U(h)\neq U_{s}$ then $%
\alpha_{\mathrm{max}}>\alpha_{d}$. To see this, let $\phi_{d}$ be the
eigenfunction of $-\frac{d^{2}}{dy^{2}}-K(y)$ on $y\in(0,h)$ with the
Dirichlet boundary conditions corresponding to $-\alpha_{d}^{2}$ and let $%
\phi_{m}$ be the eigenfunction of $-\frac{d^{2}}{dy^{2}}-K(y)$ with the
boundary conditions (\ref{bc-sturm1})--(\ref{bc-sturm2}) corresponding to $%
-\alpha_{\mathrm{max}}^{2}$. By Sturm's theory, we can assume $%
\phi_{d},\phi_{m}>0$ on $y\in(0,h)$. An integration by parts yields that 
\begin{align*}
0 & =\int_{0}^{h}\left( \phi_{d}(\phi_{m}^{\prime\prime}-\alpha _{\mathrm{max%
}}^{2}\phi_{m}+K(y)\phi_{m})-\phi_{m}(\phi_{d}^{\prime\prime
}-\alpha_{d}^{2}\phi_{d}+K(y)\phi_{d})\right) dy \\
& =-\phi_{m}(h)\phi_{d}^{\prime}(h)+(\alpha_{d}^{2}-\alpha_{\mathrm{max}%
}^{2})\int_{0}^{h}\phi_{d}\phi_{m}dy.
\end{align*}
Since $\phi_{d}^{\prime}(h)<0$ the claim follows.

For flows in class $\mathcal{K}^{+}$, the lowest eigenvalue of (\ref{E:SL})
measures the range of instability, for both the free surface and the rigid
wall cases. That means, $(0,\alpha_{\mathrm{max}})$ is the interval of
unstable wave numbers in the free-surface setting (Theorem \ref{class-k})
and $(0,\alpha_{d})$ is the interval of unstable wave numbers in the
rigid-wall setting \cite[Theorem 1.2]{lin1}. The fact that $\alpha_{\mathrm{%
max}}>\alpha_{d}$ thus indicates that the free surface has a \emph{%
destabilizing} effect.
\end{remark}

\subsection{Monotone unstable shear flows}

In general, for a given shear flow profile in the class $\mathcal{K}^{+}$,
one might show the existence of a neutral limiting mode and thus the
existence of growing modes, by numerically computing the negativity of (\ref%
{E:amax}). For monotone flows with one inflection point, however, the
existence of neutral limiting modes can be shown (\cite{yih72}) by a
comparison argument.

\begin{lemma}
\label{lemma-mode-monotone} For any monotone shear flow $U\left( y\right) \ $%
with exactly one inflection point $y_{s}$ in the interior, there exists a
neutral limiting mode. That is, (\ref{rayleigh})--(\ref{bc-rayleigh}) has a
nontrivial solution for which $c=U(y_{s})$ and $\alpha >0$.
\end{lemma}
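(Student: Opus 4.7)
The plan is to reduce the problem to showing that the Sturm-Liouville operator $-d^2/dy^2 - K(y)$ with boundary conditions (\ref{bc-sturm1})--(\ref{bc-sturm2}) admits a negative eigenvalue. Since $U$ is monotone with a unique interior inflection point $y_s$ and $U_s := U(y_s)$, the coefficient $K(y) = -U''(y)/(U(y)-U_s)$ is continuous and positive on $[0,h]$ (as $U''$ and $U-U_s$ vanish simultaneously at $y_s$ and have matching signs on each side), so $U \in \mathcal{K}^+$; monotonicity further gives $U(h) \neq U_s$, so the boundary condition is (\ref{bc-sturm1}). Once a negative eigenvalue is produced, i.e.\ $\alpha_{\max}^2 > 0$, Theorem \ref{class-k} supplies a nontrivial solution of (\ref{rayleigh})--(\ref{bc-rayleigh}) at $c = U_s$ and some $\alpha > 0$.

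The key observation is that $\psi := U - U_s$ solves the $\alpha = 0$ Rayleigh equation, since $\psi'' + K\psi = U'' - U'' = 0$. Substituting $\phi = \psi w$ into the Rayleigh equation at $c = U_s$ yields the symmetric Sturm-Liouville form
\[
\bigl(\psi^2 w'\bigr)' = \alpha^2 \psi^2 w, \qquad y \in (0,h),
\]
with boundary conditions $w(0) = 0$ and $(U(h)-U_s)^2 w'(h) = g\, w(h)$. Multiplication by $w$ and integration by parts give the Rayleigh quotient
\[
\alpha^2 = \frac{g\, w(h)^2 - \int_0^h \psi^2 (w')^2\, dy}{\int_0^h \psi^2 w^2\, dy}.
\]

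To make this positive, I would exploit that the weight $\psi^2$ vanishes quadratically at $y_s$. For small $\epsilon > 0$, take $w_\epsilon$ equal to $0$ on $[0,y_s]$, linearly increasing from $0$ to $1$ on $[y_s, y_s+\epsilon]$, and equal to $1$ on $[y_s + \epsilon, h]$. Since $\psi(y) = U'(y_s)(y - y_s) + O((y-y_s)^2)$,
\[
\int_0^h \psi^2 (w'_\epsilon)^2\, dy = \frac{1}{\epsilon^2}\int_{y_s}^{y_s+\epsilon} \psi^2\, dy = O(\epsilon) \to 0,
\]
while $g\, w_\epsilon(h)^2 = g$ and $\int_0^h \psi^2 w_\epsilon^2\, dy \to \int_{y_s}^h \psi^2\, dy > 0$. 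Hence for $\epsilon$ small the quotient is strictly positive, so the associated Sturm-Liouville problem admits a positive eigenvalue.

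The main obstacle is passing from this variational inequality to an honest solution of the original Rayleigh system. The subtlety is that the weight $\psi^2$ is degenerate at $y_s$, and the substitution $\phi = \psi w$ is bijective only onto those Rayleigh eigenfunctions that vanish at $y_s$, so standard Sturm-Liouville theory must be adapted to the singular weight. An alternative route that avoids this is a direct shooting argument: let $\phi_\alpha$ be the solution of (\ref{E:SL}) at $c = U_s$ with $\phi_\alpha(0) = 0$, $\phi_\alpha'(0) = 1$, and set $F(\alpha) = \phi_\alpha'(h) - g_r(U_s)\phi_\alpha(h)$. For large $\alpha$, $\phi_\alpha(y) \sim \sinh(\alpha y)/\alpha$ gives $F(\alpha) \to +\infty$; as $\alpha \to 0^+$, the second independent solution of the $\alpha = 0$ equation has a simple pole at $y_s$ (by reduction of order from $\psi$), so $\phi_\alpha$ develops large variation near $y_s$ forcing $F(\alpha) < 0$ for some small $\alpha > 0$. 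The intermediate value theorem then yields the desired $\alpha$.
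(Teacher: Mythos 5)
Your variational idea is sound and, if completed, would give a genuinely different (and arguably cleaner) proof than the paper's, but as written it is not completed, and both of the places where you defer the work contain real problems. First, the reduction ``monotone with one interior inflection point $\Rightarrow K>0$, hence $U\in\mathcal{K}^{+}$'' is false: for an increasing flow that is concave below $y_{s}$ and convex above it (e.g. $U=\tan(y-h/2)$ on $[0,h]$, $h<\pi$), $U''$ and $U-U_{s}$ have the \emph{same} sign on each side of $y_{s}$ and $K<0$; indeed your own parenthetical (``matching signs'') would force $K<0$, not $K>0$. Luckily positivity of $K$ is never used in your argument --- what is needed (and is implicitly assumed in the paper's proof as well) is boundedness of $K$, so that (\ref{E:SL}) is a regular Sturm--Liouville problem --- and you also do not need Theorem \ref{class-k}: that theorem produces unstable modes with $\mathrm{Im}\,c>0$ for $\alpha<\alpha_{\max}$, not a solution at $c=U_{s}$; what you actually want is the elementary fact that the eigenfunction of the lowest (negative) eigenvalue of $-d^{2}/dy^{2}-K$ with (\ref{bc-sturm1})--(\ref{bc-sturm2}) is itself the desired neutral mode. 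Second, the ``main obstacle'' you flag (the degenerate weight $\psi^{2}$, $\psi=U-U_{s}$) is bypassed in one line which you never write: take $\phi_{\epsilon}=\psi w_{\epsilon}$, which is admissible for the \emph{regular} $\phi$-problem, and integrate by parts using $\psi''+K\psi=0$ and $\psi'(h)=U'(h)$ to get
\begin{equation*}
\int_{0}^{h}\bigl(|\phi_{\epsilon}'|^{2}-K|\phi_{\epsilon}|^{2}\bigr)\,dy-g_{r}(U_{s})\,|\phi_{\epsilon}(h)|^{2}=\int_{0}^{h}\psi^{2}|w_{\epsilon}'|^{2}\,dy-g\,|w_{\epsilon}(h)|^{2},
\end{equation*}
where the boundary term carries a minus sign once the Robin condition $\phi'(h)=g_{r}(U_{s})\phi(h)$ is folded into the form (test (\ref{E:SL}) against $\phi$, cf.\ (\ref{zero-real})). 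Your ramp computation makes the right-hand side negative, so the bottom eigenvalue of the regular problem is negative and its eigenfunction solves (\ref{rayleigh})--(\ref{bc-rayleigh}) with $c=U_{s}$. With that bridge the argument closes; without it, it does not.

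Your fallback shooting argument is essentially the paper's proof, but its decisive step is asserted on incorrect grounds. With $K$ bounded, (\ref{E:SL}) is a regular ODE: every solution is bounded, and in particular the reduction-of-order companion $\psi(y)\int^{y}\psi^{-2}\,dy'$ is bounded near $y_{s}$ (the apparent pole of the integral is cancelled by the vanishing factor $\psi$), so there is no ``large variation near $y_{s}$'' and no mechanism of the kind you describe; $\phi_{\alpha}$ depends continuously on $\alpha$ down to $\alpha=0$ and the sign of $F(0)$ must be computed. The paper does exactly this: (i) it proves $\phi_{\alpha}>0$ on $(0,h]$ for all $\alpha\geq0$ by playing Sturm's first and second comparison theorems against the exact solution $U-U_{s}$, and (ii) it integrates the conservative form $\bigl((U-U_{s})\phi_{\alpha}'-U'\phi_{\alpha}\bigr)'=\alpha^{2}(U-U_{s})\phi_{\alpha}$ over $(0,h)$, which at $\alpha=0$ gives $(U(h)-U_{s})F(0)=(U(0)-U_{s})-\frac{g}{U(h)-U_{s}}\phi_{0}(h)<0$ for an increasing flow; the large-$\alpha$ positivity is then obtained, as you indicate, by comparison with explicit $\sinh$ solutions. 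Your proposal supplies neither ingredient (i) nor (ii), so as submitted both routes have a gap; the cleanest repair is the one-line identity above, which completes your variational route and avoids the Sturm-comparison analysis altogether.
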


\begin{proof}
This result is given in Theorem 4 of \cite{yih72}. Here we present a
detailed proof for completeness and also for clarification of some arguments
in \cite{yih72}.

We consider an increasing flow $U(y)$ only. A decreasing flow can be treated
in the same way. Let $U_{s}=U(y_{s})$ be the inflection value. Denoted by $%
\phi _{\alpha }$ the solution of the Rayleigh equation 
\begin{equation*}
\phi _{\alpha }^{\prime \prime }+\left( \frac{U^{\prime \prime }}{U_{s}-U}%
-\alpha ^{2}\right) \phi _{\alpha }=0\qquad \text{for}\quad y\in (0,h)
\end{equation*}%
with $\phi _{\alpha }(0)=0$ and $\phi _{\alpha }^{\prime }(0)=1$. As in the
proof of Lemma \ref{lemma-bifur}, an integration of the above over $(0,h)$
yields that 
\begin{equation*}
(U(h)-U_{s})\phi _{\alpha }^{\prime }(h)-(U(0)-U_{s})-\phi _{\alpha
}(h)U^{\prime }(h)-\alpha ^{2}\int_{0}^{h}(U-U_{s})\phi _{\alpha }dy=0,
\end{equation*}%
and thus 
\begin{equation*}
\frac{\phi _{\alpha }^{\prime }(h)}{\phi _{\alpha }(h)}=\frac{U(0)-U_{s}}{%
(U(h)-U_{s})\phi _{\alpha }(h)}+\frac{U^{\prime }(h)}{U(h)-U_{s}}+\frac{%
\alpha ^{2}}{(U(h)-U_{s})\phi _{\alpha }(h)}\int_{0}^{h}(U-U_{s})\phi
_{\alpha }dy.
\end{equation*}%
It is straightforward to see that the boundary condition (\ref{bc-rayleigh})
is satisfied if and only if the function 
\begin{equation*}
f(\alpha )=\frac{U(0)-U_{s}}{(U(h)-U_{s})\phi _{\alpha }(h)}+\frac{\alpha
^{2}}{(U(h)-U_{s})\phi _{\alpha }(h)}\int_{0}^{h}(U-U_{s})\phi _{\alpha }dy-%
\frac{g}{(U_{s}-U(h))^{2}}.
\end{equation*}%
vanishes at some $\alpha >0$.

We claim that $\phi _{\alpha }(y)>0$ on $y\in (0,h]$ for any $\alpha \geq 0$%
. Suppose otherwise, let $y_{\alpha }\in (0,h]$ to be the first zero of $%
\phi _{\alpha }$ other than $0$, that is, $\phi _{\alpha }(y_{\alpha })=0$
and $\phi _{\alpha }(y)>0$ for $y\in (0,y_{\alpha })$. Then, $y_{\alpha
}>y_{s}$ must hold. Indeed, if $y_{\alpha }\leq y_{s}$ were to be true, then
Sturm's first comparison theorem would apply to $\phi _{\alpha }$ and $%
U-U_{s}$ on $[0,y_{\alpha }]$ to assert that $U-U_{s}$ must vanish somewhere
in $(0,y_{\alpha })\subset (0,y_{s})$. This uses that 
\begin{equation*}
(U-U_{s})^{\prime \prime }+\frac{U^{\prime \prime }}{U_{s}-U}(U-U_{s})=0.
\end{equation*}%
A contradiction then asserts that $y_{\alpha }>y_{s}$. Correspondingly, $%
\phi _{\alpha }$ and $U-U_{s}$ have exactly one zero in $[0,y_{\alpha }]$.
On the other hand, by Sturm's second comparison theorem \cite{hartman}, it
follows that 
\begin{equation*}
\frac{\phi _{\alpha }^{\prime }(y_{\alpha })}{\phi _{\alpha }(y_{\alpha })}%
\geq \frac{U^{\prime }(y_{\alpha })}{U(y_{\alpha })-U_{s}}.
\end{equation*}%
This contradicts since $\phi _{\alpha }(y_{\alpha })=0$ and the left hand
side is $-\infty $. Therefore, $\phi _{\alpha }(y)>0$ for $y\in (0,h]$ and
for any $\alpha \geq 0$. In particular, $\phi _{\alpha }(h)>0$ for any $%
\alpha \geq 0$. Consequently, $f$ is a continuous function of $\alpha $ and $%
f(0)<0$.

It remains to show that $f(\alpha )>0$ for $\alpha >0$ big enough. Thereby,
by continuity $f$ vanishes at some $\alpha >0$. Let $\left\vert \frac{%
U^{\prime \prime }}{U_{s}-U}\right\vert \leq M$ and $\alpha ^{2}>M$. Let us
denote by $\phi _{1}$ and $\phi _{2}$ the solutions of 
\begin{equation*}
\phi _{1}^{\prime \prime }+(M-\alpha ^{2})\phi _{1}=0\quad \text{and}\quad
\phi _{2}^{\prime \prime }+(-M-\alpha ^{2})\phi _{2}=0\qquad \text{for}\quad
y\in (0,h),
\end{equation*}%
respectively, with $\phi _{i}(0)=0$ and $\phi _{i}^{\prime }(0)=1$. It is
straightforward that 
\begin{equation*}
\phi _{1}(y)=\frac{1}{\sqrt{\alpha ^{2}-M}}\sinh \sqrt{\alpha ^{2}-M}y\quad 
\text{and}\quad \phi _{2}(y)=\frac{1}{\sqrt{\alpha ^{2}+M}}\sinh \sqrt{%
\alpha ^{2}+M}y.
\end{equation*}%
As in the proof of Lemma \ref{lemma-dispersion-property} (a), Sturm's second
comparison theorem \cite{hartman} implies that 
\begin{equation*}
\frac{1}{\sqrt{\alpha ^{2}-M}}\sinh \sqrt{\alpha ^{2}-M}y\leq \phi _{\alpha
}(y)\leq \frac{1}{\sqrt{\alpha ^{2}+M}}\sinh \sqrt{\alpha ^{2}+M}y.
\end{equation*}%
This together with the monotone property of $U$ establishes that $f(\alpha
)\geq C_{1}\alpha -C_{2}$ for some constants $C_{1},C_{2}>0$. For details we
refer to \cite[Theorem 4]{yih72}. Thus, $f(\alpha )>0$ if $\alpha >0$ is
sufficiently large. This completes the proof .
\end{proof}

Since a monotone flow with one inflection value is in class $\mathcal{K}^{+}$%
, the above lemma combined with Theorem \ref{class-k} asserts its
instability.

\begin{corollary}
\label{cor-unsta-mono}Any monotone shear flow with exactly one inflection
point in the interior is unstable in the free-surface setting, for wave
numbers in an interval $(0,\alpha _{\max })$ with $\alpha _{\max }>0$.
\end{corollary}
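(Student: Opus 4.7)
The plan is to deduce the corollary by chaining together the pieces that have been assembled: first identify the flow as belonging to $\mathcal{K}^{+}$, then invoke Lemma \ref{lemma-mode-monotone} to produce a neutral limiting mode, and finally apply Theorem \ref{class-k} to convert the neutral limiting mode into a genuine unstable band $(0,\alpha_{\max})$.

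First I would verify that a $C^{2}$ monotone flow $U$ on $[0,h]$ with exactly one interior inflection point $y_s$ lies in class $\mathcal{K}^{+}$ in the sense of Definition \ref{classK}. The unique inflection value is $U_s=U(y_s)$. Away from $y_s$ the function $U-U_s$ is nonzero (by strict monotonicity) and $U''$ is nonzero and has a definite sign on each side; because $U''$ changes sign precisely at $y_s$ and $U-U_s$ also changes sign precisely at $y_s$, the quotient $K(y)=-U''(y)/(U(y)-U_s)$ has the same sign throughout $[0,h]\setminus\{y_s\}$. A short argument using Taylor expansion around $y_s$ (with $U'(y_s)\neq 0$ by monotonicity) gives
\begin{equation*}
\lim_{y\to y_s} K(y)=-\frac{U'''(y_s)}{U'(y_s)},
\end{equation*}
which is finite; in the generic case where $U'''(y_s)\neq 0$ this limit is nonzero, and the common sign of $K$ on $[0,h]$ must be positive, since otherwise $U''$ and $U-U_s$ would have the wrong relative sign at an inflection. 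Thus $K\in L^{\infty}([0,h])$ and $K>0$, placing $U$ in $\mathcal{K}^{+}$.

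Next, Lemma \ref{lemma-mode-monotone} produces a nontrivial solution of the Rayleigh system (\ref{rayleigh})--(\ref{bc-rayleigh}) with $c=U_s$ and some $\alpha_0>0$. Setting $\phi=\phi_{\alpha_0}$ and using that $c=U_s$ reduces the Rayleigh equation to the Sturm--Liouville problem (\ref{E:SL}) with boundary conditions (\ref{bc-sturm1})--(\ref{bc-sturm2}). Hence $-\alpha_0^{2}$ is a (negative) eigenvalue of the operator $-d^{2}/dy^{2}-K(y)$ with those boundary conditions. In particular, the lowest eigenvalue $-\alpha_{\max}^{2}$ satisfies $-\alpha_{\max}^{2}\leq -\alpha_0^{2}<0$, so $\alpha_{\max}\geq\alpha_0>0$, and the hypothesis of Theorem \ref{class-k} is met.

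Finally, applying Theorem \ref{class-k} yields that for every $\alpha\in(0,\alpha_{\max})$ the Rayleigh system (\ref{rayleigh})--(\ref{bc-rayleigh}) has a solution with $\mathrm{Im}\,c>0$, giving linear instability on an entire interval $(0,\alpha_{\max})$ of wave numbers. The only mild obstacle is the verification that $K$ is bounded and positive at the inflection point; this is a routine L'Hopital calculation once strict monotonicity and a nonvanishing $U'''(y_s)$ are assumed, and the rest of the corollary is essentially a reformulation of Lemma \ref{lemma-mode-monotone} and Theorem \ref{class-k} together.
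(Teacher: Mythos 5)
Your route is the same as the paper's: identify the flow as a class $\mathcal{K}^{+}$ profile, invoke Lemma \ref{lemma-mode-monotone} to produce a neutral mode with $c=U_{s}$ at some $\alpha_{0}>0$, observe that $-\alpha_{0}^{2}$ is then a negative eigenvalue of $-\frac{d^{2}}{dy^{2}}-K(y)$ with the boundary conditions (\ref{bc-sturm1})--(\ref{bc-sturm2}), so that the lowest eigenvalue $-\alpha_{\max}^{2}\leq-\alpha_{0}^{2}<0$, and finally apply Theorem \ref{class-k} to get instability on all of $(0,\alpha_{\max})$. The paper compresses this into one sentence; your explicit bridge from the neutral mode of Lemma \ref{lemma-mode-monotone} to the negativity of the lowest eigenvalue (hence $\alpha_{\max}\geq\alpha_{0}>0$) is exactly the detail the paper leaves implicit, and it is correct.

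The weak point is your verification that the flow lies in $\mathcal{K}^{+}$. Positivity of $K=-U''/(U-U_{s})$ does \emph{not} follow from monotonicity plus a single interior inflection point: it requires the specific sign structure that $U''$ and $U-U_{s}$ have opposite signs on each side of $y_{s}$ (for an increasing flow, convexity below the inflection and concavity above, as in $U=a\sin b(y-h/2)$). Your sentence that otherwise ``$U''$ and $U-U_{s}$ would have the wrong relative sign at an inflection'' is an assertion, not an argument; a monotone increasing profile that is concave below and convex above its unique inflection point (a tangent-like profile) has $K<0$ everywhere, so it is not in $\mathcal{K}^{+}$ and Theorem \ref{class-k} cannot be applied to it. Likewise, boundedness of $K$ at $y_{s}$ is not automatic for $U\in C^{2}$; your L'Hopital computation quietly assumes $C^{3}$ regularity and $U'''(y_{s})\neq0$, neither of which is in the hypotheses. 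To be fair, the paper's own proof simply asserts membership in $\mathcal{K}^{+}$ without justification, so your overall structure matches the published argument; but since you chose to prove this step, you should either add the sign/regularity hypotheses explicitly or restrict the statement to profiles with the convex--concave structure, rather than appeal to a ``generic'' case.
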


\begin{remark}
In the free-surface setting, there are two different kinds of neutral modes,
which are solutions to the Rayleigh system (\ref{rayleigh})--(\ref%
{bc-rayleigh}) with ${\rm Im}c=0$. Neutral limiting modes have their phase
speed in the range of the shear profile. For flows in class $\mathcal{K}^{+}$%
, moreover, the phase speed of a neutral limiting mode must be the
inflection value of the shear profile (Proposition \ref{prop-neutral-mode})
and it is contiguous to unstable modes (Proposition \ref{theorem-unstable}).
On the other hand, Lemma \ref{lemma-bifur} shows that under the conditions $%
U^{\prime \prime }(h)<0$ and $U(h)>U(y)$ for $h\neq y,$ a neutral mode
exists with the phase speed $c>\max U$. Such a neutral mode is used in the
local bifurcation of nontrivial periodic waves in Theorems \ref{T:smallE}
and \ref{T:bifur-fixed U}. In view of the semicircle theorem (\ref%
{semicircle}), however, such a neutral mode is not contiguous to unstable
modes. This implies that neutral modes governing the stability property are
different from those governing the bifurcation of nontrivial waves. This is
an important difference in the free-surface setting. Since in the rigid wall
setting, for any possible neutral modes the phase speed must lie in the
range of $U$, which follows easily from Sturm's first comparison theorem.
For class $\mathcal{K}^{+}$ flows, such neutral modes are contiguous to
unstable modes (\cite{lin1}) and the bifurcation of nontrivial waves from
these neutral modes can also be shown (\cite{bd}). Thus, in the rigid-wall
setting, the same neutral modes govern both stability and bifurcation.
\end{remark}

\begin{remark}
\label{remark-neutral} In \cite{cost1}, the $\mathcal{J}$-formal stability
was introduced via a quadratic form, which is related to the local
bifurcation of nontrivial waves in the transformed variables (see also
Introduction), and it was concluded that this formal stability of the
trivial solutions switches exactly at the bifurcation point. Below we
discuss two examples for which the linear stability property does not change
along the line of trivial solutions passing the bifurcation point, which
indicates that the $\mathcal{J}$-formal stability in \cite{cost1} is
unrelated to linear stability of the physical water wave problem. However,
the $\mathcal{J}$-formal stability results \cite{cost1} do give more
information about the structure of the periodic water wave branch.

Let us consider a monotone increasing flow $U(y)$ on $y\in \lbrack 0,h]$
with one inflection point $y_{s}\in (0,h)$, for example, $U(y)=a\sin
b(y-h/2) $ on $y\in \lbrack 0,h]$ for which $y_{s}=h/2$. By lemma \ref%
{lemma-mode-monotone} and Corollary \ref{cor-unsta-mono}, such a shear flow
is equipped with a neutral limiting mode with $c=U(y_{s})$ and $\alpha
=\alpha _{\max }>0$, and it is linearly unstable for any wave number $\alpha
\in (0,\alpha _{\max })$. In addition, by lemma \ref{lemma-bifur} and
Theorem \ref{T:bifur-fixed U} for any wave number $\alpha \in (0,\alpha
_{\max })$ this shear flow has a neutral mode with $c(\alpha )>U(h)=\max U$.
Moreover, such a neutral mode is a nontrivial solution to the bifurcation
equation (\ref{rayleigh-disper})--(\ref{bc-disper}), and thus there exists a
local curve of bifurcation of nontrivial waves with a wave speed $c(\alpha )$
and period $2\pi /\alpha $. Let $p_{0}$ and $\gamma $ be the flux and
vorticity relation determined by $U\left( y\right) ,\ c(\alpha )$ and $h$
via (\ref{eqn-flux-vorticity}). Consider the trivial solutions with shear
flows $U(y;\mu )$ defined in Lemma \ref{L:trivial}, with above $p_{0},\
\gamma $ and the parameter $\mu $. The bifurcation point $U(y;\mu
_{0})=U\left( y\right) -c(\alpha )\ $corresponds to $\mu _{0}=(U(h)-c(\alpha
))^{2}$. The instability of $U(y;\mu _{0})$ at the wave number $\alpha $ is
continuated to shear flows $U(y;\mu )\ $with $\mu $ near $\mu _{0}$, which
can be shown by a similar argument as in the proof of Lemma \ref{basis}. So
at the bifurcation point $\mu _{0}$, there is \emph{NO }switch of stability
of trivial solutions.

Let us consider $U\in C^{2}([0,h])$ satisfying that $U^{\prime }(y)>0$ and $%
U^{\prime \prime }(y)<0 $ in $y\in \lbrack 0,h]$. For such a shear flow
Lemma \ref{lemma-bifur} and Theorem \ref{T:bifur-fixed U} applies as well
and there exists a local curve of bifurcation of nontrivial waves for any
wave number $\alpha $ which travel at the speed $c(\alpha )>\max U$, where $%
c(\alpha )$ is chosen so that the bifurcation equation (\ref{rayleigh-disper}%
)--(\ref{bc-disper}) is solvable. For this bifurcation flow $U\left(
y\right) -c(\alpha )$, the vorticity relation $\gamma \ $determined via (\ref%
{eqn-flux-vorticity}) is monotone since $U^{\prime \prime }$ does not change
sign. Consequently, any shear flows $U(y;\mu )$ defined in Lemma \ref%
{L:trivial} with the same $\gamma $ has no inflection points, as also
remarked at the end of Section 3. Therefore, by Theorem \ref{T:stable} all
trivial solutions corresponding to these shear flows $U(y;\mu )$ are stable.
This again shows that the bifurcation of nontrivial periodic waves does not
involve the switch of stability of trivial solutions.
\end{remark}

\section{Linear instability of periodic water waves with free surface}

\label{S:instability0}We now turn to investigating the linear instability of
periodic traveling waves near an unstable background shear flow. Suppose
that a shear flow $(U(y),0)$ with $U\in C^{2+\beta }([0,h_{0}])$ and $U\in 
\mathcal{K}^{+}$ has an unstable wave number $\alpha >0$, that is, for such
a wave number $\alpha >0$ the Rayleigh system (\ref{rayleigh})--(\ref%
{bc-rayleigh}) has a nontrivial solution with ${\rm Im}c>0$. Suppose
moreover that for the unstable wave number $\alpha >0$ the bifurcation
equation (\ref{rayleigh-disper})--(\ref{bc-disper}) is solvable with some $%
c(\alpha )>\max U$. Then Remark \ref{remark-neutral-bifur} and Theorem \ref%
{T:smallE} apply to state that there exists a one-parameter curve of
small-amplitude traveling-wave solutions $(\eta _{\epsilon }(x),\psi
_{\epsilon }(x,y))$ satisfying (\ref{stream}) with the period $2\pi /\alpha $
and the wave speed $c(\alpha )$, where $\epsilon \geq 0$ is the amplitude
parameter. A natural question is: are these small-amplitude nontrivial
periodic waves generated over the unstable shear flow also unstable? The
answer is YES under some technical assumptions, which is the subject of the
forthcoming investigation.

\subsection{The main theorem and examples}

We prove the linear instability of the steady periodic water-waves $(\eta
_{\epsilon }(x),\psi _{\epsilon }(x,y))$ by finding a growing-mode solution
to the linearized water-wave problem. As in Section \ref{S:formulation}, let 
\begin{equation*}
\mathcal{D}_{\epsilon }=\{(x,y):0<x<2\pi /\alpha ,\,0<y<\eta _{\epsilon
}(x)\}\quad \text{and}\quad \mathcal{S}_{\epsilon }=\{(x,\eta _{\epsilon
}(x)):0<x<2\pi /\alpha \}
\end{equation*}%
denote, respectively, the fluid domain of the steady wave $(\eta _{\epsilon
}(x),\psi _{\epsilon }(x,y))$ of one period and the steady surface. The
growing-mode problem (\ref{growing mode}) of the linearized periodic
water-wave problem around $(\eta _{\epsilon }(x),\psi _{\epsilon }(x,y))$
reduces to 
\begin{subequations}
\label{E:growing}
\begin{gather}
\Delta \psi +\gamma ^{\prime }(\psi _{\epsilon })\psi -\gamma ^{\prime
}(\psi _{\epsilon })\int_{-\infty }^{0}\lambda e^{\lambda s}\psi
(X_{\epsilon }(s),Y_{\epsilon }(s))ds=0\quad \text{in $\mathcal{D}_{\epsilon
}$};  \label{g-vor} \\
\lambda \eta (x)+\frac{d}{dx}\big(\psi _{\epsilon y}(x,\eta _{\epsilon
}(x))\eta (x)\big)=-\frac{d}{dx}\psi (x,\eta _{\epsilon }(x));  \label{g-eta}
\\
P(x,\eta _{\epsilon }(x))+P_{\epsilon y}(x)\eta (x)=0;  \label{g-P} \\
\lambda \psi _{n}(x)+\frac{d}{dx}\big(\psi _{\epsilon y}(x,\eta _{\epsilon
}(x))\psi _{n}(x)\big)=-\frac{d}{dx}P(x,\eta _{\epsilon }(x))-\Omega \frac{d%
}{dx}\psi (x,\eta _{\epsilon }(x));  \label{g-phi-n} \\
\psi (x,0)=0.  \label{g-Bottom}
\end{gather}%
Here and in sequel, let us abuse notation and denote that $P_{\epsilon
y}(x)=P_{\epsilon y}(x,\eta _{\epsilon }(x))$, that is, the restriction of $%
P_{\epsilon y}(x,y)$ on the steady wave-profile $y=\eta _{\epsilon }(x)$. By
Theorem \ref{T:smallE}, $P_{\epsilon y}(x)=P_{\epsilon y}(x,\eta _{\epsilon
}(x))=-g+O(\epsilon )$. Recall that 
\end{subequations}
\begin{equation*}
\psi _{n}(x)=\partial _{y}\psi (x,\eta _{\epsilon }(x))-\eta _{\epsilon
x}(x)\partial _{x}\psi (x,\eta _{\epsilon }(x))
\end{equation*}%
is the derivative of $\psi (x,\eta _{\epsilon }(x))$ in the direction normal
to the free surface $(x,\eta _{\epsilon }(x))$ and that $\Omega =\gamma (0)$
is the vorticity of the steady flow of $\psi _{\epsilon }(x,y)$ on the
steady wave-profile $y=\eta _{\epsilon }(x)$. Note that $\Omega $ is a
constant independent of $\epsilon $.

\begin{theorem}[Linear instability of small-amplitude periodic water-waves]
\label{T:unstable} Let the shear flow $U\left( y\right) \in C^{2+\beta
}([0,h_{0}])$, $\beta \in (0,1)$, be in class $\mathcal{K}^{+}$. Suppose
that $U(h_{0})\neq U_{s}$, where $U_{s}$ is the inflection value of $U$, and
that $\alpha _{\mathrm{max}}$ defined by (\ref{E:amax1}) is positive, as
such Theorem \ref{class-k} applies to find the interval of unstable wave
numbers $(0,\alpha _{\mathrm{max}})$. Suppose moreover that for some $\alpha
\in (\alpha _{\mathrm{max}}/2,\alpha _{\mathrm{max}})$ there exists $%
c(\alpha )>\max U$ such that the bifurcation equation (\ref{rayleigh-disper}%
)-- (\ref{bc-disper}) has a nontrivial solution. Let us denote by $(\eta
_{\epsilon }(x),\psi _{\epsilon }(x,y))$ the family of nontrivial waves with
the period $2\pi /\alpha $ and the wave speed $c(\alpha )$, bifurcating from
the trivial solution $\eta _{0}(x)\equiv h_{0}$ and $(\psi _{0y}(y),-\psi
_{0x}(y))=U(y)-c(\alpha ),0)$, where $\epsilon \geq 0$ is the amplitude
parameter. Provided that 
\begin{equation}
g+U^{\prime }(h_{0})(U(h_{0})-U_{s})>0,  \label{C:technical}
\end{equation}%
then for each $\epsilon >0$ sufficiently small, there exists an
exponentially growing solution $(e^{\lambda t}\eta (x),e^{\lambda t}\psi
(x,y))$ of the linearized system (\ref{eqn-L}), where ${\rm Re}\lambda >0$,
with the regularity property 
\begin{equation*}
(\eta (x),\psi (x,y))\in C^{2+\beta }([0,2\pi /\alpha ])\times C^{2+\beta }(%
\mathcal{\bar{D}}_{\epsilon }).
\end{equation*}
\end{theorem}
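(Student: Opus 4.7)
The plan is to cast the growing-mode system (\ref{E:growing}) as an operator equation $\mathcal{A}_\epsilon(\lambda)\zeta=0$ in which the unknown is the restriction $\zeta(x)=\psi(x,\eta_\epsilon(x))$ of the stream-function perturbation to the steady free surface $\mathcal{S}_\epsilon$, and then to apply an analytic perturbation argument from $\epsilon=0$. At $\epsilon=0$ the problem decouples into normal modes and reduces to the Rayleigh system (\ref{rayleigh})--(\ref{bc-rayleigh}), which by Theorem \ref{class-k}, applied at the unstable wave number $\alpha\in(0,\alpha_{\max})$, possesses an unstable eigenvalue $\lambda_0=-i\alpha c_0$ with $\mathrm{Re}\,\lambda_0>0$. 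The objective is to show that this eigenvalue persists to all small $\epsilon>0$.

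\textbf{Construction of $\mathcal{A}_\epsilon(\lambda)$.} For $\zeta$ prescribed on $\mathcal{S}_\epsilon$, I would first solve the bulk integro-differential equation (\ref{g-vor}) with Dirichlet data $\zeta$ on $\mathcal{S}_\epsilon$ and zero data on the bottom, obtaining $\psi$ in $\mathcal{D}_\epsilon$ and in particular its normal derivative $\psi_n$ on $\mathcal{S}_\epsilon$ as a bounded linear function of $\zeta$. The non-local integral along characteristics $\int_{-\infty}^0\lambda e^{\lambda s}\psi(X_\epsilon(s),Y_\epsilon(s))\,ds$ in (\ref{g-vor}) is the awkward piece here; to handle it I would introduce action-angle variables $(\theta,I)$ on $\mathcal{D}_\epsilon$, which are well defined by the no-stagnation property $\psi_{\epsilon y}<0$ guaranteed by Theorem \ref{T:smallE}. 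In these coordinates the Hamiltonian flow (\ref{E:char}) becomes $\dot\theta=T_\epsilon(I)^{-1}$, $\dot I=0$, so that the convolution along trajectories reduces to a one-dimensional Laplace-type integral in $\theta$ at fixed $I$ that depends smoothly on $\epsilon$ through the period function $T_\epsilon(I)$. Once $\psi$ and $\psi_n$ are available, $\eta$ is eliminated by the transport equation (\ref{g-eta}) and $P$ by (\ref{g-P}); substitution into the remaining dynamic relation (\ref{g-phi-n}) then produces the desired operator equation on $\mathcal{S}_\epsilon$.

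\textbf{Perturbation argument.} The operator $\mathcal{A}_\epsilon(\lambda)$ is analytic in $\lambda$ on $\{\mathrm{Re}\,\lambda>0\}$ and, after flattening $\mathcal{D}_\epsilon$ to a fixed rectangle by the action-angle map, depends continuously on $\epsilon$ in operator norm. At $\epsilon=0$, Fourier decomposition in $x$ on $[0,2\pi/\alpha]$ diagonalises $\mathcal{A}_0(\lambda)$ into a sequence of scalar problems, the $k$-th of which is precisely the Rayleigh--surface system (\ref{rayleigh})--(\ref{bc-rayleigh}) at wave number $|k|\alpha$. The hypothesis $\alpha\in(\alpha_{\max}/2,\alpha_{\max})$ guarantees $|k|\alpha>\alpha_{\max}$ for every $|k|\geq 2$, so Theorem \ref{class-k} makes each higher-harmonic Rayleigh problem stable and confines all unstable eigenvalues of $\mathcal{A}_0$ to the modes $k=\pm 1$. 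The unstable eigenvalue $\lambda_0$ is therefore isolated, and Steinberg's theorem \cite{steinberg} on analytic families of Fredholm operators then supplies an eigenvalue $\lambda_\epsilon$ of $\mathcal{A}_\epsilon(\lambda)$ close to $\lambda_0$ with $\mathrm{Re}\,\lambda_\epsilon>0$ for every sufficiently small $\epsilon>0$. The technical assumption (\ref{C:technical}) is used so that the coefficient $g+U'(h_0)(U(h_0)-U_s)$ in the free-surface boundary condition (\ref{bc-rayleigh}) at the neutral limiting value $c=U_s$ is strictly positive, which keeps the boundary Dirichlet-to-Neumann-type operator at $\epsilon=0$ non-degenerate near $\lambda_0$.

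\textbf{Main obstacle and regularity.} The principal difficulty will be establishing continuity of the family $\mathcal{A}_\epsilon(\lambda)$ in $\epsilon$ down to $\epsilon=0$: the domains $\mathcal{D}_\epsilon$ move with $\epsilon$, the trajectories on the perturbed steady flow are no longer the horizontal lines of the background shear, and the bulk equation (\ref{g-vor}) is non-local in a way sensitive to the precise geometry of the flow. The action-angle change of variables is what simultaneously flattens the moving boundary and converts the characteristic integral into an elementary $\theta$-integral that is uniformly controlled in $\epsilon$; verifying this uniformity, together with the accompanying elliptic estimates on the solution operator of (\ref{g-vor}), will be the technical core of the argument. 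Once an unstable eigenvalue $\lambda_\epsilon$ and a nontrivial $\zeta_\epsilon$ have been extracted, the asserted regularity $\psi\in C^{2+\beta}(\bar{\mathcal{D}}_\epsilon)$ and $\eta\in C^{2+\beta}([0,2\pi/\alpha])$ follows by a short bootstrap: equation (\ref{g-vor}) is linear elliptic in $\psi$ with $C^{1+\beta}$ coefficients on the $C^{3+\beta}$ domain $\mathcal{D}_\epsilon$ supplied by Theorem \ref{T:smallE}, and $\eta$ is recovered from the first-order surface transport equation (\ref{g-eta}).
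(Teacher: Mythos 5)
Your overall architecture coincides with the paper's: reduce the growing-mode system to an operator equation for the surface trace $\psi(x,\eta_{\epsilon}(x))$, use the action-angle map to flatten $\mathcal{D}_{\epsilon}$ and to turn the characteristic integral into a one-dimensional integral uniformly controlled in $\epsilon$, prove operator-norm continuity in $\epsilon$, conclude by Steinberg's eigenvalue-perturbation theorem, and finish with a bootstrap for regularity. The gap lies in the step you treat as routine: the solvability of the auxiliary bulk problem that defines your solution operator. You propose to solve (\ref{g-vor}) with \emph{Dirichlet} data on $\mathcal{S}_{\epsilon}$ and work with the resulting Dirichlet-to-Neumann map. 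First, that map is of order one, so your final equation is not of the form ``identity plus compact'', and Steinberg's theorem (Lemma \ref{L:Steinberg}), which concerns analytic families of \emph{compact} operators, does not apply as invoked; the paper deliberately solves the bulk problem with Neumann data $\psi_{n}=b$ and uses the Neumann-to-Dirichlet operator $\mathcal{T}_{\epsilon}$, which is compact on $L^{2}_{\text{per}}(\mathcal{S}_{\epsilon})$ by the trace theorem, precisely to land in that framework. Second, and more seriously, the unique solvability of this auxiliary problem, uniformly for small $\epsilon$ and $\lambda$ near $\lambda_{0}$, is where the structural hypotheses actually do their work, and your proposal neither proves it nor assigns the hypotheses their correct roles. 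In the paper this is Lemma \ref{L:solv0}: at $\epsilon=0$ the homogeneous Neumann problem separates into harmonics $e^{il\alpha x}$; the $l=1$ harmonic is killed by a Green-identity pairing with the unstable free-surface Rayleigh eigenfunction $\phi_{\alpha}$ (using $\phi_{\alpha}(h_{0})\neq 0$), and the harmonics $l\geq 2$ are killed because $l^{2}\alpha^{2}>\alpha_{\max}^{2}>\alpha_{n}^{2}$, where $-\alpha_{n}^{2}$ is the lowest eigenvalue of $-\frac{d^{2}}{dy^{2}}-K$ with the Neumann condition at $y=h_{0}$, and the inequality $\alpha_{\max}>\alpha_{n}$ is exactly what the technical condition (\ref{C:technical}) delivers (Lemma \ref{L:alpha}).

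Consequently, (\ref{C:technical}) is not about keeping a boundary operator ``non-degenerate near $\lambda_{0}$'' through positivity of the coefficient at $c=U_{s}$, and the hypothesis $\alpha\in(\alpha_{\max}/2,\alpha_{\max})$ is not needed to isolate $\lambda_{0}$ (isolation is automatic once the resolvent family is meromorphic); both hypotheses enter through the kernel analysis of the auxiliary problem just described. Note also that with your Dirichlet choice the $l=1$ harmonic at $\epsilon=0$ is the \emph{rigid-wall} Rayleigh problem at the complex speed $c_{\alpha}$, whose nondegeneracy is not guaranteed by the stated assumptions, so the Dirichlet route may fail even to define your operator at the relevant $\lambda$ without an additional argument. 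Finally, the regularity step is not a purely elliptic bootstrap: the nonlocal term in (\ref{g-vor}) must be shown to gain derivatives along with $\psi$, which the paper achieves by differentiating along trajectories in action-angle variables and using the at-most-linear growth in $s$ of the trajectory Jacobian; your argument should account for this before invoking Schauder-type regularity on the $C^{3+\beta}$ domain.
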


\begin{remark}[Examples]
\label{R:example} As is discussed in Remark \ref{remark-neutral}, any
increasing flow shear flow $U\in C^{2+\beta }([0,h_{0}])$, $\beta \in (0,1)$%
, with exactly one inflection point in $y\in (0,h_{0})$ satisfies $\alpha _{%
\mathrm{max}}>0$. Moreover, Lemma \ref{lemma-bifur} applies and
small-amplitude periodic waves bifurcate at any wave number $\alpha >0$.
Since (\ref{C:technical}) holds true, therefore, by Theorem \ref{T:unstable}
small-amplitude periodic waves bifurcating from such a shear flow at any
wave number $\alpha \in (\alpha _{\mathrm{max}}/2,\alpha _{\mathrm{max}})$
are unstable. Below, we discuss in details such an example: 
\begin{equation}
U(y)=a\sin b(y-h_{0}/2)\qquad \text{for}\quad y\in \lbrack 0,h_{0}],
\label{E:example}
\end{equation}%
where $h_{0},b>0$ satisfy $h_{0}b\leq \pi $ and $a>0$ is arbitrary.

(1) The shear flow in (\ref{E:example}) is unstable under periodic
perturbations of a wave number $\alpha \in (0,\alpha _{\mathrm{max}})$,
where $\alpha _{\mathrm{max}}>0$. Note that in the rigid-wall setting \cite%
{lin1}, the same shear flow is stable under perturbations of any wave
number. This indicates that the free surface has a destabilizing effect.
This serves as an example of Remark \ref{R:destable} since $\alpha _{\max
}>0 $ and $\alpha _{d}=0$.

(2) The amplitude $a$ and the depth $h_{0}$ in (\ref{E:example}) may be
chosen arbitrarily small, and the shear flow as well as the nontrivial
periodic waves near the shear flow are unstable for any wave number $\alpha
\in (0,\alpha _{\mathrm{max}})$, which contrasts with the result in \cite%
{cost1} that small-amplitude rotational periodic water-waves are $\mathcal{J}
$-formally stable if the vorticity strength and the depth are sufficiently
small. Thus, as also commented in Remark (\ref{remark-neutral}), the $%
\mathcal{J}$-formal stability in \cite{cost1} is not directly related to the
linear stability of water waves. Indeed, while $\partial \mathcal{J}(\eta
,\psi )=0$ gives the equations for steady steady waves, the linearized
water-wave problem is not in the form 
\begin{equation*}
\partial _{t}(\eta ,\psi )=(\partial ^{2}\mathcal{J})(\eta ,\psi ),
\end{equation*}%
which is implicitly required in \cite{cost1} in order to apply the
Crandall-Rabanowitz theory \cite{cr-ra-stability} of the exchange of
stability.

(3) Our example (\ref{E:example}) also indicates that adding an arbitrarily
small vorticity to the irrotational water wave system of an arbitrary depth
may induce instability. That means, although small irrotational periodic
waves are found to be stable under perturbations of the same period \cite%
{longet-78-super}, \cite{tanaka83}, they are not structurally stable;
Vorticity has a subtle influence on the stability of water waves.
\end{remark}

The proof of Theorem \ref{T:unstable} uses a perturbation argument. At $%
\epsilon =0$ the trivial solution $(\eta _{0}(x),\psi _{0}(x,y))$
corresponds to the shear flow $(U(y)-c(\alpha ),0)$ under the flat surface $%
\{y=h_{0}\}$. To simplify notations, in the remainder of this section, we
write $U(y)$ for $U(y)-c(\alpha )$, as is done in Section \ref%
{S:instability0}. Thereby, $U(y)<0$. Since $\alpha $ is an unstable wave
number of $U(y)$, there exist an unstable solution $\phi _{\alpha }$ to the
Rayleigh system (\ref{rayleigh})--(\ref{bc-rayleigh}) and an unstable phase
speed $c_{\alpha }$. That is, $\phi _{\alpha }\not\equiv 0$, $\mathrm{Im}%
\,c_{\alpha }>0$ and 
\begin{equation}
\begin{split}
& \phi _{\alpha }^{\prime \prime }-\alpha ^{2}\phi _{\alpha }+\frac{%
U^{\prime \prime }}{U-c_{\alpha }}\phi _{\alpha }=0\qquad \text{for}\quad
y\in (0,h_{0}), \\
& \phi _{\alpha }^{\prime }(h_{0})=\left( \frac{g}{(U(h_{0})-c_{\alpha })^{2}%
}+\frac{U^{\prime }(h_{0})}{U(h_{0})-c_{\alpha }}\right) \phi _{\alpha
}(h_{0}), \\
& \phi _{\alpha }(0)=0.
\end{split}
\label{E:alpha}
\end{equation}%
This corresponds to a growing mode solution satisfying (\ref{E:growing}) at $%
\epsilon =0$, where $\lambda _{0}=-i\alpha c_{\alpha }$ has a positive real
part. Our goal is to show that for $\epsilon >0$ sufficiently small, there
exists $\lambda _{\epsilon }$ near $\lambda _{0}$ such that the growing-mode
problem (\ref{E:growing}) at $(\eta _{\epsilon }(x),\psi _{\epsilon }(x,y))$
is solvable. First, the system (\ref{E:growing}) is reduced to an operator
equation defined in a function space independent of $\epsilon $. Then, by
showing the continuity of this operator with respect to the small-amplitude
parameter $\epsilon $, the continuation of the unstable mode follows from
the eigenvalue perturbation theory of operators.

\subsection{Reduction to an operator equation}

The purpose of this subsection is to reduce the growing mode system (\ref%
{E:growing}) to an operator equation on $L^{2}_{\text{per}}(\mathcal{S}%
_{\epsilon})$. Here and elsewhere the subscript \emph{per} denotes the
periodicity in the $x$-variable. The idea is to express $\eta(x)$ on $%
\mathcal{S}_{\epsilon}$ and $\psi(x,y)$ in $\mathcal{D}_{\epsilon}$ (and
hence $P(x, \eta_{\epsilon}(x))$) in terms of $\psi(x,\eta_{\epsilon}(x))$.

Our first task is to relate $\eta(x)$ with $\psi(x,\eta_{\epsilon}(x))$.

\begin{lemma}
\label{lemma-c-lb}For $|\lambda -\lambda _{0}|\leq ({\rm Re}\lambda _{0})/2$%
, where $\lambda _{0}=-i\alpha c_{\alpha }$, let us define the operator $%
\mathcal{C}^{\lambda }:L_{\text{per}}^{2}(\mathcal{S}_{\epsilon
})\rightarrow L_{\text{per}}^{2}(\mathcal{S}_{\epsilon })$ by%
\begin{equation}
\begin{split}
\mathcal{C}^{\lambda }\phi (x)& =-\frac{1}{\psi _{\epsilon y}(x)}\phi (x)+%
\frac{1}{\psi _{\epsilon y}(x)e^{\lambda a(x)}}\int_{0}^{x}\lambda
e^{\lambda a(x^{\prime })}\psi _{\epsilon y}^{-1}(x^{\prime })\phi
(x^{\prime })dx^{\prime } \\
& -\frac{\lambda }{\psi _{\epsilon y}(x)e^{\lambda a(x)}\left( 1-e^{\lambda
a(2\pi /\alpha )}\right) }\int_{0}^{2\pi /\alpha }e^{\lambda a(x^{\prime
})}\psi _{\epsilon y}^{-1}(x^{\prime })\phi (x^{\prime })dx^{\prime },
\end{split}
\label{defn-c-lambda}
\end{equation}%
where $a(x)=\int_{0}^{x}\psi _{\epsilon y}^{-1}(x^{\prime },\eta _{\epsilon
}(x^{\prime }))dx^{\prime }$. For simplicity, here and in the sequel we
identify $\psi _{ey}(x)$ with $\psi _{ey}(x,\eta _{\epsilon }(x))$ and $\phi
(x)$ with $\phi (x,\eta _{\epsilon }(x))$, etc. Then,

\textrm{{(a)} The operator $\mathcal{C}^{\lambda}$ is analytic in $\lambda$
for $|\lambda - \lambda_{0}| \leq ({\rm Re}\lambda_{0})/2$, and the
estimate 
\begin{equation*}
\left\Vert \mathcal{C}^{\lambda}\right\Vert _{L^{2}_{\text{per}}(\mathcal{S}%
_{\epsilon })\rightarrow L^{2}_{\text{per}}(\mathcal{S}_{\epsilon})}\leq K
\end{equation*}
holds, where $K>0$ is independent of $\lambda$ and $\epsilon$. }

\textrm{{(b)} For any $\phi\in L^{2}_{\text{per}}(\mathcal{S}_{\epsilon})$,
the function $\varphi=\mathcal{C}^{\lambda}\phi$ is the unique $L^{2}_{\text{%
per}}(\mathcal{S}_{\epsilon})$- weak solution of the first-order ordinary
differential equation 
\begin{equation}
\lambda\varphi+\frac{d}{dx}\left(\psi_{\epsilon y}(x)\varphi\right) =-\frac{d%
}{dx}\phi.  \label{eqn-inhomo}
\end{equation}
If, in addition, $\phi\in C^{1}_{\text{per}}(\mathcal{S}_{\epsilon})$ then $%
\varphi\in C^{1}_{\text{per}}(\mathcal{S}_{\epsilon})$ is the unique
classical solution of (\ref{eqn-inhomo}). }
\end{lemma}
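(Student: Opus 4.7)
The plan is to recognize (\ref{eqn-inhomo}) as a scalar linear first-order ODE on the circle and obtain (\ref{defn-c-lambda}) by variation of constants. Rewrite the equation as $(\psi_{\epsilon y}\varphi)'+\lambda\varphi=-\phi'$, set $u=\psi_{\epsilon y}\varphi$, and use the integrating factor $e^{\lambda a(x)}$ with $a'(x)=\psi_{\epsilon y}^{-1}(x)$. Integrating from $0$ to $x$ and integrating by parts the right-hand side
\begin{equation*}
-\int_{0}^{x}e^{\lambda a(x')}\phi'(x')\,dx'= -e^{\lambda a(x)}\phi(x)+\phi(0)+\int_{0}^{x}\lambda e^{\lambda a(x')}\psi_{\epsilon y}^{-1}(x')\phi(x')\,dx',
\end{equation*}
and then using the periodicity constraints $u(2\pi/\alpha)=u(0)$ and $\phi(2\pi/\alpha)=\phi(0)$ to solve for $u(0)$, one recovers (\ref{defn-c-lambda}) after dividing by $\psi_{\epsilon y}(x)$. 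The requirement that $1-e^{\lambda a(2\pi/\alpha)}$ be nonzero comes out naturally from this step.

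For part (a), analyticity in $\lambda$ is manifest: $e^{\lambda a(x)}$ is entire in $\lambda$, and the only possible singularity $1-e^{\lambda a(2\pi/\alpha)}=0$ is excluded on the disk $|\lambda-\lambda_0|\leq(\mathrm{Re}\,\lambda_0)/2$ because $a(2\pi/\alpha)<0$ is real (since $\psi_{\epsilon y}<0$ throughout the fluid) while $\mathrm{Re}\,\lambda\geq(\mathrm{Re}\,\lambda_0)/2>0$, so $|e^{\lambda a(2\pi/\alpha)}|\leq e^{-\delta}<1$. The uniform $L^{2}$ bound follows from Young/Minkowski applied to the two integral kernels in (\ref{defn-c-lambda}): the kernels have absolute value bounded by a constant multiple of $|\psi_{\epsilon y}|^{-1}|\psi_{\epsilon y}(x')|^{-1}$, which is uniformly bounded by the no-stagnation property of Theorem \ref{T:smallE}, namely $\psi_{\epsilon y}(x,y)<0$ with a lower bound on $|\psi_{\epsilon y}|$ uniform in $\epsilon$ small since $\psi_{\epsilon y}\to U-c(\alpha)<0$ in $C^{2+\beta}$. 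The factor $(1-e^{\lambda a(2\pi/\alpha)})^{-1}$ is bounded uniformly in $\epsilon$ by the same argument. This yields $\|\mathcal{C}^{\lambda}\|\leq K$ with $K$ independent of both $\lambda$ and $\epsilon$.

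For part (b), a direct differentiation of (\ref{defn-c-lambda}) shows that $\varphi=\mathcal{C}^{\lambda}\phi$ solves (\ref{eqn-inhomo}) pointwise whenever $\phi$ is smooth, and periodicity is checked by substituting $x=2\pi/\alpha$ into the formula and using the definition of the correction term — the constant $u(0)$ was chosen precisely to enforce this. For general $\phi\in L^{2}_{\text{per}}$, an approximation by smooth periodic functions together with the bound from part (a) gives the weak-solution statement. For uniqueness, a homogeneous weak $L^{2}$-solution $w$ satisfies $\lambda w+(\psi_{\epsilon y}w)'=0$ distributionally; the general solution of this linear ODE is $w(x)=C e^{-\lambda a(x)}/\psi_{\epsilon y}(x)$, and periodicity forces $e^{-\lambda a(2\pi/\alpha)}=1$, which is impossible for $\mathrm{Re}\,\lambda>0$ because $a(2\pi/\alpha)$ is real and nonzero. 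Hence $C=0$, proving uniqueness. Finally, when $\phi\in C^{1}_{\text{per}}$, each summand in (\ref{defn-c-lambda}) is manifestly $C^{1}_{\text{per}}$ (the first term because $\psi_{\epsilon y}\in C^{2+\beta}$ and does not vanish; the integral terms because their integrands are continuous), so $\varphi\in C^{1}_{\text{per}}$ and the ODE is satisfied in the classical sense.

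The only nontrivial point is maintaining the bounds uniformly in $\epsilon$; this reduces to two ingredients, both supplied by Theorem \ref{T:smallE}: (i) $\psi_{\epsilon y}$ stays bounded away from zero on $\mathcal{S}_{\epsilon}$, and (ii) $a(2\pi/\alpha)=\int_{0}^{2\pi/\alpha}\psi_{\epsilon y}^{-1}\,dx$ is bounded above by a strictly negative constant. Everything else is routine manipulation of an explicitly-given Green's function for the periodic first-order ODE.
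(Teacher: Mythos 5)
Your proposal is correct, and its core is the paper's own route: the explicit formula is obtained by variation of constants with the integrating factor $e^{\lambda a(x)}$, the constant fixed by periodicity, followed by an integration by parts trading $\phi'$ for $\phi$; part (a) rests, exactly as in the paper, on $a<0$, ${\rm Re}\,\lambda\geq({\rm Re}\,\lambda_0)/2$, and uniform bounds on $|\psi_{\epsilon y}|$ along the bifurcation curve (the paper states (a) in one line, so your elaboration of the uniformity in $\epsilon$ is a useful expansion rather than a deviation). Where you genuinely diverge is in part (b). For general $\phi\in L^2_{\text{per}}$ the paper checks the weak form of (\ref{eqn-inhomo}) directly by integrating (\ref{defn-c-lambda}) by parts against $h\in H^{1}_{\text{per}}$, whereas you approximate $\phi$ by smooth functions and pass to the limit using the bound from (a); both are fine. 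For uniqueness the paper uses an energy-type argument entirely at the weak level: the homogeneous solution $\varphi_1$ has mean zero, so its antiderivative $h_1$ is periodic, and testing with $(\lambda h_1)^{*}$ yields ${\rm Re}\,\lambda\int\psi_{\epsilon y}|\varphi_1|^2\,dx=0$, hence $\varphi_1\equiv 0$. You instead solve the homogeneous equation explicitly, $w=Ce^{-\lambda a(x)}/\psi_{\epsilon y}(x)$, and rule out periodicity because $|e^{-\lambda a(2\pi/\alpha)}|>1$. This is equally valid, but it tacitly uses a small regularity bootstrap which you should state: from the distributional equation, $(\psi_{\epsilon y}w)'=-\lambda w\in L^2$, so $\psi_{\epsilon y}w\in H^{1}_{\text{per}}$ (continuous, with matching endpoint values), and only then does the explicit-solution argument apply. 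The trade-off is that the paper's energy argument needs no regularity discussion, while your Floquet-type argument makes the obstruction completely explicit ($e^{\lambda a(2\pi/\alpha)}\neq 1$) and thereby also explains the appearance of the factor $\left(1-e^{\lambda a(2\pi/\alpha)}\right)^{-1}$ in (\ref{defn-c-lambda}).
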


\begin{proof}
Assertions of (a) follow immediately since $\psi _{\epsilon y}(x)<0$ and
thus $a(x)<0$ and since ${\rm Re}\lambda \geq {\rm Re}\lambda _{0}/2>0$.

(b) First, we consider the case $\phi\in C^{1}_{\text{per}}(\mathcal{S}%
_{\epsilon})$ to motivate the definition of $\mathcal{C}^{\lambda}$.

Let us write (\ref{eqn-inhomo}) as the first-order ordinary differential
equation 
\begin{equation*}
\frac{d}{dx}\varphi +\frac{1}{\psi _{\epsilon y}}\left( \lambda +\frac{d}{dx}%
\psi _{\epsilon y}\right) \varphi =-\frac{1}{\psi _{\epsilon y}}\frac{d}{dx}%
\phi ,
\end{equation*}%
which has an unique $2\pi /\alpha $-periodic solution 
\begin{align*}
\varphi (x)=-\frac{1}{\psi _{\epsilon y}(x)e^{\lambda a(x)}}\Big(&
\int_{0}^{x}e^{\lambda a(x^{\prime })}\frac{d}{dx}\phi (x^{\prime
})dx^{\prime } \\
& -\frac{1}{1-e^{\lambda a(2\pi /\alpha )}}\int_{0}^{2\pi /\alpha
}e^{\lambda a(x^{\prime })}\frac{d}{dx}\phi (x^{\prime })dx^{\prime }\Big).
\end{align*}%
An integration by parts of the above formula yields that $\varphi (x)=%
\mathcal{C}^{\lambda }\phi $ is as defined in (\ref{defn-c-lambda}).

In case $\phi \in L_{\text{per}}^{2}(\mathcal{S}_{\epsilon })$, the integral
representation (\ref{defn-c-lambda}) makes sense and solves (\ref{eqn-inhomo}%
) in the weak sense. Indeed, an integration by parts shows that $\varphi $
defined by (\ref{defn-c-lambda}) satisfies the weak form of equation (\ref%
{eqn-inhomo}) 
\begin{equation*}
\int_{0}^{2\pi /\alpha }\left( \lambda \varphi (x)h(x)-\psi _{\epsilon
y}(x)\varphi (x)\frac{d}{dx}h(x)-\phi (x)\frac{d}{dx}h(x)\right) dx=0
\end{equation*}%
for any $2\pi /\alpha $-periodic function $h\in H_{\text{per}}^{1}([0,2\pi
/\alpha ])$. In order to show the uniqueness, suppose that $\tilde{\varphi}%
\in L_{\text{per}}^{2}(\mathcal{S}_{\epsilon })$ is another weak solution of
(\ref{eqn-inhomo}). Let $\varphi _{1}=\varphi -\tilde{\varphi}$. Then, $%
\varphi _{1}\in L_{\text{per}}^{2}(\mathcal{S}_{\epsilon })$ is a weak
solution of the homogeneous differential equation 
\begin{equation*}
\lambda \varphi _{1}+\frac{d}{dx}\left( \psi _{\epsilon y}(x)\varphi
_{1}\right) =0.
\end{equation*}%
It is readily seen that $\int_{0}^{2\pi /\alpha }\varphi _{1}(x)dx=0$. Note
that $h_{1}(x)=\int_{0}^{x}\varphi _{1}(x^{\prime })dx^{\prime }$ defines a $%
2\pi /\alpha $-periodic function in $H_{\text{per}}^{1}([0,2\pi /\alpha ])$.
Multiplication of the above homogeneous equation by $(\lambda h_{1})^{\ast }$
and an integration by parts then yield that 
\begin{align*}
0& ={\rm Re}\int_{0}^{2\pi /\alpha }\left( \left\vert \lambda \right\vert
^{2}\varphi _{1}(x)h_{1}^{\ast }(x)-\psi _{\epsilon y}(x)\lambda ^{\ast
}\varphi _{1}(x)\left( \frac{d}{dx}h_{1}(x)\right) ^{\ast }\right) dx \\
& =\left\vert \lambda \right\vert ^{2}\int_{0}^{2\pi /\alpha }\frac{d}{dx}%
\left( \tfrac{1}{2}\left\vert h_{1}\right\vert ^{2}\right) dx-{\rm Re}%
\lambda \int_{0}^{2\pi /\alpha }\psi _{\epsilon y}(x)\left\vert \varphi
_{1}\right\vert ^{2}dx \\
& =-{\rm Re}\lambda \int_{0}^{2\pi /\alpha }\psi _{\epsilon y}(x)\left\vert
\varphi _{1}\right\vert ^{2}dx.
\end{align*}%
Here and elsewhere, the asterisk denotes the complex conjugation. Since $%
{\rm Re}\lambda >0$ and $\psi _{\epsilon y}(x)<0$, it follows that $\varphi
_{1}\equiv 0$, and in turn, $\varphi \equiv \tilde{\varphi}$. This completes
the proof.
\end{proof}

Formally, the operator $\mathcal{C}^{\lambda }$ can be written as%
\begin{equation*}
\mathcal{C}^{\lambda }\phi (x)=-\left( \lambda +\frac{d}{dx}\left( \psi
_{\epsilon y}(x)\phi (x)\right) \right) ^{-1}\frac{d}{dx}\phi (x).
\end{equation*}

Let us denote $f(x)=\psi(x,\eta_{\epsilon}(x))$. With the use of $\mathcal{C}%
^{\lambda}$ then the boundary conditions (\ref{E:growing}b)--(\ref{E:growing}%
d) are written in terms of $f$ as 
\begin{gather*}
\eta(x)=\mathcal{C}^{\lambda}f(x), \\
P(x,\eta_{\epsilon}(x))=-P_{\epsilon y}(x)\mathcal{C}^{\lambda}f(x), \\
\psi_{n}(x)=-\mathcal{C}^{\lambda}(P_{\epsilon y}(x)\mathcal{C}^{\lambda
}+\Omega \,id)f(x),
\end{gather*}
where $id:L^{2}(\mathcal{S}_{\epsilon})\rightarrow L^{2}(\mathcal{S}%
_{\epsilon })$ is the identity operator.

Our next task is to relate $\psi (x,y)$ in $\mathcal{D}_{\epsilon }$ with $%
f(x)=\psi (x,\eta _{\epsilon }(x))$. Given $b\in L_{\text{per}}^{2}(\mathcal{%
S}_{\epsilon })$, let $\psi _{b}\in H^{1}(\mathcal{D}_{\epsilon })$ be a
weak solution of the elliptic partial differential equation 
\begin{subequations}
\label{E:psib}
\begin{gather}
\Delta \psi +\gamma ^{\prime }(\psi _{\epsilon })\psi -\gamma ^{\prime
}(\psi _{\epsilon })\int_{-\infty }^{0}\lambda e^{\lambda s}\psi
(X_{\epsilon }(s),Y_{\epsilon }(s))ds=0\quad \text{in }\mathcal{D}_{\epsilon
}  \label{E:psib-a} \\
\psi _{n}(x):=\partial _{y}\psi (x,\eta _{\epsilon }(x))-\eta _{\epsilon
x}(x)\partial _{x}\psi (x,\eta _{\epsilon }(x))=b(x)\quad \text{on }\mathcal{%
S}_{\epsilon },  \label{E:psib-b} \\
\psi (x,0)=0  \label{E:psib-c}
\end{gather}%
such that $\psi _{b}$ is $2\pi /\alpha $-periodic in the $x$-variable. Lemma %
\ref{L:solv} below proves that the boundary value problem (\ref{E:psib}) is
uniquely solvable and $\psi _{b}\in H^{1}(\mathcal{D}_{\epsilon })$ provided
that $|\lambda -\lambda _{0}|\leq ({\rm Re}\lambda _{0})/2$ and $(\eta
_{\epsilon }(x),\psi _{\epsilon }(x,y))$ is near the trivial solution with
the flat surface $y=h_{0}$ and the unstable shear flow $(U(y),0)$ given in
Theorem \ref{T:unstable}. This, together with the trace theorem, allows us
to define an operator $\mathcal{T}_{\epsilon }:L_{\text{per}}^{2}(\mathcal{S}%
_{\epsilon })\rightarrow L_{\text{per}}^{2}(\mathcal{S}_{\epsilon })$ by 
\end{subequations}
\begin{equation}
\mathcal{T}_{\epsilon }b(x)=\psi _{b}(x,\eta _{\epsilon }(x)),  \label{E:T}
\end{equation}%
which is the unique solution $\psi _{b}$ of (\ref{E:psib}) restricted on the
steady surface $\mathcal{S}_{\epsilon }$. 

Be definition, it follows that 
\begin{equation*}
f(x)=\psi(x,\eta_{\epsilon}(x))=\mathcal{T}_{\epsilon} \psi_{n}(x).
\end{equation*}
This, together with the boundary conditions written in terms of $f$ as above
yields that 
\begin{equation}
f=-\mathcal{T}_{\epsilon}\mathcal{C}^{\lambda}(P_{\epsilon y}(x)\mathcal{C}%
^{\lambda}+\Omega \, id)f.  \label{E:operator}
\end{equation}
The growing-mode problem (\ref{E:growing}) is thus reduced to find a
nontrivial solution $f(x)=\psi(x,\eta_{\epsilon}(x))\in L^{2}_{\text{per}}(%
\mathcal{S}_{\epsilon})$ of the equation (\ref{E:operator}), or
equivalently, to show that the operator 
\begin{equation*}
id +\mathcal{T}_{\epsilon}\mathcal{C}^{\lambda}(P_{\epsilon y}(x)\mathcal{C}%
^{\lambda}+\Omega \, id)
\end{equation*}
has a nontrivial kernel for some $\lambda\in\mathbb{C}$ with ${\rm Re}%
\,\lambda>0$.

The remainder of this subsection concerns with the unique solvability of (%
\ref{E:psib}). Our first task is to compare (\ref{E:psib}) at $\epsilon=0$
with the Rayleigh system, which is useful in later consideration.

\begin{lemma}
\label{L:alpha} For $U\left( y\right) $ in class $\mathcal{K}^{+}$, $y\in
\lbrack 0,h_{0}]$ and $U(h_{0})\neq U_{s}$, let $-\alpha _{\mathrm{max}}^{2}$
be the lowest eigenvalue of $-\frac{d^{2}}{dy^{2}}-K(y)$ for $y\in (0,h_{0})$
subject to the boundary conditions 
\begin{equation}
\phi ^{\prime }(h_{0})=\left( \frac{g}{(U(h_{0})-U_{s})^{2}}+\frac{U^{\prime
}(h_{0})}{U(h_{0})-U_{s}}\right) \phi (h_{0})\quad \text{and}\quad \phi
(0)=0,  \label{BC:g}
\end{equation}%
where $K$ is defined in (\ref{E:K}). Let $-\alpha _{n}^{2}$ be the lowest
eigenvalue of $-\frac{d^{2}}{dy^{2}}-K(y)$ for $y\in (0,h_{0})$ subject to
the boundary conditions 
\begin{equation}
\phi ^{\prime }(h_{0})=0\quad \text{and}\quad \phi (0)=0.  \label{BC:n}
\end{equation}%
If $g+U^{\prime }(h_{0})(U(h_{0})-U_{s})>0$, then $\alpha _{\mathrm{max}%
}>\alpha _{n}$.
\end{lemma}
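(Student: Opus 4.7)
The plan is to compare the two eigenvalue problems by the same Green's-identity argument that appears in Remark \ref{R:destable} for the free-surface vs.\ rigid-wall comparison, adapted here for the Neumann vs.\ Robin boundary condition at $y=h_0$. First I would let $\phi_m$ and $\phi_n$ denote the first eigenfunctions of $-d^2/dy^2-K(y)$ corresponding to $-\alpha_{\max}^2$ and $-\alpha_n^2$, respectively, both satisfying $\phi(0)=0$. By Sturm oscillation theory, each may be normalized so that $\phi_m,\phi_n>0$ on $(0,h_0)$. The value $\phi_n(h_0)>0$ is immediate, and $\phi_m(h_0)\neq 0$ as well, for if $\phi_m(h_0)=0$ the Robin condition $\phi_m'(h_0)=g_r(U_s)\phi_m(h_0)$ would also force $\phi_m'(h_0)=0$, and hence $\phi_m\equiv 0$ by uniqueness for the second-order ODE. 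Thus $\phi_m(h_0)>0$ too.

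Next I would subtract $\phi_m$ times the equation for $\phi_n$ from $\phi_n$ times the equation for $\phi_m$ and integrate on $(0,h_0)$. The $K(y)$-terms cancel, yielding
\begin{equation*}
(\alpha_{\max}^2-\alpha_n^2)\int_0^{h_0}\phi_n\phi_m\,dy
=\bigl[\phi_n\phi_m'-\phi_m\phi_n'\bigr]_0^{h_0}.
\end{equation*}
The boundary term at $y=0$ vanishes by Dirichlet; at $y=h_0$ the Neumann condition $\phi_n'(h_0)=0$ and the Robin condition $\phi_m'(h_0)=g_r(U_s)\phi_m(h_0)$ reduce it to $g_r(U_s)\phi_n(h_0)\phi_m(h_0)$, so
\begin{equation*}
(\alpha_{\max}^2-\alpha_n^2)\int_0^{h_0}\phi_n\phi_m\,dy
=g_r(U_s)\,\phi_n(h_0)\phi_m(h_0).
\end{equation*}

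Finally, I would rewrite $g_r(U_s)=\bigl(g+U'(h_0)(U(h_0)-U_s)\bigr)/(U(h_0)-U_s)^2$, so that the hypothesis $g+U'(h_0)(U(h_0)-U_s)>0$ is exactly $g_r(U_s)>0$. Combining this with $\phi_n,\phi_m>0$ on $(0,h_0]$ forces the right side to be strictly positive, and the integral on the left to be strictly positive, so $\alpha_{\max}^2>\alpha_n^2$ and hence $\alpha_{\max}>\alpha_n$. The only non-routine point in the whole argument is securing $\phi_m(h_0)>0$, which the Robin condition provides automatically; there is no serious obstacle.
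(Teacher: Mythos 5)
Your proposal is correct and follows essentially the same route as the paper: the paper's proof is exactly this Green's-identity comparison of the two first eigenfunctions (modeled on Remark \ref{R:destable}), using their positivity and the sign of $g+U'(h_0)(U(h_0)-U_s)$ to conclude $-\alpha_{\max}^2<-\alpha_n^2$. Your extra observation that $\phi_m(h_0)>0$ (and likewise $\phi_n(h_0)>0$) is a useful explicit justification of a point the paper leaves implicit.
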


\begin{proof}
The argument is nearly identical to that in Remark \ref{R:destable}. Let us
denote by $\phi _{m}$ the eigenfunction of $-\frac{d^{2}}{dy^{2}}-K(y)$ on $%
y\in (0,h_{0})$ with (\ref{BC:g}) corresponding to the eigenvalue $-\alpha _{%
\mathrm{max}}^{2}$ and by $\phi _{n}$ the eigenfunction of $-\frac{d^{2}}{%
dy^{2}}-K(y)$ on $y\in (0,h_{0})$ with (\ref{BC:n}) corresponding to the
eigenvalue $-\alpha _{n}^{2}$. By the standard theory of Sturm-Liouville
operators we can assume $\phi _{m}>0$ and $\phi _{n}>0$ on $y\in (0,h_{0})$.
An integration by parts yields that 
\begin{align*}
0=& \int_{0}^{h_{0}}\Big(\phi _{n}(\phi _{m}^{\prime \prime }-\alpha _{%
\mathrm{max}}^{2}\phi _{m}+K(y)\phi _{m})-\phi _{m}(\phi _{n}^{\prime \prime
}-\alpha _{n}^{2}\phi _{n}+K(y)\phi _{n})\Big)dy \\
=& (\alpha _{n}^{2}-\alpha _{\mathrm{max}}^{2})\int_{0}^{h_{0}}\phi _{n}\phi
_{m}dy+\frac{g+U^{\prime }(h_{0})(U(h_{0})-U_{s})}{(U(h_{0})-U_{s})^{2}}\phi
_{m}(h_{0})\phi _{n}(h_{0}).
\end{align*}%
The assumption $g+U^{\prime }(h_{0})(U(h_{0})-U_{s})>0$ then proves the
assertion.
\end{proof}

Our next task is the unique solvability of the homogeneous problem of (\ref%
{E:psib}).

\begin{lemma}
\label{L:solv0} Assume that $U(h_{0})\neq U_{s}$ and $g+U^{\prime
}(h_{0})(U(h_{0})-U_{s})>0$. For $\epsilon >0$ sufficiently small and $%
|\lambda -\lambda _{0}|\leq ({\rm Re}\lambda _{0})/2$, the following
elliptic partial differential equation 
\begin{subequations}
\label{E:psi0}
\begin{equation}
\Delta \psi +\gamma ^{\prime }(\psi _{\epsilon })\psi -\gamma ^{\prime
}(\psi _{\epsilon })\int_{-\infty }^{0}\lambda e^{\lambda s}\psi
(X_{\epsilon }(s),Y_{\epsilon }(s))ds=0\quad \text{in }\mathcal{D}_{\epsilon
}  \label{E:psi0-a}
\end{equation}%
subject to 
\begin{equation}
\psi (x,0)=0  \label{E:psi0-b}
\end{equation}%
and the Neumann boundary condition 
\begin{equation}
\psi _{n}=0\qquad \text{on}\quad \mathcal{S}_{\epsilon }  \label{E:psi0-c}
\end{equation}\end{subequations}
admits only the trivial solution $\psi \equiv 0$.
\end{lemma}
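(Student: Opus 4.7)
The plan is to argue by contradiction and compactness, reducing the statement for $\epsilon > 0$ small to a spectral question at $\epsilon = 0$, which is then handled by Fourier decomposition in $x$. Suppose, for contradiction, that there exist sequences $\epsilon_{k}\to 0^{+}$ and $\lambda_{k}$ with $|\lambda_{k}-\lambda_{0}|\leq \tfrac{1}{2}\mathrm{Re}\,\lambda_{0}$, together with nontrivial $\psi_{k}\in H^{1}(\mathcal{D}_{\epsilon_{k}})$ solving (\ref{E:psi0}), normalized by $\|\psi_{k}\|_{L^{2}(\mathcal{D}_{\epsilon_{k}})}=1$. Viewing (\ref{E:psi0-a}) as a Poisson equation $\Delta \psi_{k}=G_{k}$ with $G_{k}$ uniformly bounded in $L^{\infty}$ (the weight $e^{\mathrm{Re}\,\lambda_{k}s}$ providing a uniform tail estimate on the trajectory integral since $\mathrm{Re}\,\lambda_{k}\geq \tfrac{1}{2}\mathrm{Re}\,\lambda_{0}>0$), elliptic regularity with the homogeneous Dirichlet-Neumann data yields uniform $H^{2}$ bounds on $\psi_{k}$. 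After flattening $\mathcal{D}_{\epsilon_{k}}$ to the reference strip $\{0<y<h_{0}\}$ via $(x,y)\mapsto(x,\,yh_{0}/\eta_{\epsilon_{k}}(x))$ and extracting a subsequence, $\psi_{k}\to\psi_{\infty}$ strongly in $L^{2}$ (so $\|\psi_{\infty}\|_{L^{2}}=1$) and weakly in $H^{2}$, and $\lambda_{k}\to\lambda_{\infty}$ in the closed disk. By Theorem~\ref{T:smallE} the steady profiles converge in $C^{3+\beta}$, so the particle trajectories $(X_{\epsilon_{k}}(s),Y_{\epsilon_{k}}(s))$ converge uniformly on compact $s$-intervals to the straight-line trajectories $(x+U(y)s,y)$ of the background shear; dominated convergence in $s$ then shows that $\psi_{\infty}$ solves (\ref{E:psi0}) at $\epsilon=0$ with $\lambda=\lambda_{\infty}$.

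At $\epsilon=0$ the equations are translation-invariant in $x$, so I Fourier-expand $\psi_{\infty}(x,y)=\sum_{n\in\mathbb{Z}}\hat\psi_{n}(y)e^{in\alpha x}$. The straight-line trajectory reduces the integral to the Fourier multiplier $\lambda_{\infty}/(\lambda_{\infty}+in\alpha U(y))$, and each mode $\hat\psi_{n}$ satisfies
\begin{equation*}
\hat\psi_{n}''(y)-n^{2}\alpha^{2}\hat\psi_{n}(y)-\frac{U''(y)}{U(y)-c_{n}}\hat\psi_{n}(y)=0\quad\text{on }(0,h_{0}),
\end{equation*}
with $\hat\psi_{n}(0)=0$, $\hat\psi_{n}'(h_{0})=0$, and $c_{n}=i\lambda_{\infty}/(n\alpha)$ for $n\neq 0$. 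The $n=0$ mode reduces to $\hat\psi_{0}''=0$ with Dirichlet-Neumann data, forcing $\hat\psi_{0}\equiv 0$. For $|n|\geq 2$, the wave number $|n|\alpha\geq 2\alpha>\alpha_{\max}>\alpha_{n}$ by the hypothesis $\alpha>\alpha_{\max}/2$ and Lemma~\ref{L:alpha}; the analog of Theorem~\ref{class-k} for the Dirichlet-Neumann Rayleigh problem -- obtained by replaying the neutral-limiting-mode arguments of Section~4 with (\ref{bc-sturm1}) replaced by $\phi'(h_{0})=0$ -- yields the absence of eigenvalues with $\mathrm{Im}\,c\neq 0$ at such wave numbers, so $\hat\psi_{n}\equiv 0$.

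The delicate step is the side modes $n=\pm 1$, where $c_{\pm 1}$ lies near $\pm c_{\alpha}$ and one must show directly that neither $c_{\alpha}$ nor $-c_{\alpha}$ is a Dirichlet-Neumann eigenvalue at wave number $\alpha$. For $n=1$: the eigenfunction $\phi_{\alpha}$ of (\ref{E:alpha}) already satisfies $\phi_{\alpha}(0)=0$ and $\phi_{\alpha}'(h_{0})=g_{r}(c_{\alpha})\phi_{\alpha}(h_{0})$; by one-dimensionality of the space of Rayleigh solutions vanishing at $y=0$, any Dirichlet-Neumann eigenfunction at $c_{\alpha}$ would be a scalar multiple of $\phi_{\alpha}$, forcing $\phi_{\alpha}'(h_{0})=0$ and hence $g_{r}(c_{\alpha})\phi_{\alpha}(h_{0})=0$. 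A direct computation of $\mathrm{Im}\,g_{r}(c_{\alpha})$ using hypothesis (\ref{C:technical}), the condition $U'(h_{0})\geq 0$, and the free-surface semicircle $\mathrm{Re}\,c_{\alpha}\leq U(h_{0})$ yields $g_{r}(c_{\alpha})\neq 0$; consequently $\phi_{\alpha}(h_{0})=0$, and initial-value uniqueness at $y=h_{0}$ forces $\phi_{\alpha}\equiv 0$, a contradiction. Analyticity of Rayleigh solutions in $c$ off the real axis extends this non-vanishing to a full neighborhood of $c_{\alpha}$. For $n=-1$, $c_{-1}$ lies near $-c_{\alpha}$ in the lower half plane; by conjugation of the real-coefficient Rayleigh equation, $-c_{\alpha}$ is a Dirichlet-Neumann eigenvalue iff $-c_{\alpha}^{*}$ is, and the latter has $\mathrm{Re}(-c_{\alpha}^{*})=-\mathrm{Re}\,c_{\alpha}\geq -U(h_{0})>0$, while $U<0$ on $[0,h_{0}]$ in the traveling frame by the no-stagnation assumption. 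A Dirichlet-Neumann Howard-type identity obtained by multiplying Rayleigh by $\phi^{*}$, integrating, and using both boundary conditions (combined with a trace estimate for the resulting boundary term) confines the real part of any eigenvalue to the vicinity of the midpoint of $[U_{\min},U_{\max}]$, which is strictly negative; this excludes $-c_{\alpha}^{*}$, and analyticity again propagates to $c_{-1}$ in the relevant neighborhood.

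Combining the four cases, every Fourier coefficient of $\psi_{\infty}$ vanishes, so $\psi_{\infty}\equiv 0$, contradicting $\|\psi_{\infty}\|_{L^{2}}=1$. The principal obstacle is the side-mode analysis at $n=\pm 1$: the assumption $\alpha\in(\alpha_{\max}/2,\alpha_{\max})$ exists precisely to route the higher modes $|n|\geq 2$ through Lemma~\ref{L:alpha}, while the hypothesis (\ref{C:technical}) is used decisively via the non-vanishing of $g_{r}(c_{\alpha})$, and the traveling-frame sign condition $U<0$ is what keeps $-c_{\alpha}^{*}$ out of the Dirichlet-Neumann spectrum at wave number $\alpha$.
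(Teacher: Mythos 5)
Your overall strategy coincides with the paper's: argue by contradiction, extract uniform $H^{2}$ bounds, pass to the limit on an $\epsilon$-independent domain, and kill the limiting $\epsilon=0$ problem Fourier mode by Fourier mode. The cosmetic differences are acceptable: a graph-flattening map in place of the action-angle variables, $L^{2}$ (not $L^{\infty}$, which you have not justified and do not need) control of the nonlocal term, and, for $|n|\geq 2$, a detour through an unproved ``Neumann analog of Theorem \ref{class-k}'' where the paper simply uses the boundary-term-free version of the identity (\ref{estimate-Jq-f}) with $q=U_{s}$ together with the variational characterization of $\alpha_{n}$ and Lemma \ref{L:alpha}; the fact you invoke is true, but the energy identity is all that is required, and it is also where the hypothesis (\ref{C:technical}) actually enters. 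Your claim that (\ref{C:technical}) is ``used decisively via the non-vanishing of $g_{r}(c_{\alpha})$'' is a misattribution: the only zero of $g_{r}$ is the real number $U(h_{0})+g/U^{\prime}(h_{0})$, so $g_{r}(c)\neq 0$ for every nonreal $c$ with no appeal to (\ref{C:technical}), to $U^{\prime}(h_{0})\geq 0$ (not a hypothesis), or to the semicircle theorem.

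The genuine gaps are at the side modes $n=\pm 1$, which are the heart of the lemma. Because you (correctly) allow $\lambda_{k}\to\lambda_{\infty}$ anywhere in the closed disk, the mode-$1$ phase speed $c_{1}=i\lambda_{\infty}/\alpha$ ranges over the full disk $|c_{1}-c_{\alpha}|\leq\tfrac{1}{2}\mathrm{Im}\,c_{\alpha}$. Your proportionality argument (the paper's Wronskian identity in disguise) excludes only the center $c_{\alpha}$, and the step ``analyticity extends this non-vanishing to a full neighborhood'' does not close the argument: non-vanishing of the analytic Neumann dispersion function $c\mapsto\phi_{c}^{\prime}(h_{0})$ at one point gives non-vanishing only on some unquantified neighborhood, whereas you need it on a disk of fixed radius $\tfrac{1}{2}\mathrm{Im}\,c_{\alpha}$, inside which its (isolated) zeros may perfectly well lie. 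The paper's proof avoids this by taking the limiting spectral parameter to be $\lambda_{0}$ itself, so that the mode-$1$ speed is exactly $c_{\alpha}$ and the Wronskian argument applies. The $n=-1$ case is in worse shape: your exclusion rests on a ``Dirichlet--Neumann Howard-type identity'' said to confine $\mathrm{Re}\,c$ near the midpoint of $[U_{\min},U_{\max}]$, but the computation you sketch does not produce such a localization --- multiplying by $\phi^{\ast}$ and integrating yields no boundary term at all under the conditions $\phi^{\prime}(h_{0})=0$, $\phi(0)=0$, and only gives an $\alpha\leq\alpha_{n}$-type restriction, which does not exclude eigenvalues since $\alpha>\alpha_{\max}/2$ may still lie below $\alpha_{n}$; the genuine semicircle substitution $F=\phi/(U-c)$ leaves the non-sign-definite boundary term $-U^{\prime}(h_{0})|\phi(h_{0})|^{2}/(U(h_{0})-c)^{\ast}$. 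Moreover the inputs $\mathrm{Re}\,c_{\alpha}\leq U(h_{0})$ and $U^{\prime}(h_{0})\geq 0$ are not implied by the hypotheses (class $\mathcal{K}^{+}$ does not make $U(h_{0})$ the maximum of $U$). As written, neither $n=1$ away from $c_{\alpha}$ nor $n=-1$ is actually excluded, so the proof is incomplete precisely where the lemma is delicate.
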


The main difficulty in the proof of Lemma \ref{L:solv0} is that the domain $%
D_{\epsilon }$ depends on the small amplitude parameter $\epsilon >0$
whereas the statement of Lemma \ref{L:solv0} calls for an estimate of
solutions of the system (\ref{E:psi0}) uniform for $\epsilon >0$. In order
to compare (\ref{E:psi0}) for different values of $\epsilon $, we employ the
action-angle variables, which map the domain $D_{\epsilon }$ into a common
domain independent of $\epsilon $. For any $(x,y)\in D_{\epsilon }$, let us
denote by $\{(x^{\prime },y^{\prime }):\psi _{\epsilon }(x^{\prime
},y^{\prime })=\psi _{\epsilon }(x,y)=p\}$ the streamline containing $(x,y)$%
, by $\sigma $ the arc-length variable on the streamline $\{\psi _{\epsilon
}(x^{\prime },y^{\prime })=p\}$, and by $\sigma (x,y)$ the value of $\sigma $
corresponding to the point $(x,y)$ along the streamline. Let us define the
normalized action-angle variables as

\begin{equation}
I=\frac{\alpha }{2\pi }\frac{h_{0}}{h_{\epsilon }}\iint_{\{\psi _{\epsilon
}(x^{\prime },y^{\prime })<p\}}dy^{\prime }dx^{\prime },\quad \text{and}%
\quad \theta =\upsilon _{\epsilon }(I)\int_{0}^{\sigma (x,y)}\frac{1}{%
|\nabla \psi _{\epsilon }|}\Big|_{\{\psi _{\epsilon }(x^{\prime }y^{\prime
})=p\}}d\sigma ^{\prime },  \label{E:aa}
\end{equation}%
where $h_{0}$ and $h_{\epsilon }$ are the mean water depth at the parameter
values $0$ and $\epsilon $, respectively, and 
\begin{equation*}
\upsilon _{\epsilon }(I)=\frac{2\pi }{\alpha }\left( \oint_{\{\psi
_{\epsilon }(x^{\prime },y^{\prime })=p\}}\frac{1}{|\nabla \psi _{\epsilon }|%
}\right) ^{-1}.
\end{equation*}%
The action variable $I$ represents the (normalized) area in the phase space
under the streamline $\{(x^{\prime },y^{\prime }):\psi _{\epsilon
}(x^{\prime },y^{\prime })=\psi _{\epsilon }(x,y)=p\}$ and the angle
variable $\theta $ represents the position along the streamline of $\psi
_{\epsilon }(x,y)$. The assumption of no stagnation, i.e. $\psi _{\epsilon
y}(x,y)<0$ throughout $\mathcal{D}_{\epsilon }$, implies that all stream
lines are non-closed. For, otherwise, the horizontal velocity $\psi
_{\epsilon y}$ must change signs on a closed streamline. Moreover, for $%
\epsilon >0$ sufficiently small, all streamlines are close to those of the
trivial flow, that is, they almost horizontal. Therefore, the action-angle
variable $(\theta ,I)$ is defined globally in $\mathcal{D}_{\epsilon }$. The
mean-zero property (\ref{mean-zero}$)$ of the wave profile $\eta _{\epsilon
}(x)$ implies that the area of the (steady) fluid region $\mathcal{D}%
_{\epsilon }$ is $(2\pi /\alpha )h_{\epsilon }$. Accordingly, by the
definition in (\ref{E:aa}) it follows that $0<I<h_{0}$ and $0<\theta <2\pi
/\alpha $, independently of $\epsilon $.

Let us define the mapping by the action-angle variables by $\mathcal{A}%
_{\epsilon}(x,y)=(\theta,I)$. From the above discussions follows that $%
\mathcal{A}_{\epsilon}$ is bijective and maps $\mathcal{D}_{\epsilon}$ to 
\begin{equation*}
D=\{(\theta,I):0<\theta<2\pi/\alpha,\,0<I<h_{0}\}.
\end{equation*}
At $\epsilon=0$, the action-angle mapping reduces to the identity mapping on 
$\mathcal{D}_{0}=D$. For $\epsilon>0$, the mapping has a scaling effect.
More precisely,\qquad\ 
\begin{equation*}
\iint_{D}f(\mathcal{A}_{\epsilon}^{-1}(\theta,I))d\theta dI=\frac{h_{0}}{%
h_{\epsilon}}\iint_{\mathcal{D}_{\epsilon}}f(x,y)dydx
\end{equation*}
for any function $f$ defined in $\mathcal{D}_{\epsilon}$.

Another motivation to employ the action-angle variables comes from that they
simplify the equation on the particle trajectory\footnote{%
In the irrotational setting, i.e., $\gamma \equiv 0$, a qualitative
description of particle trajectories is obtained \cite{con} by studying a
specific nonlinear boundary value problem for harmonic functions.}. In the
action-angle variables $\left( \theta ,I\right) $, the characteristic
equation (\ref{E:char}) becomes (\cite[Section 50]{arnold}, \cite[p. 94]%
{lin2}) 
\begin{equation*}
\begin{cases}
\dot{\theta}=-\upsilon _{\epsilon }(I) \\ 
\dot{I}=0,%
\end{cases}%
\end{equation*}%
where the dot above a variable denotes the differentiation in the $\sigma $%
-variable. This observation is very useful for future considerations.

\begin{proof}[Proof of Lemma \protect\ref{L:solv0}]
Suppose on the contrary that there would exist sequences $\epsilon
_{k}\rightarrow 0+$, $\lambda _{k}\rightarrow \lambda _{0}$ as $k\rightarrow
\infty $ and $\psi _{k}\in H^{1}(\mathcal{D}_{\epsilon _{k}})$ such that $%
\psi _{k}\not\equiv 0$ is a solution of (\ref{E:psi0}) with $\epsilon
=\epsilon _{k}$. After normalization, $\Vert \psi _{k}\Vert _{L^{2}(\mathcal{%
D}_{\epsilon _{k}})}=1$. We claim that 
\begin{equation}
\Vert \psi _{k}\Vert _{H^{2}(\mathcal{D}_{\epsilon _{k}})}\leq C,
\label{E:elliptic1}
\end{equation}%
where $C>0$ is independent of $k$. 
Indeed, by Minkovski's inequality it follows that 
\begin{align}
& \left\Vert \gamma ^{\prime }(\psi _{\epsilon _{k}})\psi _{k}-\gamma
^{\prime }(\psi _{\epsilon _{k}})\int_{-\infty }^{0}\lambda e^{\lambda
s}\psi _{k}(X_{\epsilon _{k}}(s),Y_{\epsilon _{k}}(s))ds\right\Vert _{L^{2}(%
\mathcal{D}_{\epsilon _{k}})}  \notag \\
& \leq \Vert \gamma ^{\prime }(\psi _{\epsilon _{k}})\Vert _{L^{\infty
}}\left( \Vert \psi _{k}\Vert _{L^{2}(\mathcal{D}_{\epsilon
_{k}})}+\int_{-\infty }^{0}\left\vert \lambda \right\vert e^{{\rm Re}%
\lambda s}\left\Vert \psi _{k}(X_{\epsilon _{k}}(s),Y_{\epsilon
_{k}}(s))\right\Vert _{L^{2}(\mathcal{D}_{\epsilon _{k}})}ds\right)
\label{estimate-k-lb} \\
& =\Vert \gamma ^{\prime }(\psi _{\epsilon _{k}})\Vert _{L^{\infty }}\Vert
\psi _{k}\Vert _{L^{2}(\mathcal{D}_{\epsilon _{k}})}\left( 1+\frac{%
\left\vert \lambda \right\vert }{{\rm Re}\lambda }\right) \leq \Vert \gamma
^{\prime }(\psi _{\epsilon _{k}})\Vert _{L^{\infty }}\left( 1+\frac{%
\left\vert \lambda _{0}\right\vert +\delta }{\delta }\right) .  \notag
\end{align}%
This uses the fact that the mapping $(x,y)\rightarrow (X_{\epsilon
_{k}}(s),Y_{\epsilon _{k}}(s))$ is measure preserving and that $|\lambda
-\lambda _{0}|<{\rm Re}\lambda _{0}/2$. The standard elliptic regularity
theory \cite{ag-book} for a Neumann problem adapted for (\ref{E:psi0}) then
proves the estimate (\ref{E:elliptic1}). This proves the claim.

In order to study the convergence of $\{\psi_{k}\}$, we perform the mapping
by the action-angle variables (\ref{E:aa}) and write $\mathcal{A}%
_{\epsilon_{k}}(x,y)=(\theta,I)$. Note that the image of $\mathcal{D}%
_{\epsilon_{k}}$ under the mapping $\mathcal{A}_{\epsilon_{k}}$ is 
\begin{equation*}
D=\{(\theta,I):0<\theta<2\pi/\alpha\,,\,0<I<h_{0}\},
\end{equation*}
which is independent of $k$. Let us denote $A\psi_{k}(\theta,I)=\psi _{k}(%
\mathcal{A}_{\epsilon_{k}}^{-1}(\theta,I))$. It is immediate to see that $%
A\psi_{k}\in H^{2}(D)$.

Since $h_{\epsilon}=h_{0}+O(\epsilon)$ it follows that 
\begin{equation*}
\Vert
A\psi_{k}\Vert_{L^{2}(D)}=(h_{0}/h_{\epsilon})\Vert\psi_{k}\Vert_{L^{2}(%
\mathcal{D}_{\epsilon_{k}})}=1+O(\epsilon).
\end{equation*}
This, together with (\ref{E:elliptic1}), implies that 
\begin{alignat*}{2}
A\psi_{k} & \rightarrow\psi_{\infty}\quad\text{weakly in $H^{2}(D) $}\quad & 
\text{as $k\rightarrow\infty$}, \\
A\psi_{k} & \rightarrow\psi_{\infty}\quad\text{strongly in $L^{2}(D)$}\quad
& \text{ as $k\rightarrow\infty$}
\end{alignat*}
for some $\psi_{\infty}$. By continuity, $\Vert\psi_{\infty}%
\Vert_{L^{2}(D_{0})}=1$. Our goal is to show that $\psi_{\infty}\equiv0$ and
thus prove the assertion by contradiction.

At the limit as $\epsilon_{k}\rightarrow0,$ the limiting mapping $\mathcal{A}%
_{0}$ is the identity mapping on $D=\mathcal{D}_{0}$ and the limit function $%
\psi_{\infty}$ satisfies 
\begin{gather}
\Delta\psi_{\infty}+\gamma^{\prime}(\psi_{0})\psi_{\infty}-\gamma^{\prime
}(\psi_{0})\int_{-\infty}^{0}\lambda_{0}e^{\lambda_{0}s}\psi_{%
\infty}(X_{0}(s),Y_{0}(s))ds=0\qquad\text{in}\quad D;  \notag \\
\partial_{y}\psi_{\infty}(x,h_{0})=0;  \label{E:infty} \\
\psi_{\infty}(x,0)=0.  \notag
\end{gather}
Here, $\lambda_{0}=-i\alpha c_{\alpha}$ and $(X_{0}(s),Y_{0}(s))=(x+U(y)s,y)$%
.

Since the above equation and the boundary conditions are separable in the $x$
and $y$ variables, $\psi_{\infty}$ can be written as 
\begin{equation*}
\psi_{\infty}(x,y)=\sum_{l=0}^{\infty} e^{il\alpha x}\phi_{l}(y).
\end{equation*}

In case $l=0$, the boundary value problem (\ref{E:infty}) reduces to 
\begin{equation*}
\begin{cases}
\phi_{0}^{\prime\prime}=0\qquad\text{for}\quad y\in(0,h_{0}) \\ 
\phi_{0}^{\prime}(h_{0})=0,\ \ \ \phi_{0}(0)=0,%
\end{cases}%
\end{equation*}
and thus, $\phi_{0}\equiv0$.

Next, consider the solution $\phi _{1}$ of (\ref{E:infty}) when $l=1$: 
\begin{equation*}
\begin{cases}
\phi _{1}^{\prime \prime }-\alpha ^{2}\phi _{1}-{\frac{U^{\prime \prime }}{%
U-c_{\alpha }}}\phi _{1}=0\qquad \text{for}\quad y\in (0,h_{0}) \\ 
\phi _{1}^{\prime }(h_{0})=0,\ \ \ \phi _{1}(0)=0.%
\end{cases}%
\end{equation*}%
This uses that $\gamma ^{\prime }(\psi _{0})=-U^{\prime \prime }/U$. Recall
that for the unstable wave number $\alpha $ and the unstable wave speed $%
c_{\alpha }$, there exists an unstable solution $\phi _{\alpha }$ of the
Rayleigh system (\ref{E:alpha}). As is done in the proof of Lemma \ref%
{L:alpha}, an integration by parts yields that 
\begin{align*}
0=& \int_{0}^{h_{0}}\left( \phi _{1}\left( \phi _{\alpha }^{\prime \prime
}-\alpha ^{2}\phi _{\alpha }-\frac{U^{\prime \prime }}{U-c_{\alpha }}\phi
_{\alpha }\right) -\phi _{\alpha }\left( \phi _{1}^{\prime \prime }-\alpha
^{2}\phi _{1}-\frac{U^{\prime \prime }}{U-c_{\alpha }}\phi _{1}\right)
\right) dy \\
=& \left( \frac{g}{(U(h_{0})-c_{\alpha })^{2}}+\frac{U^{\prime }(h_{0})}{%
U(h_{0})-c_{\alpha }}\right) \phi _{\alpha }(h_{0})\phi _{1}(h_{0}).
\end{align*}%
%
%
%
%
%
%
%
%
%
%
%
%
%
%
Since $\phi _{\alpha }(h_{0})\neq 0$, we have $\phi _{1}(h_{0})=0$, which
together with $\phi _{1}^{\prime }(h_{0})=0$, implies that $\phi _{1}\equiv
0 $.

For $l\geq 2$, the solution $\phi _{l}$ of (\ref{E:infty}) ought to satisfy 
\begin{equation*}
\begin{cases}
\phi _{l}^{\prime \prime }-l^{2}\alpha ^{2}\phi _{l}-\frac{U^{\prime \prime }%
}{U-c_{\alpha }/l}\phi _{l}=0\qquad \text{for}\quad y\in (0,h_{0}) \\ 
\phi _{l}^{\prime }(h_{0})=0,\ \ \phi _{l}(0)=0.%
\end{cases}%
\end{equation*}%
%
%
%
%
%
%
%
%
%
%
%
%
%
%
An integration by parts yields that 
\begin{equation*}
\int_{0}^{h_{0}}\Big(\left\vert \phi _{l}^{\prime }\right\vert
^{2}+l^{2}\alpha ^{2}|\phi _{l}|^{2}+\frac{U^{\prime \prime }}{U-c_{\alpha
}/l}|\phi _{l}|^{2}\Big)dy=0,
\end{equation*}%
and subsequently, for any $q$ real it follows that (see the proof of Lemma %
\ref{lemma-h2-bound}) 
\begin{equation*}
\int_{0}^{h_{0}}\Big(\left\vert \phi _{l}^{\prime }\right\vert
^{2}+l^{2}\alpha ^{2}|\phi _{l}|^{2}+\frac{U^{\prime \prime }(U-q)}{%
|U-c_{\alpha }/l|^{2}}|\phi _{l}|^{2}\Big)dy=0.
\end{equation*}%
%
%
%
%
%
%
%
%
%
%
%
%
%
%
The same argument as in proving Lemma \ref{lemma-h2-bound} applies to assert
that 
\begin{equation*}
\int_{0}^{h_{0}}(\left\vert \phi _{l}^{\prime }\right\vert ^{2}+l^{2}\alpha
^{2}|\phi _{l}|^{2})dy\leq \int_{0}^{h_{0}}K(y)|\phi _{l}|^{2}dy.
\end{equation*}%
Recall that $K(y)=-U^{\prime \prime }(y)/(U(y)-U_{s})>0$.

Let $-\alpha_{n}^{2}$ be as in Lemma \ref{L:alpha} the lowest eigenvalue of $%
\frac{d^{2}}{dy^{2}}-K(y)$ on $y\in(0,h_{0})$ with the boundary conditions $%
\phi^{\prime}(h_{0})=0$ and $\phi(0)=0$. By Lemma \ref{L:alpha}, $\alpha_{%
\mathrm{max}}>\alpha_{n}$. On the other hand, the variational
characterization of $-\alpha_{n}^{2}$ asserts that 
\begin{equation*}
\int_{0}^{h}(|\phi_{l}^{\prime2}-K(y)|\phi_{l}|^{2})dy\geq-\alpha_{n}^{2}%
\int_{0}^{h}|\phi_{l}|^{2}dy.
\end{equation*}
Accordingly, 
\begin{equation*}
\int_{0}^{h_{0}}(l^{2}\alpha^{2}-\alpha_{n}^{2})|\phi_{l}|^{2}dy\leq0
\end{equation*}
must hold. Since $\alpha>\alpha_{\mathrm{max}}/2$ and $l\geq2$, it follows
that $l^{2}\alpha^{2}-\alpha_{n}>2\alpha_{\mathrm{max}}^{2}-\alpha_{n}^{2}>0$%
. Consequently, $\phi_{l}\equiv0$. 

Therefore, $\psi_{\infty}\equiv0$, which contradicts since $\|\phi_{\infty
}\|_{L^{2}}=1$. This completes the proof.
\end{proof}

\begin{lemma}
\label{L:solv} Under the assumption of \textrm{Lemma \ref{L:solv0}}, for any 
$b\in L^{2}(\mathcal{S}_{\epsilon })$, there exists an unique solution $\psi
_{b}$ to (\ref{E:psib}). Moreover, the estimate 
\begin{equation}
\Vert \psi _{b}\Vert _{H^{1}(\mathcal{D}_{\epsilon })}\leq C\Vert b\Vert
_{L^{2}(\mathcal{S}_{\epsilon })}  \label{E:elliptic}
\end{equation}%
holds, where $C>0$ is independent of $\epsilon ,b$.
\end{lemma}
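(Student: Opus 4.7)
The plan is to use the Fredholm alternative for existence and uniqueness, followed by a blow-up argument to obtain the bound uniform in $\epsilon$. Write (\ref{E:psib-a}) as $-\Delta\psi = K_\lambda \psi$ where
$$K_\lambda \psi = \gamma'(\psi_\epsilon)\psi - \gamma'(\psi_\epsilon)\int_{-\infty}^{0}\lambda e^{\lambda s}\psi(X_\epsilon(s), Y_\epsilon(s))\,ds,$$
a bounded operator on $L^2(\mathcal{D}_\epsilon)$ by the measure-preserving computation used in (\ref{estimate-k-lb}). Let $\mathcal{L}$ denote the mixed Dirichlet--Neumann Laplacian (zero Dirichlet data at $\{y=0\}$, Neumann data on $\mathcal{S}_\epsilon$), which is invertible on $H^1_{0,\mathrm{bot}}(\mathcal{D}_\epsilon) := \{\psi\in H^1 : \psi|_{y=0}=0\}$ by Lax--Milgram. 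Then (\ref{E:psib}) becomes $(I+\mathcal{L}^{-1}K_\lambda)\psi = \mathcal{L}^{-1}[b]$, and $\mathcal{L}^{-1}K_\lambda$ is compact on $H^1_{0,\mathrm{bot}}(\mathcal{D}_\epsilon)$ via Rellich. Lemma \ref{L:solv0} asserts exactly that $\ker(I+\mathcal{L}^{-1}K_\lambda) = \{0\}$, so the Fredholm alternative yields a unique $\psi_b \in H^1(\mathcal{D}_\epsilon)$ for each admissible triple $(\epsilon, \lambda, b)$.

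For the estimate (\ref{E:elliptic}) with constant uniform in $\epsilon$, argue by contradiction in the spirit of the proof of Lemma \ref{L:solv0}. Suppose there exist sequences $\epsilon_k\to 0+$, $\lambda_k$ with $|\lambda_k-\lambda_0|\leq ({\rm Re}\,\lambda_0)/2$, $b_k$, and solutions $\psi_k$ of (\ref{E:psib}) at $\epsilon=\epsilon_k$ with $\|\psi_k\|_{H^1(\mathcal{D}_{\epsilon_k})}=1$ and $\|b_k\|_{L^2(\mathcal{S}_{\epsilon_k})}\to 0$; pass to a subsequence so that $\lambda_k\to\lambda_\infty$. The $L^2$-bound on $K_{\lambda_k}\psi_k$ from (\ref{estimate-k-lb}), together with the weak formulation, yields uniform $H^1$-control, and an interior elliptic estimate plus the boundary smoothness $\eta_\epsilon\in C^{3+\beta}$ upgrades this to a uniform $H^s$-bound for some $s>1$. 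Pulling back through the action-angle map $\mathcal{A}_{\epsilon_k}$ of Section 5.2 onto the fixed rectangle $D=(0,2\pi/\alpha)\times (0,h_0)$, a subsequence $A\psi_k$ converges weakly in $H^s(D)$ and strongly in $H^1(D)$ to a limit $\psi_\infty$ with $\|\psi_\infty\|_{H^1(D)}=1$. Since $\|b_k\|\to 0$, the boundary condition passes to the limit and $\psi_\infty$ solves the homogeneous problem (\ref{E:psi0}) at $\lambda=\lambda_\infty$; Lemma \ref{L:solv0} then forces $\psi_\infty\equiv 0$, contradicting $\|\psi_\infty\|_{H^1(D)}=1$.

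The principal technical difficulty is extracting a uniform boundary estimate from the rough Neumann data $b_k\in L^2(\mathcal{S}_{\epsilon_k})$, since standard $H^2$-regularity of mixed problems requires $H^{1/2}$ traces. I would circumvent this either by staying at the $H^1$-level throughout (which suffices for the compactness step because $\mathcal{L}^{-1}K_\lambda$ is compact on $H^1_{0,\mathrm{bot}}$), or by decomposing $\psi_k = \psi_k^{\mathrm{int}} + \psi_k^{\mathrm{bdy}}$ where $\psi_k^{\mathrm{bdy}}$ is a harmonic lift of $b_k$ tending to zero in $H^1$ and $\psi_k^{\mathrm{int}}$ solves a zero-Neumann problem to which the $H^2$ estimate (\ref{E:elliptic1}) already applies uniformly. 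Either route is then combined with the boundary straightening $(x,y)\mapsto(x,h_0 y/\eta_\epsilon(x))$ or the action-angle map, for which $\eta_\epsilon=h_\epsilon+O(\epsilon)$ in $C^{3+\beta}$ guarantees that the elliptic constants on the fixed domain remain bounded as $\epsilon\to 0$; the resulting uniform-in-$\epsilon$ version of Lemma \ref{L:solv0} closes the compactness argument.
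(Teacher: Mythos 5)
Your existence--uniqueness step is essentially the paper's: the paper also reduces (\ref{E:psib}) to a Fredholm equation on the space $H(\mathcal{D}_\epsilon)=\{\psi\in H^1:\psi(x,0)=0\}$, the only cosmetic difference being that it works with the $z$-shifted bilinear form $B_z$ and the Lax--Milgram operator $L_z$ (so that the nonlocal term is absorbed into a coercive form and the equation becomes $(id-zL_z)\psi_b=L_zb^{\ast}$) rather than with $\mathcal{L}^{-1}K_\lambda$; Lemma \ref{L:solv0} supplies the trivial kernel in both versions, and the boundary datum enters only through the functional $b^{\ast}\in H^{\ast}$ via the trace theorem, so no regularity of $b$ beyond $L^2$ is ever needed.

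Where you genuinely diverge is the estimate (\ref{E:elliptic}). The paper reads it off from the operator norms, $\Vert\psi_b\Vert_{L^2}\le\Vert(id-zL_z)^{-1}\Vert\,\Vert L_z\Vert\,\Vert b^{\ast}\Vert_{H^{\ast}}$, and simply asserts that these are independent of $\epsilon$; you instead run a blow-up argument (normalize $\Vert\psi_k\Vert_{H^1}=1$, $\Vert b_k\Vert_{L^2}\to0$, pull back by $\mathcal{A}_{\epsilon_k}$, pass to a limit solving the homogeneous problem, invoke Lemma \ref{L:solv0}). Your route is in fact the more honest treatment of the uniformity, and it is exactly the mechanism already used in the proof of Lemma \ref{L:solv0}; your handling of the rough Neumann data (harmonic lift with $\Vert\psi^{\mathrm{bdy}}\Vert_{H^1}\le C\Vert b\Vert_{H^{-1/2}}\le C\Vert b\Vert_{L^2}$, plus the zero-Neumann $H^2$ bound for the remainder) is a sound way to get the strong $H^1$ compactness, and is arguably cleaner than the $H^{3/2}$ route. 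Two points need care, though. First, the negation of uniformity only gives $\epsilon_k\in(0,\epsilon_0]$, not $\epsilon_k\to0+$; you must extract $\epsilon_k\to\bar{\epsilon}\in[0,\epsilon_0]$ and, when $\bar{\epsilon}>0$, run the limit on the fixed domain $\mathcal{D}_{\bar{\epsilon}}$, where Lemma \ref{L:solv0} as stated applies. Second, when $\bar{\epsilon}=0$ you also let $\lambda_k\to\lambda_\infty$, which may differ from $\lambda_0$, and the trivial-kernel property of the limiting flat-strip problem (\ref{E:psi0}) at $\epsilon=0$, $\lambda=\lambda_\infty$ is not literally in the statement of Lemma \ref{L:solv0}; its proof handles $\lambda_0$ via the pairing with the unstable Rayleigh mode $\phi_\alpha$ (the $l=1$ Fourier mode), and for a general $\lambda_\infty$ in the half-disc the $l=1$ mode is not excluded by the $l\ge2$ energy argument alone. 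This imprecision is inherited from the paper (whose proof of Lemma \ref{L:solv0} also takes $\lambda_k\to\lambda_0$ even though the statement covers the whole half-disc), so since you are entitled to use Lemma \ref{L:solv0} as stated, your argument is acceptable, but you should either cite that lemma for the limit point explicitly or shrink the $\lambda$-neighborhood so the limit analysis reduces to the case actually proved.
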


\begin{proof}
The proof uses the theory of Fredholm alternative as adapted to usual
elliptic problems \cite[Section 6.2]{evans}. Let us introduce the Hilbert
space 
\begin{equation}
H\left( \mathcal{D}_{\epsilon }\right) =\{\psi \in H^{1}(\mathcal{D}%
_{\epsilon }):\psi (x,0)=0\}  \label{defn-spaceH}
\end{equation}%
and a bilinear form $B_{z}:H\times H\rightarrow \mathbb{R}$, defined as 
\begin{equation*}
B_{z}[\phi ,\psi ]=\iint_{\mathcal{D}_{\epsilon }}\left( \nabla \phi \cdot
\nabla \psi ^{\ast }+\phi (K^{\lambda }\psi )^{\ast }\right) dydx+z\left(
\phi ,\psi \right) .
\end{equation*}%
Here, 
\begin{equation*}
K^{\lambda }\psi =-\gamma (\psi _{\epsilon })\psi +\gamma ^{\prime }(\psi
_{\epsilon })\int_{-\infty }^{0}\lambda e^{\lambda s}\psi (X_{\epsilon
}(s),Y_{\epsilon }(s))ds,
\end{equation*}%
$z\in \mathbb{R}$ and $(\cdot ,\cdot )$ denotes the $L^{2}\left( \mathcal{D}%
_{\epsilon }\right) $ inner product. By the estimate (\ref{estimate-k-lb})
it follows that 
\begin{equation}
|B_{z}[\phi ,\psi ]|\leq (C(\gamma ,\delta )+z)\Vert \phi \Vert
_{H^{1}}\Vert \psi \Vert _{H^{1}}  \label{E:energy1}
\end{equation}%
and 
\begin{equation}
B_{z}[\phi ,\phi ]\geq c_{0}\Vert \phi \Vert _{H^{1}}^{2}  \label{E:energy2}
\end{equation}%
for $z>2C(\gamma ,\delta )$, where 
\begin{equation*}
C(\gamma ,\delta )=\Vert \gamma ^{\prime }(\psi _{\epsilon })\Vert
_{L^{\infty }}\left( 1+\frac{\left\vert \lambda _{0}\right\vert +\delta }{%
\delta }\right)
\end{equation*}%
and $c_{0}=\min (1,C(\gamma ,\delta ))>0$. Then, by the Lax-Milgram theorem
there exists a bounded operator $L_{z}:H^{\ast }\rightarrow H$ such that $%
B_{z}[L_{z}f,\phi ]=\left\langle f,\phi \right\rangle $ for any $f\in
H^{\ast }$ and $\phi \in H$, where $H^{\ast }$ denotes the dual space of $H$
and $\left\langle \cdot ,\cdot \right\rangle $ is the duality pairing. For $%
b\in L^{2}(\mathcal{S}_{\epsilon })$, the trace theorem \cite[Section 5.5]%
{evans} permits us to define $b^{\ast }\in H^{\ast }$ by 
\begin{equation*}
<b^{\ast },\psi >=\oint_{\mathcal{S}_{\epsilon }}b(x)\psi ^{\ast }(x,\eta
_{\epsilon }(x))dx\qquad \text{for}\quad \psi \in H.
\end{equation*}%
Note that $\psi _{b}\in H$ is a weak solution of (\ref{E:psib}) if and only
if 
\begin{equation*}
B_{z}(\psi _{b},\phi )=<z\psi _{b}+b^{\ast },\phi >\qquad \text{for all}%
\quad \phi \in H.
\end{equation*}%
That is to say, $\psi _{b}=L_{z}(z\psi _{b}+b^{\ast })$, or equivalently 
\begin{equation}
(id-zL_{z})\psi _{b}=L_{z}b^{\ast }.  \label{eqn-fredolm}
\end{equation}

The operator $L_{z}:L^{2}(\mathcal{D}_{\epsilon})\rightarrow L^{2}(\mathcal{D%
}_{\epsilon})$ is compact. Indeed, 
\begin{equation*}
\left\Vert L_{z}\phi\right\Vert _{H^{1}\left( \mathcal{D}_{\epsilon}\right)
}\leq\left\Vert L_{z}\right\Vert _{H^{\ast}\rightarrow H}\left\Vert
\phi\right\Vert _{H^{\ast}}\leq C\left\Vert \phi\right\Vert _{L^{2}(\mathcal{%
D}_{\epsilon})}
\end{equation*}
for any $\phi\in L^{2}(\mathcal{D}_{\epsilon})$. Moreover, the result of
Lemma \ref{L:solv0} states that $\ker( I-zL_{z}) =\{ 0\} $. Thus, by the
Fredholm alternative theory for compact operators, the equation (\ref%
{eqn-fredolm}) is uniquely solvable for any $b\in L^{2}(\mathcal{S}%
_{\epsilon})$ and 
\begin{equation}
\psi_{b}=\left( id-zL_{z}\right) ^{-1}L_{z}b^{\ast}.  \label{exp-phib}
\end{equation}

Next is the proof of (\ref{E:elliptic}). From (\ref{exp-phib}) it follows
that 
\begin{equation*}
\Vert \psi _{b}\Vert _{L^{2}(\mathcal{D}_{\epsilon })}\leq \left\Vert
(id-zL_{z})^{-1}\right\Vert _{L^{2}\rightarrow L^{2}}\left\Vert
L_{z}\right\Vert _{H^{\ast }\rightarrow H}\left\Vert b^{\ast }\right\Vert
_{H^{\ast }}\leq C\Vert b\Vert _{L^{2}(\mathcal{S}_{\epsilon })},
\end{equation*}%
where $C>0$ is independent of $b$ and $\epsilon $. Then, by (\ref%
{eqn-fredolm}) it follows that 
\begin{equation*}
\left\Vert \psi _{b}\right\Vert _{H^{1}(\mathcal{D}_{\epsilon })}=\left\Vert
L_{z}(z\psi _{b})+L_{z}b^{\ast }\right\Vert _{H}\leq C(\Vert \psi _{b}\Vert
_{L^{2}(\mathcal{D}_{\epsilon })}+\Vert b\Vert _{L^{2}(\mathcal{S}_{\epsilon
})})\leq C^{\prime }\Vert b\Vert _{L^{2}(\mathcal{S}_{\epsilon })},
\end{equation*}%
where $C,C^{\prime }>0$ are independent of $b$ and $\epsilon $. This
completes the proof.
\end{proof}

Similar considerations to the above proves the unique solvability of the
inhomogeneous problem.

\begin{corollary}
\label{lemma-solv-f}Under the assumption of \textrm{Lemma \ref{L:solv0}},
for any $f\in H^{\ast}(\mathcal{D}_{\epsilon})$ there exists an unique weak
solution $\psi_{f}\in H^{1}(\mathcal{D}_{\epsilon})$ to the elliptic problem 
\begin{gather*}
\Delta\psi+\gamma^{\prime}(\psi_{\epsilon})\psi-\gamma^{\prime}(\psi
_{\epsilon})\int_{-\infty}^{0}\lambda e^{\lambda s}\psi(X_{\epsilon
}(s),Y_{\epsilon}(s))ds=f\quad\text{in }\mathcal{D}_{\epsilon}, \\
\psi_{n}=0\qquad\text{on}\quad\mathcal{S}_{\epsilon}, \\
\psi(x,0)=0.
\end{gather*}
The estimate 
\begin{equation}
\Vert\psi_{f}\Vert_{H^{1}(\mathcal{D}_{\epsilon})}\leq C\Vert
f\Vert_{H^{\ast }(\mathcal{D}_{\epsilon})}  \label{estimate-H1-f}
\end{equation}
holds, where $C>0$ is independent of $f$ and $\epsilon$. Moreover, if $f\in
L^{2}(\mathcal{D}_{\epsilon})$ then an improved estimate 
\begin{equation}
\Vert\psi_{f}\Vert_{H^{2}(\mathcal{D}_{\epsilon})}\leq C\Vert f\Vert _{L^{2}(%
\mathcal{D}_{\epsilon})}  \label{estimate-H2-f}
\end{equation}
holds, where $C>0$ is independent of $f$ and $\epsilon$.
\end{corollary}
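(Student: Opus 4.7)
The plan is to mirror the proof of Lemma \ref{L:solv}, replacing the boundary data $b^{\ast}$ by the interior source $f$. Working in the Hilbert space $H(\mathcal{D}_{\epsilon})$ defined in \eqref{defn-spaceH}, I would use the same bilinear form
\[
B_{z}[\phi,\psi]=\iint_{\mathcal{D}_{\epsilon}}\left(\nabla\phi\cdot\nabla\psi^{\ast}+\phi(K^{\lambda}\psi)^{\ast}\right)dydx+z(\phi,\psi),
\]
choosing $z>2C(\gamma,\delta)$ so that the coercivity and continuity estimates \eqref{E:energy1}--\eqref{E:energy2} hold uniformly in $\epsilon$. Lax--Milgram then produces a bounded inverse operator $L_{z}\colon H^{\ast}\to H$. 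A function $\psi_{f}\in H$ is a weak solution of the inhomogeneous problem iff $B_{z}[\psi_{f},\phi]=\langle z\psi_{f}+f,\phi\rangle$ for every $\phi\in H$, which rewrites as
\[
(id-zL_{z})\psi_{f}=L_{z}f.
\]

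The compactness of $L_{z}\colon L^{2}(\mathcal{D}_{\epsilon})\to L^{2}(\mathcal{D}_{\epsilon})$ follows from Rellich's theorem exactly as in Lemma \ref{L:solv}. By Lemma \ref{L:solv0}, $\ker(id-zL_{z})=\{0\}$, so the Fredholm alternative yields unique solvability with $\psi_{f}=(id-zL_{z})^{-1}L_{z}f$, and hence
\[
\Vert\psi_{f}\Vert_{L^{2}}\leq\Vert(id-zL_{z})^{-1}\Vert_{L^{2}\to L^{2}}\,\Vert L_{z}\Vert_{H^{\ast}\to H}\,\Vert f\Vert_{H^{\ast}}.
\]
Combined with $\psi_{f}=L_{z}(z\psi_{f})+L_{z}f\in H$, this gives the $H^{1}$-estimate \eqref{estimate-H1-f}. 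To obtain the improved $H^{2}$-estimate when $f\in L^{2}$, rewrite the equation as
\[
\Delta\psi_{f}=f-\gamma^{\prime}(\psi_{\epsilon})\psi_{f}+\gamma^{\prime}(\psi_{\epsilon})\int_{-\infty}^{0}\lambda e^{\lambda s}\psi_{f}(X_{\epsilon}(s),Y_{\epsilon}(s))ds,
\]
whose right-hand side lies in $L^{2}(\mathcal{D}_{\epsilon})$ and is bounded by $C(\Vert f\Vert_{L^{2}}+\Vert\psi_{f}\Vert_{H^{1}})\leq C\Vert f\Vert_{L^{2}}$ via the estimate \eqref{estimate-k-lb} and the $H^{1}$-bound already obtained. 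Standard elliptic regularity for the mixed Dirichlet--Neumann problem on $\mathcal{D}_{\epsilon}$ then furnishes \eqref{estimate-H2-f}.

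The main obstacle is the uniformity of the constants in $\epsilon$. The coercivity constants in $B_{z}$ and the operator norm $\Vert L_{z}\Vert_{H^{\ast}\to H}$ are manifestly uniform since $\mathcal{D}_{\epsilon}\to\mathcal{D}_{0}$ smoothly and $\gamma^{\prime}(\psi_{\epsilon})$ is uniformly bounded by Theorem \ref{T:smallE}. The delicate point is uniformity of $\Vert(id-zL_{z})^{-1}\Vert$; I would argue by contradiction, as in Lemma \ref{L:solv0}: if it blew up along $\epsilon_{k}\to 0$, one could extract $\psi_{k}\in H(\mathcal{D}_{\epsilon_{k}})$ with $\Vert\psi_{k}\Vert_{L^{2}}=1$ and $(id-zL_{z})\psi_{k}\to 0$, and push forward through the action--angle mapping $\mathcal{A}_{\epsilon_{k}}$ to the fixed domain $D$; the resulting weak limit would solve the homogeneous problem at $\epsilon=0$, contradicting Lemma \ref{L:solv0}. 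Finally, the $\epsilon$-uniform $H^{2}$-regularity is obtained by flattening $\mathcal{S}_{\epsilon}$ through the diffeomorphism $(x,y)\mapsto (x,h_{0}y/\eta_{\epsilon}(x))$, which is $O(\epsilon)$-close to the identity in $C^{2+\beta}$; the transformed equation has coefficients bounded uniformly in $\epsilon$, so the usual Neumann regularity constant is controlled independently of $\epsilon$.
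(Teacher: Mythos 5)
Your argument is correct and follows essentially the same route as the paper: the identical Lax--Milgram/Fredholm reduction $(id-zL_{z})\psi_{f}=L_{z}f$ with the trivial kernel supplied by Lemma \ref{L:solv0}, the same chain of norm bounds for (\ref{estimate-H1-f}), and elliptic regularity for the Neumann problem for (\ref{estimate-H2-f}). Your extra remarks on $\epsilon$-uniformity merely spell out what the paper leaves implicit and are consistent with its treatment.
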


\begin{proof}
As in the proof of Lemma \ref{L:solv}, the function $\psi_{f}$ is a solution
to the above boundary value problem if and only if 
\begin{equation*}
(id-zL_{z}) \psi_{f}=L_{z}f.
\end{equation*}
The unique solvability and the $H^{1}$-estimate (\ref{estimate-H1-f}) are
identically the same as those in Lemma \ref{L:solv}. When $f\in L^{2}(%
\mathcal{D}_{\epsilon})$ the elliptic regularity theory \cite{ag-book} for
Neumann boundary condition implies (\ref{estimate-H2-f}).
\end{proof}

For any $b\in L^{2}_{\text{per}}(\mathcal{S}_{\epsilon})$ let us define the
operator 
\begin{equation}
\mathcal{T}_{\epsilon}b=\psi_{b}|_{\mathcal{S}_{\epsilon}}=\psi_{b}\left(
x,\eta_{\epsilon}(x)\right) ,  \label{defn-T-ebs}
\end{equation}
where $\psi_{b}$ is the unique solution of (\ref{E:psib}) in Lemma \ref%
{L:solv} with periodicity. Then, the elliptic estimate (\ref{E:elliptic})
and the trace theorem \cite[Section 5.5]{evans} implies that 
\begin{equation}
\Vert\mathcal{T}_{\epsilon}b\Vert_{H^{1/2}\left( \mathcal{S}_{\epsilon
}\right) }\leq C\Vert\psi_{b}\Vert_{H^{1}\left( \mathcal{D}_{\epsilon
}\right) }\leq C^{\prime}\Vert b\Vert_{L^{2}};  \label{E:t-cpt}
\end{equation}
Therefore, $\mathcal{T}_{\epsilon}:L^{2}_{\text{per}}(\mathcal{S}%
_{\epsilon}) \to L^{2}_{\text{per}}(\mathcal{S}_{\epsilon})$ is a compact
operator.

\subsection{Proof of Theorem \protect\ref{T:unstable}}

This subsection is devoted to the proof of Theorem \ref{T:unstable}
pertaining to the linear instability of small-amplitude periodic traveling
waves over an unstable shear flow.

For $\epsilon \geq 0$ sufficiently small and $|\lambda - \lambda_{0}| \leq (%
{\rm Re}\lambda_{0})/2$, let us denote 
\begin{equation}
\mathcal{F}(\lambda,\epsilon)=\mathcal{T}\left( \lambda,\epsilon\right) 
\mathcal{C}\left( \lambda,\epsilon\right) (P_{\epsilon y}\mathcal{C}\left(
\lambda,\epsilon\right) +\Omega \, id):L^{2}_{\text{per}}(\mathcal{S}%
_{\epsilon})\rightarrow L^{2}_{\text{per}}(\mathcal{S}_{\epsilon}),
\label{E:operator-Fc}
\end{equation}
where $\mathcal{C}(\lambda,\epsilon)=\mathcal{C}^{\lambda}$ and $\mathcal{T}%
(\lambda,\epsilon)=\mathcal{T}_{\epsilon}$ are defined in (\ref%
{defn-c-lambda}) and (\ref{defn-T-ebs}), respectively. In the light of the
discussion in the previous subsection, it suffices to show that for each
small parameter $\epsilon \geq 0$ there exists $\lambda(\epsilon)$ with $%
|\lambda(\epsilon) - \lambda_{0}| \leq ({\rm Re}\lambda_{0})/2$ such that
the operator $id +\mathcal{F}(\lambda (\epsilon),\epsilon)$ has a nontrivial
null space. The result of Theorem \ref{class-k} states that there exists $%
\lambda_{0}=-i\alpha c_{\alpha}$ with $\mathrm{Im}\,c_{\alpha}>0$ such that $%
id+\mathcal{F}(\lambda_{0},0)$ has a nontrivial null space. The proof for $%
\epsilon>0$ uses a perturbation argument, based on the following lemma due
to Steinberg \cite{steinberg}.

\begin{lemma}
\label{L:Steinberg} Let $F(\lambda,\epsilon)$ be a family of compact
operators on a Banach space, analytic in $\lambda$ in a region $\Lambda$ in
the complex plane and jointly continuous in $(\lambda,\epsilon)$ for each $%
(\lambda, \epsilon) \in \Lambda \times\mathbb{R}$. Suppose that $%
id-F(\lambda_{0},\epsilon)$ is invertible for some $\lambda_{0}\in \Lambda$
and all $\epsilon \in\mathbb{R}$. Then, $R(\lambda,
\epsilon)=(id-F(\lambda,\epsilon))^{-1}$ is meromorphic in $\Lambda$ for
each $\epsilon \in\mathbb{R}$ and jointly continuous at $(z_{0},
\epsilon_{0})$ if $\lambda_{0}$ is not a pole of $R(\lambda ,\epsilon)$; its
poles depend continuously on $\epsilon$ and can appear or disappear only at
the boundary of $\Lambda$.
\end{lemma}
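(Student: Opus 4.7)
The plan is to establish the three assertions in turn: meromorphy in $\lambda$ for each fixed $\epsilon$, joint continuity at non-pole points, and continuous dependence of the poles. The whole argument is a parameterized version of the classical analytic Fredholm theorem.

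First, fix $\epsilon \in \mathbb{R}$. Then $\lambda \mapsto F(\lambda, \epsilon)$ is an analytic family of compact operators on $\Lambda$, and $id - F(\lambda_0, \epsilon)$ is invertible by hypothesis. The analytic Fredholm theorem (as in Reed--Simon, Theorem VI.14) asserts that either the resolvent $(id - F(\lambda, \epsilon))^{-1}$ fails to exist anywhere on $\Lambda$, or it exists on $\Lambda \setminus \Sigma_\epsilon$ for some discrete subset $\Sigma_\epsilon \subset \Lambda$ and is meromorphic on $\Lambda$ with finite-rank principal parts at each pole. Invertibility at $\lambda_0$ rules out the first alternative, so $R(\cdot, \epsilon)$ is meromorphic on $\Lambda$ for each $\epsilon$.

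Next, I would prove joint continuity at a point $(\lambda_*, \epsilon_*)$ with $\lambda_* \notin \Sigma_{\epsilon_*}$. Set $R_* = (id - F(\lambda_*, \epsilon_*))^{-1}$ and write the factorization
\begin{equation*}
id - F(\lambda, \epsilon) = \bigl(id - F(\lambda_*, \epsilon_*)\bigr) \bigl(id - R_*[F(\lambda, \epsilon) - F(\lambda_*, \epsilon_*)]\bigr).
\end{equation*}
The hypothesis of joint norm-continuity of $F$ forces the bracketed operator to differ from $id$ by a term of small norm whenever $(\lambda, \epsilon)$ is close to $(\lambda_*, \epsilon_*)$, and a Neumann series gives uniform invertibility with $\|R(\lambda, \epsilon)\| \leq 2\|R_*\|$ on a neighborhood. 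The resolvent identity
\begin{equation*}
R(\lambda, \epsilon) - R_* = R(\lambda, \epsilon)[F(\lambda, \epsilon) - F(\lambda_*, \epsilon_*)] R_*
\end{equation*}
then yields norm-continuity of $R$ at $(\lambda_*, \epsilon_*)$.

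For the continuous dependence of poles, fix $\epsilon_0$ and $\lambda_0 \in \Sigma_{\epsilon_0}$. For every $r > 0$ small enough that $\overline{B_r(\lambda_0)} \subset \Lambda$ and $\overline{B_r(\lambda_0)} \cap \Sigma_{\epsilon_0} = \{\lambda_0\}$, the previous step applied uniformly on the compact circle $\partial B_r(\lambda_0)$ shows that $id - F(\lambda, \epsilon)$ is invertible for all $\lambda \in \partial B_r(\lambda_0)$ and all $\epsilon$ in some neighborhood $I_r$ of $\epsilon_0$. Define the Riesz projection
\begin{equation*}
P_r(\epsilon) = \frac{1}{2\pi i} \oint_{\partial B_r(\lambda_0)} R(\lambda, \epsilon) \, d\lambda,
\end{equation*}
which is norm-continuous in $\epsilon \in I_r$ by the preceding paragraph, is finite-rank (the range is a sum of generalized eigenspaces obtained from the finite-rank principal parts of paragraph one, which are finite-dimensional by compactness of $F$), and satisfies $\dim \operatorname{range} P_r(\epsilon_0) \geq 1$. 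Since nearby projections in operator norm have equal rank, $\dim \operatorname{range} P_r(\epsilon) \geq 1$ for all $\epsilon \in I_r$, so $R(\cdot, \epsilon)$ has at least one pole inside $B_r(\lambda_0)$. Letting $r \downarrow 0$ yields both the continuous motion of the poles and the impossibility of a pole appearing or vanishing in the interior of $\Lambda$. The main subtle point is the rank-constancy of the Riesz projection under the perturbation: this is where one uses compactness of $F$ crucially, since on a Banach space only continuity in norm (not merely strong continuity) of a sequence of finite-rank projections preserves the rank, and it is compactness of $F$ together with the joint norm-continuity hypothesis that propagates norm-continuity from $F$ to $R$ and thence to $P_r$.
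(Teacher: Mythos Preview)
Your argument is correct and follows the standard route to the parameterized analytic Fredholm theorem. Note, however, that the paper does not supply a proof of this lemma at all: it is quoted verbatim as a result due to Steinberg \cite{steinberg} and used as a black box in the proof of Theorem~\ref{T:unstable}. What you have written is therefore not a comparison target but rather a self-contained reconstruction of Steinberg's theorem, which the paper takes for granted.

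One small point worth tightening in your third paragraph: you invoke rank constancy of $P_r(\epsilon)$ to conclude $\dim\operatorname{range}P_r(\epsilon)\ge 1$, but rank constancy gives equality, and it is precisely this equality (applied in the other direction, when $P_r(\epsilon_0)=0$) that rules out poles \emph{appearing} in the interior. You assert both conclusions in the final sentence but only spell out the persistence direction; the disappearance/appearance symmetry deserves one explicit line.
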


In order to apply Lemma \ref{L:Steinberg} to our situation, we need to
transform the operator (\ref{E:operator-Fc}) to one on a function space
independent of the parameter $\epsilon$. This calls for the employment of
the action-angle mapping $\mathcal{A}_{\epsilon}$, as is done in the proof
of Lemma \ref{L:solv0}: 
\begin{equation*}
\mathcal{A}_{\epsilon}:\mathcal{D}_{\epsilon}\rightarrow D \quad \text{and}%
\quad \mathcal{A}_{\epsilon}(x,y)=(\theta,I),
\end{equation*}
where the action-angle variables $(\theta,I)$ are defined in (\ref{E:aa})
and 
\begin{equation*}
D=\{(\theta,I):0<\theta<2\pi/\alpha\,,\,0<I<h_{0}\}.
\end{equation*}
Note that $\mathcal{A}_{\epsilon}$ maps $\mathcal{S}_{\epsilon}$ bijectively
to $\{(\theta,h_{0}):0<\theta<2\pi/\alpha\}$. The latter may be identified
with $(2\pi/\alpha,h_{0})$. This naturally induces an homeomorphism%
\begin{equation*}
\mathcal{B}_{\epsilon}:L^{2}_{\text{per}}(\mathcal{S}_{\epsilon})\rightarrow
L_{\text{per}}^{2}([0,2\pi/\alpha])
\end{equation*}
by 
\begin{equation*}
(\mathcal{B}_{\epsilon}f)(\theta)=f(\mathcal{A}_{\epsilon}^{-1}\left(
\theta,h_{0}\right) ).
\end{equation*}
Let us denote the following operators from $L_{\text{per}}^{2}\left(
[0,2\pi/\alpha]\right)$ to itself: 
\begin{equation*}
\mathcal{\tilde{T}}\left( \lambda,\epsilon\right) =\mathcal{B}_{\epsilon }%
\mathcal{T}\left( \left( \lambda,\epsilon\right) \right) (\mathcal{B}%
_{\epsilon})^{-1},\newline
\end{equation*}%
\begin{equation*}
\mathcal{\tilde{C}}\left( \lambda,\epsilon\right) =\mathcal{B}_{\epsilon }%
\mathcal{C}\left( \lambda,\epsilon\right) (\mathcal{B}_{\epsilon})^{-1},
\end{equation*}%
\begin{equation*}
\mathcal{\tilde{P}}_{\epsilon}=\mathcal{B}_{\epsilon}P_{\epsilon y}(\mathcal{%
B}_{\epsilon})^{-1},
\end{equation*}
and 
\begin{equation}
\mathcal{\tilde{F}}(\lambda,\epsilon)=\mathcal{B}_{\epsilon}\mathcal{F}%
(\lambda,\epsilon)(\mathcal{B}_{\epsilon})^{-1}=\mathcal{\tilde{T}}\left(
\lambda,\epsilon\right) \mathcal{\tilde{C}}( \lambda,\epsilon) (\mathcal{%
\tilde{P}}_{\epsilon}\mathcal{\tilde{C}}\left( \lambda ,\epsilon\right)
+\Omega id).  \label{defn-cal-F}
\end{equation}

Since $\mathcal{T}(\lambda ,\epsilon )$ is compact and $\mathcal{T}(\lambda
,\epsilon )$ and $\mathcal{C}(\lambda ,\epsilon )$ are analytic in $\lambda $
with $|\lambda -\lambda _{0}|\leq ({\rm Re}\lambda _{0})/2$, the operator $%
\mathcal{F}(\lambda ,\epsilon )$ is compact and analytic in $\lambda $.
Subsequently, $\mathcal{\tilde{F}}(\lambda ,\epsilon )$ is compact and
analytic in $\lambda $. Clearly, $P_{\epsilon y}(x)$, $\mathcal{C}(\lambda
,\epsilon )$ and $\mathcal{B}_{\epsilon }$ are continuous in $\epsilon $,
and in turn, $\mathcal{\tilde{C}}(\lambda ,\epsilon )$ and $\mathcal{\tilde{P%
}}_{\epsilon }$ are continuous in $\epsilon $. The key technical lemma is to
show the continuity of $\mathcal{\tilde{T}}(\lambda ,\epsilon )$ in $%
\epsilon $ and thus obtain the the continuity of $\mathcal{\tilde{F}}%
(\lambda ,\epsilon )$ in $\epsilon $.

\begin{lemma}
\label{L:cont} For $\epsilon \geq 0$ sufficiently small and $|\lambda -
\lambda_{0}| \leq ({\rm Re}\lambda_{0})/2$, the operator $\mathcal{\tilde{T}%
}( \lambda,\epsilon) $ satisfies the estimate 
\begin{equation}
\Vert\mathcal{\tilde{T}}( \lambda,\epsilon_{1}) -\mathcal{\tilde {T}}(
\lambda,\epsilon_{2}) \Vert_{L_{\text{per}}^{2}([0,2\pi/\alpha])\rightarrow
L_{\text{per}}^{2}([0,2\pi/\alpha])}\leq C|\epsilon_{1}-\epsilon_{2}|,
\label{E:cont}
\end{equation}
where $C>0$ is independent of $\lambda$ and $\epsilon$.
\end{lemma}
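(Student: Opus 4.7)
The plan is to pull back the elliptic problem (\ref{E:psib}) to the fixed domain $D=\{(\theta,I):0<\theta<2\pi/\alpha,\,0<I<h_{0}\}$ via the action-angle mapping $\mathcal{A}_{\epsilon}$ and exploit the smoothness of $\mathcal{A}_{\epsilon}$ in $\epsilon$ together with the elliptic estimates of Corollary \ref{lemma-solv-f}. Theorem \ref{T:smallE} gives $\psi_{\epsilon}(x,y)=(U(y)-c(\alpha))y+O(\epsilon)$ in $C^{3+\beta}$ and $\eta_{\epsilon}(x)=h_{\epsilon}+O(\epsilon)$; from the defining formulas (\ref{E:aa}) it follows that the action-angle map and its inverse, as well as the frequency $\upsilon_{\epsilon}(I)$ and the Jacobian of the boundary parametrization $\theta\mapsto\mathcal{A}_{\epsilon}^{-1}(\theta,h_{0})$, all depend Lipschitz-continuously on $\epsilon$ in $C^{2+\beta}$.

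First I would set $\tilde{\psi}(\theta,I)=\psi(\mathcal{A}_{\epsilon}^{-1}(\theta,I))$ and rewrite (\ref{E:psib}) on the fixed domain $D$. The Laplacian transforms to a uniformly elliptic second-order operator $L_{\epsilon}$ on $D$ whose coefficients inherit the Lipschitz dependence on $\epsilon$. The crucial simplification is that the characteristic equation (\ref{E:char}) in action-angle coordinates becomes $\dot\theta=-\upsilon_{\epsilon}(I),\ \dot I=0$, so that $\mathcal{A}_{\epsilon}(X_{\epsilon}(s),Y_{\epsilon}(s))=(\theta-\upsilon_{\epsilon}(I)s,\,I)$ and the particle-trajectory integral reduces to
\begin{equation*}
\int_{-\infty}^{0}\lambda e^{\lambda s}\tilde{\psi}(\theta-\upsilon_{\epsilon}(I)s,\,I)\,ds,
\end{equation*}
which depends Lipschitz-continuously on $\epsilon$ through $\upsilon_{\epsilon}$. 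The Neumann condition (\ref{E:psib-b}) pulls back to a modified Neumann-type condition on $\{I=h_{0}\}$ with boundary data $\tilde{b}(\theta)\,J_{\epsilon}(\theta)$ where $J_{\epsilon}$ is a Jacobian factor that is Lipschitz in $\epsilon$; the bottom condition becomes $\tilde\psi(\theta,0)=0$.

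Next, for a fixed $\tilde{b}\in L^{2}_{\text{per}}([0,2\pi/\alpha])$ and two parameters $\epsilon_{1},\epsilon_{2}$, let $\tilde\psi^{(i)}$ denote the corresponding pulled-back solutions. The difference $w=\tilde\psi^{(1)}-\tilde\psi^{(2)}$ satisfies an elliptic boundary value problem on $D$ whose source term and boundary data are commutator expressions involving $(L_{\epsilon_{1}}-L_{\epsilon_{2}})\tilde\psi^{(2)}$, $(\upsilon_{\epsilon_{1}}-\upsilon_{\epsilon_{2}})\partial_{\theta}\tilde\psi^{(2)}$, and $(J_{\epsilon_{1}}-J_{\epsilon_{2}})\tilde{b}$. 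Using Corollary \ref{lemma-solv-f} transported to $D$, the $H^{2}(D)$ norm of $\tilde\psi^{(2)}$ is bounded by $C\|\tilde b\|_{L^{2}}$ uniformly in $\epsilon$, so the right-hand side and boundary data for $w$ are bounded in the appropriate norms by $C|\epsilon_{1}-\epsilon_{2}|\,\|\tilde b\|_{L^{2}}$. Applying the same elliptic estimate to $w$ and the trace theorem on $\{I=h_{0}\}$ yields
\begin{equation*}
\|\tilde{\mathcal{T}}(\lambda,\epsilon_{1})\tilde b-\tilde{\mathcal{T}}(\lambda,\epsilon_{2})\tilde b\|_{L^{2}_{\text{per}}([0,2\pi/\alpha])}\leq C|\epsilon_{1}-\epsilon_{2}|\,\|\tilde b\|_{L^{2}},
\end{equation*}
which is (\ref{E:cont}).

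The main obstacle will be verifying uniformly in $\epsilon$ that the perturbed operator $L_{\epsilon}$ on $D$ inherits the hypotheses needed for Corollary \ref{lemma-solv-f}: namely, that the perturbed homogeneous problem has only the trivial solution for $\epsilon$ small and $|\lambda-\lambda_{0}|\leq({\rm Re}\,\lambda_{0})/2$, and that the $H^{2}$-estimate holds with a constant independent of $\epsilon$. The former follows by the same contradiction-and-weak-limit argument used in Lemma \ref{L:solv0} (any counterexample sequence would converge, after pull-back, to a nontrivial solution of the $\epsilon=0$ problem on $D$, contradicting Lemma \ref{L:solv0}); the latter follows from uniform ellipticity of $L_{\epsilon}$ together with uniform bounds on its coefficients, which are direct consequences of the $C^{3+\beta}$ expansion in Theorem \ref{T:smallE}. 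Once these uniform bounds are in hand, the computation above is routine.
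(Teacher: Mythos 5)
Your overall strategy is the same as the paper's: pull both problems back to the fixed domain $D$ by the action-angle map, use that the trajectory becomes $(\theta+\upsilon_{\epsilon}(I)s,I)$ so the nonlocal term is an explicit multiplier in $\theta$-Fourier modes, write the equation for the difference of the two pulled-back solutions with coefficient differences of size $O(|\epsilon_{1}-\epsilon_{2}|)$, and close with the uniform solvability of Corollary \ref{lemma-solv-f} plus the trace theorem. However, there is a genuine gap at the central estimate. You claim that Corollary \ref{lemma-solv-f}, transported to $D$, gives $\Vert\tilde\psi^{(2)}\Vert_{H^{2}(D)}\leq C\Vert\tilde b\Vert_{L^{2}}$ uniformly in $\epsilon$, and you use this to put the commutator term $(L_{\epsilon_{1}}-L_{\epsilon_{2}})\tilde\psi^{(2)}$ in $L^{2}$. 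This is not available: for inhomogeneous Neumann data $b$ that is merely $L^{2}$ on the boundary, Lemma \ref{L:solv} only provides the $H^{1}$ bound $\Vert\psi_{b}\Vert_{H^{1}}\leq C\Vert b\Vert_{L^{2}}$ (the $H^{2}$ estimate in Corollary \ref{lemma-solv-f} requires an $L^{2}$ interior source together with \emph{zero} Neumann data). Since $(L_{\epsilon_{1}}-L_{\epsilon_{2}})\tilde\psi^{(2)}$ contains second derivatives of $\tilde\psi^{(2)}$, your argument cannot bound the source term for $w$ in $L^{2}$, and the subsequent application of the elliptic estimate collapses.

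The paper gets around exactly this obstruction by measuring that term in the dual space $H^{\ast}(D)$: one pairs $(\Delta_{1}-\Delta_{2})\psi_{2}$ with a test function $\phi$, integrates by parts to move one derivative onto $\phi$ and onto the $O(|\epsilon_{1}-\epsilon_{2}|)$ coefficient differences, and the boundary term produced on $\{I=h_{0}\}$ is controlled precisely because $\partial_{I}\psi_{2}(\theta,h_{0})=b(\theta)$ is known $L^{2}$ data; only the $H^{1}$ bound on $\psi_{2}$ is needed. Corollary \ref{lemma-solv-f} is then applied with $f\in H^{\ast}$, yielding an $H^{1}$ bound on the difference, which suffices after the trace theorem. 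Note also that justifying this integration by parts requires an auxiliary regularity/density step (first take $b$ smooth, bootstrap $\psi_{2}$ to $C^{2}(\bar D)$ using the $C^{3+\beta}$ regularity of the steady wave and the $C^{2}$ regularity of $\mathcal{A}_{\epsilon}$, then approximate general $b\in L^{2}$), which your proposal also omits. The remaining ingredients of your plan — Lipschitz dependence of $\mathcal{A}_{\epsilon}$, $\upsilon_{\epsilon}$ and the coefficients on $\epsilon$, the multiplier bound $|\lambda+il\upsilon_{\epsilon}|^{-1}\leq 2/{\rm Re}\,\lambda_{0}$ for the terms not involving second derivatives, and the uniform-in-$\epsilon$ invertibility from Lemma \ref{L:solv0} — are sound and coincide with the paper's treatment.
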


\begin{proof}
For a given $b\in L_{\text{per}}^{2}([0,2\pi/\alpha])$, let us denote $%
b_{\epsilon }=\mathcal{B}_{\epsilon}^{-1}b\in L^{2}(\mathcal{S}_{\epsilon}) $%
. By definition 
\begin{equation*}
\mathcal{\tilde{T}}\left( \lambda,\epsilon\right) b=\mathcal{B}_{\epsilon }%
\mathcal{T}_{\epsilon}b_{\epsilon}=\mathcal{B}_{\epsilon}(\psi_{b,\epsilon
}|_{\mathcal{S}_{\epsilon}}),
\end{equation*}
where $\psi_{b,\epsilon}\in H^{1}\left( \mathcal{D}_{\epsilon}\right) $ is
the unique weak solution of 
\begin{subequations}
\label{eqn-phi-b-ebs}
\begin{gather}
\Delta\psi_{b,\epsilon}+\gamma^{\prime}(\psi_{\epsilon})\psi_{b,\epsilon
}-\gamma^{\prime}(\psi_{\epsilon})\int_{-\infty}^{0}\lambda e^{\lambda
s}\psi_{b,\epsilon}(X_{\epsilon}(s),Y_{\epsilon}(s))ds=0\quad\text{in }%
\mathcal{D}_{\epsilon}; \\
(\psi_{b,\epsilon})_{n}=b_{\epsilon}\quad\text{on }\,\mathcal{S}_{\epsilon};
\\
\psi_{b,\epsilon}(x,0)=0.
\end{gather}

Our goal is to estimate the $L^{2}$-operator norm of $\mathcal{\tilde{T}}%
\left( \lambda ,\epsilon _{1}\right) -\mathcal{\tilde{T}}\left( \lambda
,\epsilon _{2}\right) $ in terms of $|\epsilon _{1}-\epsilon _{2}|$. Since
the domain of the boundary value problem (\ref{eqn-phi-b-ebs}) depends on $%
\epsilon $, we use the action-angle mapping $\mathcal{A}_{\epsilon _{j}}$ ($%
j=1,2$) to transform functions and the Laplacian operator in $\mathcal{D}%
_{\epsilon _{j}}$ ($j=1,2$) to those in the fixed domain $D$. To simplify
notations, we use $\mathcal{A}_{\epsilon _{j}}$ ($j=1,2$) to denote the
induced transformations for functions and operators. Let $\psi _{j}=\mathcal{%
A}_{\epsilon _{j}}(\psi _{b,\epsilon _{j}})$, which are $H^{1}(D)$%
-functions, and let 
\end{subequations}
\begin{equation*}
\Delta _{j}=\mathcal{A}_{\epsilon _{j}}(\Delta ),\qquad \gamma _{j}(I)=%
\mathcal{A}_{\epsilon _{j}}(\gamma ^{\prime }(\psi _{\epsilon _{j}})),
\end{equation*}%
where $j=1,2$. By definition, $\mathcal{A}_{\epsilon _{j}}(b_{\epsilon
_{j}})=\mathcal{B}_{\epsilon _{j}}(b_{\epsilon _{j}})=b$. Note that the
characteristic equation (\ref{E:char}) in the action-angle variables $%
(\theta ,I)$ becomes 
\begin{equation*}
\begin{cases}
\dot{\theta}=\upsilon _{j}(I) \\ 
\dot{I}=0%
\end{cases}%
\end{equation*}%
for $j=1,2$, where $\upsilon _{j}(I)=\upsilon _{\epsilon _{j}}(I)$. That
means, the trajectory $(X_{\epsilon _{j}}(s),Y_{\epsilon _{j}}(s))$ in the
phase space transforms under the mapping $\mathcal{A}_{\epsilon _{j}}$ into $%
(\theta +\upsilon _{j}(I)s,I)$. Since $\psi _{j}$ $(j=1,2)$ are $2\pi
/\alpha $-periodic in $\theta $, we have the Fourier expansions 
\begin{equation*}
\psi _{j}(\theta ,I)=\sum_{l}e^{il\theta }\psi _{j,l}(I),\ j=1,2.
\end{equation*}%
Under the action-angle mapping $\mathcal{A}_{\epsilon _{j}}\left(
j=1,2\right) $ the left side of (\ref{eqn-phi-b-ebs}) becomes 
\begin{align}
& \mathcal{A}_{\epsilon _{j}}\left( \gamma ^{\prime }(\psi _{\epsilon })\psi
_{b,\epsilon }-\gamma ^{\prime }(\psi _{\epsilon })\int_{-\infty
}^{0}\lambda e^{\lambda s}\psi _{b,\epsilon }(X_{\epsilon }(s),Y_{\epsilon
}(s))ds\right)  \notag \\
& =\gamma _{j}(I)\left( \sum_{l}e^{il\theta }\psi _{j,l}(I)-\int_{-\infty
}^{0}\lambda e^{\lambda s}\sum_{l}e^{il(\theta +\upsilon _{j}(I)s)}\psi
_{j,l}(I)ds\right)  \label{eqn-Klb-A} \\
& =\gamma _{j}(I)\sum_{l}\frac{il\upsilon _{j}(I)}{\lambda +il\upsilon
_{j}(I)}\psi _{j,l}(I)e^{il\theta },  \notag
\end{align}%
and thus the system (\ref{eqn-phi-b-ebs}) becomes 
\begin{gather*}
\Delta _{j}\psi _{j}+\gamma _{j}(I)\sum_{l}\frac{il\upsilon _{j}(I)}{\lambda
+il\upsilon _{j}(I)}\psi _{j,l}(I)e^{il\theta }=0\quad \text{in }D, \\
\partial _{I}\psi _{j}(\theta ,h_{0})=b(\theta ), \\
\psi _{j}(\theta ,0)=0.
\end{gather*}%
Accordingly, the difference $\psi _{1}-\psi _{2}$ is a weak solution of the
partial differential equation 
\begin{equation}
\begin{split}
\Delta _{1}(\psi _{1}& -\psi _{2})+(\Delta _{1}-\Delta _{2})\psi _{2}+\gamma
_{1}\sum_{l}\frac{il\upsilon _{1}}{\lambda +il\upsilon _{1}}(\psi
_{1,l}(I)-\psi _{2,l}(I))e^{il\theta } \\
& +\gamma _{1}\sum_{l}\left( \frac{il\upsilon _{1}}{\lambda +il\upsilon _{1}}%
-\frac{il\upsilon _{2}}{\lambda +il\upsilon _{2}}\right) \psi
_{2,l}(I)e^{il\theta }+(\gamma _{1}-\gamma _{2})\sum_{l}\frac{il\upsilon _{2}%
}{\lambda +il\upsilon _{2}}\psi _{2,l}(I)e^{il\theta }=0
\end{split}
\label{E:psi-difference}
\end{equation}%
with the boundary conditions 
\begin{subequations}
\begin{gather}
\partial _{I}(\psi _{1}-\psi _{2})(\theta ,h_{0})=0,  \label{b1-phi-12} \\
(\psi _{1}-\psi _{2})(\theta ,0)=0.  \label{b2-phi12}
\end{gather}%
Let us write (\ref{E:psi-difference}) as 
\end{subequations}
\begin{equation}
\Delta _{1}(\psi _{1}-\psi _{2})+\gamma _{1}\sum_{l}\frac{il\upsilon _{1}(I)%
}{\lambda -ik\upsilon _{1}(I)}(\psi _{1,l}(I)-\psi _{2,l}(I))e^{il\theta }=f,
\label{eqn-phi-1-2}
\end{equation}%
where $f=f_{1}+f_{2}+f_{3}$ with 
\begin{align*}
f_{1}& =-(\Delta _{1}-\Delta _{2})\psi _{2}, \\
f_{2}& =-\gamma _{1}\sum_{l}\frac{i\lambda l(\upsilon _{1}-\upsilon _{2})}{%
(\lambda +il\upsilon _{1})(\lambda +il\upsilon _{2})}\psi
_{2,l}(I)e^{il\theta }, \\
f_{3}& =-(\gamma _{1}-\gamma _{2})\sum_{l}\frac{il\upsilon _{2}}{\lambda
+il\upsilon _{2}}\psi _{2,l}(I)e^{il\theta }.
\end{align*}%
We estimate $f_{1},f_{2},f_{3}$ separately. To simplify notations, $C>0$ in
the estimates below denotes a generic constant independent of $\epsilon $
and $\lambda $.

First, we claim that $f_{1}\in H^{\ast }(D)$ with the estimates 
\begin{equation}
\left\Vert f_{1}\right\Vert _{H^{\ast }\left( D_{0}\right) }\leq C|\epsilon
_{1}-\epsilon _{2}|\Vert b\Vert _{L^{2}([0,2\pi /\alpha ])},
\label{estimate-f1}
\end{equation}%
where $H^{\ast }(D)$ is the dual space of 
\begin{equation*}
H(D)=\{\psi \in H^{1}(D):\psi (\theta ,I)=\psi (\theta +2\pi /\alpha ,I),\
\psi (\theta ,0)=0\}.
\end{equation*}%
Let us write 
\begin{equation*}
\Delta _{j}=a_{II}^{j}\partial _{II}+a_{I\theta }^{j}\partial _{I\theta
}+a_{\theta \theta }^{j}\partial _{\theta \theta }+b_{I}^{j}\partial
_{I}+b_{\theta }^{j}\partial _{\theta }\qquad \text{for}\quad j=1,2,
\end{equation*}%
and the difference of the coefficients as 
\begin{gather*}
\bar{a}_{II}=a_{II}^{1}-a_{II}^{2},\ \ \bar{a}_{I\theta }=a_{I\theta
}^{1}-a_{I\theta }^{2},\ \ \ \bar{a}_{\theta \theta }=a_{\theta \theta
}^{1}-a_{\theta \theta }^{2}, \\
\bar{b}_{I}=b_{I}^{1}-b_{I}^{2},\quad \bar{b}_{\theta }=b_{\theta
}^{1}-b_{\theta }^{2}.
\end{gather*}%
Then formally, for any $\phi \in H(D)\cap C^{2}(\bar{D})$ it follows that%
\begin{eqnarray}
&&\int_{D_{0}}f_{1}\phi \ dId\theta  \label{comp-forma} \\
&=&\int_{D_{0}}-\phi \left( \bar{a}_{II}\partial _{II}+\bar{a}_{I\theta
}\partial _{I\theta }+\bar{a}_{\theta \theta }\partial _{\theta \theta }+%
\bar{b}_{\theta }\partial _{\theta }+\bar{b}_{I}\partial _{I}\right) \psi
_{2}dId\theta  \notag \\
&=&\int_{D_{0}}\left[ \partial _{I}\psi _{2}\partial _{I}\left( \bar{a}%
_{II}\phi \right) +\partial _{I}\psi _{2}\partial _{\theta }\left( \bar{a}%
_{I\theta }\phi \right) +\partial _{\theta }\psi _{2}\partial _{\theta
}\left( \bar{a}_{\theta \theta }\phi \right) \right] dId\theta  \notag \\
&&-\int_{D_{0}}\phi \left( \bar{b}_{I}\partial _{I}\psi _{2}+\bar{b}_{\theta
}\partial _{\theta }\psi _{2}\right) dId\theta -\int_{\left\{
I=h_{0}\right\} }\bar{a}_{II}\phi b\left( \theta \right) d\theta ,  \notag
\end{eqnarray}%
This uses that $\psi _{2}$ and $\phi $ are periodic in the $\theta $%
-variable and that 
\begin{equation*}
\partial _{I}\psi _{2}(\theta ,h_{0})=b(\theta ),\qquad \phi (\theta ,0)=0.
\end{equation*}%
Note that the elliptic estimate (\ref{E:elliptic}) and the equivalence of
norms under the transformation $\mathcal{A}_{\epsilon _{2}}^{-1}$ assert
that 
\begin{align*}
\left\Vert \psi _{2}\right\Vert _{H^{1}(D)}& \leq C\left\Vert \mathcal{A}%
_{\epsilon _{2}}^{-1}\psi _{2}\right\Vert _{H^{1}(\mathcal{D}_{\epsilon
_{2}})}=C\left\Vert \psi _{b,\epsilon _{2}}\right\Vert _{H^{1}(\mathcal{D}%
_{\epsilon _{2}})} \\
& \leq C\left\Vert b_{\epsilon _{2}}\right\Vert _{L^{2}(\mathcal{S}%
_{\epsilon _{2}})}\leq C\left\Vert b\right\Vert _{L^{2}([0,2\pi /\alpha ])}.
\end{align*}%
Since 
\begin{equation*}
\left\vert \bar{a}_{II}\right\vert _{C^{1}}+\left\vert \bar{a}_{I\theta
}\right\vert _{C^{1}}+\ \left\vert \bar{a}_{\theta \theta }\right\vert
_{C^{1}}+\left\vert \bar{b}_{\theta }\right\vert _{C^{1}}+\left\vert \bar{b}%
_{I}\right\vert _{C^{1}}=O(|\epsilon _{1}-\epsilon _{2}|),
\end{equation*}%
by using the trace theorem it follow from (\ref{comp-forma}) the estimate 
\begin{align*}
\left\vert \int_{D}f_{1}\phi \ dId\theta \right\vert & \leq C\left\vert
\epsilon _{1}-\epsilon _{2}\right\vert \left( \left\Vert \psi
_{2}\right\Vert _{H^{1}(D)}+\left\Vert b\right\Vert _{L^{2}([0,2\pi /\alpha
])}\right) \left\Vert \phi \right\Vert _{H^{1}(D)} \\
& \leq C\left\vert \epsilon _{1}-\epsilon _{2}\right\vert \left\Vert
b\right\Vert _{L^{2}([0,2\pi /\alpha ])}\left\Vert \phi \right\Vert
_{H^{1}(D)}.
\end{align*}%
This proves the estimate (\ref{estimate-f1}).

We claim that if $b$ is smooth then the formal manipulations in (\ref%
{comp-forma}) are valid and $\psi _{2}\in C^{2}(\bar{D})$. Note that Theorem %
\ref{T:smallE} ensures that the steady state $(\eta _{\epsilon _{2}}(x),\psi
_{\epsilon _{2}}(x,y))$ is in $C^{3+\beta }$ class, where $\beta \in (0,1)$.
Since $b$ is smooth it follows that $b_{\epsilon _{2}}=\mathcal{B}_{\epsilon
_{2}}^{-1}b$ is at least in $H^{2}\left( \mathcal{S}_{\epsilon _{2}}\right) $%
. Then, the similar argument as in the regularity proof of Theorem \ref%
{T:unstable} below asserts that $\psi _{b,\epsilon _{2}}\in H^{7/2}(\mathcal{%
D}_{\epsilon _{2}})\subset C^{2}(\mathcal{\bar{D}}_{\epsilon _{2}})$. Since
the definition of the action-angle variables guarantees that the mapping $%
\mathcal{A}_{\epsilon _{2}}$ is at least of $C^{2}$, subsequently, $\psi
_{2}=\mathcal{A}_{\epsilon _{2}}\psi _{b,\epsilon _{2}}\in C^{2}(\bar{D})$.
This proves the claim. If $b\in L^{2}$, an approximation of $b$ by smooth
functions establishes (\ref{estimate-f1}).

Next, since 
\begin{equation*}
\left\vert \frac{1}{\lambda+il\upsilon_{j}}\right\vert =\frac{1}{(|{\rm Re}%
\,\lambda|^{2}+|\mathrm{Im}\,\lambda-l\upsilon_{j}|^{2})^{1/2}}\leq\frac{1}{|%
{\rm Re}\,\lambda|} \leq\frac{2}{{\rm Re}\,\lambda_{0}},
\end{equation*}
by the estimate (\ref{E:elliptic}) it follows that 
\begin{equation}
\Vert f_{2}\Vert_{L^{2}(D)}\leq C|\epsilon_{1}-\epsilon_{2}|\Vert\psi
_{2}\Vert_{L^{2}(D)}\leq C|\epsilon_{1}-\epsilon_{2}|\Vert b\Vert
_{L^{2}([0,2\pi/\alpha])}.  \label{E:II}
\end{equation}
Similarly,%
\begin{equation}
\left\Vert f_{3}\right\Vert _{L^{2}\left( D_{0}\right) }\leq C|\epsilon
_{1}-\epsilon_{2}|\Vert b\Vert_{L^{2}([0,2\pi/\alpha])}.  \label{E:III}
\end{equation}
Combining the estimates (\ref{estimate-f1}), (\ref{E:II}) and (\ref{E:III})
asserts that $f\in H^{\ast}(D)$ and 
\begin{equation*}
\left\Vert f\right\Vert _{H^{\ast}(D)}\leq C|\epsilon
_{1}-\epsilon_{2}|\Vert b\Vert_{L^{2}([0,2\pi/\alpha])}.
\end{equation*}

Let $\psi=\psi_{1}-\psi_{2}\in H^{1}(D)$ and $\phi=\mathcal{A}%
_{\epsilon_{1}}^{-1}\psi\in H^{1}\left( \mathcal{D}_{\epsilon_{1}}\right) $.
It remains to transform back to the physical space of the boundary value
problem for $\psi$ and to compute the operator norm of $\mathcal{\tilde{T}}%
\left( \lambda,\epsilon_{1}\right) -\mathcal{\tilde{T}}\left(
\lambda,\epsilon_{2}\right)$. Under the transformation $\mathcal{A}%
_{\epsilon_{1}}^{-1}$, the equations (\ref{eqn-phi-1-2}), (\ref{b1-phi-12}),
(\ref{b2-phi12}) become 
\begin{gather*}
\Delta\phi+\gamma^{\prime}(\psi_{\epsilon_{1}})\phi
-\gamma^{\prime}(\psi_{\epsilon_{1}})\int_{-\infty}^{0}\lambda e^{\lambda
s}\phi(X_{\epsilon_{1}}(s),Y_{\epsilon_{1}}(s))ds=\mathcal{A}%
_{\epsilon_{1}}^{-1}f\quad\text{in }\mathcal{D}_{\epsilon_{1}}; \\
(\phi^{\epsilon_{1}})_{n}=0\quad\text{on }\,\mathcal{S}_{\epsilon_{1}}; \\
\phi^{\epsilon_{1}}(x,0)=0,
\end{gather*}
Then, $\mathcal{A}_{\epsilon_{1}}^{-1}f\in H^{\ast}(\mathcal{D}%
_{\epsilon_{1}})$ and 
\begin{equation*}
\left\Vert \mathcal{A}_{\epsilon_{1}}^{-1}f\right\Vert _{H^{\ast}(\mathcal{D}%
_{\epsilon_{1}})}\leq C\left\Vert f\right\Vert _{H^{\ast}(D) }\leq
C|\epsilon_{1}-\epsilon_{2}|\Vert b\Vert_{L^{2}([0,2\pi/\alpha])}.
\end{equation*}
Corollary \ref{lemma-solv-f} thus applies to assert that 
\begin{align*}
\left\Vert \psi\right\Vert _{H^{1}(D)} & \leq\left\Vert \phi\right\Vert
_{H^{1}( \mathcal{D}_{\epsilon_{1}}) }\leq C\left\Vert \mathcal{A}%
_{\epsilon_{1}}^{-1}f\right\Vert _{H^{\ast}(\mathcal{D}_{\epsilon_{1}})} \\
& \leq C|\epsilon_{1}-\epsilon_{2}|\Vert b\Vert_{L^{2}([0,2\pi/\alpha])}.
\end{align*}
Finally, by the trace theorem it follows 
\begin{align*}
\Vert\mathcal{\tilde{T}}\left( \lambda,\epsilon_{1}\right) b-\mathcal{\tilde{%
T}}\left( \lambda,\epsilon_{2}\right) b\Vert _{L_{\text{per}%
}^{2}([0,2\pi/\alpha])}&=\left\Vert \left( \psi_{1}-\psi _{2}\right) \left(
\theta,h_{0}\right) \right\Vert _{L_{\text{per}}^{2}([0,2\pi/\alpha])} \\
& \leq C\left\Vert \psi\right\Vert _{H^{1}(D) }\leq
C|\epsilon_{1}-\epsilon_{2}|\Vert b\Vert_{L^{2}_{\text{per}%
}([0,2\pi/\alpha])}.
\end{align*}
This completes the proof.
\end{proof}

We are now in a position to prove our main theorem.

\begin{proof}[Proof of Theorem \protect\ref{T:unstable}]
For $|\lambda -\lambda _{0}|\leq ({\rm Re}\lambda _{0})/2$, where $\lambda
_{0}=-i\alpha c_{\alpha }$, and $\epsilon \geq 0$ small, consider the family
of operators $\mathcal{\tilde{F}}(\lambda ,\epsilon )$ on $L_{\text{per}%
}^{2}([0,2\pi /\alpha ])$, defined by (\ref{defn-cal-F}). The discussions
following (\ref{defn-cal-F}) and Lemma \ref{L:cont} assert that $\mathcal{%
\tilde{F}}(\lambda ,\epsilon )$ is compact, analytic in $\lambda $ and
continuous in $\epsilon $.

By our assumption, ${\rm Im}c_{\alpha }>0$ and $c_{\alpha }$ is an unstable
eigenvalue of the Rayleigh system (\ref{rayleigh})-(\ref{bc-rayleigh}) which
corresponds to $\epsilon =0$. In other words, $\lambda _{0}$ is a pole of $%
(id+\mathcal{F}(\lambda ,0))^{-1}$. Subsequently, it is a pole of $(id+%
\mathcal{\tilde{F}}(\lambda ,0))^{-1}$. Since $\lambda _{0}$ is an isolated
pole, we may choose $\delta >0$ small enough so that the operator $id+%
\mathcal{\tilde{F}}(\lambda ,0)$ is invertible on $|\lambda -\lambda
_{0}|=\delta $. By the continuity of $\mathcal{\tilde{F}}(\lambda ,\epsilon
) $ in $\epsilon $, the following estimate 
\begin{equation*}
\Vert \mathcal{\tilde{F}}(\lambda ,\epsilon )-\mathcal{\tilde{F}}(\lambda
,0)\Vert _{L_{\text{per}}^{2}([0,2\pi /\alpha ])\rightarrow L_{\text{per}%
}^{2}([0,2\pi /\alpha ])}\leq C\epsilon
\end{equation*}%
holds. Hence, $id+\mathcal{\tilde{F}}(\lambda ,\epsilon )$ is invertible on $%
|\lambda -\lambda _{0}|=\delta $ for $\epsilon \geq 0$ sufficiently small.
Then by Lemma \ref{L:Steinberg}, the poles of $(id+\mathcal{\tilde{F}}%
(\lambda ,\epsilon ))^{-1}$ are continuous in $\epsilon $ and can only
appear or disappear in the boundary of $\{\lambda :|\lambda -\lambda
_{0}<\delta \}$. Therefore, for each $\epsilon \geq 0$, there exists a pole $%
\lambda (\epsilon )$ of $\left( id+\mathcal{\tilde{F}}(\lambda ,\epsilon
)\right) ^{-1}$ in $|\lambda (\epsilon )-\lambda _{0}|<\delta $. Thus, $%
{\rm Re}\,\lambda (\epsilon )>0$ and there exists a nonzero function $%
\tilde{f}\in L_{\text{per}}^{2}([0,2\pi /\alpha ])$ such that 
\begin{equation}
(id+\mathcal{\tilde{F}}(\lambda ,\epsilon ))\tilde{f}=0.  \label{eqn-F^t-d}
\end{equation}

Below we construct an exponentially growing solution to the linearized
system (\ref{eqn-L-vor})-(\ref{eqn-L-bottom}). Define 
\begin{equation*}
f=(\mathcal{B}_{\epsilon })^{-1}\tilde{f}\in L^{2}(\mathcal{S}_{\epsilon }),
\end{equation*}%
then 
\begin{equation}
\left( I+\mathcal{F}(\lambda ,\epsilon )\right) f=0  \label{eqn-F-f}
\end{equation}%
by (\ref{eqn-F^t-d}) and the definition of $\mathcal{\tilde{F}}$. Let $\psi
\left( x,y\right) \in $ $H^{1}\left( \mathcal{D}_{\epsilon }\right) $ to be
the unique solution of (\ref{E:psib}) with $\lambda =\lambda (\epsilon )$
and 
\begin{equation}
\psi _{n}(x)=b=-\mathcal{C}^{\lambda }(P_{\epsilon y}(x)\mathcal{C}^{\lambda
}+\Omega I)f(x).  \label{eqn-b-f}
\end{equation}%
By (\ref{eqn-F-f}), (\ref{eqn-b-f}) and the definition of $\mathcal{F}$, we
have 
\begin{equation*}
f=\mathcal{T}_{\epsilon }b=\psi (x,\eta _{\epsilon }(x))
\end{equation*}%
and thus 
\begin{equation*}
\psi _{n}(x)=-\mathcal{C}^{\lambda }(P_{\epsilon y}(x)\mathcal{C}^{\lambda
}+\Omega I)\psi (x,\eta _{\epsilon }(x)).
\end{equation*}%
Define 
\begin{equation}
\eta (x)=\mathcal{C}^{\lambda }\left[ \psi (x,\eta _{\epsilon }(x))\right]
\in L^{2}(\mathcal{S}_{\epsilon }),  \label{eqn-W-eta}
\end{equation}%
and 
\begin{equation*}
P(x,\eta _{\epsilon }(x))=-P_{\epsilon y}(x)\eta (x),
\end{equation*}%
then (\ref{eqn-b-f}) becomes 
\begin{equation}
\psi _{n}(x)=-\mathcal{C}^{\lambda }\left( P(x,\eta _{\epsilon }(x))+\Omega
\psi (x,\eta _{\epsilon }(x))\right) .  \label{eqn-W-phi-n}
\end{equation}

Now we show that $\left[ e^{\lambda (\epsilon )t}\psi \left( x,y\right)
,e^{\lambda (\epsilon )t}\eta (x)\right] \ $satisfies the linearized system (%
\ref{eqn-L-vor})-(\ref{eqn-L-bottom}). The bottom boundary condition (\ref%
{eqn-L-bottom}) is satisfied since $\psi \left( x,0\right) =0.$ The equation
(\ref{eqn-L-P-eta}) is automatic. By Lemma \ref{lemma-c-lb} and equations (%
\ref{eqn-W-eta}), (\ref{eqn-W-phi-n}), $\eta \left( x\right) $ and $\psi
_{n}(x)$ satisfy the equations (\ref{g-eta}) and (\ref{g-phi-n}) weakly.
Equivalently, the equations (\ref{eqn-L-eta}) and (\ref{eqn-L-phi-P}) are
satisfied weakly. Since $\psi \left( x,y\right) $ satisfies the equation (%
\ref{E:psib}a), we have

\begin{equation}
\omega =-\Delta \psi =\gamma ^{\prime }(\psi _{\epsilon })\psi -\gamma
^{\prime }(\psi _{\epsilon })\int_{-\infty }^{0}\lambda e^{\lambda s}\psi
(X_{\epsilon }(s),Y_{\epsilon }(s))ds.  \label{eqn-I-omega}
\end{equation}%
As shown in \cite{lin3}, above equation implies that the vorticity $\omega $
satisfies the equation (\ref{E:vorticityG}) weakly. Equivalently, the
equation (\ref{eqn-L-vor}) is satisfied weakly. In summary, $\left[
e^{\lambda (\epsilon )t}\psi \left( x,y\right) ,e^{\lambda (\epsilon )t}\eta
(x)\right] $ is a weak solution of the linearized system (\ref{eqn-L-vor})-(%
\ref{eqn-L-bottom}).

Our last step of the proof is to get the regularity of the growing-mode $%
(e^{\lambda (\epsilon )t}\eta (x)$, $e^{\lambda (\epsilon )t}\psi (x,y))$
and thus show that it is a classical solution of (\ref{eqn-L-vor})-(\ref%
{eqn-L-bottom}). By (\ref{eqn-b-f}) and Lemma \ref{L:solv}, it follows that $%
\psi \in H^{1}\left( \mathcal{D}_{\epsilon }\right) $. We claim that $\psi
\in H^{2}\left( \mathcal{D}_{\epsilon }\right) $. Indeed, by the trace
theorem, $\psi \in H^{1}\left( \mathcal{D}_{\epsilon }\right) $ implies that 
$\psi (x,\eta _{\epsilon }(x))\in H^{\frac{1}{2}}\left( \mathcal{S}%
_{\epsilon }\right) $. Since the operator $\mathcal{C}^{\lambda }$ is
regularity preserving, by (\ref{eqn-W-phi-n}) $\psi _{n}(x)\in H^{\frac{1}{2}%
}\left( \mathcal{S}_{\epsilon }\right) $. This, together with the facts that 
$\omega =-\Delta \psi \in L^{2}\left( \mathcal{D}_{\epsilon }\right) $ and
that the steady state 
\begin{equation*}
\left( \eta _{\epsilon }(x),\psi _{\epsilon }\left( x,y\right) \right) \in
C^{3+\alpha },\alpha \in \left( 0,1\right)
\end{equation*}%
$\,$(see \cite{cost} or Theorem \ref{T:smallE})$,$ implies that $\psi \in
H^{2}\left( \mathcal{D}_{\epsilon }\right) $ by the regularity theory (\cite%
{ag-book}) of elliptic boundary problems. Then, by using the trace theorem
and (\ref{eqn-W-phi-n}) again, we get $\psi (x,\eta _{\epsilon }(x))$ and $%
\psi _{n}(x)\in H^{\frac{3}{2}}\left( \mathcal{S}_{\epsilon }\right) $.

In order to obtain the higher regularity for $\psi $, we need to show that $%
\omega \in H^{1}(\mathcal{D}_{\epsilon })$. The argument presented below is
a simpler version of that in \cite{lin4}. Taking the gradient of (\ref%
{eqn-I-omega}) yields that 
\begin{equation}
\begin{split}
\nabla \omega =& \nabla (\gamma ^{\prime }(\psi _{\epsilon }))\psi +\gamma
^{\prime }(\psi _{\epsilon })\nabla \psi -\nabla (\gamma ^{\prime }(\psi
_{\epsilon }))\int_{-\infty }^{0}\lambda e^{\lambda s}\psi (X_{\epsilon
}(s),Y_{\epsilon }(s))ds \\
& -\gamma ^{\prime }(\psi _{\epsilon })\int_{-\infty }^{0}\lambda e^{\lambda
s}\nabla \psi (X_{\epsilon }(s),Y_{\epsilon }(s))\frac{\partial (X_{\epsilon
}(s),Y_{\epsilon }(s))}{\partial (x,y)}ds.
\end{split}
\label{eqn-gradient-omega}
\end{equation}%
Note that the particle trajectory is written in the action-angle variables $%
(\theta ,I)=\mathcal{A}_{\epsilon }(x,y)$ as 
\begin{equation*}
\left( X_{\epsilon }(s;x,y),Y_{\epsilon }(s;x,y)\right) =\mathcal{A}%
_{\epsilon }^{-1}((\theta +\upsilon _{\epsilon }(I)s,I)).
\end{equation*}%
This relies on that the action-angle mapping $\mathcal{A}_{\epsilon }$ is
globally defined, as a consequence of the fact that the steady flow has no
stagnation. With the use of the above description of the trajectory the
estimate of the Jacobi matrix 
\begin{equation}
\left\vert \frac{\partial \left( X_{\epsilon }(s;x,y),Y_{\epsilon
}(s;x,y)\right) }{\partial \left( x,y\right) }\right\vert \leq
C_{1}\left\vert s\right\vert +C_{2}  \label{estimate-J-linear}
\end{equation}%
follows, where $C_{1},C_{2}>0$ are independent of $s$.

It is straightforward to see that by calculations as in proving (\ref%
{estimate-k-lb}), the $L^{2}$-norm of the first three terms of (\ref%
{eqn-gradient-omega}) is bounded by the $H^{1}$ norm of $\psi $. The last
term in (\ref{eqn-gradient-omega}) is treated as 
\begin{align*}
\Big\|\gamma ^{\prime }(\psi _{\epsilon })& \int_{-\infty }^{0}\lambda
e^{\lambda s}\nabla \psi (X_{\epsilon }(s),Y_{\epsilon }(s))\frac{\partial
(X_{\epsilon }(s),Y_{\epsilon }(s))}{\partial (x,y)}ds\Big\|_{L^{2}} \\
& \leq ||\gamma ^{\prime }(\psi _{\epsilon })||_{L^{\infty }}\int_{-\infty
}^{0}|\lambda |e^{{\rm Re}\lambda s}(C_{1}\left\vert s\right\vert
+C_{2})||\nabla \psi (X_{\epsilon }(s),Y_{\epsilon }(s))||_{L^{2}\left( 
\mathcal{D}_{\epsilon }\right) }ds \\
& \leq C\left\Vert \psi \right\Vert _{H^{1}(\mathcal{D}_{\epsilon })}.
\end{align*}%
This uses (\ref{estimate-J-linear}), ${\rm Re}\lambda \geq \delta >0$ and
the fact that the mapping $(x,y)\mapsto (X_{\epsilon }(s),Y_{\epsilon }(s))$
is measure-preserving. Therefore, 
\begin{equation*}
\Vert \nabla \omega \Vert _{L^{2}(\mathcal{D}_{\epsilon })}\leq C\left\Vert
\psi \right\Vert _{H^{1}(\mathcal{D}_{\epsilon })}.
\end{equation*}%
In turn, $\omega \in H^{1}(\mathcal{D}_{\epsilon })$. Since $\psi _{n}(x)\in
H^{3/2}(\mathcal{S}_{\epsilon })$, by the elliptic regularity theorem \cite%
{ag-book} it follows that that $\psi \in H^{3}\left( \mathcal{D}_{\epsilon
}\right) $. In view of the trace theorem this implies $\psi _{n}\in H^{5/2}(%
\mathcal{S}_{\epsilon })$.

We repeat the process again. Taking the gradient of (\ref{eqn-gradient-omega}%
) and using the linear stretching property (\ref{linear-streching}) of the
trajectory, it follows that $\omega \in H^{2}\left( \mathcal{D}_{\epsilon
}\right) $. The elliptic regularity applies to assert that $\psi \in H^{4}(%
\mathcal{D}_{\epsilon })\subset C^{2+\beta }(\mathcal{\bar{D}}_{\epsilon })$%
, where $\beta \in (0,1)$. By the trace theorem then it follows that $\psi
(x,\eta _{\epsilon }(x))\in H^{7/2}(\mathcal{S}_{\epsilon })$. On account of
(\ref{eqn-W-eta}) this implies that $\eta \in H^{7/2}\left( \mathcal{S}%
_{\epsilon }\right) \subset C^{2+\beta }([0,2\pi /\alpha ])$. Therefore, $%
(e^{\lambda (\epsilon )t}\eta (x),e^{\lambda (\epsilon )t}\psi (x,y))$ is a
classical solution of (\ref{eqn-L}). This completes the proof.
\end{proof}

\section{Instability of general shear flows}

Linear instability of free-surface shear flows is of independent interests.
This section extends our instability result in Theorem \ref{class-k} to a
more general class of shear flows. The following class of flows was
introduced in \cite{lin1} and \cite{lin3} in the rigid-wall setting.

\begin{definition}
\label{classF} A function $U\in C^{2}([0,h])$ is said to be in the class $%
\mathcal{F}$ if $U^{\prime \prime }$ takes the same sign at all points such
that $U(y)=c$, where $c$ is in the range of $U$ but not an inflection value
of $U$.
\end{definition}

Examples of the class-$\mathcal{F}$ flows include all monotone flows and
symmetric flows with a monotone half. Moreover, if $U^{\prime \prime
}(y)=f(U(y))k(y)$ for $f$ continuous and $k(y)>0$, then $U$ is in class $%
\mathcal{F}$. All flows in class $\mathcal{K}^{+}$ are in class $\mathcal{F}$%
.

The lemma below shows that for a flow in class $\mathcal{F}$ a neutral
limiting wave speed must be an inflection value. The main difference of the
proof from that in the class $\mathcal{K}^{+}$ case (Proposition \ref%
{prop-neutral-mode}) is the lack of an uniform $H^{2}$-bound for the
unstable mode sequence. 

\begin{lemma}
\label{lemma-inflection-F} For $U\in \mathcal{F}$, let $\{(\phi _{k},\alpha
_{k},c_{k})\}_{k=1}^{\infty }$ with ${\rm Im}c_{k}>0$ be a sequence of
unstable solutions satisfying (\ref{rayleigh})--(\ref{bc-rayleigh}). If $%
(\alpha _{k},c_{k})$ converges to $(\alpha _{s},c_{s})$ as $k\rightarrow
\infty $ with $\alpha _{s}>0$ and $c_{s}$ is in the range of $U$, then $%
c_{s} $ must be an inflection value of $U$.
\end{lemma}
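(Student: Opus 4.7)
The plan is to argue by contradiction, paralleling Proposition \ref{prop-neutral-mode} for class $\mathcal{K}^{+}$, with the main new difficulty being that, in the class-$\mathcal{F}$ setting, we no longer have at our disposal the uniform bound $\Vert \phi_k\Vert_{H^2}\le C$ furnished by Lemma \ref{lemma-h2-bound}. Suppose, for contradiction, that $c_s\in \mathrm{range}(U)$ is not an inflection value, and let $y_1<\cdots<y_m$ denote the points in $[0,h]$ where $U(y_j)=c_s$. By the defining property of $\mathcal{F}$, the values $U''(y_j)$ share a common sign; without loss of generality assume $U''(y_j)>0$ for all $j$, and choose $q<U_{\min}$ so that $U-q>0$ and the weight $U''(U-q)$ is strictly positive in a neighborhood of each $y_j$. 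Because the Rayleigh system is linear and homogeneous, we may replace the normalization $\Vert \phi_k\Vert_{L^2}=1$ with $\phi_k(0)=0$, $\phi_k'(0)=1$, so that any limit extracted will be automatically nontrivial on $(0,y_1)$.

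Next I would exploit the imaginary part identity (\ref{zero-imag}),
\begin{equation*}
\mathrm{Im}\,c_k \int_0^h \frac{U''}{|U-c_k|^2}|\phi_k|^2\,dy = \mathrm{Im}\,g_r(c_k)\,|\phi_k(h)|^2,
\end{equation*}
together with the real-part identity (\ref{estimate-Jq-f}) applied with the above $q$. The real-part identity, after separating $U''=U''_{+}-U''_{-}$ and noting that $\mathrm{supp}(U''_{-})$ stays a positive distance from $\{y_j\}$ (so that $|U-c_k|^{-2}$ is uniformly bounded there), gives
\begin{equation*}
\int_0^h |\phi_k'|^2 \,dy + \int_{\bigcup_j \mathrm{nbhd}(y_j)} \frac{U''(U-q)}{|U-c_k|^2}|\phi_k|^2\,dy \;\le\; C\bigl(1+|\phi_k(h)|^2\bigr),
\end{equation*}
with $C$ independent of $k$, since the coefficient $\mathrm{Re}\,g_r(c_k)+\frac{\mathrm{Re}\,c_k-q}{\mathrm{Im}\,c_k}\mathrm{Im}\,g_r(c_k)$ stays bounded when $c_s\neq U(h)$ (for the case $c_s=U(h)$ one uses (\ref{estimate-Jq-g}) and $g_s$ in place of $g_r$). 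By ODE continuous dependence, $\phi_k\to\phi_s$ in $C^1_{\mathrm{loc}}([0,h]\setminus\{y_j\})$, where $\phi_s$ satisfies (\ref{rayleigh}) with $c=c_s$ on each $(y_j,y_{j+1})$, and $\phi_s(0)=0$, $\phi_s'(0)=1$.

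The core step is to show $\phi_s(y_j)=0$ for every $j$. Passing $c_k\to c_s$ in the imaginary-part identity and invoking the Plemelj-type limit
\begin{equation*}
\lim_{k\to\infty}\int_0^h \frac{\mathrm{Im}\,c_k}{(U-\mathrm{Re}\,c_k)^2+(\mathrm{Im}\,c_k)^2}\, f(y)\,dy = \pi\sum_{j}\frac{f(y_j)}{|U'(y_j)|}
\end{equation*}
(valid when $U'(y_j)\neq 0$; the degenerate case $U'(y_j)=0$ is handled by a direct Fatou argument on the positive singular integrand $U''(U-q)|\phi_k|^2/|U-c_k|^2$), with $f=U''|\phi_s|^2$ and $\mathrm{Im}\,g_r(c_k)|\phi_k(h)|^2\to 0$, one obtains $\pi\sum_j |U'(y_j)|^{-1}U''(y_j)|\phi_s(y_j)|^2=0$; strict positivity of each summand then forces $\phi_s(y_j)=0$. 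Once this is known, Lemma \ref{vanish} applied to the interval $(0,y_1)$, where both endpoint values vanish, yields $\phi_s\equiv 0$ on $(0,y_1)$, contradicting $\phi_s'(0)=1$. The main obstacle I anticipate is a technical one: making the extraction of $\phi_s$ and the Plemelj passage fully rigorous without an $H^2$ bound, especially when an inflection point $y_j$ has $U'(y_j)=0$, so that the singular weight has higher order and the concentration argument must be replaced by a direct Fatou estimate on the positive integrand, combined with uniform $C^1_{\mathrm{loc}}$ convergence away from $\{y_j\}$ to control sign issues.
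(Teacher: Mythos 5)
There is a genuine gap, and it sits exactly at the point this lemma is designed to handle: the absence of uniform control of $\phi_{k}$ near the critical points $y_{j}$. First, your normalization swap ($\phi_{k}(0)=0$, $\phi_{k}'(0)=1$ in place of $\Vert\phi_{k}\Vert_{L^{2}}=1$) removes all a priori bounds your later steps need. ``ODE continuous dependence'' from data at $y=0$ only gives convergence on $[0,y_{1})$; to reach any interval $(y_{j},y_{j+1})$ beyond the first critical point you must integrate through the critical layer, where the coefficient $U''/(U-c_{k})$ blows up as ${\rm Im}\,c_{k}\to0$, so the asserted $C^{1}_{\mathrm{loc}}([0,h]\setminus\{y_{j}\})$ convergence is unjustified (the locally uniform convergence used in the paper comes from interior ODE estimates together with the $L^{2}$ normalization, cf.\ \cite[Lemma 3.6]{lin1}). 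For the same reason your real-part inequality is not available: after discarding the region away from the $y_{j}$ you are left with terms of size $\sup_{E_{\delta}^{c}}|U''(U-q)|/|U-c_{k}|^{2}\,\Vert\phi_{k}\Vert_{L^{2}(E_{\delta}^{c})}^{2}$ and $|\phi_{k}(h)|^{2}$, neither of which is bounded independently of $k$ under your normalization, so the constant in ``$\leq C(1+|\phi_{k}(h)|^{2})$'' and the subsequent use of that bound are both uncontrolled. (If instead $U(0)=c_{s}$, the interval $(0,y_{1})$ on which your final contradiction lives is empty, and even the initial-value argument at $y=0$ fails.)

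Second, the core step — passing to the limit in (\ref{zero-imag}) via a Plemelj formula to conclude $\sum_{j}U''(y_{j})|\phi_{s}(y_{j})|^{2}/|U'(y_{j})|=0$ and hence $\phi_{s}(y_{j})=0$ for all $j$ — cannot be carried out here. The kernel ${\rm Im}\,c_{k}/|U-c_{k}|^{2}$ concentrates precisely at the $y_{j}$, where you have neither convergence nor uniform bounds of $\phi_{k}$; the singular-limit lemmas in the appendix require $C^{1}([0,h])$ convergence of the integrand factor, i.e.\ exactly the uniform-through-the-critical-layer control (the $H^{2}$ bound of Lemma \ref{lemma-h2-bound}) that class $\mathcal{F}$ lacks, which is the stated difficulty of this lemma. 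The claim ${\rm Im}\,g_{r}(c_{k})|\phi_{k}(h)|^{2}\to0$ is likewise unjustified, since ${\rm Im}\,g_{r}(c_{k})=O({\rm Im}\,c_{k})$ but $|\phi_{k}(h)|$ may grow. Moreover, when $U'(y_{j})=0$ (possible in class $\mathcal{F}$ at a non-inflection critical point) the Plemelj formula itself fails, and the Fatou substitute you invoke only yields a one-sided lower bound on a positive integral — it cannot produce the equality you need to force $\phi_{s}(y_{j})=0$. This is why the paper argues in the opposite direction: keeping $\Vert\phi_{k}\Vert_{L^{2}}=1$, it first shows $\phi_{s}\not\equiv0$ (absorbing the boundary term in (\ref{estimate-Jq-f}) with $q=U_{\min}-1$), then gets $\phi_{s}(y_{j})\neq0$ for some $j$ from Lemma \ref{vanish}, and finally plays Fatou's lemma on the sign-definite singular integrand against the uniform bound supplied by the identity — never taking a two-sided limit at the critical layer. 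As written, your route does not close, and patching it essentially forces you back to the paper's argument.
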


\begin{proof}
Suppose on the contrary that $c_{s}$ is not an inflection value. Let $%
y_{1},y_{2},\ldots ,y_{m}$ be in the pre-image of $c_{s}$ so that $%
U(y_{j})=c_{s}$, and let $S_{0}$ be the complement of the set of points $%
\{y_{1},y_{2},\dots ,y_{m}\}$ in the interval $[0,h]$. Since $c_{s}$ is not
an inflection value, Definition \ref{classF} asserts that $U^{\prime \prime
}(y_{j})$ takes the same sign for $j=1,2,\dots ,m$, say positive. As in the
proof of Proposition \ref{prop-neutral-mode}, let $E_{\delta }=\{y\in
\lbrack 0,h]:|y-y_{j}|<\delta \text{ for some $j$, where $j=1,2,\cdots ,m$}%
\} $. It is readily seen that $E_{\delta }^{c}\subset S_{0}$. Note that $%
U^{\prime \prime }(y)>0$ for $y\in E_{\delta }$ if $\delta >0$ small enough.
We normalize the sequence by setting $\Vert \phi _{k}\Vert _{L^{2}}=1$. The
result of Lemma 3.6 in \cite{lin1} implies that $\phi _{k}$ converges
uniformly to $\phi _{s}$ on any compact subset of $S_{0}$. Moreover, $\phi
_{s}^{\prime \prime }$ exists on $S_{0}$ and $\phi _{s}$ satisfies 
\begin{equation*}
\phi _{s}^{\prime \prime }-\alpha _{s}^{2}\phi _{s}-\frac{U^{\prime \prime }%
}{U-c_{s}}\phi _{s}=0\qquad \text{for}\quad y\in (0,h).
\end{equation*}

Our first task is to show that $\phi_{s}$ is not identically zero. Suppose
otherwise. The proof is again divided into two cases.

Case 1: $U(h)\neq c_{s}$. In this case, $[h-\delta _{1},h]\subset S_{0}$ for
some $\delta _{1}>0$. As is done in the proof of \ Proposition \ref%
{prop-neutral-mode}, for any $q$ real, it follows that 
\begin{align*}
\ \int_{0}^{h}& \left( \left\vert \phi _{k}^{\prime }\right\vert ^{2}+\alpha
_{k}^{2}|\phi _{k}|^{2}+\frac{U^{\prime \prime }(U-q)}{|U-c_{k}|^{2}}|\phi
_{k}|^{2}\right) dy \\
& \geq \int_{0}^{h}\left\vert \phi _{k}^{\prime }\right\vert ^{2}dy+\alpha
_{k}^{2}+\int_{E_{\delta }^{c}}\frac{U^{\prime \prime }(U-q)}{|U-c_{k}|^{2}}%
|\phi _{k}|^{2}dy+\int_{E_{\delta }}\frac{U^{\prime \prime }(U-q)}{%
|U-c_{k}|^{2}}|\phi _{k}|^{2}dy \\
& \geq \int_{0}^{h}\left\vert \phi _{k}^{\prime }\right\vert ^{2}dy+\alpha
_{k}^{2}-\sup_{E_{\delta }^{c}}\frac{|U^{\prime \prime }(U-q)|}{|U-c_{k}|^{2}%
}\int_{E_{\delta }^{c}}|\phi _{k}|^{2}dy.
\end{align*}%
We choose $q=U_{\min }-1$, then by (\ref{estimate-Jq-f}) 
\begin{align}
\int_{0}^{h}& \left( \left\vert \phi _{k}^{\prime }\right\vert ^{2}+\alpha
_{k}^{2}|\phi _{k}|^{2}+\frac{U^{\prime \prime }(U-U_{\min }+1)}{%
|U-c_{k}|^{2}}|\phi _{k}|^{2}\right) dy  \label{inter5} \\
& =\left( {\rm Re}g_{r}(c_{k})+({\rm Re}c_{k}-U_{\min }+1)\frac{{\rm Im}%
g_{r}(c_{k})}{{\rm Im}c_{k}}\right) |\phi _{k}(h)|^{2}  \notag \\
& \leq C|\phi _{k}(h)|^{2}\leq C_{1}\left( \varepsilon \int_{h-\delta
_{1}}^{h}\left\vert \phi _{k}^{\prime }\right\vert ^{2}dy+\frac{1}{%
\varepsilon }\int_{h-\delta _{1}}^{h}|\phi _{k}|^{2}dy\right) .  \notag
\end{align}%
If $\varepsilon $ is chosen to be small then the above two inequalities lead
to 
\begin{equation}
0\geq \alpha _{k}^{2}-\sup_{E_{\delta }^{c}}\frac{|U^{\prime \prime
}(U-U_{\min }+1)|}{|U-c_{k}|^{2}}\int_{E_{\delta }^{c}}|\phi
_{k}|^{2}dy-C_{\varepsilon }\int_{h-\delta _{1}}^{h}|\phi _{k}|^{2}dy.
\label{inter8}
\end{equation}%
Since $\phi _{k}$ converges to $\phi _{s}\equiv 0$ uniformly on $E_{\delta
}^{c}$ and $[h-\delta _{1},h]$, this implies $0\geq \alpha _{s}^{2}/2$ when $%
k$ is large enough. A contradiction proves that $\phi _{s}$ is not
identically zero.

Case 2: $U(h)=c_{s}$. From (\ref{estimate-2nd-gs}), we have 
\begin{equation*}
\int_{0}^{h}\left( \left\vert \phi _{k}^{\prime \prime }\right\vert
^{2}+2\alpha _{k}^{2}\left\vert \phi _{k}^{\prime }\right\vert ^{2}+\alpha
_{k}^{4}\left\vert \phi _{k}\right\vert ^{2}\right) dy=2\alpha _{k}^{2}
{\rm Re}g_{s}(c_{k})\left\vert \phi _{k}^{\prime }\left( h\right) \right\vert
^{2}+\int_{0}^{h}\frac{\left( U^{\prime \prime }\right) ^{2}\left\vert \phi
_{k}\right\vert ^{2}}{\left\vert U-c_{k}\right\vert ^{2}}dy.
\end{equation*}%
The imaginary part of (\ref{zweo-g-complex}) yields 
\begin{equation}
{\rm Im}c_{k}\int_{0}^{h}\frac{U^{\prime \prime }\left\vert \phi
_{k}\right\vert ^{2}}{\left\vert U-c_{k}\right\vert ^{2}}dy=-{\rm Im}%
g_{s}(c_{k})\left\vert \phi _{k}^{\prime }\left( h\right) \right\vert ^{2}.
\label{inter10}
\end{equation}%
Denote $U_{\max }^{\prime \prime }=\max_{[0,h]}U^{\prime \prime }(y)$.
Combining the above two identities, we have 
\begin{align}
& \int_{0}^{h}\left( \left\vert \phi _{k}^{\prime \prime }\right\vert
^{2}+2\alpha _{k}^{2}\left\vert \phi _{k}^{\prime }\right\vert ^{2}+\alpha
_{k}^{4}\left\vert \phi _{k}\right\vert ^{2}\right) dy+\int_{0}^{h}\frac{%
U^{\prime \prime }\left( U_{\max }^{\prime \prime }+1-U^{\prime \prime
}\right) \left\vert \phi _{k}\right\vert ^{2}}{\left\vert U-c_{k}\right\vert
^{2}}dy  \label{estimate-2nd-g} \\
& =\left( 2\alpha _{k}^{2}{\rm Re}g_{s}(c_{k})-\frac{{\rm Im}g_{s}(c_{k})}{%
{\rm Im}c_{k}}\left( U_{\max }^{\prime \prime }+1\right) \right) \left\vert
\phi _{k}^{\prime }\left( h\right) \right\vert ^{2}  \notag \\
& \leq C^{\prime }d\left( c_{k},U(h)\right) \left\vert \phi _{k}^{\prime
}\left( h\right) \right\vert ^{2}\leq Cd\left( c_{k},U(h)\right) \left\Vert
\phi _{k}\right\Vert _{H^{2}}^{2},  \notag
\end{align}%
where we use (\ref{bound-gs}). Since 
\begin{equation*}
d(c_{k},U(h))=|{\rm Re}\,c_{k}-U(h)|+(\mathrm{Im}\,c_{k})^{2}\rightarrow 0,
\end{equation*}%
so for $k$ large enough we have 
\begin{equation*}
0\geq \frac{\alpha _{k}^{4}}{2}-\sup_{E_{\delta }^{c}}\frac{\left\vert
U^{\prime \prime }\left( U_{\max }^{\prime \prime }+1-U^{\prime \prime
}\right) \right\vert }{\left\vert U-c_{k}\right\vert ^{2}}\int_{E_{\delta
}^{c}}\left\vert \phi _{k}\right\vert ^{2}dy\geq \frac{\alpha _{s}^{4}}{4},
\end{equation*}%
which is a contradiction. This proves that $\phi _{s}$ is not identically
zero. Subsequently, Lemma \ref{vanish} asserts that $\phi _{s}(y_{j})\neq 0$
for some $y_{j}$.

Below, we get a contradiction from the assumption that $c_{s}$ is not an
inflection value. In Case 1 when $U(h)\neq c_{s}$, it is straightforward to
see that 
\begin{equation*}
\int_{E_{\delta }}\frac{U^{\prime \prime }(U-U_{\min }+1)}{|U-c_{s}|^{2}}%
|\phi _{s}|^{2}dy\geq \int_{|y-y_{j}|<\delta }\frac{U^{\prime \prime }}{%
|U-c_{s}|^{2}}|\phi _{s}|^{2}dy=\infty ,
\end{equation*}%
since $\phi _{s}(y_{j})\neq 0$ and by our assumption $U^{\prime \prime }>0$
on $\left\{ |y-y_{j}|<\delta \right\} $. Fatou's lemma then states that 
\begin{equation*}
\liminf_{k\rightarrow \infty }\int_{E_{\delta }}\frac{U^{\prime \prime
}(U-U_{\min }+1)}{|U-c_{k}|^{2}}|\phi _{k}|^{2}dy=\infty .
\end{equation*}%
Then similar to the estimate (\ref{inter5}) above, we have 
\begin{align*}
0=& \int_{0}^{h}\left( \left\vert \phi _{k}^{\prime }\right\vert ^{2}+\alpha
_{k}^{2}|\phi _{k}|^{2}+\frac{U^{\prime \prime }(U-U_{\min }+1)}{|U-c|^{2}}%
|\phi _{k}|^{2}\right) dy \\
& -\left( {\rm Re}g_{r}(c_{k})+({\rm Re}c_{k}-U_{\min }+1)\frac{{\rm Im}%
g_{r}(c_{k})}{{\rm Im}c_{k}}\right) |\phi _{k}(h)|^{2} \\
\geq & \int_{E_{\delta }}\frac{U^{\prime \prime }(U-U_{\min }+1)}{%
|U-c_{k}|^{2}}|\phi _{k}|^{2}dy-\sup_{E_{\delta }^{c}}\frac{|U^{\prime
\prime }(U-U_{\min }+1)|}{|U-c_{k}|^{2}}-C_{\varepsilon }>0,
\end{align*}%
for $k$ large. A contradiction asserts that $c_{s}$ is an inflection value.
For Case 2 when $U(h)=c_{s}$, similarly we have 
\begin{equation*}
\lim_{k\rightarrow \infty }\inf \int_{E_{\delta }}\frac{U^{\prime \prime
}\left( U_{\max }^{\prime \prime }+1-U^{\prime \prime }\right) }{\left\vert
U-c_{k}\right\vert ^{2}}\left\vert \phi _{k}\right\vert ^{2}dy=+\infty ,
\end{equation*}%
and from (\ref{estimate-2nd-g})%
\begin{equation*}
0\geq \int_{E_{\delta }}\frac{U^{\prime \prime }\left( U_{\max }^{\prime
\prime }+1-U^{\prime \prime }\right) }{\left\vert U-c_{k}\right\vert ^{2}}%
\left\vert \phi _{k}\right\vert ^{2}dy-\sup_{E_{\delta }^{c}}\frac{%
\left\vert U^{\prime \prime }\left( U_{\max }^{\prime \prime }+1-U^{\prime
\prime }\right) \right\vert }{\left\vert U-c_{k}\right\vert ^{2}}>0,
\end{equation*}%
when $k$ is large. Another contradiction completes the proof.
\end{proof}

The proof of above lemma indicates that a flow in class $\mathcal{F}$ is
linearly stable when the wave number is large.

\begin{lemma}
\label{lemma-bound-wave number}Assume $g\neq 1$. Then for any flow $U\left(
y\right) $ in class $\mathcal{F},$ there exists $\alpha _{\max }>0$ such
that when $\alpha \geq \alpha _{\max }$ there is no unstable solutions to (%
\ref{rayleigh})--(\ref{bc-rayleigh}).
\end{lemma}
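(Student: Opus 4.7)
The plan is to argue by contradiction: assume there exists a sequence $(\phi_k,\alpha_k,c_k)$ of nontrivial solutions of (\ref{rayleigh})--(\ref{bc-rayleigh}) with $\mathrm{Im}\,c_k>0$ and $\alpha_k\to\infty$. Howard's semicircle theorem (\ref{semicircle}) confines $\{c_k\}$ to a bounded subset of $\mathbb{C}$, so after passing to a subsequence $c_k\to c_s\in[U_{\min},U_{\max}]$. Normalize by $\|\phi_k\|_{L^2}=1$. Because $\phi_k(0)=0$, the elementary trace estimate $|\phi_k(h)|^2\leq \varepsilon\|\phi_k'\|_{L^2}^2+\varepsilon^{-1}\|\phi_k\|_{L^2}^2$ is at our disposal for every $\varepsilon>0$, and we will use it to absorb boundary contributions into the Dirichlet energy. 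The target is to deduce $\alpha_k^2\leq C$ uniformly in $k$, contradicting $\alpha_k\to\infty$.

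First consider the generic case $c_s\neq U(h)$: by (\ref{bound-gr}) both $\mathrm{Re}\,g_r(c_k)$ and $(\mathrm{Re}\,c_k-q)\mathrm{Im}\,g_r(c_k)/\mathrm{Im}\,c_k$ remain uniformly bounded. Applying the identity (\ref{estimate-Jq-f}) with $q=U_{\min}-1$ and splitting the middle integral on $E_\delta=\{y:|U(y)-c_s|<\delta\}$ and its complement gives control: on $E_\delta^c$ the factor $|U-c_k|$ is bounded below so the integrand is $O(1)$; on $E_\delta$ the class-$\mathcal{F}$ sign-constancy of $U''$ at roots of $U-c_s$, combined with the imaginary-part identity (\ref{zero-imag}) exactly as in the analysis leading to Lemma~\ref{lemma-inflection-F}, yields a bound of the form $C(1+|\phi_k(h)|^2)$. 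Substituting into (\ref{estimate-Jq-f}) and choosing $\varepsilon$ small in the trace estimate produces $\alpha_k^2\leq C'$, the desired contradiction.

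The harder case is $c_s=U(h)$, since then $g_r(c_k)$ is unbounded. Here we switch to the dual identity (\ref{estimate-Jq-g}) with $g_s$ in place of $g_r$ and invoke the sharp bound (\ref{bound-gs}), $|\mathrm{Re}\,g_s(c_k)|+|\mathrm{Im}\,g_s(c_k)/\mathrm{Im}\,c_k|\leq C_0\,d(c_k,U(h))\to 0$. The same splitting of the potential integral applies, and the boundary term $|\phi_k'(h)|^2$ is now absorbed via the second-order test (the analogue of (\ref{estimate-2nd-gs})) obtained by multiplying the Rayleigh equation by $-(\phi_k^{*})''$ and integrating. The hypothesis $g\neq 1$ is what ensures that the coefficient of the Dirichlet-energy term in the resulting linear combination stays away from a degenerate value, so the final absorption into the left-hand side goes through and again gives $\alpha_k^2\leq C$.

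The principal technical obstacle is controlling $\int_0^h U''(U-q)/|U-c_k|^2|\phi_k|^2\,dy$ uniformly in $k$ when $\mathrm{Im}\,c_k\to 0$ and $c_s$ lies in the range of $U$: without the class-$\mathcal{F}$ sign structure of $U''$ on the level set $\{U=c_s\}$ (either directly, or via the inflection-value alternative supplied by Lemma~\ref{lemma-inflection-F}) this step would fail. Otherwise the argument is a quantitative, large-$\alpha$ counterpart of the neutral-limiting-mode analysis already developed in Section~4 and Lemma~\ref{lemma-inflection-F}, with the $\alpha_k^2$-term on the left now providing the decisive coercivity that forces a bound rather than the extraction of a neutral limit.
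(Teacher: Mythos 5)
Your overall strategy coincides with the paper's: argue by contradiction with $\alpha_k\to\infty$, normalize $\Vert\phi_k\Vert_{L^2}=1$, use the identities of Lemma \ref{rayleigh-eqn} with a suitable $q$, split the critical-layer integral over $E_\delta$ and its complement, and absorb the boundary term so that the $\alpha_k^2$-term forces a uniform bound; the paper likewise reduces the remainder to the proof of Lemma \ref{lemma-inflection-F}. However, there are concrete problems. First, Howard's semicircle theorem only confines $c_k$ to a bounded set; it does not give $c_s\in[U_{\min},U_{\max}]$, and the paper's one displayed computation is exactly the step you skipped: if $\mathrm{Im}\,c_k\geq\delta>0$ along a subsequence, then $1/|U-c_k|$ and $g_r(c_k)$ are uniformly bounded, and (\ref{estimate-Jq-f}) with $q=\mathrm{Re}\,c_k$ yields $0\geq\alpha_k^2-C>0$ for large $k$; only after this does one know $c_k\to c_s$ real. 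This case is easy and fits your framework, but the implication as you state it is false. Second, your explanation of the hypothesis $g\neq1$ is invented: nothing in (\ref{estimate-Jq-g}), (\ref{bound-gs}) or the second-order identity (\ref{estimate-2nd-gs}) degenerates at $g=1$, and the paper's own proof never invokes this hypothesis; asserting that it ``keeps a coefficient away from a degenerate value'' without exhibiting that coefficient is not a proof step.

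The more serious gap is the case in which $c_s$ is an inflection value of $U$. Your Case 1 discards the $E_\delta$-integral by appealing to ``class-$\mathcal{F}$ sign-constancy of $U''$ at roots of $U-c_s$,'' but Definition \ref{classF} supplies that sign information only when $c_s$ is \emph{not} an inflection value; and your fallback, ``the inflection-value alternative supplied by Lemma \ref{lemma-inflection-F},'' is inapplicable here because that lemma presupposes $\alpha_k\to\alpha_s$ with $\alpha_s$ finite and positive, whereas in the present contradiction argument $\alpha_k\to\infty$. Consequently your chain of estimates produces $\alpha_k^2\leq C$ only when $c_s$ avoids the inflection values, and the remaining case is left unaddressed: when $c_s=U_j$ one needs a different control of the critical-layer term, e.g.\ localizing the estimates of Lemma \ref{lemma-h2-bound} near the inflection points where $-U''/(U-U_j)$ is controlled, as the paper indicates before Proposition \ref{neutral mode-F} following \cite{lin3}. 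The paper is admittedly terse at the same spot (``nearly identical to Lemma \ref{lemma-inflection-F}''), but within your write-up this step is missing rather than deferred, since the lemma you cite cannot supply it.
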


\begin{proof}
Suppose otherwise. Then, there would exist a sequence of unstable solutions $%
\{(\phi _{k},\alpha _{k},c_{k})\}_{k=1}^{\infty }$ of (\ref{rayleigh})--(\ref%
{bc-rayleigh}) such that $\alpha _{k}\rightarrow \infty $ as $k\rightarrow
\infty $. After normalization, let $\Vert \phi _{k}\Vert _{L^{2}}=1$. First
we show that $\lim_{k\rightarrow \infty }{\rm Im}c_{k}=0$. If $\mathrm{Im}%
\,c_{k}\geq \delta >0$ for some $\delta $, then $1/\left\vert
U-c_{k}\right\vert $ and $|g_{r}(c_{k})|$ are uniformly bounded.
Accordingly, with $q={\rm Re}c_{k}$ in (\ref{estimate-Jq-f}) it follows
that 
\begin{align*}
0& =\int_{0}^{h}\left( \left\vert \phi _{k}^{\prime }\right\vert ^{2}+\alpha
_{k}^{2}\left\vert \phi _{k}\right\vert ^{2}+\frac{U^{\prime \prime }\left(
U-{\rm Re}c_{k}\right) }{\left\vert U-c_{k}\right\vert ^{2}}\left\vert \phi
_{k}\right\vert ^{2}\right) dy-{\rm Re}g_{r}(c_{k})\left\vert \phi
_{k}\left( h\right) \right\vert ^{2} \\
& \geq \alpha _{k}^{2}-\sup \frac{\left\vert U^{\prime \prime }\left( U-%
{\rm Re}c_{k}\right) \right\vert }{\left\vert U-c_{k}\right\vert ^{2}}%
+\int_{0}^{h}\left\vert \phi _{k}^{\prime }\right\vert ^{2}dy-C\left(
\varepsilon \int_{0}^{h}\left\vert \phi _{k}^{\prime }\right\vert ^{2}dy+%
\frac{1}{\varepsilon }\int_{0}^{h}\left\vert \phi _{k}\right\vert
^{2}dy\right) \\
& \geq \alpha _{k}^{2}-\sup \frac{\left\vert U^{\prime \prime }\left( U-%
{\rm Re}c_{k}\right) \right\vert }{\left\vert U-c_{k}\right\vert ^{2}}-%
\frac{C}{\varepsilon }>0,
\end{align*}%
when $k$ is big enough. This contradiction shows that $c_{k}\rightarrow
c_{s}\in \left[ U_{\min },U_{\max }\right] $ when $k\rightarrow \infty $.
The remainder of the proof is nearly identical to that of Lemma \ref%
{lemma-inflection-F} and hence is omitted.
\end{proof}

The following theorem gives a necessary condition for the free surface
instability that the flow profile should have an inflection point, which
generalize the classical result of Lord Rayleigh \cite{ray} in the rigid
wall case.

\begin{theorem}
\label{T:stable} A shear flow $U\left( y\right) $ without an inflection
point is linearly stable in the free surface setting.
\end{theorem}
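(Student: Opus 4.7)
The plan is to argue by contradiction, combining Howard's semicircle theorem with the neutral-limiting-mode machinery already established for the class $\mathcal{F}$ in this section. Suppose $U$ has no inflection point yet the Rayleigh system (\ref{rayleigh})--(\ref{bc-rayleigh}) admits a nontrivial solution $(\phi_0,\alpha_0,c_0)$ with $\alpha_0>0$ and $\mathrm{Im}\,c_0>0$. Since $U''$ never vanishes, continuity forces $U''$ to have constant sign on $[0,h]$, so the class-$\mathcal{F}$ condition of Definition \ref{classF} is satisfied vacuously and Lemmas \ref{lemma-inflection-F} and \ref{lemma-bound-wave number} both apply to $U$.

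Next, I would show that the set $\mathcal{U}\subset(0,\infty)$ of unstable wave numbers is both open and bounded. Boundedness is Lemma \ref{lemma-bound-wave number}. Openness follows from the same winding-number argument used in the first part of the proof of Lemma \ref{basis}: that argument rests only on the analyticity in $c\in\mathbb{C}^+$ and continuity in $\alpha$ of the shooting function $\Phi(\alpha,c)$ in (\ref{E:Phi}), together with the fact that the zeros of an analytic function are isolated, and so it does not require $U\in\mathcal{K}^+$. Since $\alpha_0\in\mathcal{U}$, the set $\mathcal{U}$ is a nonempty bounded open subset of $(0,\infty)$, hence it possesses a boundary point $\alpha_s\in(0,\infty)$ not belonging to $\mathcal{U}$. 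By definition, $\alpha_s$ is then a neutral limiting wave number, i.e., there is a sequence of unstable triples $(\phi_k,\alpha_k,c_k)$ with $\alpha_k\to\alpha_s$ and $\mathrm{Im}\,c_k\to 0+$.

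To finish, apply Howard's semicircle theorem (\ref{semicircle}) to obtain $\mathrm{Re}\,c_k\in[U_{\min},U_{\max}]$. Extracting a subsequence, $c_k\to c_s\in[U_{\min},U_{\max}]$, so $c_s$ lies in the range of the continuous function $U$. Lemma \ref{lemma-inflection-F} then forces the real neutral limiting wave speed $c_s$ to be an inflection value of $U$, contradicting the standing hypothesis. Hence $\mathcal{U}=\emptyset$ and $U$ is linearly stable.

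The principal obstacle is really just a bookkeeping one: one must check that the openness argument of Lemma \ref{basis}, stated there for $\mathcal{K}^+$ flows, transports verbatim to a general $U\in C^2([0,h])$. This is immediate, because the function $\Phi(\alpha,c)$ and its analyticity in $c$ on $\mathbb{C}^+$ depend only on the linear ODE (\ref{rayleigh}) and the boundary conditions (\ref{bc-rayleigh}), both of which are well-defined for any $c$ with $\mathrm{Im}\,c>0$ regardless of the class of $U$. Once this is granted, every other ingredient in the chain of reasoning has already been proved in full generality.
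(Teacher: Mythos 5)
Your argument is correct and is essentially the paper's own proof: assume an unstable mode, use the openness/continuation argument of Lemma \ref{basis} together with the boundedness of unstable wave numbers from Lemma \ref{lemma-bound-wave number} (a flow with no inflection point being trivially in class $\mathcal{F}$) to produce a neutral limiting mode, and then invoke Lemma \ref{lemma-inflection-F} to force the neutral limiting speed to be an inflection value, a contradiction. Your additional remarks (the semicircle theorem placing $c_s$ in the range of $U$, and the observation that the winding-number openness argument does not need $U\in\mathcal{K}^+$) just make explicit steps the paper leaves implicit.
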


\begin{proof}
Suppose otherwise; Then, there would exist an unstable solution $(\phi
,\alpha,c)$ to (\ref{rayleigh})--(\ref{bc-rayleigh}) with $\alpha>0$ and $%
{\rm Im}c>0$. Lemma \ref{basis} allows us to continue this unstable mode
for wave numbers to the right of $\alpha$ until the growth rate becomes
zero. Note that a flow without an inflection point is trivially in class $F$%
. So by Lemma \ref{lemma-bound-wave number}, this continuation must end at a
finite wave number $\alpha_{\max}$ and a neutral limiting mode therein. On
the other hand, Lemma \ref{lemma-inflection-F} asserts that the neutral
limiting wave speed $c_{s}$ corresponding to this neutral limiting mode must
be an inflection value. A contradiction proves the assertion.
\end{proof}

\begin{remark}
Our proof of the above no-inflection stability theorem is very different
from the rigid wall case. In the rigid-wall setting, where $\phi(h)=0$, the
identity (\ref{zero-imag}) reduces to%
\begin{equation*}
c_{i}\int_{0}^{h}\frac{U^{\prime\prime}}{|U-c|^{2}}|\phi|^{2}dy=0,
\end{equation*}
which immediately shows that if $U$ is unstable $\left( c_{i}>0\right) $
then $U^{\prime\prime}(y)=0$ at some point $y\in(0,h)$. The same argument
was adapted in \cite[Section 5]{yih72} for the free-surface setting,
however, it does not give linear stability for general flows with no
inflection points. More specifically, in the free-surface setting, (\ref%
{zero-imag}) becomes 
\begin{equation*}
c_{i}\int_{0}^{h}\frac{U^{\prime\prime}}{|U-c|^{2}}\left\vert
\phi\right\vert ^{2}dy=\left( \frac{2g(U(h)-c_{r})}{|U(h)-c|^{4}}+\frac{%
U^{\prime}(h)}{|U(h)-c|^{2}}\right) \left\vert \phi(h)\right\vert ^{2},
\end{equation*}
which only implies linear stability (\cite[Section 5]{yih72}) for special
flows satisfying $U^{\prime\prime}\left( y\right) <0,U^{\prime}\left(
y\right) \geq0$ or $U^{\prime\prime}\left( y\right) >0,U^{\prime}\left(
y\right) \leq0$. In the proof of Theorem \ref{T:stable}, we use the
characterization of neutral limiting modes and remove above additional
assumptions.
\end{remark}

Let us now consider a shear flow $U\in \mathcal{F}$ with multiple inflection
values $U_{1},U_{2},\dots ,U_{n}$. Lemma \ref{lemma-inflection-F} states
that a neutral limiting wave speed $c_{s}$ must be one of the inflection
values $U_{1},U_{2},\dots ,U_{n}$, say $c_{s}=U_{j}$. By localizing the
estimates in the proof of Lemma \ref{lemma-h2-bound} around inflection
points with the inflection values $U_{i}$, we can get an uniform $H^{2}$
bound for the unstable mode sequence. We skip the details, which are similar
to the case of rigid walls treated in \cite{lin3}. Thus, neutral limiting
modes for flows in class $\mathcal{F}$ are also characterized by inflection
values.

\begin{proposition}
\label{neutral mode-F} If $U\in \mathcal{F}$ has inflection values $%
U_{1},U_{2},\dots ,U_{n}$, then for a neutral limiting mode $(\phi
_{s},\alpha _{s},c_{s})$ with $\alpha _{s}>0$ the neutral limiting wave
speed must be one of the inflection values, that is, $c_{s}=U_{j}$ for some $%
j$. Moreover, $\phi _{s}$ must solve 
\begin{equation*}
\phi _{s}^{\prime \prime }-\alpha _{s}^{2}\phi _{s}+K_{j}(y)\phi _{s}=0\quad 
\text{for }\,y\in (0,h),
\end{equation*}%
with boundary conditions%
\begin{equation}
\begin{cases}
\phi _{s}^{\prime }(h)=g_{r}(U_{j}), & \phi _{s}(0)=0\qquad \text{if}\quad
U(h)\neq U_{j} \\ 
\phi _{s}(h)=0,\quad & \phi _{s}(0)=0\qquad \text{if}\quad U(h)=U_{j}%
\end{cases}%
,  \label{bc-sturm-F}
\end{equation}%
where $K_{j}(y)=-U^{\prime \prime }(y)/(U(y)-U_{j})$.
\end{proposition}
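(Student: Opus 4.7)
The plan is to argue in two stages, following the blueprint of Proposition \ref{prop-neutral-mode} but with the crucial modification that the a priori $H^2$ estimate of Lemma \ref{lemma-h2-bound} is established by \emph{localization} around each inflection point with value $U_j$, as is done for the rigid-wall setting in \cite{lin3}. First, by Lemma \ref{lemma-inflection-F}, any neutral limiting wave speed $c_s$ in the range of $U$ must coincide with some inflection value $U_j$; if instead $c_s$ lies outside the range of $U$, then the coefficient $U''/(U-c_s)$ is bounded and the standard Sturm-Liouville theory forces $\alpha_s^2$ to be a negative eigenvalue of a regular problem, but the same argument used in the proof of Lemma \ref{lemma-bound-wave number} rules out an accumulation of unstable modes in that regime. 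Thus $c_s = U_j$ for some $j \in \{1,\dots,n\}$.

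Next, I would take the normalized sequence $\{(\phi_k,\alpha_k,c_k)\}$ of unstable solutions with $\|\phi_k\|_{L^2}=1$ converging to $(\phi_s,\alpha_s,U_j)$, and derive a uniform $H^2$ bound as follows. Let $y_1^{(j)}<\cdots<y_{m_j}^{(j)}$ be the preimages of $U_j$ under $U$. By the class-$\mathcal{F}$ hypothesis, $U''$ has a common sign at these points, so in a sufficiently small neighborhood $E_\delta^{(j)}=\bigcup_i\{|y-y_i^{(j)}|<\delta\}$ the product $U''(U-U_j)\ge 0$. This is exactly the sign condition that drives the estimates in Lemma \ref{lemma-h2-bound}. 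Splitting the integrals in identities \eqref{estimate-Jq-f}--\eqref{estimate-Jq-g} (with $q=U_j$) into the contribution from $E_\delta^{(j)}$ and from its complement $E_\delta^{(j),c}$—on which $|U-c_k|$ is bounded below uniformly in $k$—yields
\begin{equation*}
\int_0^h (|\phi_k'|^2 + \alpha_k^2|\phi_k|^2)\,dy \le \int_0^h K_j(y)|\phi_k|^2\,dy + o(1)\cdot \|\phi_k\|_{H^1}^2 + C|\mathrm{boundary\ term}|,
\end{equation*}
and an analogous localized multiplication by $(\phi_k^*)''$ followed by substitution via the Rayleigh equation produces the $H^2$ bound, treating the two cases $U(h)\neq U_j$ and $U(h)=U_j$ exactly as in Cases 1 and 2 of Lemma \ref{lemma-h2-bound}.

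Granted the uniform $H^2$ bound, $\phi_k\rightharpoonup\phi_s$ weakly in $H^2$ and strongly in $H^1$ (hence in $C^0$), with $\|\phi_s\|_{L^2}=1$. On each compact subset of $[0,h]\setminus\{y_1^{(j)},\dots,y_{m_j}^{(j)}\}$ the coefficients of the Rayleigh equation converge uniformly, so $\phi_s$ satisfies
\begin{equation*}
\phi_s'' - \alpha_s^2 \phi_s + K_j(y)\phi_s = 0
\end{equation*}
almost everywhere (and by $H^2$ regularity, classically on $(0,h)$ after extending $K_j$ continuously through the removable singularities at $y_i^{(j)}$, which is permissible because $U''(y_i^{(j)})=0$). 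Passing to the limit in the boundary conditions \eqref{bc-rayleigh}: when $U(h)\neq U_j$, the coefficient $g_r(c_k)$ is continuous at $c_s=U_j$ and immediately yields $\phi_s'(h)=g_r(U_j)\phi_s(h)$; when $U(h)=U_j$ we rewrite the boundary condition in its $g_s$-form $\phi_k(h)=g_s(c_k)\phi_k'(h)$ and note that $g_s(c_k)\to 0$ while $\|\phi_k\|_{H^2}$ is bounded, forcing $\phi_s(h)=0$. The condition $\phi_s(0)=0$ passes trivially.

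The main obstacle is establishing the uniform $H^2$ bound in the multi-inflection case: unlike the class $\mathcal{K}^+$ situation, the sign of $U''(U-U_j)$ is controlled only locally near the preimages of the specific inflection value $U_j$, and one must simultaneously extract cancellations from the Rayleigh identities (with the careful choice $q=U_j$) and absorb the non-localized contributions into small quantities using the convergence $c_k\to U_j$ together with the elliptic-type estimate on $E_\delta^{(j),c}$. Once that localization argument is pushed through, which is essentially the strategy of \cite{lin3} adapted to the free-surface boundary data (including both the $g_r$ and $g_s$ formulations needed for the two boundary cases), the limit passage is routine.
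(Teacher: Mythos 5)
Your architecture is the same as the paper's: invoke Lemma \ref{lemma-inflection-F} to get $c_{s}=U_{j}$, obtain a uniform $H^{2}$ bound for the normalized unstable sequence by localizing the estimates of Lemma \ref{lemma-h2-bound} near the preimages of $U_{j}$, and then pass to the limit in the equation and in the boundary conditions exactly as in Proposition \ref{prop-neutral-mode} (the paper itself skips the localization details, deferring to \cite{lin3}). Two small remarks on your first paragraph: the case ``$c_{s}$ outside the range of $U$'' cannot occur at all, since by the semicircle theorem \eqref{semicircle} every $c_{k}$ lies in the closed semicircle over $[U_{\min},U_{\max}]$, so the detour through Lemma \ref{lemma-bound-wave number} (which concerns large wave numbers) is unnecessary and not really the right tool there.

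The one step that needs repair is your justification of the localized estimate. You assert that the class-$\mathcal{F}$ hypothesis gives $U''$ a common sign at the preimages of $U_{j}$, hence $U''(U-U_{j})\geq 0$ on $E_{\delta}^{(j)}$, and that this is ``exactly the sign condition that drives the estimates in Lemma \ref{lemma-h2-bound}.'' First, Definition \ref{classF} constrains the sign of $U''$ only at preimages of values that are \emph{not} inflection values; the statement you need is deduced by letting non-inflection values $c\to U_{j}$, and it is really a statement about the sign of $K_{j}$ near the inflection points. Second, that sign is \emph{not} always the one you wrote, and it is not always the one used in Lemma \ref{lemma-h2-bound} (there the favorable inequality is $-U''(U-U_{s})\geq 0$, i.e. $K\geq 0$): for class-$\mathcal{F}$ flows both signs of $K_{j}$ genuinely occur, which is precisely the positive/negative neutral limiting mode dichotomy exploited later in Theorem \ref{theorem class-f}. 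So the localized argument must branch on the sign rather than assume a single favorable one: if $K_{j}\geq 0$ near the preimages, take $q=U_{j}-2(U_{j}-{\rm Re}\,c_{k})$ in \eqref{estimate-Jq-f} (and its $g_{s}$ analogue \eqref{estimate-Jq-g}) as in the $\mathcal{K}^{+}$ case, so that the singular part of the integrand is bounded by $\|K_{j}\|_{L^{\infty}(E_{\delta}^{(j)})}$; if $K_{j}\leq 0$ there, the choice $q=U_{j}$ already makes the singular contribution nonnegative on the left-hand side, hence harmless, and $(U'')^{2}/|U-c_{k}|^{2}\leq \|K_{j}\|_{L^{\infty}(E_{\delta}^{(j)})}\,U''(U-U_{j})/|U-c_{k}|^{2}$ gives the $H^{2}$ step. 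In either case one also needs $K_{j}$ bounded near those preimages (so the singularities are removable there), plus the uniform lower bound on $|U-c_{k}|$ off $E_{\delta}^{(j)}$. With this case analysis inserted, your limit passage, including $\phi_{s}(h)=0$ when $U(h)=U_{j}$ via $g_{s}(c_{k})\to 0$, is the same as the paper's.
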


One may exploit the instability analysis of Theorem \ref{theorem-unstable}
for a flow in class $\mathcal{F}$ with possibly multiple inflection values.
The main difference of the analysis in class $\mathcal{F}$ from that in
class $\mathcal{K}^{+}$ is that unstable wave numbers in class $\mathcal{F}$
may bifurcate to the left and to the right of a neutral limiting wave
number, whereas unstable wave numbers in class $\mathcal{K}^{+}$ bifurcate
only to the left of a neutral limiting wave number. In the rigid-wall
setting, with an extension of the proof of \cite[Theorem 1.1]{lin1}, Lin 
\cite[Theorem 2.7]{lin3} analyzed this more complicated structure of the set
of unstable wave numbers. The remainder of this section establishes an
analogous result in the free-surface setting.

In order to study the structure of unstable wave numbers in class $\mathcal{F%
}$ with possibly multiple inflection values, we need several notations to
describe. 
A flow $U \in\mathcal{F}$ is said to be in class $\mathcal{F}^{+}$ if each $%
K_{j}(y)=-U^{\prime\prime}(y)/(U(y)-U_{j})$ is nonzero, where $U_{j}$ for $%
j=1, \cdots, n$ are inflection values of $U$. It is readily seen that for
such a flow $K_{j}$ takes the same sign at all inflection points of $U_{j}$.
A neutral limiting mode $(\phi_{j}, \alpha_{j}, U_{j})$ is said to be
positive if the sign of $K_{j}$ is positive at inflection points of $U_{j}$,
and negative if the sign of $K_{j}$ is negative. Proposition \ref{neutral
mode-F} asserts that $-\alpha_{s}^{2}$ is a negative eigenvalue of $-\frac{%
d^{2}}{dy^{2}}-K_{j}(y)$ on $y \in(0,h)$ with boundary conditions (\ref%
{bc-sturm-F}). We employ the argument in the proof of Theorem \ref%
{theorem-unstable} to conclude that an unstable solution exists near a
positive (negative) neutral limiting mode if and only if the perturbed wave
number is slightly to the left (right) of the neutral limiting wave number.
Thus, the structure of the set of unstable wave numbers with multiple
inflection values is more intricate. We remark that a class-$\mathcal{K}^{+}$
flow has a unique positive neutral limiting mode and hence unstable
solutions bifurcate to the left of a neutral limiting wave number.

Let us list all neutral limiting wave numbers in the increasing order. If
the sequence contains more than one successive negative neutral limiting
wave numbers, then we pick the smallest (and discard others). If the
sequence contains more than one successive positive neutral limiting wave
numbers, then we pick the largest (and discard others). If the smallest
member in this sequence is a positive neutral limiting wave number, then we
add zero into the sequence. Thus, we obtain a new sequence of neutral
limiting wave numbers. Let us denote the resulting sequence by $%
\alpha^{-}_{0}<a^{+}_{0}<\cdots <\alpha^{-}_{N}<\alpha^{+}_{N}$, where, $%
\alpha^{-}_{0}$ (might be $0$),$\dots,\alpha^{-}_{N}$ are negative neutral
limiting wave numbers and $\alpha^{+}_{0},\dots,\alpha^{+}_{N}$ are positive
neutral limiting wave numbers. The largest member of the sequence must be a
positive neutral wave number since no unstable modes exist to its right.

\begin{theorem}
\label{theorem class-f}For $U\in\mathcal{F}^{+}$ with inflection values $%
U_{1},U_{2},\dots,U_{n}$, let $\alpha_{0}^{-}<a_{0}^{+}<\cdots<%
\alpha_{N}^{-}<\alpha_{N}^{+}$ be defined as above. For each $%
\alpha\in\cup_{j=0}^{N}(\alpha_{j}^{-},\alpha_{j}^{+})$, there exists an
unstable solution of (\ref{rayleigh})--(\ref{bc-rayleigh}. Moreover, the
flow is linear stable if either $\alpha\geq\alpha_{N}^{+}$ or all operators $%
-\frac{d^{2}}{dy^{2}}-K_{j}(y)$ ($j=1,2,\dots,n$) on $y\in(0,h)$ with (\ref%
{bc-sturm-F} are nonnegative.
\end{theorem}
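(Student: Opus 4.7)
The plan is to mimic the strategy used for class $\mathcal{K}^{+}$ in the proof of Theorem \ref{class-k}, but now allowing for multiple inflection values and keeping careful track of the \emph{direction} in which unstable wave numbers bifurcate from each neutral limiting wave number. The starting point is Proposition \ref{neutral mode-F}, which pins down that every neutral limiting mode $(\phi_s,\alpha_s,c_s)$ with $\alpha_s>0$ must satisfy $c_s = U_j$ for some inflection value and that $-\alpha_s^2$ is a negative eigenvalue of the Sturm--Liouville operator $-\frac{d^2}{dy^2}-K_j(y)$ with boundary conditions (\ref{bc-sturm-F}). Together with the openness of the unstable set (Lemma \ref{basis}, whose proof carries over verbatim to class $\mathcal{F}^+$ since it only uses analyticity of the Evans-type function $\Phi$) and the absence of instability for large wave numbers (Lemma \ref{lemma-bound-wave number}), this reduces the problem to analyzing the bifurcation of unstable modes from each neutral limiting mode.

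The key step is to establish the analog of Proposition \ref{theorem-unstable} in this setting. For each neutral limiting mode $(\phi_j,\alpha_j,U_j)$ in class $\mathcal{F}^+$, I would rerun the contraction-mapping construction around $(\alpha_j,U_j)$, setting $\alpha = \alpha_j^2 + \varepsilon$ and using the Green's function expansion for $\partial \Phi/\partial \varepsilon$ and $\partial \Phi/\partial c$. The limits (\ref{i-conver 1})--(\ref{i-conver2}) generalize, with the residue contribution
\[
D = -\frac{\pi}{\phi_s^{\prime}(0)}\sum_{y: U(y)=U_j}\frac{K_j(y)}{|U^{\prime}(y)|}\phi_s^2(y)
\]
whose sign equals the common sign of $K_j$ at the inflection points with value $U_j$ (this is exactly where the class-$\mathcal{F}^+$ assumption is used). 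Following the argument in \cite[Section 4]{lin1} for class $\mathcal{F}^+$ in the rigid-wall setting, one checks that the contraction mapping produces a solution $c(\varepsilon)$ with $\operatorname{Im}c(\varepsilon)>0$ precisely when $\varepsilon < 0$ for positive neutral modes (so unstable wave numbers appear just to the \emph{left} of $\alpha_j$) and precisely when $\varepsilon > 0$ for negative neutral modes (unstable wave numbers appear just to the \emph{right}).

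With this directional bifurcation established, the union $\bigcup_{j=0}^{N}(\alpha_j^-,\alpha_j^+)$ is contained in the set of unstable wave numbers: each interval begins immediately to the right of a negative neutral limiting wave number $\alpha_j^-$ (or at $0$ when no such $\alpha_j^-$ exists below the first positive one) and ends immediately to the left of the next positive neutral limiting wave number $\alpha_j^+$. To fill in the interiors, I would use Lemma \ref{basis}: start from an unstable mode near $\alpha_j^- +$ (or near $\alpha_j^+ -$) and continuate. Lemma \ref{lemma-bound-wave number} forbids escape to $\infty$, and Proposition \ref{neutral mode-F} forces any neutral limiting wave number encountered to lie in our pruned list; by construction no such number lies strictly inside $(\alpha_j^-,\alpha_j^+)$, so the continuation covers the whole interval. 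For the stability statement, any unstable $\alpha \geq \alpha_N^+$ could by the same continuation be pushed rightward until meeting a neutral limiting wave number larger than $\alpha_N^+$, contradicting maximality; and if every operator $-\frac{d^2}{dy^2}-K_j(y)$ is nonnegative then no neutral limiting wave numbers exist at all, so by the continuation-plus-Lemma \ref{lemma-bound-wave number} argument the unstable set must be empty.

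The main technical obstacle I expect is the directional bifurcation analysis: verifying that the sign of $\operatorname{Im}D$ (and hence the side on which unstable modes emerge) really matches the designation \emph{positive/negative} neutral mode, and in particular that the pruning procedure (discarding all but the smallest in a run of consecutive negative wave numbers, and all but the largest in a run of consecutive positive ones) exactly reflects which intervals are filled by continuation without being blocked by intermediate neutral limiting wave numbers. A second subtlety is that the uniform $H^2$ bound in Lemma \ref{lemma-h2-bound}, used in the proof of Proposition \ref{prop-neutral-mode}, must be localized near each inflection point of each $U_j$ rather than applied globally; this follows the localization in \cite{lin3} but requires care near the free-surface boundary when $U(h) = U_j$ for some $j$, using the identity (\ref{estimate-Jq-g}) in place of (\ref{estimate-Jq-f}) as in Case 2 of Lemma \ref{lemma-h2-bound}.
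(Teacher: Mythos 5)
Your overall route is the paper's route (the paper itself only sketches this proof): characterize neutral limiting modes via Proposition \ref{neutral mode-F}, adapt the contraction-mapping analysis of Proposition \ref{theorem-unstable} to get unstable modes emerging to the left of positive and to the right of negative neutral limiting wave numbers, and then continuate using Lemma \ref{basis} and Lemma \ref{lemma-bound-wave number}. There is, however, a concrete error in your covering step: you justify that the continuation sweeps out all of $(\alpha_j^-,\alpha_j^+)$ by asserting that ``by construction no such number lies strictly inside $(\alpha_j^-,\alpha_j^+)$''. This is backwards. The pruned-away neutral limiting wave numbers lie precisely inside these intervals --- the larger members of the negative run beginning at $\alpha_j^-$ and the smaller members of the positive run ending at $\alpha_j^+$ --- whereas the gaps $(\alpha_j^+,\alpha_{j+1}^-)$ contain no neutral limiting wave numbers at all, since $\alpha_j^+$ and $\alpha_{j+1}^-$ are adjacent in the unpruned list. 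Consequently a one-sided continuation starting just to the right of $\alpha_j^-$ may terminate at a discarded positive neutral wave number strictly inside the interval, and the reason you give for the continuation being unobstructed does not stand.

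The step is repaired by running both continuations and using the directional dichotomy you already proved. A rightward continuation from just right of $\alpha_j^-$ can only terminate (growth rate tending to zero) at a \emph{positive} neutral limiting wave number, because the ``only if'' half of the bifurcation statement forbids unstable modes from accumulating at a negative one from the left; symmetrically, a leftward continuation from just below $\alpha_j^+$ can only terminate at a \emph{negative} neutral limiting wave number (or at $0$). Inside $(\alpha_j^-,\alpha_j^+)$ every negative neutral wave number precedes every positive one (one negative run followed by one positive run), so the right endpoint of the first unstable component is at least the smallest discarded positive value while the left endpoint of the second is at most the largest discarded negative value; the two components therefore overlap and their union is the whole interval. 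With this correction, your arguments for stability when $\alpha\geq\alpha_N^+$ or when all operators $-\frac{d^2}{dy^2}-K_j$ are nonnegative, and your localization of the $H^2$ bound near the inflection sets (with (\ref{estimate-Jq-g}) replacing (\ref{estimate-Jq-f}) when $U(h)=U_j$), are as intended in the paper. One small point: the side of bifurcation is governed by the sign of $BD\propto-(\phi_s'(0))^{-2}\sum_i K_j(y_i)\phi_s^2(y_i)/|U'(y_i)|$ rather than of $D$ alone; since $\phi_s$ need not be a ground state in class $\mathcal{F}^{+}$, $\phi_s'(0)$ may have either sign, but it enters squared, so the directions you state are indeed correct.
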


Theorem \ref{theorem class-f} indicates that there might exist a gap in $%
(0,\alpha _{N}^{+})$ of stable wave numbers. Indeed, in the rigid-wall
setting, a numerical computation \cite{balm} demonstrates that for a certain
shear-flow profile the onset of the unstable wave numbers is away from zero,
that is $\alpha _{0}^{-}>0$.

\renewcommand{\epsilon}{\varepsilon}

\appendix

\section{Proofs of (\protect\ref{i-conver 1}), (\protect\ref{i-conver2}), (%
\protect\ref{integral-limit 2}) and (\protect\ref{formula-Bk-limit-Ud})}

Our first task is to show that the limit (\ref{i-conver 1}) holds as $%
\epsilon\rightarrow0-$ uniformly in $E_{(R,b_{1},b_{2})}$.

In the proof of Theorem \ref{theorem-unstable}, we have already established
that both $\bar{\phi}_{1}\left( y;\varepsilon ,c\right) $ and $\phi
_{0}\left( y;\varepsilon ,c\right) $ uniformly converge to $\phi _{s}$ in $%
C^{1}$, as $(\epsilon ,c)\rightarrow (0,0)$ in $E_{(R,b_{1},b_{2})}$.
Moreover, $\phi _{2}(0;\epsilon ,c)\rightarrow -\frac{1}{\phi _{s}^{\prime
}(0)}$ uniformly as $(\epsilon ,c)\rightarrow (0,0)$ in $E_{(R,b_{1},b_{2})}$%
. So the function 
\begin{equation*}
G(y,0;\epsilon ,c)\phi _{0}(y;\epsilon ,c)=\left( \bar{\phi}_{1}\left(
y;\varepsilon ,c\right) \phi _{2}\left( 0;\varepsilon ,c\right) -\phi
_{2}\left( y;\varepsilon ,c\right) \bar{\phi}_{1}\left( 0;\varepsilon
,c\right) \right) \phi _{0}(y;\epsilon ,c)
\end{equation*}%
converges uniformly to $-\phi _{s}^{2}\left( y\right) /\phi _{s}^{\prime
}\left( 0\right) $ in $C^{1}[0,d],$as $(\epsilon ,c)\rightarrow (0,0)$ in $%
E_{(R,b_{1},b_{2})}.$Then the uniform convergence of (\ref{i-conver 1})
follows from (\ref{differ1}).

The proof of (\ref{i-conver2}) uses the following lemma.

\begin{lemma}[\protect\cite{lin1}, Lemma 7.3]
\label{interlemma}Assume that a sequence of differentiable functions $%
\{\Gamma_{k}\}_{k=1}^{\infty}$ converges to $\Gamma_{\infty}$ in $C^{1}$ and
that $\{c_{k}\}_{k=1}^{\infty}$ converges to zero, where $\mathrm{Im}%
\,c_{k}>0$ and $|{\rm Re}\,c_{k}|\leq R\,\mathrm{Im}\,c_{k}$ for some $R>0$%
. Then, 
\begin{equation}
\lim_{k\rightarrow\infty}-\int_{0}^{h}\frac{U^{\prime\prime}}{%
(U-U_{s}-c_{k})^{2}}\Gamma_{k}dy=p.v.\int_{0}^{h}\frac{K(y)}{U-U_{s}}%
\Gamma_{\infty }dy+i\pi\sum_{i=1}^{m_{s}}\frac{K(a_{j})}{|U^{\prime}(a_{j})|}%
\phi_{s}(a_{j}),  \label{2-order limit}
\end{equation}
provided that $U^{\prime}(y)\neq0$ at each $a_{j}$. Here $a_{1},\dots
,a_{m_{s}}$ are roots of $U-U_{s}$.
\end{lemma}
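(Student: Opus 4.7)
\medskip

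\noindent\textbf{Proof proposal.} The plan is to reduce the second-order singularity to a first-order one by an algebraic identity followed by an integration by parts, and then to apply a Plemelj--Sokhotski-type analysis to the resulting first-order singular integral. The constraint $|{\rm Re}\,c_{k}|\leq R\,{\rm Im}\,c_{k}$ forces $c_{k}\to 0$ inside a cone in the upper half-plane, which is exactly what is needed to pick up the $+i\pi$ contribution at each $a_{j}$.

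First I would use the defining relation $U''(y)=-K(y)(U(y)-U_{s})$ together with the elementary identity
\[
\frac{U-U_{s}}{(U-U_{s}-c_{k})^{2}}=\frac{1}{U-U_{s}-c_{k}}+\frac{c_{k}}{(U-U_{s}-c_{k})^{2}}
\]
to write
\[
-\int_{0}^{h}\frac{U''}{(U-U_{s}-c_{k})^{2}}\Gamma_{k}\,dy
=\int_{0}^{h}\frac{K\,\Gamma_{k}}{U-U_{s}-c_{k}}\,dy
+c_{k}\int_{0}^{h}\frac{K\,\Gamma_{k}}{(U-U_{s}-c_{k})^{2}}\,dy.
\]
For the second summand, since $U'(a_{j})\neq 0$ the function $K\Gamma_{k}/U'$ is $C^{1}$ near each $a_{j}$ (uniformly in $k$), and I can integrate by parts using
$(U-U_{s}-c_{k})^{-2}=-(U')^{-1}\tfrac{d}{dy}(U-U_{s}-c_{k})^{-1}$ to convert this into a first-order singular integral plus boundary terms; the overall prefactor $c_{k}$ then sends this contribution to zero (the first-order singular integral remains bounded by the same analysis as applied to the main term).

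The main step is therefore to prove
\[
\lim_{k\to\infty}\int_{0}^{h}\frac{K\,\Gamma_{k}}{U-U_{s}-c_{k}}\,dy
=\mathrm{p.v.}\int_{0}^{h}\frac{K\,\Gamma_{\infty}}{U-U_{s}}\,dy
+i\pi\sum_{j=1}^{m_{s}}\frac{K(a_{j})}{|U'(a_{j})|}\Gamma_{\infty}(a_{j}).
\]
For this I would fix a small $\delta>0$ and a smooth partition of unity subordinate to $\{(a_{j}-\delta,a_{j}+\delta)\}_{j}$ and its complement in $[0,h]$. Away from the $a_{j}$'s, $(U-U_{s}-c_{k})^{-1}\to (U-U_{s})^{-1}$ uniformly and $\Gamma_{k}\to\Gamma_{\infty}$ in $C^{1}$, so that piece converges to the corresponding portion of the principal-value integral. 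On each neighborhood of $a_{j}$, I would change variables $t=U(y)-U_{s}$; since $U'(a_{j})\neq 0$ this is a $C^{1}$ diffeomorphism onto an interval around $0$, and the integral becomes
\[
\int \frac{H_{k}(t)}{t-c_{k}}\,dt,\qquad H_{k}(t)=\frac{K\,\Gamma_{k}}{U'}\bigg|_{y=U^{-1}(U_{s}+t)}.
\]
Because $H_{k}\to H_{\infty}$ in $C^{1}$ and $c_{k}$ approaches $0$ from within the cone $|{\rm Re}\,c_{k}|\leq R\,{\rm Im}\,c_{k}$, the classical Plemelj--Sokhotski formula applies uniformly in this cone and yields
\[
\mathrm{p.v.}\int \frac{H_{\infty}(t)}{t}\,dt+i\pi H_{\infty}(0),
\]
with $H_{\infty}(0)=K(a_{j})\Gamma_{\infty}(a_{j})/U'(a_{j})$. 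Taking $U'(a_{j})=\mathrm{sgn}(U'(a_{j}))\,|U'(a_{j})|$ and reassembling the local pieces (noting that the sign in the residue is absorbed into the orientation of the $t$-integral, producing $|U'(a_{j})|$ in the denominator) yields the claimed formula.

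The main obstacle I anticipate is the uniformity in the sector condition: one must verify that the Plemelj--Sokhotski limit is attained uniformly as $c_{k}\to 0$ within the cone (rather than merely along the imaginary axis), and that the error from approximating $\Gamma_{k}$ by $\Gamma_{\infty}$ in the localized Cauchy integral can be controlled using only $C^{1}$ convergence (the $C^{1}$ hypothesis, rather than mere continuity, is what makes the residue at $a_{j}$ well defined and independent of $\delta$). Once these uniform estimates are in place, adding the two steps finishes the proof.
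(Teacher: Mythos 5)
The paper itself gives no proof of Lemma \ref{interlemma}: it is quoted from \cite{lin1} (Lemma 7.3), so there is no in-paper argument to compare with. Your proposal is, in outline, a correct proof and essentially the standard one: writing $U''=-K\,(U-U_{s})$ and $(U-U_{s})/(U-U_{s}-c_{k})^{2}=1/(U-U_{s}-c_{k})+c_{k}/(U-U_{s}-c_{k})^{2}$ reduces the statement to a first-order Cauchy integral plus a $c_{k}$-weighted remainder, and the localized Plemelj--Sokhotski analysis near each simple zero $a_{j}$ (after $t=U-U_{s}$) produces the principal value plus $i\pi K(a_{j})\Gamma_{\infty}(a_{j})/|U'(a_{j})|$, the $C^{1}$ convergence being exactly what controls $\int (H_{k}-H_{\infty})/(t-c_{k})\,dt$. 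Note that your ``main step'' is precisely Lemma \ref{singular limit} (= \cite{lin1}, Lemma 7.1, also quoted in the appendix) applied to $\psi_{k}=K\Gamma_{k}$, since $1/(U-U_{s}-c_{k})=W_{k}/(W_{k}^{2}+({\rm Im}\,c_{k})^{2})+i\,{\rm Im}\,c_{k}/(W_{k}^{2}+({\rm Im}\,c_{k})^{2})$ with $W_{k}=U-U_{s}-{\rm Re}\,c_{k}$; within this paper you could therefore cite that lemma for the main step and only supply the algebraic reduction and the vanishing of the $c_{k}$-weighted term. (Your computation also shows that the factor $\phi_{s}(a_{j})$ in (\ref{2-order limit}) should read $\Gamma_{\infty}(a_{j})$, which is how the lemma is actually used in (\ref{limit-I}).)

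Two small repairs in your treatment of the remainder term. First, the integration by parts must be localized to neighborhoods of the $a_{j}$, since $U'$ is only assumed nonzero there; away from the roots the integrand is uniformly bounded and the prefactor $c_{k}$ kills that piece outright. Second, after integrating by parts the density is $(K\Gamma_{k}/U')'$, which is merely continuous, so the resulting first-order singular integral is \emph{not} ``bounded by the same analysis as the main term'' (that analysis uses a $C^{1}$ density); what one actually gets is the cruder bound $O(\log(1/{\rm Im}\,c_{k}))$. This still suffices because the cone condition gives $|c_{k}|\leq (1+R)\,{\rm Im}\,c_{k}$, hence $|c_{k}|\log(1/{\rm Im}\,c_{k})\rightarrow 0$; indeed this estimate, rather than the $+i\pi$ residue (which needs only ${\rm Im}\,c_{k}>0$, as the absence of a cone hypothesis in Lemma \ref{singular limit} confirms), is where the sector condition genuinely enters your argument.
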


We now prove (\ref{i-conver2}). That is, We shall show that (\ref{i-conver2}%
) holds uniformly in $E_{(R,b_{1},b_{2})}$. Suppose for some $\delta >0$ and
a sequence $\{(\varepsilon _{k},c_{k})\}_{k=1}^{\infty }$ in $%
E_{(R,b_{1},b_{2})}$with $\max (b_{1}^{k},b_{2}^{k})$ tending to zero 
\begin{equation*}
|\frac{\partial \Phi }{\partial c}(\varepsilon _{k},c_{k})-(C+iD)|>\delta
\end{equation*}%
holds, where $C$ and $D$ are defined in (\ref{E:BCD}). 
Let us write 
\begin{equation*}
\frac{\partial \Phi }{\partial c}(\varepsilon _{k},c_{k})=-\int_{0}^{h}\frac{%
U^{\prime \prime }}{(U-U_{s}-c_{k})^{2}}\Gamma _{k}dy+\frac{d}{dc}%
g_{r}(U_{s}+c)\phi _{2}(0;\varepsilon _{k},c_{k})=I+II,
\end{equation*}%
where 
\begin{equation*}
\Gamma _{k}(y)=G(y,0;\varepsilon _{k},c_{k})\phi _{0}(y;\varepsilon
_{k},c_{k})\rightarrow -\frac{1}{\phi _{s}^{\prime }(0)}\phi _{s}^{2}\quad 
\text{in }C^{1}.
\end{equation*}%
%
%
%
%
%
%
%
%
%
%
%
%
%
%
By Lemma \ref{interlemma}, it follows that 
\begin{equation*}
\lim_{k\rightarrow \infty }I=-\frac{1}{\phi _{s}^{\prime }(0)}\left(
p.v.\int_{0}^{h}\frac{K(y)}{U-U_{s}}\phi _{s}^{2}dy+i\pi \sum_{k=1}^{m_{s}}%
\frac{K(a_{j})}{|U^{\prime }(a_{j})}\phi _{s}^{2}(a_{j})\right) .
\end{equation*}%
A straightforward calculation yields that 
\begin{equation*}
\lim_{k\rightarrow \infty }II=-\frac{d}{dc}g_{r}(U_{s}+c)\frac{1}{\phi
_{s}^{\prime }(0)}=-\left( \frac{2g}{(U(h)-U_{s})^{3}}+\frac{U^{\prime }(h)}{%
(U(h)-U_{s})^{2}}\right) \frac{1}{\phi _{s}^{\prime }(0)}=-\frac{A}{\phi
_{s}^{\prime }(0)},
\end{equation*}%
where $A$ is given in (\ref{definition-A}). Therefore, 
\begin{equation*}
\lim_{k\rightarrow \infty }\frac{\partial \Phi }{\partial c}(\varepsilon
_{k},c_{k})=C+iD.
\end{equation*}%
A contradiction then proves the uniform convergence.

The proofs of (\ref{integral-limit 2}) and (\ref{formula-Bk-limit-Ud}) use
the following lemma.

\begin{lemma}[\protect\cite{lin1}, Lemma 7.1]
\label{singular limit} Assume that $\{\psi_{k}\}_{k=1}^{\infty}$ converges
to $\psi_{\infty}$ in $C^{1}([0,h])$ and that $\{c_{k}\}_{k=1}^{\infty}$
with $\mathrm{Im}\,c_{k}>0$ converges to zero. Let us denote $%
W_{k}(y)=U(y)-U_{s}-{\rm Re}\,c_{k}.$ Then, the limits 
\begin{align}
\lim_{k\rightarrow\infty}\int_{0}^{h}\frac{W_{k}}{W_{k}^{2}+\mathrm{Im}%
\,c_{k}^{2}}\psi_{k}dy & =p.v.\int_{0}^{h}\frac{\psi_{\infty}}{U-U_{s}}dy,
\label{auxially-ineg1} \\
\lim_{k\rightarrow\infty}\int_{0}^{h}\frac{\mathrm{Im}\,c_{k}}{W_{k}^{2}+%
\mathrm{Im}\,c_{k}^{2}}\psi_{k}dy & =\pi\sum_{j=1}^{m_{s}}\frac {%
\psi_{\infty}(a_{j})}{|U(a_{j})|}  \label{auxially-ineg3}
\end{align}
hold provided that $U^{\prime}(y)\neq0$ at each $a_{j}$. Here, $%
a_{1},\ldots,a_{m_{s}}$ satisfy $U(a_{j})=U_{s}$.
\end{lemma}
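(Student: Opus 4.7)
The plan is to recognize the two integrands as the real and imaginary parts of the Cauchy kernel $1/(U-U_s-c_k)$, then localize near the zero set of $U-U_s$ and reduce each local piece, by the change of variable $u=U(y)-U_s$ (legitimate because $U'(a_j)\neq 0$), to the classical Plemelj--Sokhotski limits on the real line.

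First I would write
\begin{equation*}
\frac{1}{U(y)-U_s-c_k}=\frac{W_k(y)+i\,\mathrm{Im}\,c_k}{W_k(y)^2+(\mathrm{Im}\,c_k)^2},
\end{equation*}
so that the two integrals in \eqref{auxially-ineg1}--\eqref{auxially-ineg3} are exactly the real and the imaginary parts of $\int_0^h \psi_k/(U-U_s-c_k)\,dy$. Fix disjoint open intervals $J_j\ni a_j$ so small that $U'$ is bounded away from $0$ on $J_j$ and $U-U_s$ is a $C^1$ diffeomorphism from $J_j$ onto its image, and choose a smooth partition of unity $1=\chi_0+\sum_{j=1}^{m_s}\chi_j$ on $[0,h]$ with $\mathrm{supp}\,\chi_j\subset J_j$. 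Split each of the two integrals accordingly.

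On $\mathrm{supp}\,\chi_0$ the function $|U-U_s|$ has a positive lower bound, so $W_k^2+(\mathrm{Im}\,c_k)^2$ is bounded below uniformly in $k$; by $C^1$ (hence uniform) convergence of $\psi_k\to\psi_\infty$ and the dominated convergence theorem, the $\chi_0$-piece of the first integral tends to $\int_0^h \chi_0\,\psi_\infty/(U-U_s)\,dy$, while the $\chi_0$-piece of the second tends to $0$. For each $\chi_j$-piece, substitute $u=U(y)-U_s$, $du=U'(y)\,dy$; this turns the integrals into
\begin{equation*}
\int_{I_j}\frac{u-\mathrm{Re}\,c_k}{(u-\mathrm{Re}\,c_k)^2+(\mathrm{Im}\,c_k)^2}\,\Psi_k^j(u)\,du
\quad\text{and}\quad
\int_{I_j}\frac{\mathrm{Im}\,c_k}{(u-\mathrm{Re}\,c_k)^2+(\mathrm{Im}\,c_k)^2}\,\Psi_k^j(u)\,du,
\end{equation*}
where $\Psi_k^j(u)=(\chi_j\psi_k/U')\circ y(u)$ is $C^1$ on the neighborhood $I_j$ of $0$ and converges uniformly in $C^1$ to $\Psi_\infty^j(u)=(\chi_j\psi_\infty/U')\circ y(u)$. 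Because $U'(a_j)\neq 0$ with $|U'(a_j)|$ positive, $\Psi_\infty^j(0)=\psi_\infty(a_j)/U'(a_j)$.

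For the imaginary-part integral, the kernel $(\mathrm{Im}\,c_k)/[(u-\mathrm{Re}\,c_k)^2+(\mathrm{Im}\,c_k)^2]$ is the Poisson kernel at $\mathrm{Re}\,c_k+i\,\mathrm{Im}\,c_k\to 0$, an approximate identity of mass $\pi$; continuity of $\Psi_\infty^j$ at $0$ yields the limit $\pi\,\Psi_\infty^j(0)=\pi\psi_\infty(a_j)/U'(a_j)$. Summing over $j$ gives \eqref{auxially-ineg3} (with $|U'(a_j)|$ in place of $U'(a_j)$ once one absorbs the orientation of the substitution when $U'(a_j)<0$, which is precisely the reason the absolute value appears). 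For the real-part integral, subtract and add the value at $u=\mathrm{Re}\,c_k$:
\begin{equation*}
\int_{I_j}\frac{(u-\mathrm{Re}\,c_k)\,\Psi_k^j(u)}{(u-\mathrm{Re}\,c_k)^2+(\mathrm{Im}\,c_k)^2}\,du=\int_{I_j}\frac{(u-\mathrm{Re}\,c_k)\bigl(\Psi_k^j(u)-\Psi_k^j(\mathrm{Re}\,c_k)\bigr)}{(u-\mathrm{Re}\,c_k)^2+(\mathrm{Im}\,c_k)^2}\,du+\Psi_k^j(\mathrm{Re}\,c_k)\!\int_{I_j}\!\frac{(u-\mathrm{Re}\,c_k)\,du}{(u-\mathrm{Re}\,c_k)^2+(\mathrm{Im}\,c_k)^2}.
\end{equation*}
Uniform $C^1$ bounds on $\Psi_k^j$ make the first integrand bounded by $\|(\Psi_k^j)'\|_\infty$, so dominated convergence gives the limit $\int_{I_j}(\Psi_\infty^j(u)-\Psi_\infty^j(0))/u\,du$; the second term is an explicit $\log$ that converges to $\Psi_\infty^j(0)\log|b_j^+/b_j^-|$ where $I_j=[b_j^-,b_j^+]$ (the $\mathrm{Im}\,c_k$ contribution vanishes). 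Reassembling these two pieces yields exactly $\mathrm{p.v.}\!\int_{I_j}\Psi_\infty^j(u)/u\,du$, and changing variables back to $y$ converts this to the $\chi_j$-contribution of the principal value in \eqref{auxially-ineg1}. Summation over $j$ together with the $\chi_0$-piece completes the proof.

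The main obstacle is handling the real part: the kernel is not integrable at $u=\mathrm{Re}\,c_k$, and one cannot simply take a pointwise limit. The subtract-and-add trick converts the genuinely singular contribution into one governed by $(\Psi_k^j)'$ (controlled uniformly by $C^1$ convergence), while collecting all the would-be divergent parts into a single explicit integral that is exactly the regularization used to define the Cauchy principal value. Verifying that the boundary logarithms from the $\chi_j$ pieces cancel or combine correctly with the $\chi_0$ piece to give a genuine $\mathrm{p.v.}$ over all of $[0,h]$ is the only bookkeeping that requires care.
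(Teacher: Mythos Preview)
Your argument is correct and is precisely the standard Plemelj--Sokhotski computation: localize by a partition of unity, straighten each zero of $U-U_s$ via $u=U(y)-U_s$ using $U'(a_j)\neq 0$, identify the imaginary part with a Poisson approximate identity of total mass $\pi$, and handle the real part by the subtract-and-add device that produces exactly the principal-value regularization. Note that the paper does not supply its own proof of this lemma; it is quoted verbatim from \cite{lin1}, Lemma~7.1, and the argument there follows the same route you describe. (The $|U(a_j)|$ in the displayed formula \eqref{auxially-ineg3} is a typographical slip for $|U'(a_j)|$, as your derivation correctly shows.)
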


We now prove (\ref{integral-limit 2}). Since $K(y)\phi_{s}\phi_{k}%
\rightarrow K(y)\phi_{s}^{2}$ in $C^{1}([0,h])$, by (\ref{auxially-ineg1})
and (\ref{auxially-ineg3}) it follows that 
\begin{align}
-\int_{0}^{h} & \frac{U^{\prime\prime}}{(U-c_{k})(U-U_{s})}\phi_{s}\phi
_{k}dy  \notag \\
& =\int_{0}^{h}\frac{W_{k}}{W_{k}^{2}+\mathrm{Im}\,c_{k}^{2}}%
K(y)\phi_{s}\phi_{k}dy+i\int_{0}^{h}\frac{\mathrm{Im}\,c_{k}}{W_{k}^{2}+%
\mathrm{Im}\,c_{k}^{2}}K(y)\phi_{s}\phi_{k}dy  \label{limit-I} \\
& \rightarrow p.v.\int_{0}^{h}\frac{K}{(U-U_{s})}\phi_{s}^{2}dy+i\pi
\sum_{j=1}^{m_{s}}\frac{K(a_{j})}{|U(a_{j})|}\phi_{s}^{2}(a_{j})  \notag
\end{align}
as $k\rightarrow\infty$. Since $c_{k}\rightarrow U_{s}$ as $k\rightarrow
\infty$ in the proof of Theorem \ref{class-k}, it follows that 
\begin{equation}
\lim_{k\rightarrow\infty}\frac{g_{r}(c_{k})-g_{r}(U_{s})}{c_{k}-U_{s}}%
=g_{r}^{\prime}(U_{s})=\frac{2g}{(U(h)-U_{s})^{3}}+\frac{U^{\prime}(h)}{%
(U(h)-U_{s})^{2}}.  \label{limit-II}
\end{equation}
Addition of (\ref{limit-I}) and (\ref{limit-II}) proves (\ref{integral-limit
2}). In case $U(h)=U_{s}$ the same computations as above prove (\ref%
{formula-Bk-limit-Ud}). This uses that 
\begin{equation*}
\lim_{k\rightarrow\infty}\frac{g_{s}(c_{k})}{c_{k}-U_{s}}=-\lim_{k%
\rightarrow \infty}\frac{U(h)-c_{k}}{g+U^{\prime}(h)(U(h)-c_{k})}=0.
\end{equation*}

\subsection*{Acknowledgement}

The work of Vera Mikyoung Hur is supported partly by the NSF grant
DMS-0707647, and the work of Zhiwu Lin is supported partly by the NSF grants
DMS-0505460 and DMS-0707397. The authors thank the anonymous referee for
valuable comments and suggestions.

\end{document}